\DeclareMathOperator{\Ba}{\mathcal{B}}
\DeclareMathOperator{\Ca}{\mathcal{C}}
\DeclareMathOperator{\Da}{\mathcal{D}}
\DeclareMathOperator{\Ga}{\mathcal{G}}
\DeclareMathOperator{\Ha}{\mathcal{H}}
\DeclareMathOperator{\La}{\mathcal{L}}
\DeclareMathOperator{\Ma}{\mathcal{M}}
\DeclareMathOperator{\Oa}{\mathcal{O}}
\DeclareMathOperator{\Sa}{\mathcal{S}}
\DeclareMathOperator{\Ua}{\mathcal{U}}
\DeclareMathOperator{\Va}{\mathcal{V}}
\DeclareMathOperator{\Xa}{\mathcal{X}}
\DeclareMathOperator{\Ya}{\mathcal{Y}}
\DeclareMathOperator{\Cb}{\mathbb{C}}
\DeclareMathOperator{\Db}{\mathbb{D}}
\DeclareMathOperator{\Kb}{\mathbb{K}}
\DeclareMathOperator{\Nb}{\mathbb{N}}
\DeclareMathOperator{\Pb}{\mathbb{P}}
\DeclareMathOperator{\Qb}{\mathbb{Q}}
\DeclareMathOperator{\Rb}{\mathbb{R}}
\DeclareMathOperator{\Sb}{\mathbb{S}}
\DeclareMathOperator{\Tb}{\mathbb{T}}
\DeclareMathOperator{\Zb}{\mathbb{Z}}
\newcommand{\Dc}{\mathfrak{D}}
\DeclareMathOperator{\im}{\mathsf{im}}
\DeclareMathOperator{\Hom}{\mathsf{Hom}}
\DeclareMathOperator{\Spec}{\mathsf{Spec}}
\newcommand{\ep}{\varepsilon}
\DeclareMathOperator{\op}{^\text{op}}
\DeclareMathOperator{\inv}{^{-1}}
\DeclareMathOperator{\id}{\mathsf{id}}
\renewcommand{\ker}{\mathsf{ker}}
\DeclareMathOperator{\Aut}{\mathsf{Aut}}
\DeclareMathOperator{\BAut}{\mathsf{BAut}}
\DeclareMathOperator{\colim}{\mathsf{colim}}
\DeclareMathOperator{\Prop}{\textbf{Prop}}
\DeclareMathOperator{\Type}{\textbf{Type}}
\DeclareMathOperator{\refl}{\mathsf{refl}}
\DeclareMathOperator{\fib}{\mathsf{fib}}
\DeclareMathOperator{\fst}{\term{fst}}
\DeclareMathOperator{\snd}{\term{snd}}
\DeclareMathOperator{\tr}{\term{tr}}
\DeclareMathOperator{\ap}{\term{ap}}
\DeclareMathOperator{\at}{\term{\, at\,}}
\newcommand{\sslash}{\mathbin{/\mkern-6mu/}} %
\newcommand{\modal}{{\Diamond}}
\DeclareMathOperator{\Set}{\textbf{Set}}
\newcommand{\yo}{\term{y}}
\newcommand{\dprod}[1]{({#1}) \to }           %
\newcommand{\dsum}[1]{({#1}) \times }       %
\newcommand{\type}[1]{\mathsf{{#1}}}
\newcommand{\term}[1]{\mathsf{{#1}}}
\newcommand{\trunc}[1]{\left\lVert#1\right\rVert}       %
\newcommand{\B}{\textbf{B}}
\newcommand{\pt}{\term{pt}}
\newcommand{\ord}[1]{\underline{\type{#1}}}
\newcommand{\shape}{\mathbin{\textup{\textesh}}}
\newcommand{\exemplar}{exemplar\xspace}
\newcommand{\exemplars}{exemplars\xspace}
\newcommand{\twisted}[1]{^{\circlearrowright {#1}}}
\newcommand{\xto}[1]{\xrightarrow{#1}}
\newcommand{\pto}{\,\cdot\kern-.1em{\to}\,}
\providecommand*{\xmapstofill@}{%
  \arrowfill@{\mapstochar\relbar}\relbar\rightarrow
}
\providecommand*{\xmapsto}[2][]{%
  \ext@arrow 0395\xmapstofill@{#1}{#2}%
}
\def\slashedarrowfill@#1#2#3#4#5{%
  $\m@th\thickmuskip0mu\medmuskip\thickmuskip\thinmuskip\thickmuskip
   \relax#5#1\mkern-7mu%
   \cleaders\hbox{$#5\mkern-2mu#2\mkern-2mu$}\hfill
   \mathclap{#3}\mathclap{#2}%
   \cleaders\hbox{$#5\mkern-2mu#2\mkern-2mu$}\hfill
   \mkern-7mu#4$%
}
\def\rightslashedarrowfill@{%
  \slashedarrowfill@\relbar\relbar\mapstochar\rightarrow}
\newcommand\xslashedrightarrow[2][]{%
  \ext@arrow 0055{\rightslashedarrowfill@}{#1}{#2}}
\tikzset{
    vert/.style={anchor=south, rotate=90, inner sep=.5mm}
} %
\newcommand{\latticegen}[1]{\Zb\!{#1} \oplus \Zb}
\newtheorem{thm}{Theorem}[subsection]
\theoremstyle{definition}
\newtheorem{defn}[thm]{Definition}
\newtheorem{informal}[thm]{Informal Definition}
\newtheorem{ex}[thm]{Example}
\newtheorem{axiom}{Axiom}
\newtheorem{rmk}[thm]{Remark}
\newtheorem{warn}[thm]{Warning}
\newtheorem*{acknowledgements}{Acknowledgements}
\newtheorem{lem}[thm]{Lemma}
\newtheorem{cor}[thm]{Corollary}
\newtheorem{prop}[thm]{Proposition}
\begin{document}

\title{Orbifolds as Microlinear Types in Synthetic Differential Cohesive Homotopy Type Theory}
\author{David Jaz Myers}
\date{\today}
\maketitle

\begin{abstract}
  Informally, an orbifold is a smooth space whose points may have finitely many internal symmetries. Formally, however, the notion of orbifold has been presented in a number of different guises --- from Satake's $V$-manifolds to Moerdijk and Pronk's proper \'etale groupoids --- which do not on their face resemble the informal definition. The reason for this divergence between formalism and intuition is that the points of spaces cannot have internal symmetries in traditional, set-level foundations. The extra data of these symmetries must be carried around and accounted for throughout the theory. More drastically, maps between orbifolds presented in the usual ways cannot be defined pointwise, and instead must use intermediary spaces.

  In this paper, we will put forward a definition of orbifold in synthetic differential cohesive homotopy type theory: an orbifold is a microlinear type for which the type of identifications between any two points is properly finite. In homotopy type theory, a point of a type may have internal symmetries, and we will be able to construct examples of orbifolds by defining their type of points directly. All maps between these orbifolds may be defined pointwise; the mapping space between two orbifolds is merely the type of functions.

We will justify this synthetic definition by proving, internally, that every proper \'etale groupoid is an orbifold. In this way, we will show that the synthetic theory faithfully extends the usual theory of orbifolds. Along the way, we will investigate the microlinearity of higher types such as \'etale groupoids, showing that the methods of synthetic differential geometry generalize gracefully to higher analogues of smooth spaces. We will also investigate the relationship between the Dubuc-Penon and open-cover definitions of compactness in synthetic differential geometry, and show that any discrete, Dubuc-Penon compact subset of a second-countable manifold is subfinitely enumerable.
  \end{abstract}

\tableofcontents

\section{Introduction}

\subsection{What are orbifolds, and what could they be?} Informally, an orbifold is a smooth space whose points may have finitely many internal symmetries.

\begin{informal}\label{informal:orbifold}
An \emph{orbifold} is a smooth space $X$ whose points $x$ have finite groups $\Aut_{X}(x)$ of internal symmetries, known as their \emph{isotropy groups}.
\end{informal}

A paradigmatic example of an orbifold is the quotient of a manifold by the action of a finite group; the smooth structure comes from the manifold, and we may think of the stabilizer group of a point as its internal symmetries.

A common way to model the notion of orbifold uses \emph{proper \'etale groupoids}, which are groupoids internal to the category of smooth manifolds (Lie groupoids) satisfying certain properties \cite{Moerdijk-Pronk:Orbifolds}. However, the correct notion of sameness for these sorts of orbifolds is not equivalence of groupoids, but a separate notion of \emph{Morita equivalence}. Worse, to get all the morphisms between two such orbifolds we may need to replace one of them by a Morita equivalent orbifold first. While this sort of situation is standard fair for categorical homotopy theory, it does not directly capture the intuitive idea of an orbifold as a smooth space whose points have internal symmetries. The issue is that in the usual set theoretic foundations, the elements of sets \emph{cannot} have internal symmetries, and therefore we must carry the data of these symmetries around and account for them at every step of our theory.

In this paper, we will investigate the notion of orbifold in the setting of homotopy type theory, where points can be non-trivially self identified on the foundational level. Homotopy type theory is a novel foundation of mathematics which is based on the intuition that each mathematical object is a certain \emph{type} of mathematical object. For example, $3$ is an integer, $\pi$ is a real number, and $\textbf{GL}_{n}(\Rb)$ is a Lie group. For any two objects $x$ and $y$ of the same type --- say, any two vector spaces --- we may consider what it means to \emph{identify} $x$ with $y$ --- for vector spaces, we would identify $x$ with $y$ by giving a linear isomorphism between them. Because an identification between mathematical objects is another mathematical object, we also get a type of all identifications between $x$ and $y$ which we write as $(x = y)$.

  Identification between mathematical objects is the only form of equality available in homotopy type theory --- the more traditional \emph{proposition} of equality just occurs in the case that there is at most one way to identify two objects of a given type. For example, there is at most one way to identify two natural numbers: when they're equal, we may trivially identify them, and otherwise we can't. We call types where identification is the proposition of equality --- types where there is at most one way to identify two elements --- \emph{sets}. Just as in set theory, the elements of sets cannot have any non-trivial self-identifications. However, there are types in which there can be multiple ways to identify two different objects, such as the type of vector spaces (since there are many linear isomorphisms between two vector spaces).

  In these higher types, objects $x : X$ may have nontrivial self-identifications in the type $\Aut_{X}(x) :\equiv (x = x)$.\footnote{To distinguish between identifications $(=)$ and definitional equalities, we use the symbol $\equiv$ for a definitional equality. We will also put a colon next to the symbol --- as in $:\equiv$ --- to show that the left hand side is defined to be the right hand side. For more on notation, see \cref{sec:notation}.} That is, homotopy type theory gracefully generalizes from set-level mathematics to groupoid-level mathematics and higher. Because the elements of types can have non-trivial self identifications --- the type $(x = x)$ can have more than one element --- we can quite directly formalize \cref{informal:orbifold}.
\begin{defn}\label{defn:orbifold}
An \emph{orbifold} is a \emph{microlinear type} (\cref{defn:microlinear}) $X$ whose types of identifications $(x = y)$ are \emph{properly finite} for all points $x, y : X$.
\end{defn}

The notion of microlinearity is a good formalization of ``smooth space''. The technical notion of ``properly finite'' is needed due to a quirk of constructive mathematics --- saying that the identification types were simply finite would be much too strong. We will discuss these notions further in this introduction.

Our main goal in this paper will be to justify \cref{defn:orbifold}. We will do this in two main ways. First, in \cref{sec:orbifold.examples}, we will construct explicit examples of orbifolds by saying what their points are. For example, in \cref{defn:M11} we will construct the moduli space $\Ma_{1,1}$ of complex elliptic curves as the type of pairs $(\La, \Lambda)$ where $\La$ is a $1$-dimensional complex vector space and $\Lambda \subseteq \La$ is a lattice in it --- the elliptic curve itself is the torus $\La / \Lambda$. The symmetries of a point $(\La, \Lambda) : \Ma_{1,1}$ may be identified with $\Cb$-linear automorphisms of $\La$ that fix the lattice $\Lambda$. This will let us show in \cref{prop:M11.homotopy.quotient} that $\Ma_{1,1}$ may be equivalently defined as the quotient $\mathfrak{h} \sslash \type{SL}_{2}(\Zb)$ of the upper half plane $\mathfrak{h}$ by the action of $\type{SL}_{2}(\Zb)$ via M{\:o}bius transformations.

Second, in \cref{thm:ordinary.proper.etale.groupoid.is.orbifold}, we will prove that any crisp, ordinary proper \'etale groupoid is an orbifold in the sense of \cref{defn:orbifold}. The extra adjectives ``crisp'' and ``ordinary'' are just there to say that we mean proper \'etale groupoids in the ordinary, external sense. This theorem shows that our type theoretic definition of orbifold subsumes the standard definition.

This introduction is structured as an outline of the paper. The remaining introductions in this introduction introduce Sections 2-6 of this paper. We conclude in section 7 with a brief summary of our results.

\subsection{Quotients of group actions in homotopy type theory}

Homotopy type theory takes a novel perspective on the theory of groups which allows for the construction of quotients by group actions without using any colimits. This approach takes very seriously the idea that a group is to be considered as the type of symmetries of a given mathematical object. Instead of working with a group $G$ itself, we work instead with a type $\B G$ of \emph{exemplars} of $G$ --- mathematical objects whose group of symmetries is $G$, at least up to conjugation --- together with a canonical \exemplar $\pt_{\B G} : \B G$ whose self-identifications $(\pt_{\B G} =_{\B G} \pt_{\B G})$ we identify with $G$.

For example, we may take $\B\type{GL}_{n}(\Rb)$ to be the type of $n$-dimensional real vector spaces. That is, we take an \exemplar of the group $\type{GL}_{n}(\Rb)$ to be an $n$-dimensional real vector space. We have a canonical \exemplar of $\type{GL}_{n}(\Rb)$: the canonical $n$-dimensional real vector space $\Rb^{n} : \B \type{GL}_{n}(\Rb)$. We have a canonical identification of $\type{GL}_{n}(\Rb)$ with the type of identifications $(\Rb^{n}=_{\B\type{GL}_{n}(\Rb)} \Rb^{n})$, which can be proven to be equivalent to the type of automorphisms of $\Rb^{n}$ considered as an $n$-dimensional real vector space.

As another example, we may take $\B \Aut(\ord{n})$ to be the type of $n$-element sets. That is, we take an \exemplar of the symmetric group $\Aut(\ord{n})$ to be an $n$-element set. We again have a canonical \exemplar, namely the canonical $n$-element set $\ord{n} :\equiv \{0, \ldots, n-1\}$, and again the group $\Aut(\ord{n})$ is canonically identified with the automorphisms of $\ord{n}$ as an $n$-element set.

The HoTT approach to groups, reviewed in \cref{sec:higher.groups.intro}, works with the types $\B G$ of \exemplars --- known as \emph{deloopings} of the group $G$ --- rather than the group $G$ itself with its algebraic structure. This translation is lossless --- everything we might want to do with a group $G$ can be done in terms of a delooping $\B G$. In particular, as explained in \cref{sec:group.actions}, an action of $G$ on a type $X$ may be equivalently given by a function $X\twisted{-} : \B G \to \Type$ which sends any \exemplar $e : \B G$ of $G$ to a type $X\twisted{e}$ which we call ``$X$ twisted by $e$'', together with an identification $\pt_{X^{\twisted{-}}} : X\twisted{\pt_{\B G}} \simeq X$ of $X$ twisted by the canonical exemplar $\pt_{\B G}$ with $X$ itself.

For example, the action of $\type{GL}_{n}(\Rb)$ on the set $\Rb^{n}-\{0\}$ of non-zero vectors in $\Rb^{n}$ may be given by the function $V \mapsto V - \{0\} : \B\type{GL}_{n}(\Rb) \to \Type$, noting that this function sends the canonical \exemplar $\Rb^{n}$ to the set $\Rb^{n}-\{0\}$ we were trying to act on. Or, the action of the symmetric group $\Aut(\ord{n})$ on the vector space $\Rb^{n}$ given by permuting the coordinates may be equivalently given by the function $X \mapsto \Rb^{X} : \B\Aut(\ord{n}) \to \B \type{GL}_{n}(\Rb)$ sending a finite set $X$ to the vector space of real-valued functions on $X$, noting that we may canonically identify $\Rb^{\ord{n}}$ with $\Rb^{n}$.

This approach is to groups is as radical as it is elementary. It begins to pay major dividends in the construction of quotients by group actions. In \cref{sec:group.quotients}, we describe a construction of the quotient $X \sslash G$ of the type $X$ by the action of a group $G$ as the type of a pairs $(e, x)$ where $e : \B G$ is an \exemplar of $G$, and $x : X\twisted{e}$ is an element of $X$ twisted by the \exemplar $e$. For example, the configuration space $\Rb^{n} \sslash \Aut(\ord{n})$ of $n$ unlabelled points in $\Rb^{n}$ may be defined as the type of pairs $(X, v)$ where $X$ is an $n$-element set and $v : \Rb^{X}$ is a real-valued function on $X$. The quotient map itself is given by sending $x : X$ to $(\pt_{\B G}, x)$, remembering that we identify $X\twisted{\pt_{\B G}}$ with $X$. In our example, the quotient $\Rb^{n} \to \Rb^{n} \sslash \Aut(\ord{n})$ is given by sending $v : \Rb^{n}$ to the pair $(\ord{n}, v)$.

To see that this type of pairs constructs the quotient, let's consider an identification $p : (\ord{n}, v) = (\ord{n}, w)$ between the images of two vectors $v$ and $w : \Rb^{n}$ under the quotient map. By a few elementary HoTT lemmas concerning identifications, an identification $p$ is equivalently given by a pair of identifications $(\sigma, q)$ where $\sigma : \ord{n} = \ord{n}$ is a self-identification of the canonical $n$-element set with itself --- a permutation --- and where $q : v \circ \sigma = w$ is an identification of $v$ with $w$ \emph{relative} to the identification $\sigma$. That is, the type $(\ord{n}, v) = (\ord{n}, w)$ is equivalent to the set of all permuations $\sigma : \Aut(\ord{n})$ which send $v$ to $w$ under the action of permuting coordinates: $v \circ \sigma = w$.

As you can see, we have gotten something more out of this simple process of taking pairs than just the usual set-theoretic quotient. Yes, if we have a $\sigma$ so that $v \circ \sigma = w$, then we will get an identification $(\ord{n}, v) = (\ord{n}, w)$ of their images in the quotient, so that two vectors in the same orbit of this action are identified in the quotient. But furthermore, we remember exactly which permutations $\sigma$ send $v$ to $w$: the type $(\ord{n}, v) = (\ord{n}, w)$ is the \emph{set} of all such permutations. The quotients $X \sslash G$ constructed by taking pairs $\dsum{e : \B G} X\twisted{e}$ are often called \emph{weak quotients} or \emph{homotopy quotients}, though they are stronger than the usual set theoretic quotient in that they contain more information, and they have nothing in particular to do with continuous deformation.

Using this elementary construction of the quotients of types by the actions of groups, we will explicitly construct a number of examples of so-called \emph{good orbifolds} --- the orbifolds arising as quotients of smooth spaces by discrete groups --- in \cref{sec:orbifold.examples}. In general, to construct an orbifold knowing that it may be expressed as the quotient $X \sslash \Gamma$ invovles choosing a good notion of \exemplar for $\Gamma$ --- that is, judiciously choosing a $\B \Gamma$ --- so that the action of $\Gamma$ on $X$ takes a particularly nice form as a function $X\twisted{-} : \B \Gamma \to \Type$. In the end, the construction gives us an explicit definition of the orbifold in terms of its points: the points of $X \sslash \Gamma$ are pairs $(e, x)$ of an \exemplar $e : \B \Gamma$ of $\Gamma$ together with a point $x : X\twisted{e}$ of $X$ twisted by $e$.

\subsection{The homotopy theory of orbifolds via cohesion.}
In his open letter to the homotopy theory community \cite{Barwick:Letter}, Clark Barwick makes the bold claim that
\begin{quote}
Homotopy theory is not a branch of topology.
  \end{quote}
  If homotopy theory is not the study of \emph{homotopies} --- that is, continuous deformations of objects --- what is it? Homotopy type theory offers a striking formal answer to this question: homotopy theory is the study of the way mathematical objects may be \emph{identified}. For example, the homotopy circle $S^{1}$ may be freely generated as a type with a single point $\pt$ a self-indentification $\term{loop} : \pt = \pt$.

  However, if we intend to do algebraic topology --- that is, to use homotopy theoretic methods to study topological spaces and their higher cousins such as orbifolds --- then we will need to distinguish the actual circle
  \[
\Sb^{1} :\equiv \{x : \Rb^{2} \mid |x| = 1\}
\]
from the homotopy circle $S^{1}$. Even more, we should be able to prove that the homotopy circle $S^{1}$ is the type we end up with if we start with the actual circle $\Sb^{1}$ and identify points according to how they may be continuously deformed into each other. That is to say, $S^{1}$ should be the \emph{homotopy type} of $\Sb^{1}$.

In his paper ``Brouwer's fixed point theorem in real cohesive homotopy type theory'' \cite{Shulman:Real.Cohesion}, Shulman gives us the tools to do synthetic algebraic topology in homotopy type theory by adding a system of \emph{modalities} to HoTT which include the \emph{shape} modality $\shape$ that sends a type $X$ to its homotopy type $\shape X$. The shape $\shape X$ of a type $X$ may be defined as the localization of $X$ at the type $\Rb$ of real numbers, so that any path $\gamma : \Rb \to X$ gives us an identification $p(\gamma) : \gamma(0)^{\shape} = \gamma(1)^{\shape}$ in $\shape X$. However, for this construction to behave right, we need the rest of Shulman's cohesion, which we will review in \cref{sec:review.cohesion}.

In \cref{sec:modal.covering.theory}, we will use the theory of modal fibrations and coverings developed in \cite{Jaz:Good.Fibrations} to compute the homotopy types of some orbifolds and hint at their general covering theory. For example, in \cref{thm:homotopy.type.of.M11} we will compute that
\[
\shape \Ma_{1,1} \simeq \B \type{SL}_{2}(\Zb)
\]
the homotopy type of the moduli stack of elliptic curves $\Ma_{{1,1}}$ is a delooping of the group $\type{SL}_{2}(\Zb)$. We'll use this modal covering theory in \cref{sec:orbifold.maps} to briefly investigate maps between orbifolds.

\begin{rmk}\label{rmk:finite.covering}
It is precisely the fluency of covering theory in cohesive homotopy type theory which forces us to use a technical notion of ``properly finite'', rather than ``finite'', in our definition of orbifolds (\cref{defn:orbifold}). A map $f : X \to Y$ whose fibers are finite is necessarily a covering map by the ``good fibrations'' trick of \cite{Jaz:Good.Fibrations} (see also Remark 9.9 of \textit{ibid.}). If for every $x : X$, the type of automorphisms $(x = x)$ of $x$ were finite, then the projection from the inertia orbifold $X^{{S^{1}}} \to X$ (whose fiber over $x$ is $(x = x)$) would be a finite covering; but this would in particular imply that the cardinality of the isotropy group $(x = x)$ is constant on any connected component of $X$. This is problematic for most orbifolds seen in practice.
\end{rmk}

\subsection{Smooth spaces and synthetic differential geometry.}
Orbifolds are \emph{smooth spaces} whose points have internal symmetries, and while moving to homotopy type theory has given us direct access to types whose points have internal symmetries, we have not yet talked about the smooth structure. The formal system of homotopy type theory admits models in all $\infty$-toposes (\cite{Shulman:Models.of.HoTT}), so that a type gets interpreted as a stack of homotopy types, and an element of a type gets interpreted as a map between these stacks. Identifications between elements get interpreted as homotopies between the corresponding maps. We can therefore get the smooth structure we need on our types by interpreting our homotopy type theory in an $\infty$-topos of stacks on a suitable site --- say a site consisting of smooth manifolds.

But we would like to be able to work with this smooth structure from within homotopy type theory itself. To give our types smooth structure, we will use the axioms of \emph{synthetic differential geometry}, which we review in \cref{sec:SDG.axioms}. Synthetic differential geometry is an axiom system for doing differential geometry with nilpotent infinitesimals, first put forward by Lawvere and developed further by Dubuc, Kock, Bunge, Moerdijk, Reyes, and many others. While this axiom system is usually interpreted in $1$-toposes, it can be interpreted in $\infty$-toposes just as well. The Dubuc $\infty$-topos of stacks on a site of infinitesimally extended Euclidean spaces (with smooth maps between them) and the very similar $\infty$-topos which Schreiber calls the topos of formal smooth $\infty$-groupoids \cite{Schreiber:Differential.Cohomology.v2} are models of both cohesion and synthetic differential geometry. These will be our intended models for this paper.\footnote{The site for the Dubuc $\infty$-topos consists of (the opposite category of) $\Ca^{\infty}$-rings of the form $\Ca^{\infty}(\Rb^{n})/I$ where $I$ is a \emph{germ-determined ideal}: $f \in I$ if and only if for all $x \in \Rb^{n}$, the germ $f_{x}$ is in the ideal $I_{x}$ generated by the germs at $x$ of functions in $I$. The site for the topos of formal smooth $\infty$-groupoids has as its objects the $\Ca^{\infty}$-rings of the form $\Ca^{\infty}(\Rb^{n}) \otimes_{\Rb} W$ where $W$ is a \emph{Weil algebra} --- a finitely presented augmented $\Rb$-algebra with finitely generated and nilpotent augmentation ideal. Crucially, the category of euclidean spaces $\Rb^{n}$ and smooth maps embeds into both of these sites by $\Rb^{n} \mapsto \Ca^{\infty}(\Rb^{n})$. But these sites also have \emph{infinitesimal} spaces, such as the dual numbers $\Rb[x]/(x^{2})$, which enable us to work with infinitesimals in the homotopy type theory of these toposes. For a definition of these sites, see the standard reference \cite{Moerdijk-Reyes:SDG} where Dubuc's site is known as $\mathbb{G}$, and Section 6.5 of \cite{Schreiber:Differential.Cohomology.v2}.
}

In synthetic differential geometry, we axiomatize the \emph{smooth reals} $\Rb$ as an ordered field. Crucially, since we are working constructively, just because a number is \emph{not} non-zero does not imply that it is zero. The numbers which are not non-zero are known as \emph{infinitesimals}  (after Penon's \emph{Infinitesimaux et intuisionisme} \cite{Penon:Infinitesimals.and.Intuitionism}). The most crucial axiom of synthetic differential geometry which make these infinitesimals behave as we would like them to is the Kock-Lawvere axiom. As a special case of this axiom, we see that every function $f : \Db \equiv \{\ep : \Rb \mid \ep^{2} = 0\} \to \Rb$ from the set $\Db$ of nilsquare elements of $\Rb$ (the ``first-order infinitesimals'') to $\Rb$ is linear: that is, there is a unique $b : \Rb$ so that $f(\ep) = f(0) + b\ep$ for all $\ep^{2} = 0$.

As a corollary of this axiom, we may define the derivative $f'$ of a function $f : \Rb \to \Rb$ to be the unique function satisfying
\[
f(x + \ep) = f(x) + f'(x)\ep
\]
for all $x : \Rb$ and $\ep^{2} = 0$. This justifies calling $\Rb$ the ``smooth reals'' --- every function $f : \Rb \to \Rb$ is smooth.

In general, we may think of the type of functions $X^{\Db} \equiv ( \Db \to X )$ as the tangent bundle of $X$, with the projection $\pi : X^{\Db} \to X$ given by evaluation at $0$. The tangent space $T_{x} X$ of $x : X$ is therefore the type of functions $v : \Db \to X$ with $v(0) = x$. Because we can make this definition, there is a sense in which every type in synthetic differential geometry has a sort of differentiable structure. However, this structure isn't very much like a manifold's in general. For any type $X$, the tangent spaces $T_{x}X$ admit a scalar multiplication by $\Rb$ defined by $(rv)(\ep) :\equiv v(r\ep)$, but $T_{x} X$ is not generally an $\Rb$-module.

The natural question is then: what are the smooth spaces in synthetic differential geometry? There are a number of answers. We could of course repeat the usual definition of smooth manifold. Or, we could look at spaces that are only \emph{infinitesimally} (and not necessarily locally) isomorphic to Euclidean space; this gives us Penon's notion of manifold. Or, we could look at types which are infinitesimally isomorphic to Euclidean space, but this time in the sense of being related to Euclidean spaces by a zig-zag of {\'e}tale maps; this gives us Schreiber's notion of manifold. For each of these possible definitions of manifold, the tangent spaces $T_{x} X$ will be $\Rb$-modules.

But there is a wider class of spaces that includes all of the above and which the synthetic differential geometry community has settled into as the ``right'' notion of smooth space suitable for proving theorems: \emph{microlinear} spaces. Microlinear spaces have all the infinitesimal linear properties that $\Rb$ does, in a sense which we will make precise in \cref{sec:microlinear}.  And, since microlinear spaces may be defined by lifting uniquely on the right against a given class of maps (\cref{lem:microlinear.lifting.property}), they have good closure properties.

As a further advantage, the definition of microlinearity applies just as well to higher types as to sets. In particular, a standard theorem in synthetic differential geometry proves that the tangent spaces of microlinear sets are $\Rb$-modules; this is likely the reason these spaces are called ``microlinear''. In \cref{thm:inf.linear.R.module}, we will prove this fact in such a way that it applies not only to sets but also groupoids and general higher types. That is, if $X$ is a microlinear type, not necessarily a set, then its tangent spaces admit fully coherent $\Rb$-module structures.

Of the three definitions of manifold given above, only Schreiber's generalizes to higher types; but this definition relies on a choice of atlas, whereas microlinearity is ``coordinate-free''.

Just as we introduced the shape modality $\shape$ to study the topology of orbifolds by trivializing it through the nullification of $\Rb$, we will introduce the \emph{crystaline} modality $\Im$ in \cref{sec:crystaline.modality} to study the diffeology of orbifolds by trivializing it through the nullification of the set $\Dc$ of infinitesimal real numbers. The modality $\Im$ was called the ``infinitesimal shape modality'' by Schreiber in \cite{Schreiber:Differential.Cohomology.v2}, and was studied in homotopy type theory by Cherubini in \cite{Cherubini:Thesis}.

We will mainly use the $\Im$ modality for its \'etale maps. A map is $\Im$-\'etale when its $\Im$-naturality square is a pullback:
\[
\begin{tikzcd}
	X & {\Im X} \\
	Y & {\Im Y}
	\arrow["f"', from=1-1, to=2-1]
	\arrow["{(-)^{\Im}}", from=1-1, to=1-2]
	\arrow["{(-)^{\Im}}"', from=2-1, to=2-2]
	\arrow["{\Im f}", from=1-2, to=2-2]
	\arrow["\lrcorner"{anchor=center, pos=0.125}, draw=none, from=1-1, to=2-2]
\end{tikzcd}
\]
In \cref{thm:microlinear.descends.along.etale}, we will show that microlinearity descends along surjective $\Im$-\'etale maps. That is, if $X$ is microlinear and $f : X \to Y$ is surjective and $\Im$-\'etale, then $Y$ is also microlinear. We will use this theorem together with the ``good fibrations'' trick of \cite{Jaz:Good.Fibrations} to show that quotients of microlinear spaces by discrete groups are themselves microlinear (\cref{thm:discrete.homotopy.quotient.is.microlinear}). This proves in particular that the good orbifolds constructed in \cref{sec:orbifold.examples} are microlinear.

\subsection{Smooth spaces are microlinear}

With \cref{thm:microlinear.descends.along.etale} in hand, we will show that all sorts of smooth spaces are microlinear. In particular, we will show that ordinary smooth manifolds are microlinear (\cref{sec:ordinary.manifold}), as are the synthetic manifolds of Penon (\cref{sec:penon.manifold}) and Schreiber (\cref{sec:Schreiber.manifold}). We will also compare the modal notion of $\Im$-\'etale map with the usual notion of local diffeomorphism, showing in \cref{lem:etale.means.etale.for.ordinary.manifolds} that these notions coincide between crisp, ordinary manifolds.

Most importantly, in \cref{sec:etale.groupoid} we will prove in \cref{thm:etale.groupoid.microlinear} that \emph{\'etale groupoids} are microlinear. An \'etale groupoid is --- roughly speaking --- a groupoid $\Ga$ for which the source map $s : \Ga_{1} \to \Ga_{0}$ sending a morphism of $\Ga$ to its source is $\Im$-\'etale. Classically, these are a class of locally discrete Lie groupoids which contain the proper \'etale Lie groupoids that present orbifolds. For this reason, \cref{thm:etale.groupoid.microlinear} is a major step on the way to proving that all proper \'etale groupoids are orbifolds in the sense of \cref{defn:orbifold}.

The proof of \cref{thm:etale.groupoid.microlinear} involves a number of colimit-preserving properties of the modality $\Im$ --- properties which $\Im$ shares with $\shape$. That $\Im$ commutes with crisp pushouts and colimits of sequences follows from the assumption that the type $\Dc$ of infinitesimal real numbers is \emph{tiny}. This is a crucial assumption of synthetic differential geometry which we do not fully explore in this paper. Rather, we push the definition of tiny type and the requisite lemmas to \cref{sec:tiny}.

The main lemma in the proof of \cref{thm:etale.groupoid.microlinear} is an \'etale descent theorem, \cref{thm:etale.descent}, which states that if the pullback of a crisp map $f$ along itself is $\Im$-\'etale, then $f$ is itself $\Im$-\'etale. We prove this descent theorem for any modality that commutes with crisp colimits, which also includes $\shape$.

In \cref{sec:deloopings.inf.linear}, we will investigate the microlinearity of deloopings $\B G$ of microlinear groups $G$, which include the Lie groups. While I was not able to prove that $\B G$ is microlinear, we can prove in \cref{thm:inf.linear.BG} that $\B G$ is \emph{infinitesimally linear} --- a weaker condition than microlinearity --- so that at least its tangent spaces are (higher) $\Rb$-modules. The tangent space $T_{\pt_{\B G}}\B G$ of $\B G$ at its canonical \exemplar $\pt_{\B G}$ is a delooping of the Lie algebra $\mathfrak{g} :\equiv T_{1}G$, and the map $e \mapsto T_{e} \B G : \B G \to \Type$ is a delooping of the adjoint action of $G$ on $\B \mathfrak{g} :\equiv T_{\pt_{\B G}}\B G$.

\subsection{Finiteness and compactness}

Finally, we turn our attention to proving \cref{thm:ordinary.proper.etale.groupoid.is.orbifold}. This theorem states that crisp ordinary proper \'etale groupoids are orbifolds in the sense of \cref{defn:orbifold}. An ordinary proper \'etale groupoid is a groupoid $\Ga$ for which the spaces $\Ga_{0}$ of objects and $\Ga_{1}$ of morphisms are both ordinary smooth manifolds, where the source map $s : \Ga_{1} \to \Ga_{0}$ is $\Im$-\'etale (which by \cref{lem:etale.means.etale.for.ordinary.manifolds} means that $s$ is a local diffeomorphism in the ordinary sense), and where the map $(s, t) : \Ga_{1} \to \Ga_{0} \times \Ga_{0}$ is \emph{proper}.

The usual definition of a proper map is that the inverse image of any compact set is compact. If we used the usual definition of compact --- that any cover admits a finitely enumerable subcover --- then we could prove that the fibers of any proper map are in fact finite sets. This is of course to be expected, but remember: being finite is a strong condition in cohesive homotopy type theory. Namely, if a map has finite fibers, then it is a covering map. This won't do, because that would imply that $(s, t) : \Ga_{1} \to \Ga_{0} \times \Ga_{0}$ is a finite cover, which would mean that the cardinality of the isotropy groups $\Ga(x, x) = (s, t)\inv(x, x)$ would be constant over any connected component of $\Ga_{0}$. This is almost never true of orbifolds in practice; for example, the quotient $\Rb^{2} \sslash C_{k}$ of the plane by rotation by $\frac{2\pi}{k}$ has non-trivial isotropy group $C_{k}$ only at the origin.

For this reason, we will need a different definition of proper map, which means a different definition of compact set, which ultimately relies on a different notion of ``finite''. In \cref{sec:finiteness}, we will introduce \emph{properly finite} sets: discrete subquotients of finite sets. We then relate this new notion of finiteness to an appropriate notion of compactness.

Luckily, Dubuc and Penon have already explored a beautifully creative definition of ``compact set'' in the setting of synthetic differential geometry. A set $K$ is Dubuc-Penon compact if universal quantification over $K$ commutes with logical or: that is, if for any proposition $A$ and predicate $B : K \to \Prop$, if for all $k$ it is the case that $A$ holds or $B(k)$ holds, then either $A$ holds or for all $k$, $B(k)$ holds:
\[
(\forall k : K.A \vee B(k)) \Rightarrow (A \vee \forall k : K.\, B(k)).
\]
This  is an intrinsic property of the set $K$. In \cite{Dubuc-Penon:Compact}, Dubuc and Penon prove that in the various toposes of interest, a sheaf represented by a smooth manifold is Dubuc-Penon compact if and only if that manifold is compact in the ordinary sense.

In \cref{sec:compact}, we will prove an internal version of Dubuc and Penon's theorem in \cref{prop:crisply.refinement.subcompact.is.compact.for.crisp.covers}: for any crisp, Dubuc-Penon compact subset $K$ of an ordinary manifold, any crisp open cover of $K$ admits a finite subcover. This result follows as a corollary of \cref{prop:db.compact.is.countably.compact}, which states that any Dubuc-Penon compact set $K$ is countably compact: any countably enumerable Penon open cover of $K$ admits a finitely enumerable subcover. This theorem is proven with a key lemma, \cref{prop:compact.open.relation.fatness}, which states that for any Dubuc-Penon compact set $K$ and any relation $r \subseteq K \times \Rb$, if $r(k, x)$ for all $k : K$, then there exists an $\ep > 0$ such that $r(k, y)$ for all $k : K$ and $y \in B(x, \ep)$ in the $\ep$-ball around $x$. This key lemma was extracted from the proof that Gago gives in his thesis \cite{Gago:Thesis} that any positive valued function $f : K \to (0, \infty)$ is bounded away from $0$ (\cref{lem:compact.bounded}). All of this relies crucially on the Covering Property, originally due to Bunge and Dubuc \cite{Bunge-Dubuc:Covering.Property}, which is assumed of the smooth reals: if $A \cup B = \Rb$, then for any $x : \Rb$ there is an $\ep > 0$ so that $B(x, \ep) \subseteq A$ or $B(x, \ep) \subseteq B$.

With this analysis of Dubuc-Penon compact subsets in hand, we begin \cref{sec:final.theorem}. We will prove in \cref{thm:discrete.compact.subset.of.manifolds.is.properly.finite} that discrete Dubuc-Penon compact subsets of ordinary manifolds are properly finite. To finish the proof of our main \cref{thm:ordinary.proper.etale.groupoid.is.orbifold}, then, it remains to show that if $(s, t) : \Ga_{1} \to \Ga_{0} \times \Ga_{0}$ is Dubuc-Penon proper and that $s : \Ga_{1} \to \Ga_{0}$ is $\Im$-\'etale, then the hom sets $\Ga(x, y)$ of this proper \'etale groupoid $\Ga$ are discrete. We accomplish this final lemma in \cref{lem:crystaline.subset.of.manifold.is.discrete}, showing that crystaline subsets of ordinary manifolds are discrete.

We may then conclude that all (crisp, ordinary) proper \'etale groupoids are orbifolds in the sense of \cref{defn:orbifold}, justifying that definition. In \cref{sec:global.quotient}, we show that the quotient of a microlinear set by the action of a finite group is an orbifold, and quickly prove that orbifolds are closed under pullback.

\begin{acknowledgements}
Special thanks go to Emily Riehl, whose neverending support, critique, and patience made this paper possible. Thanks also to Ben Dees for help with the most elementary analysis with both hands tied his back. I also want to thank Keaton Stubis for explaining the relationship between $\type{SL}_{2}(\Zb)$ and lattices in $\Cb$ to me. The author appreciates the support of ARO under MURI Grant W911NF-20-1-0082.
\end{acknowledgements}

\subsection{Notation and terminology}
\label{sec:notation}

In this paper, we will use the following Agda-inspired syntax for types of pairs (dependent sum types) and types of functions (dependent product types):
\begin{itemize}
\item $\dsum{a : A} B(a)$ is the type $\displaystyle \sum_{a : A} B(a)$ with elements $(a, b)$ for $a : A$ and $b : B(a)$;
\item $\dprod{a : A} B(a)$ is the type $\displaystyle \prod_{a : A}B(a)$ with elements the maps $a \mapsto f(a)$ with $f(a) : B(a)$.
        \end{itemize}

        We will use the colon $a : A$ to say that $a$ is an element of type $A$. We will write $\Type$ for a (univalent) universe of types, and so a type family will be a function $B : A \to \Type$.

We will always use $(a = b)$ to denote the type of identifications of an element $a : A$ with an element $b : A$. Sometimes, we will write $(a =_{A} b)$ to emphasize that $a$ and $b$ are being identified \emph{as elements of type $A$}. We will use the symbol $\equiv$ to denote definitional (also known as judgemental) equality: $a \equiv b$ means that $a$ is \emph{defined to be} $b$. We always refer to elements $p : a = b$ as ``identifications of $a$ with $b$'' and never as ``paths'' (a term which we reserve for maps $\gamma : \Rb \to A$ from the real line).

        We will use $\Prop$ to denote the type of propositions, and we will identify subtypes $S \subseteq A$ with their membership predicates $a \mapsto a \in S : A \to \Prop$. In this way, if $P : A \to \Prop$ is any predicate, we will define its corresponding subtype to be the type of pairs:
        \[
\{a : A \mid P(a)\} :\equiv \dsum{a : A} P(a).
\]
We will often identify an element $(a, p) : \{a : A \mid P(a)\}$ of a subtype with its first component $a$, leaving the witness $p : P(a)$ implicit.

        In the case that $A$ is a set, which is to say that for all $a,\, b : A$, the type $(a = b)$ of identifications is a proposition, we will simply say that $a$ and $b$ are \emph{equal} when there is a (necessarily unique) element $p : a = b$. We will almost always leave the elements of propositions implicit, and instead say that the proposition holds.

\section{Constructing Examples of Orbifolds}\label{sec:higher.groups}

We'll begin this paper by constructing examples of orbifolds. These constructions can be performed in standard (Book) homotopy type theory, without any extra assumptions. The goal here is to show how conceptually clean the constructions of particular orbifolds can be in homotopy type theory.

The main takeaway of this section is that we can define orbifolds by saying what their elements are. For example, we will define the moduli space of complex elliptic curves $\Ma_{1,1}$ as the type of pairs $(\La, \Lambda)$ where $\La$ is a $1$-dimensional complex vector space, and $\Lambda \subseteq \La$ is a lattice within it (see \cref{defn:M11}).\footnote{The elliptic curve itself would be the torus $\La/\Lambda$.} As another example, we can define the configuration space of $n$ unlabelled points in a space $X$ as the type of pairs $(F, x)$ where $F$ is an $n$-element set and $x : F \to X$ is a function picking out points in $X$ for every element of $F$ (see \cref{defn:configuration.space}).

 We will focus on constructing \emph{good} orbifolds, those given by a quotient
 $$X \sslash \Gamma$$
 of a manifold $X$ by the action of a discrete group $\Gamma$. Remarkably, in homotopy type theory the quotient of a type by the action of a group may be constructed as a type of pairs of elements --- no equivalence classes necessary. We will review this standard construction in \cref{defn:quotient}.

We will begin this section by reviewing the theory of (higher) groups in homotopy type theory in \cref{sec:higher.groups}. In homotopy type theory, one works with a \emph{delooping} of a group $G$, rather than the group itself. A delooping of $G$ is a type $\B G$ with a fixed element $\pt_{\B G} : \B G$ whose group of symmetries is $G$ --- that is we have an isomorphism $G \simeq (\pt_{\B G} = \pt_{\B G})$ --- and where every other element $e : \B G$ is somehow identifiable with $\pt_{\B G}$, though not canonically. As an example, we may take $\B \type{GL}_{n}(\Rb)$ to be the type of $n$-dimensional real vector spaces with $\pt_{\B \type{GL}_{n}(\Rb)}$ defined to be $\Rb^{n}$; by definition, $\type{GL}_{n}(\Rb)$ is the group of linear automorphisms of $\Rb^{n}$, and every $n$-dimensional vector space is isomorphic to $\Rb^{n}$, though of course not canonically since such isomorphisms are equivalent to a choice of basis. In \cref{defn:exemplar}, we will introduce terminology for the elements of deloopings of groups: we will call $e : \B G$ an \emph{\exemplar} of $G$, and we will call $\pt_{\B G}$ the \emph{canonical \exemplar}.\footnote{Far be it from the author to tell you how to pronounce your own words, but in my opinion, pronouncing
the final syllable of ``\exemplar'' as in the word ``exemplary'' makes it sound much less pompous than pronouncing it as the final syllable of ``templar''.} We'll give a number of examples of \exemplars of common groups to help this concept settle.

Next, in \cref{sec:group.actions}, we will review how actions of a group can be described in terms of a delooping $\B G$. In particular, an action of a group $G$ on a type $X$ is equivalently a function which assigns to any \exemplar $e : \B G$ of $G$ a type $X\twisted{e}$ --- called ``$X$ twisted by $e$'' --- in such a way that $X\twisted{\pt_{\B G}}$ is identified with $X$. Like with deloopings, this is best understood in terms of examples, and so we provide them.

Then, in \cref{sec:group.quotients}, we receive a delightful payout for this reformulation of group theory. We will see in \cref{defn:quotient} that the quotient of a type $X$ by the action of a group $G$ may be constructed as the type of pairs $(e, x)$ with $e : \B G$ is an \exemplar of $G$, and $x : X\twisted{e}$ is an element of the type $X$ twisted by $e$. The quotient map itself sends $x : X$ to the pair $(\pt_{\B G}, x)$, remembering that we identify $X$ with $X\twisted{\pt_{\B G}}$. We'll note that this quotient is the ``weak quotient'' or ``homotopy quotient'' of the action: an identification $p : (\pt_{\B G}, x) = (\pt_{\B G}, y)$ is equivalently given by an element $g : G$ such that $gx = y$ (see \cref{lem:identifications.in.homotopy.quotient} and \cref{rmk:quotient}). In particular, the automorphisms of the point $(\pt_{\B G}, x)$ may be identified with the stabilizer of $x$. In this way, the elements of quotients so constructed pick up non-trivial internal symmetries.

We will make use of this construction of quotients in \cref{sec:orbifold.examples} to provide example constructions of various specific orbifolds. We won't prove that these are orbifolds in the sense of \cref{defn:orbifold} yet --- that will require the theory developed in the rest of the paper. The aim of \cref{sec:orbifold.examples} is rather to show what particular orbifolds can look like in homotopy type theory.

\subsection{(Higher) Groups in homotopy type theory} \label{sec:higher.groups.intro}

In homotopy type theory, we take the maxim that ``a group is the group of symmetries of some mathematical object'' as a definition. A symmetry is a self-identification of this object, considered as an object of a given type. We might therefore think of defining a group as a pair $(X, x)$ of a type $X$ of objects and an object $x : X$ of this type. The group $G$ itself would then be the type of symmetries of this object (as an element of the type $X$):
\[
G \equiv (x =_{X} x).
\]
However, this definition keeps around too much baggage. For the pair $(X, x)$ to be uniquely determined by the group $G \equiv (x = x)$ that it represents, we would need to show that two such pairs $(X, x)$ and $(Y, y)$ are equivalent if and only if their associated groups of symmetries $(x =_{X} x)$ and $(y =_{Y} y)$ are equivalent. However, if $X$ has other elements $x'$ which are not somehow identifiable with $x$, then there is no hope for this. Conversely, though, if every element of $X$ is somehow identifiable with the chosen object $x$ --- if $X$ is $0$-connected --- then we can prove the following \emph{fundamental theorem of higher groups}.
\begin{thm}[Folklore]
  Let $X$ and $Y$ be \emph{pointed, $0$-connected} types. That is, suppose that $\pt_{X} : X$ and $\pt_{Y} : Y$, and that for any $x : X$ there is merely an identification of $x$ with $x_{0}$, and similarly for $y : Y$. That is, suppose we have
 \(
\dprod{x : X} \trunc{x = \pt_{X}}
  \)
  and similarly $\dprod{y : Y} \trunc{y = \pt_{Y}}$. Then any function $ f: X \to Y$ with $\pt_{f} : \pt_{Y} =  f(\pt_{X})$ is an equivalence if and only if the induced function
  \[\Omega f :\equiv p \mapsto \pt_{f} \bullet f_{\ast} p \bullet \pt_{f}\inv : (\pt_{X} = \pt_{X}) \to (\pt_{Y} = \pt_{Y})\]

    is an equivalence.
\end{thm}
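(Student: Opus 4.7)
The forward direction is routine: if $f$ is an equivalence then $\ap_f$ is an equivalence on every identity type, and pre- and post-composing with the fixed identification $\pt_f$ preserves equivalences, so $\Omega f$ is an equivalence.

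For the converse, the plan is to reduce to showing that every fiber of $f$ is contractible. Since ``$f$ is an equivalence'' is a proposition and $Y$ is $0$-connected, eliminating from the propositional truncation $\trunc{\pt_Y = y}$ reduces this to showing that the single fiber $F :\equiv \dsum{x : X}(f(x) = \pt_Y)$ is contractible; this fiber is pointed by $(\pt_X, \pt_f\inv)$. The engine of the argument is then the standard lemma that a pointed $0$-connected type $Z$ is contractible iff $\Omega Z$ is contractible. (For the non-trivial direction, contractibility of $Z$ is itself a proposition, so one may use $0$-connectedness to reduce the goal $\pt_Z = z$ to the case $z \equiv \pt_Z$, which is inhabited by any element of the contractible $\Omega Z$.) So it suffices to prove (a) $\Omega F$ is contractible and (b) $F$ is $0$-connected.

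For (a), the usual characterization of identifications in dependent pair types rewrites $\Omega F$ as the fiber of $\Omega f$ over $\refl_{\pt_Y}$, which is contractible because $\Omega f$ is an equivalence. For (b), given any $(x, p) : F$, the $0$-connectedness of $X$ merely supplies $q : \pt_X = x$; transporting along $q$ reduces the problem of producing a mere identification $(\pt_X, \pt_f\inv) = (x, p)$ to producing a loop $r : \Omega X$ whose image under $\Omega f$ equals a particular loop in $\Omega Y$ built from $p$, $q$, and $\pt_f$. Such an $r$ exists merely because the equivalence $\Omega f$ is in particular surjective.

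The main obstacle is step (b): the remaining pieces all reduce cleanly to propositions (allowing free use of propositional truncation elimination), but the identification sought inside $F$ is not itself propositional, so one has to carefully assemble the two mere-existence witnesses coming from the $0$-connectedness of $X$ and the surjectivity of $\Omega f$ into an actual identification of dependent pairs.
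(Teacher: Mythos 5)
Your proof is correct but takes a genuinely different route to contractibility of the fiber. Both you and the paper reduce (via $0$-connectedness of $Y$) to showing $\fib_f(\pt_Y)$ is contractible, but from there you diverge. The paper shows that $f_{\ast} : (\pt_X = x) \to (f(\pt_X) = f(x))$ is an equivalence for \emph{all} $x : X$ (extending from the base case $x \equiv \pt_X$ via $0$-connectedness of $X$ and functoriality), after which the fiber collapses to $\dsum{x : X}(\pt_X = x) \simeq \ast$ in one stroke. You instead invoke the auxiliary lemma that a pointed, $0$-connected type is contractible iff its loop space is, and then check $0$-connectedness of the fiber (using surjectivity of $\Omega f$) and contractibility of its loop space (using the standard computation of $\Omega$ of a $\Sigma$-type and the full equivalence $\Omega f$) separately. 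Your version is closer in spirit to a long-exact-sequence calculation and requires the additional lemma; the paper's is a bit slicker since it only uses the path-space characterization once. Both are about the same total length.

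One small caveat: your parenthetical justification of the auxiliary lemma is off. You cannot directly use $0$-connectedness to ``reduce the goal $\pt_Z = z$ to the case $z \equiv \pt_Z$'', because $(\pt_Z = z)$ is not yet known to be a proposition, so you are not licensed to eliminate a mere identification into it. The standard repair: first show $Z$ is a set (for $z, z' : Z$, ``$(z = z')$ is a proposition'' is itself a proposition, so one may assume $z, z' = \pt_Z$ and reduce to $\Omega Z$ being contractible, hence a proposition); then a $0$-connected set is contractible. Since the lemma is folklore this isn't a gap in the overall argument, but the sketch as written would not compile as stated.

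Also, in your final remark about step (b) being the ``main obstacle'': there's no real difficulty there. The goal is a mere identification, hence a proposition, so both truncation eliminations (from $0$-connectedness of $X$ and from surjectivity of $\Omega f$) are directly licensed; once you have both untruncated witnesses in hand you assemble the pair identification and re-truncate. It works exactly as you describe — it just isn't an obstacle.
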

\begin{proof}
If $f$ is an equivalence, it is straightforward to show that $\Omega f$ is as well. So, we prove the converse.

Suppose that $\Omega f$ is an equivalence. We will show that $f$ is by showing that its fiber over any $y : Y$ is contractible. Since contractibility is a proposition and $Y$ is $0$-connected, we may assume a $p : \pt_Y = y$, which gives us an equivalence $\fib_f(\pt_Y) \simeq \fib_f(y)$. So, it will suffice to show that $\fib_f(\pt_Y)$ is contractible. Now,
\begin{align*}
  \fib_f(\pt_Y) &:\equiv \dsum{x : X} (\pt_Y = f(x))\\
                 &\phantom{:}\simeq \dsum{x : X} (f(\pt_X) = f(x))
  \end{align*}
by $\pt_f : \pt_Y = f(\pt_X)$. Now, $\Omega f = \pt_f \cdot f_{\ast} \cdot \pt_f\inv$ is an equivalence, so its conjugate $f_{\ast} : (\pt_X = \pt_X) \to (f(\pt_X) = f(\pt_X))$ is an equivalence. But we would like for $f_{\ast} : (\pt_X = x) \to (f(\pt_X) = f(x))$ to be an equivalence, because if it is, then
\begin{align*}
  \fib_f(\pt_Y) &\simeq \dsum{x : X} (f(\pt_X) = f(x)) \\ &\simeq \dsum{x : X} (\pt_X = x) \\ &\simeq \ast.
  \end{align*}
Luckily, $f_{\ast} : (\pt_X = x) \to (f(\pt_X) = f(x))$ being an equivalence is also a proposition, and since $X$ is $0$-connected we may assume a $q : \pt_X = x$. Then we have a commuting square
\[
\begin{tikzcd}
	{(\pt_X = \pt_X)} & {(f(\pt_X) = f(\pt_X))} \\
	{(\pt_X = x)} & {(f(\pt_X) = f(x))}
	\arrow["{\bullet q}"', from=1-1, to=2-1]
	\arrow["{f_\ast}", from=1-1, to=1-2]
	\arrow["{f_\ast}"', from=2-1, to=2-2]
	\arrow["{\bullet f_\ast q}", from=1-2, to=2-2]
\end{tikzcd}
\]by the functoriality of $f_{\ast}$.
In this square, the top map and vertical maps are equivalences,  Therefore, the bottom map is an equivalence, which proves the theorem.
\end{proof}

With this theorem in hand, we can make the following definition of (higher) group in homotopy type theory.

\begin{defn}[\cite{Buchholtz-vanDoorn-Rijke:Higher.Groups}]
  A higher group is a type $G$ identified with the type
  $$\Omega \B G :\equiv (\pt_{\B G} = \pt_{\B G})$$
  of self-identifications of the base point of a pointed, $0$-connected type $\B G$. We refer to $\B G$ as a \emph{delooping} of $G$. We say that $G$ is an \emph{$n$-group} if $\B G$ is $n$-truncated, or equivalently if $G$ itself is $(n-1)$-truncated.
\end{defn}

The basic theory of higher groups is developed in \cite{Buchholtz-vanDoorn-Rijke:Higher.Groups}, with a further development forthcoming the a textbook \cite{SymmetryBook}. By way of summary, in homotopy type theory we work with deloopings as pointed, $0$-connected types, rather than with groups as algebraic structures. This change of perspective has deep ramifications for concrete calculations, which we will try to describe now.

If $G$ is an ordinary ($1$-)group, then we can always deloop it by taking $\B G$ to be the type of $G$-torsors.
\begin{prop}[\cite{SymmetryBook}]
Let $G$ be a $1$-group. A $G$-torsor is a free, transtive, and inhabited (left) action of $G$ on a set. The type $\type{Tors}_{G}$ of $G$-torsors, pointed at $G$ acting on itself on the left, is a delooping of $G$.
\end{prop}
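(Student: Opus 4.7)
The plan is to verify directly the three defining properties of a delooping for the pointed type $(\type{Tors}_G, G)$, where $G$ acts on itself by left multiplication: that the basepoint is a valid torsor, that $\type{Tors}_G$ is $0$-connected, and that its loop space at $G$ is identified with $G$ as a group.

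First, $G$ acting on itself by left multiplication is free (by left cancellation), transitive (given $a,b : G$, the element $ba\inv$ sends $a$ to $b$), and inhabited (by the unit), so it is indeed a $G$-torsor. For $0$-connectedness, given a torsor $T$ I must produce $\trunc{T = G}$. Since this goal is a mere proposition and $T$ is (merely) inhabited by the definition of torsor, I may assume some $t_0 : T$. Then the map $\phi : G \to T$ defined by $\phi(g) :\equiv g \cdot t_0$ is surjective (by transitivity), injective (by freeness), and $G$-equivariant by associativity of the action, hence an equivariant equivalence of $G$-sets.

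To convert this equivariant equivalence into an identification in $\type{Tors}_G$, I will invoke the structure identity principle. Being a torsor is a proposition layered on top of the structure of a $G$-set (a set equipped with an action function), so identifications between torsors reduce to identifications between their underlying $G$-sets, which by univalence are exactly equivariant equivalences of sets. Applying this to $\phi$ gives the needed $G = T$ in $\type{Tors}_G$, so $\type{Tors}_G$ is $0$-connected. The same principle, applied to the case $T = T' = G$, yields
\[
\Omega\type{Tors}_G \simeq \{ f : G \simeq G \mid f \text{ is $G$-equivariant}\}.
\]
Any equivariant $f : G \to G$ satisfies $f(h) = h \cdot f(e)$ and is therefore right-multiplication by $f(e) : G$; conversely, every $x : G$ determines an equivariant equivalence $g \mapsto gx$ with inverse $g \mapsto gx\inv$. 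So I obtain a type-level equivalence $\Omega\type{Tors}_G \simeq G$, and a short calculation shows that path concatenation in $\type{Tors}_G$ corresponds to multiplication in $G$ under this equivalence (the choice of \emph{left} action on the torsor is exactly what makes the order come out right rather than opposite).

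The main obstacle is setting up the structure identity principle carefully: one has to package being a $G$-torsor as a structure on types in such a way that the resulting identity type unfolds to the type of equivariant equivalences of sets, and to verify that the induced group isomorphism sends concatenation to multiplication rather than the opposite. Once that bookkeeping is done, everything else is routine elementary group theory.
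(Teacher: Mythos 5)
The paper cites this proposition to \cite{SymmetryBook} and does not give an internal proof, so there is no in-paper argument to compare against. Your proposal is correct and is the standard one: verify that the basepoint is a torsor, establish $0$-connectedness by producing an equivariant bijection $G \simeq T$ from any mere inhabitant of $T$, and compute $\Omega \type{Tors}_G$ via the structure identity principle for $G$-sets. Your sign bookkeeping also checks out: under $\term{idtoequiv}$, the concatenation $p \bullet q$ corresponds to the \emph{reversed} composite $\widetilde{q} \circ \widetilde{p}$, and composing the right-multiplications $f_y \circ f_x = (g \mapsto (gx)y) = f_{xy}$, so the two reversals cancel and $x \mapsto f_x$ transports concatenation to multiplication in $G$ rather than $G\op$ — exactly the point you flag about the left action being the right choice.
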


  In order to make this definition a bit more concrete, here is the full definition of the type of $G$ torsors:
  \begin{align*}
    (X : \Type) &\times (\dprod{x,\, y : X} \dprod{p,\,q : x = y} (p = q)) \\
    &\times (\alpha : G \times X \to X) \\
                &\times (\dprod{x : X} (\alpha(1, x) = x)) \\
                &\times (\dprod{x : X} \dprod{g,\, h : G} (\alpha(gh, x) = \alpha(g, \alpha(h, x)))) \\
                &\times \left(\dprod{x,\, y : X}
                  \left\{
                  \begin{aligned}
                    &((g, p) : \dsum{g : G} (\alpha(g, x) = y)) \\
                    \times (&\dprod{(h, q) : \dsum{h : G} (\alpha(h, x) = y)} ((g, p) = (h, q))))
                  \end{aligned}
                              \right.\right) \\
    &\times \trunc{X}
  \end{align*}
  The point of writing this type out in full is to show how a definition of an algebraic structure satisfying certain axioms can be written out using a few basic type constructors. Note that this is a tuple ($\times$) consisting of many functions ($\to$), some of which land in types of identifications ($=$).

  The first pair $\dsum{X : \Type} (\dprod{x,\, y : X} \dprod{p,\,q : x = y} (p = q))$ has elements the \emph{sets}, as defined in homotopy type theory. A set is a type where the type $x = y$ of identifications between two elements is a proposition --- namely, the proposition that $x$ and $y$ are \emph{equal}. A proposition is a type where any two elements may be identified; any witness to the truth of a proposition is as good as any other.

  The next element $\alpha : G \times X \to X$ is the action map itself, and it is followed by the two axioms which define a group action. The second to last element witnesses that this action is a torsor. It says that for any $x$ and $y$ in $X$, there is a unique $g$ in $G$ for which $\alpha(g, x) = y$. Since we assumed that $X$ was a set, the rest of this data after the action map $\alpha$ is a proposition. The last element says that $X$ is inhabited.

  Using common type-theoretical shorthand, we could write this type more succinctly and clearly as:
  \[
    \type{Tors}_{G} :\equiv
    \left\{
  \begin{aligned}
    (X : \Set) &\times (\alpha : G \times X \to X) \\
               &\times (\forall x : X,\, \alpha(1, x) = x) \times (\forall x : X, \forall g,\, h : G,\, \alpha(gh, x) = \alpha(g, \alpha(h, x))) \\
               &\times (\forall x,\, y : X,\, \exists! g : G,\, \alpha(g, x) = y) \\
               &\times \trunc{X}.
\end{aligned}
\right.
\]
In homotopy type theory, once you know the definition of a type of object, you also have constructed the stack which classifies bundles whose fibers are that type of object: the classifying stack is just the type itself. In particular, the type $\type{Tors}_{G}$ of $G$-torsors classifies $G$-principal bundles.

Depending on what we are trying to do with our group, it might be useful to have different constructions of its delooping. In any case, we will need some special terminology for the elements of a particular delooping $\B G$, since we will be using these elements to do all our work with the group $G$.
  \begin{defn}\label{defn:exemplar}
Let $G$ be a higher group and let $\B G$ be a delooping of $G$. We refer to the elements of $\B G$ as \emph{\exemplars} of $G$. We refer to the base point $\pt_{\B G} : \B G$ as the \emph{canonical \exemplar} of $G$ in $\B G$. We may refer to $\B G$ itself as a \emph{type of \exemplars} for $G$, and we note that there may be many different (though equivalent) types of \exemplars for a given group $G$.
\end{defn}

This terminology is best explained through examples. If our (higher) group of interest is the group of symmetries $\Aut_{X}(x) :\equiv (x = x)$ of an object $x : X$, then we can always take $x$ to be a canonical \exemplar and define an \exemplar to be an element of $X$ which is \emph{identifiable} with $x$.
\begin{defn}[Standard]
  Let $x : X$ be an element. Then
  $$\B \Aut_{X}(x) :\equiv \dsum{y : X} \trunc{y = x}$$
  is the type of all elements $y : X$ which are identifiable with $x$. Pointed at $(x, |\refl|)$, this type deloops the automorphism group $\Aut_{X}(x) :\equiv (x = x)$.
  \end{defn}

  We might also deloop a group $G$ by giving a \emph{categorical} definition of an object whose group of symmetries is $G$ --- a definition such that any two instances are identifiable.

\begin{ex}
  Let $\Sigma_{n}$ denote the symmetric group on $n$ elements. We can deloop $\Sigma_{n}$ with the type
  \begin{align*}
\B \Sigma_{n} &:\equiv \B \Aut_{\Set}(\ord{n}) \\
    &\phantom{:}\equiv \dsum{F : \Set} \trunc{F = \ord{n}},
\end{align*}
   since $\Sigma_{n}$ is the group of automorphisms of the standard $n$-element set $\ord{n} :\equiv \{0, \ldots, n-1\}$. Note that we can also see $\B \Sigma_{n}$ as the type of $n$-element sets --- those sets which admit some bijection with the standard $n$-element set. In other words, we define an \exemplar of $\Sigma_{n}$ to be an $n$-element set, and take the canonical \exemplar to be $\ord{n}$.
  \end{ex}

  \begin{ex}
  Let $U(1) :\equiv \{z : \Cb \mid z\bar{z} = 1\}$ be the unit circle in the complex plane, considered as a group under multiplication. We can deloop $U(1)$ with the type $\B U(1)$ of $1$-dimensional Hermitian vector spaces, pointed at $\Cb$. Explicitly, a Hermitian vector space is a vector space $V$ over $\Cb$ equipped with a Hermitian inner product $\langle -,- \rangle : V \times V \to \Cb$ which is linear in the first component, conjugate symmetric, and for which $\langle x, x \rangle > 0$ for non-zero $x$.\footnote{In the setting of this paper --- specifically the axioms of synthetic differential geometry found in \cref{sec:SDG.axioms} --- it is appropriate to ask that $x$ be non-zero. However, in pure homotopy type theory with no classical assumptions, we should ask instead that $x$ be \emph{apart} from $0$, meaning that there is some positive rational $\ep$ with either $x < \ep$ or $x > \ep$.} Any $1$-dimensional Hermitian vector space $\La$ is identifiable with $\Cb$ by some unitary isomorphism --- if $\ell : \La$ gives a basis for $\La$, then the map $1 \mapsto \frac{\ell}{\langle \ell, \ell \rangle}$ gives a unitary isomorphism of $\Cb$ with $\La$.

  In general, $\B U(n)$ may be defined to be the type of Hermitian vector spaces identifiable with $\Cb^{n}$ with its standard inner product. In other words, we define an \exemplar of $U(n)$ to be an $n$-dimensional Hermitian vector space with positive definite inner product and take the canonical \exemplar to be $\Cb^{n}$ with its standard inner product.
    \end{ex}

    \begin{ex}\label{ex:matrix.groups}
      Of course, other matrix groups work in a similar way. We can deloop $\type{GL}_{n}(\Rb)$ with the type $\B \type{GL}_{n}(\Rb)$ of $n$-dimensional real vector spaces, pointed at $\Rb^{n}$. That is, we define an \exemplar of $\type{GL}_{n}(\Rb)$ to be an $n$-dimensional real vector space. Note that $\B \type{GL}_{n}(\Rb)$ classifies real vector bundles of rank $n$ in a truly immediate way: the vector bundle $\pi : E \to B$ is classified by the map $b \mapsto \fib_{\pi}(b) : B \to \B \type{GL}_{n}(\Rb)$ sending every point to the vector space sitting over it in the bundle.

      We could take an \exemplar of $\type{SL}_{n}(\Rb)$ to be an $n$-dimensional real vector space $V$ equipped with a non-zero\footnote{Again, without classical assumptions this must instead mean ``apart from zero''.} element of the exterior power $\Lambda^{n}V$ --- or equivalently a non-trivial alternating $n$-form on $V$. The canonical \exemplar is $\Rb^{n}$ equipped with the element $e_{1} \wedge \cdots \wedge e_{n}$.

      We could take an \exemplar of $\type{O}(n)$ to be an $n$-dimensional real vector space equipped with an inner product. The canonical \exemplar is $\Rb^{n}$ with its standard inner product.

      We could take an \exemplar of the symplectic group $\type{Sp}(2n, \Rb)$ to be a $2n$-dimensional real vector space equipped with a non-degenerate alternating $2$-form. The canonical \exemplar is $\Rb^{2n}$ equipped with its standard symplectic form
      \[
\omega(v, w) :\equiv \sum_{i = 1}^{n}x^{i}(v)y^{i}(w) - y^{i}(v)x^{i}(w)
      \]
      where $\{x_{1}, \ldots, x_{n}, y_{1}, \ldots, y_{n}\}$ is the standard basis of $\Rb^{2n}$  and $x^{i}$ and $y^{i}$ are the associated conjugate basis of $(\Rb^{2n})^{\ast}$.

    \end{ex}

    \begin{ex}
      If $V$ is a real vector space considered as an addititive group, then we can take $\B V$ to be the type of affine spaces whose difference vectors land in $V$. This is not so different than defining $\B V$ to be the type of $V$-torsors. In other words, we define an \exemplar of $V$ to be an affine space over $V$, with the canonical \exemplar being $V$ itself.

      If we want to deloop the full affine group of $\Rb^{n}$, we can take an \exemplar to be a pair consisting of an $n$-dimensional real vector space $V$ and an affine space over it. The canonical \exemplar is $\Rb^{n}$ paired with itself. That is,
      $$\B \type{Affine}(\Rb^{n}) :\equiv \dsum{ V : \B \type{GL}_{n}(\Rb^{n}) } \B V.$$
    \end{ex}

    No matter what we use to deloop our group $G$, there is always an equivalence $\B G \xto{\sim} \type{Tors}_{G}$ associating a $G$-torsor to any \exemplar of $G$.
    \begin{prop}[\cite{SymmetryBook}]\label{prop:associated.torsor}
      Let $\B G$ be a delooping of a $1$-group $G$. Then for any \exemplar $t : \B G$, the type $(t = \pt_{\B G})$ of identifications of $t$ with the canonical \exemplar is a $G$-torsor, and the function
      \[
t \mapsto (t = \pt_{\B G}) : \B G \to \type{Tors}_{G}
      \]
      sending the \exemplar $t$ to its \emph{associated torsor} $(t = \pt_{\B G})$ is an equivalence.
      \end{prop}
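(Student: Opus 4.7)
The plan is to apply the fundamental theorem of higher groups stated earlier to the function $f : \B G \to \type{Tors}_G$ defined by $f(t) :\equiv (t = \pt_{\B G})$. This requires three ingredients: a torsor structure on each type $(t = \pt_{\B G})$ so that $f$ is well-defined, the pointedness and $0$-connectedness of $\type{Tors}_G$ together with pointedness of $f$, and an equivalence on loop spaces $\Omega f$.

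First I would construct the torsor structure on $(t = \pt_{\B G})$. Since $G$ is a $1$-group, $\B G$ is $1$-truncated, so identification types in $\B G$ are sets. The left action of $G \equiv (\pt_{\B G} = \pt_{\B G})$ is $(g, p) \mapsto p \bullet g^{-1}$, and the action axioms follow directly from the groupoid laws for composition and inverses. Freeness plus transitivity reduce to the observation that for $p,\, q : t = \pt_{\B G}$, the unique $g$ satisfying $p \bullet g^{-1} = q$ is $g :\equiv q^{-1} \bullet p$. Finally, inhabitedness $\trunc{t = \pt_{\B G}}$ follows from the $0$-connectedness of $\B G$.

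Next I would verify the hypotheses on the target. The type $\type{Tors}_G$ is pointed at the regular $G$-torsor, and is $0$-connected because any torsor $X$ is merely inhabited by some $x_0 : X$, and then $g \mapsto g \cdot x_0 : G \to X$ is a $G$-equivariant bijection by freeness and transitivity. The pointedness of $f$ amounts to identifying $f(\pt_{\B G}) \equiv G$ (with action $g \cdot h = h \bullet g^{-1}$) with the regular left torsor; the required $G$-equivariant bijection is the inversion map $h \mapsto h^{-1}$, which intertwines the two actions because $(g \bullet h)^{-1} = h^{-1} \bullet g^{-1}$.

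Finally I would compute $\Omega f$. For $g : G$, the underlying function of $\ap_f(g)$ is obtained by transporting in the family $t \mapsto (t = \pt_{\B G})$, which is pre-composition by $g^{-1}$. Identifying $\Omega \type{Tors}_G$ with $G$ — any $G$-equivariant self-bijection of the regular torsor is determined by its value at $1$ — this loop corresponds, after conjugating by the pointedness path, to the inversion map $g \mapsto g^{-1} : G \to G$, which is manifestly an equivalence. The fundamental theorem then yields the claim. The main obstacle is not any individual step but the careful bookkeeping of left-versus-right conventions and inverses throughout; each check is elementary once a consistent convention is fixed, but they must be coordinated so that the transport-induced map on $\Omega$ genuinely lands in $G$ in the expected way.
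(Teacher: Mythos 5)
The paper cites this proposition from \cite{SymmetryBook} without reproducing its proof, so there is no internal argument to compare against; I will evaluate your proof on its own terms. Your route — applying the fundamental theorem of higher groups stated just above the proposition — is the natural one, and the argument is essentially correct: the torsor structure on $(t = \pt_{\B G})$ (using $1$-truncatedness of $\B G$ for the set-level requirement and $0$-connectedness for inhabitedness), the $0$-connectedness of $\type{Tors}_{G}$, the pointing of $f$ via inversion, and the computation of $\Omega f$ all hold up in the way you describe.

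One small computational issue in the last step: you assert $\Omega f$ is inversion, but with the conventions you have set up it should come out as the identity on $G$. The point is that the ``evaluate at $1$'' identification $\psi \mapsto \psi(1)$ is a group homomorphism $\Omega_{\pt_{\type{Tors}_{G}}} \type{Tors}_{G} \to G$ on the regular torsor (where equivariant self-bijections are right translations), but is a group \emph{anti}-homomorphism when applied to the twisted torsor $f(\pt_{\B G})$ (whose equivariant self-bijections are left translations, since the action there is $g \cdot h = h \bullet g^{-1}$). Conjugating by $\pt_{f}$ therefore absorbs exactly one inversion, and the net result is the identity. A sanity check confirming this: $\Omega$ of any pointed map between pointed connected types is automatically a group homomorphism, whereas inversion is only an anti-homomorphism, so ``$\Omega f$ is inversion'' cannot be literally correct outside the abelian case. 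None of this affects your conclusion, since both identity and inversion are equivalences of types, which is all the fundamental theorem requires.
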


      \begin{rmk}
        A similar theorem would work for $n$-groups for $n > 1$ so long as an appropriate notion of torsor could be defined. For example, a $2$-group is equivalently a monoidal groupoid where for every $g : G$, $g \otimes -$ is an equivalence. We could then define a torsor for a $2$-group as a action $\alpha : G \times X \to X$ of this monoidal groupoid (an ``actegory'') on an inhabited groupoid $X$ for which the map $\alpha(-, x) : G \to X$ is an equivalence for every $x : X$. However, a careful proof of this would require a good deal of work.

        Until a suitable theory of simplicial types can be developed in HoTT (a famous open problem), we will likely not be able to give a general theorem along the lines of \cref{prop:associated.torsor} for general higher groups.
    \end{rmk}

      \begin{rmk}\label{rmk:frame.bundle}
        In the case that $G \equiv  \type{GL}_{n}(\Rb)$, then the associated torsor of an $n$-dimensional vector space $V : \B \type{GL}_{n}(\Rb)$ is the type $V = \Rb^{n}$ of linear isomorphisms of $V$ with $\Rb^{n}$. A linear isomorphism with $\Rb^{n}$ is the same thing as a basis of $V$ --- a \emph{frame} --- since $V$ was assumed to be $n$-dimensional. Therefore, we see that the torsor associated to $V$ is its space $\type{Frame}(V)$ of frames.

        If $E : B \to \B \type{GL}_{n}(\Rb)$ classifies a vector bundle, then the composite $B \xto{E} \B \type{GL}_{n}(\Rb) \xto{\sim} \type{Tors}_{G}$ classifies the frame bundle of that vector bundle.
    \end{rmk}

      As a sanity check, note that the associated torsor of a torsor is itself.
      \begin{lem}\label{lem:torsor.associated.torsor.itself}
The associated torsor of a $G$-torsor $T$ is $T$ itself. Explicitly, $T$ is equivalent as a $G$-torsor (and therefore also as a type) to the torsor $(G =_{\type{Tors}_{G}} T)$ of its identifications with $G$ as a $G$-torsor.
        \end{lem}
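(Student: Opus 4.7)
The plan is to invoke the structure identity principle (or univalence) for the type $\type{Tors}_G$ of $G$-torsors: an identification $p : G =_{\type{Tors}_G} T$ corresponds to a $G$-equivariant equivalence $\tilde p : G \to T$ of the underlying $G$-sets, where $G$ is viewed as a torsor over itself via left multiplication. Thus the claim reduces to exhibiting a $G$-equivariant equivalence between $T$ and the type of $G$-equivariant equivalences $G \to T$.

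Using the freeness and transitivity of the action of $G$ on itself, any $G$-equivariant map $f : G \to T$ is determined by its value $f(1) : T$, and conversely, for any $t : T$ the formula $f_t(g) :\equiv g \cdot t$ defines a $G$-equivariant map $G \to T$. Each such $f_t$ is automatically an equivalence, since any $G$-equivariant map between $G$-torsors is one: both source and target are free transitive $G$-sets, so bijectivity is forced. Therefore evaluation at $1$ and the assignment $t \mapsto f_t$ constitute inverse equivalences between $T$ and the type of $G$-equivariant equivalences $G \to T$.

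To verify $G$-equivariance of the composite equivalence $T \simeq (G =_{\type{Tors}_G} T)$, recall that the torsor structure on the associated torsor $(G =_{\type{Tors}_G} T)$ arises from the action of $G \equiv \Aut_{\type{Tors}_G}(G)$ on identifications by precomposition, where $g : G$ is identified with the torsor automorphism $L_g : h \mapsto g h$ of $G$. Under the translation to equivariant equivalences, this becomes $f \mapsto f \circ L_g$, and evaluation at $1$ yields $(f \circ L_g)(1) = f(g) = g \cdot f(1)$, which matches the original $G$-action on $T$. The main bookkeeping obstacle is pinning down the identification of $g : G$ with a torsor automorphism of $G$ (i.e., left versus right multiplication, and orientation of the associated torsor $(G = T)$ versus $(T = G)$); once those conventions are fixed, this equivariance calculation is routine.
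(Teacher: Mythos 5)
Your proposal is correct and takes essentially the same approach as the paper, whose entire proof is the single sentence that an identification of $G$ with $T$ is determined by the image of $1 : G$ by equivariance; you have just unwound the structure-identity step and checked equivariance explicitly.
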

        \begin{proof}
An identification of $G$ with $T$ is determined by the image of $1 : G$ by equivariance.
          \end{proof}

          \subsection{Homomorphisms and actions} \label{sec:group.actions}

    We can define homomorphisms between higher groups using just their deloopings.
    \begin{defn}[\cite{Buchholtz-vanDoorn-Rijke:Higher.Groups}]
    A homomorphism $\varphi : G \to H$ between higher groups is a pointed map $\B \varphi : \B G \pto \B H$ between their deloopings.

    In other words, a homomorphism $G \to H$ is a function assigning \exemplars of $G$ to \exemplars of $H$, together with an identification of the image of the canonical \exemplar of $G$ with that of $H$. The map $\varphi : G \to H$ itself is defined to be
\[
    \Omega \B \varphi(g) :\equiv \pt_{\B \varphi}\inv \cdot \,(B \varphi)_{\ast} g \cdot \pt_{\B \varphi}.
\]
    \end{defn}

  We can always deloop a homomorphism between ordinary groups into a map between their types of torsors by tensoring up along the homomorphism.
  \begin{prop}[\cite{SymmetryBook}]
    Let $G$ and $H$ be $1$-groups, and let $\varphi : G \to H$ be a homomorphism. Then the map
    $$\B \varphi :\equiv T \mapsto H \otimes_{G} T : \type{Tors}_{G} \to \type{Tors}_{H}$$
    given by sending a $G$-torsor $T$ to the $H$-torsor $H \otimes_{G} T$ defined by
    \[
      H \otimes_{G} T :\equiv \frac{H \times T}{(h \varphi(g), t) \sim (h, gt)}    \]
    deloops the homomorphism $\varphi$ when it is pointed at the equivalence
    $$\pt_{\B \varphi} : H \otimes_{G} G = H$$
    given by $h \otimes g \mapsto h \varphi(g)$.
\end{prop}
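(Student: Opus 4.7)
The plan is to verify three things in turn: (i) that $H \otimes_G T$ carries a well-defined $H$-torsor structure, so that $\B\varphi$ is actually a map $\type{Tors}_G \to \type{Tors}_H$; (ii) that the pointing $\pt_{\B\varphi}$ given by $h \otimes g \mapsto h\varphi(g)$ is well-defined and an isomorphism of $H$-torsors; and (iii) that the induced map on loops recovers $\varphi$. Since $\type{Tors}_G$ and $\type{Tors}_H$ deloop $G$ and $H$ respectively by the preceding proposition, this last step is what identifies our pointed map with a delooping of $\varphi$.

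For (i), I would first define the left $H$-action on $H \otimes_G T$ by $h' \cdot (h \otimes t) :\equiv (h'h) \otimes t$, noting this respects the tensor relation because the relation only involves the right-$H$ structure on the first factor. The set $H \otimes_G T$ is inhabited since $T$ is (choose $t : T$ and take $1 \otimes t$). For freeness and transitivity, given $h \otimes t$ and $h' \otimes t'$, use the torsor property of $T$ to obtain the unique $g : G$ with $gt = t'$; then $h' \otimes t' = h'\varphi(g) \otimes t$ (by the defining relation), so $h'\varphi(g)h\inv$ is the unique element of $H$ sending $h \otimes t$ to $h' \otimes t'$.

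For (ii), the map $h \otimes g \mapsto h\varphi(g)$ is well-defined on the quotient since both $(h\varphi(g'), g)$ and $(h, g'g)$ send to $h\varphi(g'g)$. It is $H$-equivariant by inspection, and its inverse is $h \mapsto h \otimes 1$: one direction is immediate, and the other uses $h \otimes g = h\varphi(g) \otimes 1$. Hence $\pt_{\B\varphi}$ is an equivalence of $H$-torsors, and in particular an identification in $\type{Tors}_H$.

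For (iii), which is the conceptually most delicate step, I would use that under the equivalence $G \simeq \Omega \type{Tors}_G$, an element $g : G$ corresponds to the self-identification of the torsor $G$ given by right multiplication $r_g : g' \mapsto g'g$; similarly $h : H$ corresponds to $r_h$. Applying $\B\varphi$ to $r_g$ yields the map $H \otimes_G G \to H \otimes_G G$ sending $h \otimes g' \mapsto h \otimes g'g$. Conjugating by $\pt_{\B\varphi}$ on both sides, we compute the composite $H \to H$ as $h \mapsto h \otimes 1 \mapsto h \otimes g \mapsto h\varphi(g)$, which is precisely right multiplication by $\varphi(g)$, i.e.\ the element $\varphi(g) : H$. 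The main obstacle will be keeping the bookkeeping straight between the quotient relation, the equivariance, and the conjugation by $\pt_{\B\varphi}$; once $\Omega \B\varphi(g) = \varphi(g)$ is established, the result follows.
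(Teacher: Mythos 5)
The paper cites this proposition from \cite{SymmetryBook} without giving a proof, so there is no in-paper argument to compare against; your task was effectively to supply the missing verification. Your plan is the standard one and, modulo some details you leave implicit, correct: verify the $H$-torsor structure on $H \otimes_G T$, verify the pointing, and compute $\Omega\B\varphi$ by conjugation.

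A few remarks on places where the plan is terser than a full proof would need to be. In (i), "the relation only involves the right-$H$ structure on the first factor" is slightly misstated — the relation twists by $\varphi(g)$, so it is a right $G$-action transported along $\varphi$, and the point is that the left $H$-action $h' \cdot (h \otimes t) = h'h \otimes t$ commutes with this by associativity $(h'h)\varphi(g) = h'(h\varphi(g))$. Also, freeness and transitivity require knowing that two classes $h_1 \otimes t_1$ and $h_2 \otimes t_2$ are equal exactly when they are related by a single step of the generating relation; this follows because the stated relation is already an equivalence relation (reflexivity via $g = 1$, symmetry via $g^{-1}$, and transitivity by composing), which is worth saying explicitly since in HoTT the set-quotient is by the equivalence closure. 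Inhabitedness of $H \otimes_G T$ uses that $\trunc{T}$ is a proposition, so one may assume an inhabitant of $T$. In (iii), the identification "$g : G$ corresponds to right multiplication $r_g$" implicitly invokes univalence to pass between identifications in $\type{Tors}_G$ and $G$-equivariant equivalences, together with the observation that $g \mapsto r_g$ is a homomorphism with respect to path concatenation (concatenating $r_g$ then $r_{g'}$ gives $r_{gg'}$). Once those points are made explicit the computation $h \mapsto h \otimes 1 \mapsto h \otimes g \mapsto h\varphi(g) = r_{\varphi(g)}(h)$ is exactly right and establishes $\Omega\B\varphi(g) = \varphi(g)$.
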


\begin{ex}
  If $\varphi : G \to H$ is a \emph{surjective} homomorphism, then we may deloop its kernel by defining $\B \ker \varphi :\equiv \fib_{\B\varphi}(\pt_{\B H})$ to be the fiber of any delooping $\B \varphi : \B G \pto \B H$. We need surjectivity for the fiber of $\B \varphi$ to be $0$-connected; this is an if-and-only-if, since $\B \varphi$ is $0$-connected if and only if its fiber is and if and only if its delooping $\varphi$ is surjective (which is by definition means $-1$-connected).

  For example, we can reconstruct the delooping of $\type{SL}_{n}(\Rb)$ given in \cref{ex:matrix.groups} by seeing $\type{SL}_{n}(\Rb)$ as the kernel of the determinant $\det : \type{GL}_{n}(\Rb) \to \type{GL}_{1}(\Rb)$. The determinant may be delooped by the function $V \mapsto \Lambda^{n}V : \B \type{GL}_{n}(\Rb) \to \B \type{GL}_{1}(\Rb)$ , pointed at the identification $\Lambda^{n} \Rb^{n} = \Rb$ induced by the standard basis element $e_{1} \wedge \cdots \wedge e_{n} : \Lambda^{n}\Rb^{n}$ (where $e_{i}$ are the standard basis vectors of $\Rb^{n}$). Therefore, the kernel of $\det$ may be delooped by the fiber of the function $\Lambda^{n} : \B \type{GL}_{n}(\Rb) \to \B \type{GL}_{1}(\Rb)$, which is the type of $n$-dimensional vector spaces $V$ equipped with a linear isomorphism $\Lambda^{n}V = \Rb$.
    \end{ex}

We may describe an action of a group $G$ on an object $x : X$ as a homomorphism $\alpha : G \to \Aut_{X}(x)$, which is the same as a pointed map $\B \alpha : \B G \pto \B \Aut_{X}(x)$. In other words, we can see an action of the group $G$ on an object $x$ as a way of taking \exemplars $t : \B G$ of $G$ to objects $\B \alpha(t)$ identifiable with $x$, together with an identification $\pt_{\B \alpha} : \B \alpha(\pt_{\B G}) = x$ of the image of the canonical \exemplar with $x$ itself.

\begin{defn}
  An action of a higher group $G$ on a object $x : X$ is a pointed map $x\twisted{(-)} : \B G \pto \B \Aut_{X}(x)$.  An action $X\twisted{(-)} : \B G \to \B \Aut_{X}(x)$ takes an \exemplar $t : \B G$ of $G$ to the object $x\twisted{t} : X$ which is identifiable with $x$; we say that $x\twisted{t}$ is $x$ \emph{twisted} by $t$.
\end{defn}

  The action of a higher group on a type is itself given by transport in the type family.
  \begin{lem}\label{lem:group.action.definition}
    Let $X\twisted{(-)} : \B G \to \Type$ be an action of a higher group $G$ on a type $X :\equiv X\twisted{\pt_{\B G}}$. Then for $g : G$ and $x : X$, we have
    $$gx = \tr(X\twisted{(-)}, g)(x).$$
  \end{lem}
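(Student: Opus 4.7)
The plan is to unfold both sides of the claimed identity and observe that they are literally equal after one standard application of path induction on $g$. First I would clarify the meaning of $gx$. Per the definitions in \cref{sec:group.actions}, an action of $G$ on $X$ given as a pointed map $X\twisted{(-)} : \B G \pto \Type$ (pointed at $X$) corresponds to a homomorphism $\varphi : G \to \Aut_{\Type}(X) \equiv (X = X)$, and by the unwinding of a homomorphism on loops, $\varphi(g) \equiv \pt_{X\twisted{(-)}}\inv \cdot \ap_{X\twisted{(-)}}(g) \cdot \pt_{X\twisted{(-)}}$. Since $X :\equiv X\twisted{\pt_{\B G}}$ is a \emph{definitional} equality, the pointing $\pt_{X\twisted{(-)}}$ may be taken to be $\refl$, so $\varphi(g) = \ap_{X\twisted{(-)}}(g) : X = X$.

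Next, $gx$ is then the result of applying the identification $\varphi(g) : X = X$ to the point $x$, which is transport in the identity family on $\Type$:
\[
gx \;\equiv\; \tr(\id_{\Type},\, \varphi(g))(x) \;=\; \tr(\id_{\Type},\, \ap_{X\twisted{(-)}}(g))(x).
\]

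The key step is the standard HoTT identity $\tr(\id_{\Type}, \ap_{f}(p)) = \tr(f, p)$ for any type family $f : A \to \Type$ and any identification $p : a = a'$. This is proven by generalizing $g$ to have arbitrary endpoints and then performing path induction, whereupon both sides reduce to $\id$ in the case $p \equiv \refl$. Specializing this lemma to $f :\equiv X\twisted{(-)}$ and $p :\equiv g$ and then evaluating at $x$ gives $\tr(\id_{\Type}, \ap_{X\twisted{(-)}}(g))(x) = \tr(X\twisted{(-)}, g)(x)$, and chaining with the previous step yields $gx = \tr(X\twisted{(-)}, g)(x)$ as required.

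The ``hard part'' here is really just disentangling the notation: once one recognizes that the group action induced by a type family is, by construction, transport in that family, the content collapses to the tautological observation that $\ap$-applied-then-transported equals direct transport. The only subtlety is the pointing correction, which trivializes by the definitional choice $X \equiv X\twisted{\pt_{\B G}}$.
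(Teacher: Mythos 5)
Your proof is correct and takes essentially the same route as the paper: unfold $gx$ to $\ap$-then-transport and invoke the standard HoTT coherence $\tr(\id_{\Type}, \ap_f(p)) = \tr(f, p)$. The paper's proof is a one-liner that silently uses the definitional equality $X \equiv X\twisted{\pt_{\B G}}$ to dispense with the pointing path; you spell out that trivialization explicitly, which is a helpful clarification but not a different argument.
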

  \begin{proof}
    The action $gx$ is by definition given by $(X\twisted{(-)})_{\ast}g (x)$, so the desired identification follows from the fact that transporting over an identification in a type family is the same as applying the type family to the identification.
  \end{proof}

We can always deloop an action of a group $G$ on a set $X$ by twisting the action with a torsor.
\begin{prop}[\cite{SymmetryBook}]\label{prop:delooping.group.action.torsors}
  Let $\alpha : G  \to \Aut(X)$ be an action of a group $G$ on a set $X$. Then the map
  \[
T \mapsto T \otimes_{G} X : \type{Tors}_{G} \to \Set
\]
sending a $G$-torsor $T$ to the tensor product $T \otimes_{G} X$ defined by
\[
T \otimes_{G} X :\equiv \frac{T \times X}{(t, gx) \sim (g\inv t, x)}
\]
deloops the action $\alpha$ when pointed at the identification
\[
\pt_{\B \alpha} : G \otimes_{G} X = X
\]
given by $g \otimes x \mapsto g\inv x$.
  \end{prop}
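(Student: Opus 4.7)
The plan is to check the two things a delooping of the action $\alpha$ must satisfy: that the given map $\B\alpha : \type{Tors}_G \to \Set$ lands in the connected component of $X$ (so it restricts to a pointed map $\B G \pto \B\Aut(X)$), and that the induced loop map $\Omega \B\alpha(g) \equiv \pt_{\B\alpha}\inv \cdot (\B\alpha)_\ast g \cdot \pt_{\B\alpha}$ recovers $\alpha$ on the nose.

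For the first part, well-definedness of the pointing $[g, x] \mapsto g\inv x$ on the quotient is an immediate verification against the defining relation $(g, hx) \sim (h\inv g, x)$, and it admits an evident inverse $x \mapsto [1, x]$, so it is indeed an equivalence. To see that every $T \otimes_G X$ is merely equivalent to $X$, note that every $G$-torsor $T$ is merely equivalent to the canonical torsor $G$ (picking any $t_0 : T$ and using freeness and transitivity yields a $G$-equivariant bijection $G \simeq T$), and the functorial construction $(-) \otimes_G X$ sends any such torsor equivalence to a set-level equivalence between the corresponding tensor products, composing with the pointing to give an equivalence with $X$.

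For the second part, I would trace a group element $g : G$ through the composite. Under the standard isomorphism $G \simeq \Omega \type{Tors}_G$, the element $g$ corresponds to the $G$-equivariant self-identification $\psi_g : h \mapsto hg\inv$ of the torsor $G$ (the inverse appears so that this assignment is a homomorphism rather than an antihomomorphism). Applying $(\B\alpha)_\ast = (-) \otimes_G X$ to $\psi_g$ produces the self-identification $[h, x] \mapsto [hg\inv, x]$ of $G \otimes_G X$. Conjugating with the pointing, an element $y : X$ lifts to $[1, y]$, is carried by the loop to $[g\inv, y]$, and finally sent by the pointing to $(g\inv)\inv y = g y = \alpha(g)(y)$. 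This shows $\Omega \B\alpha(g) = \alpha(g)$ as required.

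The main subtlety will be keeping track of the three inversions that appear in the computation: the one converting the left $G$-action on $T$ into a right action for the tensor product (visible in the quotient relation), the one in the pointing $[g, x] \mapsto g\inv x$, and the one in the standard identification $G \simeq \Omega \type{Tors}_G$. Getting these to line up so that the result is $\alpha(g)$ rather than $\alpha(g\inv)$ or $\alpha$ with the opposite group structure is the only delicate bookkeeping; once the conventions are fixed, the remaining steps are direct quotient computations.
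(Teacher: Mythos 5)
The paper cites this proposition to the Symmetry Book and does not supply its own proof, so there is nothing internal to compare against; your proposal is a correct direct verification. You identify exactly the right two things to check --- that each $T \otimes_G X$ is merely identifiable with $X$ so that the map factors through $\B\Aut(X)$, and that $\Omega\B\alpha = \alpha$ --- and the loop computation is right: tracing $y : X$ through $\pt_{\B\alpha}^{-1}$, the identification induced by $\psi_g : h \mapsto hg^{-1}$, and then $\pt_{\B\alpha}$ gives $y \mapsto [1,y] \mapsto [g^{-1}, y] \mapsto gy = \alpha(g)(y)$. You are also right that the only real subtlety is the alignment of the three inversions; your choice of convention $g \mapsto (h \mapsto hg^{-1})$ for the identification $G \simeq \Omega\type{Tors}_G$ is the one needed to make this a homomorphism rather than an antihomomorphism, and with it the count of inverses works out. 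One small remark: for the first part it is slightly cleaner to note that since ``merely equivalent to $X$'' is a proposition and $\type{Tors}_G$ is $0$-connected, it suffices to check it at the basepoint $G$, where the pointing supplies the equivalence --- this sidesteps the need to observe the functoriality of $(-) \otimes_G X$ on torsor equivalences, though that observation is of course also correct and is used anyway in the second part.
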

  \begin{rmk}
The appearance of the inverses in \cref{prop:delooping.group.action.torsors} is due to our choice to use \emph{left} $G$-torsors and \emph{left} actions. If we used \emph{right} $G$-torsors and left actions then the tensor product formulas would not need any inverses.
\end{rmk}

A \emph{representation} of a group $G$ is therefore a pointed map $\B G \pto \B \type{GL}_{n}(\Rb)$; that is, it is a way of turning \exemplars of $G$ into $n$-dimensional vectors spaces in such a way that the canonical \exemplar gets turned into $\Rb^{n}$.

\begin{ex}
  A representation of the symmetric group $\Sigma_{k}$ is a function $\B \Sigma_{k} \to \type{Vect}_{\Rb}$ which sends a $k$-element set to a vector space.

  For example, we have the canonical representation of $\Sigma_{k}$ on $\Rb^{k}$ which is given by the function $\B \rho \equiv X \mapsto \Rb^{X}$ sending a $k$-element set $X$ to the vector space $\Rb^{X}$ which is free on it, together with the identification $\pt_{\B \rho} : \Rb^{\ord{k}} = \Rb^{k}$ which we may as well take as definitional.
\end{ex}

\begin{ex}\label{ex:cyclic.action.on.plane}
  Since actions and representations of groups are given as functions of their \exemplars, it is sometimes useful to come up with a new type of \exemplars for the group in order to easily define an action. For example, suppose we are trying to define the action of the cyclic group $C_{n}$ on the plane by rotation (for $n \geq 1$). Which delooping should we use?

First, we can think of $C_{n}$ as the group $\mu_{n}$ of $n^{\text{th}}$ roots of unity acting on the complex plane $\Cb$ by multiplication. We can imagine an \exemplar of $\mu_{n}$ as a set of $n$ points equi-distantly arranged on a circle in a $1$-dimensional complex vector space. So, we can start with a $1$-dimensional Hermitian vector space $\La$, and equip this with $n$-element subset of its unit circle $\Sb^{\La}$ of equidistantly placed points. It is somewhat difficult to say that the points are equidistantly placed without an ordering on them; instead, we can say that the angle between any two of them evenly divides the circle, and that rotating by an $n$-th root of unity keeps us within $C$.

  \begin{defn}
    Let $\La$ be a $1$-dimensional Hermitian vector space. A \emph{cycle of $n$ elements} in $\La$ is a subset $C \subseteq \Sb^{\La}$ of its unit circle such that for any $x,\, y \in C$, we have $\langle x, y \rangle^{n} = 1$ and for any $n^{{\text{th}}}$-root of unity $\zeta \in \mu_{n}$ and $x \in C$, the rotation $\zeta x \in C$ is also in $C$.
    \end{defn}

    We will take a pair $(\La, C)$ of a $1$-dimensional Hermitian vector space and a cycle of $n$ elements in it as an \exemplar of $\mu_{n}$. That is, we define
    \[
\B \mu_{n} :\equiv \dsum{\La : \B U(1)} \type{Cycle}_{n}(\La).
    \]

    As a canonical \exemplar, we take the subset $\mu_{n} \subseteq U(1) \subseteq \Cb$ of $n^{{\text{th}}}$ roots of unity. It remains to show that $\B \mu_{n}$ is $0$-connected, and that it deloops $\mu_{n}$.

    Suppose that $C \subseteq \La$ is a cycle of $n$-elements, seeking to show that this is identifiable with $\mu_{n} \subseteq \Cb$. We can identify $\La$ with $\Cb$ via a unitary transformation $U : \La = \Cb$, which gives us a cycle $U_{\ast}C$ of $n$-elements in $\Cb$ (since $U$ is unitary). Now since $n \geq 1$, there is some element in $U_{\ast}C$, say $Ux \in U_{\ast} C$. Then the unitary transformation $(U x)\inv U : \La = \Cb$ sends $C$ to $\mu_{n}$. Consider $(Ux)\inv Uy \in (Ux)\inv U_{\ast} C$; then $((Ux)\inv Uy)^{n}= \langle Uy, Ux \rangle^{n} = \langle x, y\rangle^{n} = 1$, so that $(Ux)\inv Uy$ is in $\mu_{n}$. Conversly, if $\zeta\in \mu_{n}$, then we have $\zeta x \in C$ and so $\zeta = (Ux)\inv U(\zeta x) \in (Ux)\inv U_{\ast} C$.

Finally, $\B \mu_{n}$ does actually deloop $\mu_{n}$ since an unitary automorphism of $\Cb$ which fixes $\mu_{n}$ setwise is given by multiplication by an element of $\mu_{n}$.

  Now, to define the representation of $C_{n}$ on the plane is easy; we can send an \exemplar $(\La, X) : \B \mu_{n}$ to $\La$ considered as a $2$-dimensional real vector space. We point this operation at the canonical identification $\Cb = \Rb^{2}$ given by $1 \mapsto e_{1}$ and $i \mapsto e_{2}$.

  \end{ex}

  Note that in this example, we tailored the delooping of $C_{n}$ to the task of constructing its action on the plane by considering a delooping of $U(1)$ which easily described its action on the plane, and then restricting this delooping to $C_{n}$ by equipping the exemplars of $U(1)$ --- the complex lines $\La$ --- with cycles of $n$-elements.  Another way to describe a cycle of $n$ elements in a complex line $\La$ is as a $\mu_{n}$-torsor which is a subaction of the $\mu_{n}$ action on $\La$ by scalar multiplication. This recipe gives us a general way to restrict a delooping of a group to a subgroup.

  \begin{defn}
Let $G$ be a group, $X$ a $G$-action, and $\Gamma$ a subgroup of $G$. A $\Gamma$-subtorsor of $X$ is a $\Gamma$-subaction of the $G$-action of $X$ restricted to $\Gamma$ which is a $\Gamma$-torsor in its own right --- that is, it is free, transitive, and inhabited as a $\Gamma$-action. We denote the type of $\Gamma$-subtorsors of $X$ by $\type{Subtors}_{\Gamma}(X)$.
    \end{defn}

    \begin{lem}\label{lem:subtorsor.lemma}
Let $g : G$ and let $X$ be a $G$-action. Then $\tr(\type{Subtors}_{\Gamma}(X\twisted{-}), g) : \type{Subtors}_{\Gamma}(X) \to \type{Subtors}_{\Gamma}(X)$ sends $T$ to $g\inv T :\equiv \{x : X \mid \exists t : T.\, x = g\inv t\}$.
    \end{lem}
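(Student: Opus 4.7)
My plan is to prove this by a direct computation of transport in the type family $\type{Subtors}_\Gamma(X\twisted{-}) : \B G \to \Type$, reducing to transport in the ambient power-set family and applying \cref{lem:group.action.definition}.

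First, I reduce to the power-set family. Since the axioms defining a $\Gamma$-subtorsor are propositional, the family $\type{Subtors}_\Gamma(X\twisted{-})$ sits as a subfamily of the power-set family $\Pa(X\twisted{-}) : \B G \to \Type$, $e \mapsto (X\twisted{e} \to \Prop)$. Transport in the subfamily is inherited from the ambient family, and since the transport is type-correct by the well-definedness of $\type{Subtors}_\Gamma(X\twisted{-})$ itself, the resulting subset is automatically still a $\Gamma$-subtorsor (this is the mild check required to know the transport lands back in $\type{Subtors}_\Gamma(X)$). So it suffices to compute transport in $\Pa(X\twisted{-})$.

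Next, I apply the standard transport formula. For a $\Pi$-type family of the form $e \mapsto F(e) \to C$ with $C$ constant, transport along $p : e_1 = e_2$ sends $f : F(e_1) \to C$ to the composite $f \circ \tr(F, p)\inv : F(e_2) \to C$. Specialising $F = X\twisted{-}$, $C = \Prop$, and $p = g : \pt_{\B G} = \pt_{\B G}$, and using \cref{lem:group.action.definition} to identify $\tr(X\twisted{-}, g)$ with the group action $x \mapsto gx$, the transport of $T$ is a subset of $X$ described as a membership predicate in terms of $g\inv$ applied to $T$.

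Finally, I identify this subset with $g\inv T$. Expanding the predicate and tracking the inverse bookkeeping introduced by the left-action / left-torsor conventions of \cref{prop:delooping.group.action.torsors} (whose pointing $g \otimes x \mapsto g\inv x$ is where the extra inverse comes from), the resulting subset equals $\{x : X \mid \exists t : T.\, x = g\inv t\} \equiv g\inv T$ as defined in the statement. The main subtlety in the proof is just bookkeeping which side of the action the inverse appears on; the structural computation --- transport in a power-set family plus the identification of the transport map with the $G$-action --- is otherwise routine.
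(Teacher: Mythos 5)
Your structural approach --- reduce to transport in the power-set family, apply the transport formula for function types into a constant codomain, and identify the transport map via \cref{lem:group.action.definition} --- is the same as the paper's. You even state the transport formula correctly: for a family of the form $e \mapsto F(e) \to C$ with $C$ constant, $\tr$ along $p$ is precomposition by $\tr(F, p)\inv$, not by $\tr(F, p)$. But the final identification step doesn't go through.

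Applying your (correct) formula with $F = X\twisted{-}$ and $p = g$, and using \cref{lem:group.action.definition} to read $\tr(X\twisted{-}, g)\inv = \tr(X\twisted{-}, g\inv)$ as the action of $g\inv$, you get
\[
\tr(X\twisted{-} \to \Prop, g)(T) = T \circ (x \mapsto g\inv x) = \{x : X \mid g\inv x \in T\}.
\]
Unwinding the membership predicate: $g\inv x \in T$ iff there is $t : T$ with $g\inv x = t$, i.e.\ $x = g t$. So the transported subset is $\{x : X \mid \exists t : T.\, x = gt\}$, which is $gT$, not $g\inv T$ as in the statement. Your attempt to recover the missing inverse by appealing to \cref{prop:delooping.group.action.torsors} does not apply here: the inverse in that proposition lives inside the specific $T \otimes_G X$ delooping construction, whereas \cref{lem:subtorsor.lemma} concerns an arbitrary action $X\twisted{-} : \B G \to \Type$, and \cref{lem:group.action.definition} defines the action directly as transport, with no tensoring step through which an extra inverse could enter.

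For what it's worth, the paper's own proof is no cleaner: it asserts that transport in $X\twisted{-} \to \Prop$ is ``given by precomposition'' by $\tr(X\twisted{-}, g)$, silently dropping the inverse, and so arrives at $\{x \mid gx \in T\} = g\inv T$, matching the stated conclusion but by a formula that is off by one inverse compared to what you correctly wrote down. The sign slip is harmless in the only downstream use (\cref{prop:subtorsor.delooping}), since there one concludes $g \in \Gamma$ from either $g\Gamma = \Gamma$ or $g\inv\Gamma = \Gamma$. Still, as a general statement about $G$-actions the lemma's sign is off, and the honest fix is to correct the stated conclusion to $gT$, not to patch your computation with an inverse imported from an unrelated construction.
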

    \begin{proof}
We note that a $\Gamma$-subtorsor $T$ of $X$ is in particular a subset $T \subseteq X$ of $X$ which satisfies a property. For this reason, we only need to think about how transporting subsets works: $\tr(X\twisted{-} \to \Prop, g) : \type{Subset}(X) \to \type{Subset}(X)$ is given by taking the inverse image under $\tr(X\twisted, g) : X \to X$, which is the action by $g$. This is because subsets are equivalently described by the property of being in that subset, $\type{Subset}(X) = (X \to \Prop)$, and transport in the latter is given by precomposition. Therefore, $\tr(X\twisted{-} \to \Prop, g)(T) = \{x : X \mid gx \in T\} = g\inv T$.
    \end{proof}

 \begin{prop}\label{prop:subtorsor.delooping}
   Let $G$ be a group and $\Gamma$ a subgroup. Suppose that $\B G$ is a delooping of $G$ giving us a type of \exemplars for $G$. We may then define an \exemplar of $\Gamma$ to be an \exemplar of $G$ equipped with a $\Gamma$-subtorsor of its associated torsor:
   $$\B \Gamma :\equiv \dsum{t : \B G} \type{Subtors}_{\Gamma}(t = \pt_{\B G}).$$
We take the canonical \exemplar to be the canonical \exemplar $\pt_{\B G}$ of $G$ paired with $\Gamma$ considered as a $\Gamma$-subtorsor of the assocaited $G$-torsor $(\pt_{\B G} = \pt_{\B G})$, identified with $G$ acting on itself.

   Furthermore, the inclusion $\Gamma \hookrightarrow G$ is delooped by the first projection $\fst : \B \Gamma \to \B G$.
  \end{prop}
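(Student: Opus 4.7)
My plan is to verify the two conditions that make $\B \Gamma$ a delooping of $\Gamma$ (pointedness and $0$-connectedness, plus the identification of the loop space) and then note that the projection $\fst$ is automatically pointed by construction, so that delooping the inclusion amounts to computing the induced map on loop spaces. Pointedness of $\B \Gamma$ at $(\pt_{\B G}, \Gamma)$ is given, so the work splits into showing $\B \Gamma$ is $0$-connected, computing $\Omega \B \Gamma \simeq \Gamma$, and reading off that $\Omega \fst$ is the inclusion.

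For $0$-connectedness, I take an arbitrary exemplar $(t, T) : \B \Gamma$ and build a mere identification with $(\pt_{\B G}, \Gamma)$. Since connectedness is a proposition I may untruncate, so using the $0$-connectedness of $\B G$ I fix $p : t = \pt_{\B G}$. Transporting $T$ along $p$ yields a $\Gamma$-subtorsor $S :\equiv p_{\ast}T$ of the associated $G$-torsor $(\pt_{\B G} = \pt_{\B G})$, which we identify with $G$ acting on itself. The key classical fact I will use is that any $\Gamma$-subtorsor $S \subseteq G$ is a right coset $\Gamma s$ of $\Gamma$: inhabitedness gives some $s \in S$, closure under $\Gamma$ gives $\Gamma s \subseteq S$, and transitivity of the $\Gamma$-action on $S$ gives the reverse inclusion. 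Then $s\inv S$ contains $1$ and is again a $\Gamma$-subtorsor, so by the same argument $s\inv S = \Gamma$. By \cref{lem:subtorsor.lemma}, transport along the loop $s : \pt_{\B G} = \pt_{\B G}$ carries $S$ to $s\inv S = \Gamma$, and therefore the identification
\[
(t, T) \;=\; (\pt_{\B G}, S) \;=\; (\pt_{\B G}, \Gamma)
\]
is supplied by the concatenation of $p$ (lifted to $\B \Gamma$ via the fact that $\type{Subtors}_{\Gamma}$ is a set) with the loop $s$.

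For the loop space computation, by the standard characterization of identifications in a $\Sigma$-type, an element of $\Omega \B \Gamma$ is a pair $(g, q)$ consisting of $g : \pt_{\B G} = \pt_{\B G}$ together with $q : g_{\ast}\Gamma = \Gamma$ in $\type{Subtors}_{\Gamma}(\pt_{\B G} = \pt_{\B G})$. Under the identification $\Omega \B G \simeq G$ and \cref{lem:subtorsor.lemma}, the second component becomes the condition $g\inv \Gamma = \Gamma$, which is a proposition (subsets of a set form a set), and which holds precisely when $g \in \Gamma$. Hence $\Omega \B \Gamma$ is canonically equivalent to the subgroup $\Gamma \subseteq G$, as required.

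Finally, the projection $\fst : \B \Gamma \to \B G$ sends the chosen basepoint $(\pt_{\B G}, \Gamma)$ to $\pt_{\B G}$ definitionally, so $\pt_{\fst} \equiv \refl$. Tracing through the equivalence $\Omega \B \Gamma \simeq \Gamma$ just described, the induced map $\Omega \fst$ simply forgets the second component $q$, which is exactly the inclusion $\Gamma \hookrightarrow G$. I expect the only real obstacle to be the identification of $\Gamma$-subtorsors of $G$ with right cosets used in the $0$-connectedness step; this is elementary but requires one to be careful that ``subtorsor'' is being interpreted as a subset closed under the restricted action which is itself a torsor for $\Gamma$, so that inhabitedness, freeness, and closure together force the coset description.
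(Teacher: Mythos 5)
Your proof has the right structure and is more thorough than the paper's: you prove $0$-connectedness, compute the loop space, and read off $\Omega\fst$, whereas the paper's proof only carries out the loop-space computation and leaves connectedness tacit. The loop-space computation matches the paper exactly (use \cref{lem:subtorsor.lemma} to reduce identifications of $(\pt_{\B G},\Gamma)$ with itself to $g$ with $g^{-1}\Gamma=\Gamma$, then observe this is $g\in\Gamma$ since $\Gamma$ contains $1$ and is a subgroup), and the reading-off of $\Omega\fst$ as the inclusion is straightforward and correct.

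However, the $0$-connectedness argument contains a step that does not hold as literally stated. You observe correctly that a $\Gamma$-subtorsor $S$ of $G$ (with $\Gamma$ acting by restriction of, say, left multiplication) must be a right coset $\Gamma s$. But then you claim that ``$s^{-1}S$ contains $1$ and is again a $\Gamma$-subtorsor, so by the same argument $s^{-1}S = \Gamma$.'' If $s^{-1}S$ means $\{s^{-1}t : t\in S\}$ --- left multiplication by $s^{-1}$ --- then $s^{-1}S = s^{-1}\Gamma s$, the \emph{conjugate} subgroup, which is a torsor for $s^{-1}\Gamma s$ but is not closed under the given $\Gamma$-action unless $\Gamma$ is normalized by $s$. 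So the intermediate claim that $s^{-1}S$ is a $\Gamma$-subtorsor fails in general, and the identification $s^{-1}S = \Gamma$ with it. What saves the argument is that the transport appearing in \cref{lem:subtorsor.lemma} is not left multiplication but the \emph{commuting} (opposite) action of $G$ on its regular torsor --- right multiplication --- and $\Gamma s \cdot s^{-1} = \Gamma$ directly, with no detour through ``$s^{-1}S$ is a subtorsor containing $1$.'' You flag at the end that one must be careful about how ``subtorsor'' is being interpreted, and indeed that is exactly where the issue lies: the step is fine once the action giving the transport is distinguished from the action defining the subtorsor structure (they are opposite, and that is why they commute), but as written the detour through the normality-dependent claim is the one thing that genuinely needs repair.
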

  \begin{proof}
An identification $(\pt_{\B G}, \Gamma) = (\pt_{\B G}, \Gamma)$ is an identification $g : \pt_{\B G} = \pt_{\B G}$ so that $g\inv \Gamma = \Gamma$ by \cref{lem:subtorsor.lemma}. However, $\Gamma$ contains $1$, so we may conclude that $g\inv \in \Gamma$ and therefore $g \in \Gamma$. On the other hand, any $\gamma : \Gamma$ clearly sends elements in $\Gamma$ to elements in $\Gamma$, so we have a bijection between self identifications of the canonical \exemplar of $\B \Gamma$ with $\Gamma$.
\end{proof}

\subsection{Quotients as types of pairs}\label{sec:group.quotients}

Now we are ready to define the quotient of a type by the action of a higher group. A beautiful feature of homotopy type theory is that the quotient has a \emph{mapping-in} property, in addition to its usual mapping out property defining it as a quotient. What this means in practice is that we can define quotients by their elements, without forcing any equivalence relations or freely generating any structure. In fact, the construction couldn't be simpler: the quotient $X \sslash G$ of the action $X$ of a higher group $G$ is the type of pairs $(t, x)$ of an \exemplar $t : \B G$ and an element $x : X\twisted{t}$ of $X$ twisted by $t$.

\begin{defn}[\cite{SymmetryBook}]\label{defn:quotient}
  Given a action $X\twisted{(-)} : \B G \to \Type$ of the higher group $G$ on the type $X :\equiv X\twisted{\pt_{\B G}}$, define the quotient
  \[
    X \sslash G :\equiv \dsum{t : \B G} X\twisted{t}
  \]
  to be the type of pairs of an \exemplar $t : \B G$ and an element of $X\twisted{t}$. The quotient map
  \[
[-] : X \to X \sslash G
\]
is given by pairing with the canonical \exemplar: $x \mapsto (\pt_{\B G}, x)$.
\end{defn}

This definition is justified by an elementary lemma about identifications in types of pairs. As a corollary, we may deduce that the symmetries of an element in the quotient are precisely its stabilizer.
\begin{lem}\label{lem:identifications.in.homotopy.quotient}
  Let $X\twisted{(-)} : \B G \to \Type$ be an action of a higher group $G$ on a type $X$. For $x, y : X$, we have an equivalence
  \[
([x] = [y]) \simeq \dsum{g : G} (g x  = y)
\]
\end{lem}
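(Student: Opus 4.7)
The plan is to chain together two standard equivalences: the characterization of identifications in a dependent sum, and the definitional identification of $G$ with $(\pt_{\B G} = \pt_{\B G})$, finishing with the translation between transport and the group action.

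First I would unfold the notation on the left: by \cref{defn:quotient} we have $[x] \equiv (\pt_{\B G}, x)$ and $[y] \equiv (\pt_{\B G}, y)$, both elements of the $\Sigma$-type $\dsum{t : \B G} X\twisted{t}$. The standard HoTT lemma characterizing identifications in a dependent sum (itself a consequence of path induction applied to both components in turn) gives an equivalence
\[
\bigl((\pt_{\B G}, x) = (\pt_{\B G}, y)\bigr) \;\simeq\; \dsum{p : \pt_{\B G} = \pt_{\B G}} \bigl(\tr(X\twisted{-}, p)(x) = y\bigr).
\]
This is the main computational step; the rest is just reindexing.

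Next, by the very definition of a delooping, the type of self-identifications of the canonical \exemplar is identified with the group: $G \simeq (\pt_{\B G} = \pt_{\B G})$. Transporting the previous equivalence along this identification replaces the index $p$ with an element $g : G$. Finally, by \cref{lem:group.action.definition}, we have $\tr(X\twisted{-}, g)(x) = gx$, so the fiber of the sum is identified with $(gx = y)$. Splicing these together yields
\[
([x] = [y]) \;\simeq\; \dsum{g : G} (gx = y),
\]
as required.

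There is no real obstacle: every step is either a standard HoTT lemma or a direct unfolding of definitions from the previous subsection. The only thing worth being careful about is that the delooping identification $G \simeq (\pt_{\B G} = \pt_{\B G})$ is the same one used to define the action $gx$ via transport in \cref{lem:group.action.definition}, so that the two occurrences of $g$ in the final expression really do refer to the same element, and no hidden conjugations or inverses sneak in.
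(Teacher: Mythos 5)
Your proof is correct and takes the same route as the paper's: both invoke the standard characterization of identifications in $\Sigma$-types (Theorem 2.7.2 of the HoTT Book) and then use \cref{lem:group.action.definition} to rewrite transport as the group action. You have merely spelled out the re-indexing via $G \simeq (\pt_{\B G} = \pt_{\B G})$, which the paper leaves implicit.
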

\begin{proof}
  This follows immediately from \cref{lem:group.action.definition} and Theorem 2.7.2 of the HoTT Book \cite{HoTTBook} which characterizes identifications in pair types:
  \[
((\pt_{\B G}, x) = (\pt_{\B G}, y)) \simeq \dsum{g : \pt_{\B G} = \pt_{\B G}} (\tr(X\twisted{(-)}, g)(x) = y).
  \]
  \end{proof}

  \begin{cor}
    Let $X\twisted{(-)} : \B G \to \Type$ be an action of a higher group on a type $X$. For $x : X$, the self-identifications of $x$ in the quotient is the stabilizer of $x$:
    \[
\Aut_{X \sslash G}([x]) \simeq \dsum{g : G} (gx = x) \equiv \type{Stab}(x).
    \]
  \end{cor}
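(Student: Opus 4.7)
The proof is essentially immediate from the preceding \cref{lem:identifications.in.homotopy.quotient}. The plan is to simply specialize that lemma by setting $y :\equiv x$. By definition, $\Aut_{X \sslash G}([x]) :\equiv ([x] = [x])$, so the lemma directly yields
\[
\Aut_{X \sslash G}([x]) \;\equiv\; ([x] = [x]) \;\simeq\; \dsum{g : G} (g x = x),
\]
and the right-hand side is exactly the defining expression for the stabilizer $\type{Stab}(x)$.

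There is no real obstacle here; the only ``work'' is recognizing that the definition of $\type{Stab}(x)$ as the subgroup of $G$ consisting of elements that fix $x$ unfolds type-theoretically to the dependent pair type $\dsum{g : G}(gx = x)$, which is precisely the shape produced by the lemma. The wider conceptual point the corollary is making --- that passing to the quotient $X \sslash G$ does \emph{not} collapse the isotropy data, but rather records it as the type of self-identifications of $[x]$ --- is exactly what \cref{lem:identifications.in.homotopy.quotient} was designed to exhibit, so no further argument is required.
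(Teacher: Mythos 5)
Your proposal is correct and is exactly what the paper intends: the paper states this as a corollary of \cref{lem:identifications.in.homotopy.quotient} with no separate proof, precisely because it follows by instantiating $y :\equiv x$ and unfolding $\Aut_{X \sslash G}([x]) \equiv ([x] = [x])$. Nothing further is needed.
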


  \begin{rmk}\label{rmk:quotient}
It is worth emphasizing that this construction of $X \sslash G$ given in \cref{defn:quotient} (which is  entirely standard in homotopy type theory) constructs what is usually known as the ``homotopy'' quotient, but it constructs it \emph{on the nose}, and not ``up to homotopy''. The terminology here really gets in the way, but the point is that it is \emph{up to identification} --- which is the only sort of equality in HoTT --- and not up to continuous deformation that the quotient $X \sslash G$ behaves like it is supposed to.
\end{rmk}

  We can also characterize the homotopy quotient maps $q : X \to X \sslash G$ quite simply, at least when $G$ is a $1$-group: they are precisely those maps whose fibers are $G$-torsors. Or, in other words, the homotopy quotients by $G$ are precisely the $G$-principal bundles.
  \begin{thm}\label{thm:recognize.homotopy.quotient}
Let $G$ be a group $q : X \to Y$ be a map. Suppose that for any $y : Y$ we have a $G$-torsor structure on $\fib_{q}(y)$. Then $G$ acts on $X$ and $q$ is equivalent to the homotopy quotient $X \to X \sslash G$ by this action.
    \end{thm}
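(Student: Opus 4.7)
My plan is to construct the classifying map of $q$ as a principal $G$-bundle, use it to build an action of $G$ on $X$, and then read off the equivalence with the quotient by descent. By hypothesis, the assignment $y \mapsto \fib_q(y)$ is a function $Y \to \type{Tors}_G$; composing with the inverse of the equivalence $\B G \xto{\sim} \type{Tors}_G$ from \cref{prop:associated.torsor}, we obtain a \emph{classifying map} $\chi : Y \to \B G$. By construction of $\chi$ together with \cref{lem:torsor.associated.torsor.itself}, the torsor $(\chi(y) = \pt_{\B G})$ is canonically identified with $\fib_q(y)$ as a $G$-torsor for every $y : Y$.

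I would then define the action of $G$ on $X$ via the type family $X\twisted{-} :\equiv \fib_\chi : \B G \to \Type$. The required pointing $X\twisted{\pt_{\B G}} \simeq X$ comes from unfolding
\[
\fib_\chi(\pt_{\B G}) \equiv \sum_{y : Y} (\chi(y) = \pt_{\B G}) \simeq \sum_{y : Y} \fib_q(y) \simeq X,
\]
where the first equivalence is the previous paragraph and the second is the canonical equivalence $(y, (x, p)) \mapsto x$ valid for any map. Transport in $\fib_\chi$ along a loop $g \in \Omega \B G = G$ post-composes the identification $p : \chi(y) = \pt_{\B G}$ with $g$, which under the above equivalence is exactly the $G$-action on $\fib_q(y)$ coming from the torsor structure; so the induced action on $X$ is the expected one.

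The equivalence with the quotient is now essentially definitional:
\[
X \sslash G \equiv \sum_{t : \B G} \fib_\chi(t) \simeq Y,
\]
via the canonical equivalence $(t, (y, p)) \mapsto y$ identifying the total space of the fibers of a map with its source. To finish, I would check that this equivalence sends the quotient map $[-] : X \to X \sslash G$ to $q$: tracing an element $x : X$ through the equivalences, it becomes $(q(x), (x, \refl)) \in \sum_{y : Y} \fib_q(y)$, hence a point of $\fib_\chi(\pt_{\B G}) \equiv X\twisted{\pt_{\B G}}$, and then $(\pt_{\B G}, -)$ in $X \sslash G$, which projects to $q(x) : Y$.

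The only real obstacle is this coherent bookkeeping at the end: ensuring that the pointing of $X\twisted{-}$, the equivalence $X \sslash G \simeq Y$, and the map $q$ all fit into a single commuting triangle. Everything else reduces, via \cref{prop:associated.torsor}, to standard manipulations of fiber sequences.
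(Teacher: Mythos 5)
Your proof is correct and takes essentially the same route as the paper: the paper defines the delooped action directly as $\B\alpha(T) :\equiv \dsum{y : Y}(T = \fib_q(y))$ on $\type{Tors}_G$ and then contracts the sum, whereas you repackage the hypothesis as a classifying map $\chi : Y \to \B G$ and take $X\twisted{-} :\equiv \fib_\chi$, but these unfold to the same type family. The one small point you elide is that $\fib_\chi$ lands in $\BAut(X)$, which does follow from the pointing equivalence $\fib_\chi(\pt_{\B G}) \simeq X$ together with the $0$-connectedness of $\B G$, as the paper makes explicit.
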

    \begin{proof}
      We will construct an action $\alpha$ of $G$ on $X$ by constructing its delooping $\B \alpha : \type{Tors}_{G} \pto \BAut(X)$. We define
      $$\B \alpha(T) :\equiv \dsum{y : Y} (T = \fib_{q}(y))$$
      where the identification is taken as $G$-torsors. Then
      \begin{align*}
        \B\alpha(G) &\equiv \dsum{y : Y} (G = \fib_{q}(y))\\ &\simeq \dsum{y : Y} \fib_{q}(Y) \\
        &\simeq X.
        \end{align*}
      gives us a pointing of $\B \alpha$. Furthermore, since $\type{Tors}_{G}$ is $0$-connected, this shows that $\B\alpha(T)$ is identifiable with $X$ for all $T : \type{Tors}_{G}$, so that $B\alpha$ really does land in $\B \Aut(X) \equiv \dsum{ Z : \Type }\trunc{Z = X}$. The middle equivalence follows from \cref{lem:torsor.associated.torsor.itself}, and the last equivalence is a general fact about any map --- it is always the sum of its fibers. Therefore, this equivalence identifies $x : X$ with $(q(x), (1 \mapsto (x, \refl)))$ where $(1 \mapsto (x, \refl)) : G = \fib_{q}(q(x))$ is the identification of $G$-torsors determined by sending $1$ to $(x, \refl)$ and the rest by $G$-equivariance.

      Now, we will give an equivalence $Y = X \sslash G$ which commutes with the quotient maps. This follows quickly by the substitution lemma:
      \begin{align*}
        X \sslash G &\equiv \dsum{T : \type{Tors}_{G}} \dsum{y : Y} (T = \fib_{q}(y)) \\
       &\simeq Y.
        \end{align*}
      Explicitly, this equivalence $Y = X \sslash G$ is given by $y \mapsto (\fib_{q}(y), y, \refl)$. It remains to show that for $x : X$, we have $(\fib_{q}(q(x)), q(x), \refl) = (G, q(x), (1 \mapsto (x, \refl)))$. Since we may take $(1 \mapsto (x, \refl))\inv : \fib_{q}(q(x)) = G$ and transporting by this sends $(1 \mapsto (x, \refl))$ to $\refl : G = G$, we have our desired identification.
    \end{proof}

\subsection{Good orbifolds}\label{sec:orbifold.examples}

Finally, we are ready to construct some orbifolds. In this section, we will focus on \emph{good} orbifolds --- those orbifolds which are the quotients of discrete groups acting on manifolds. The easy and concrete construction of quotients in homotopy type theory makes constructing good orbifolds a breeze.

\begin{rmk}
We will eventually be able to define \emph{{\'e}tale groupoids} (\cref{defn:etale.groupoid}), a notion which includes the presentations of orbifolds as proper {\'e}tale groupoids due to Moerdijk and Pronk \cite{Moerdijk-Pronk:Orbifolds}. We will, in \cref{thm:ordinary.proper.etale.groupoid.is.orbifold}, prove that (crisp, ordinary) proper \'etale groupoids are orbifolds in the sense of \cref{defn:orbifold}. But first we will need to develop the language of synthetic differential geometry.
  \end{rmk}

\begin{ex}\label{ex:elliptic.point}
  We can define an \emph{elliptic point} of order $n$ as $\Rb^{2} \sslash C_{n}$, with the action of $C_{n}$ on $\Rb^{2}$ constructed as in \cref{ex:cyclic.action.on.plane}. Explicitly, this means
  \[
\Rb^{2} \sslash C_{n} :\equiv \dsum{\La : \B U(1)} \dsum{C : \type{Cycle}_{n} (\La)}\La
\]
consists of triples $(\La, C, \ell)$ where $\La$ is a $1$-dimensional Hermitian vector space, $C \subseteq \Sb^{\La}$ is a cycle of $n$-elements in $\La$, and $\ell : \La$ is a point of $\La$.
\end{ex}

\begin{ex}\label{ex:coordinate.patch}
  Satake \cite{Satake:Orbifold} defined orbifolds as spaces locally modelled on a quotient of $\Rb^{n}$ by the action of a finite subgroup of $\type{O}(n)$. For any finite subgroup $\Gamma \subseteq \type{O}(n)$, we can construct the coordinate patch $\Rb^{n} \sslash \Gamma$ by
  \[
    \Rb^{n} \sslash \Gamma :\equiv \dsum{V : \B O(n)} \type{Subtors}_{\Gamma}(\type{Frame}(V)) \times V.
  \]
  We're making use of \cref{prop:subtorsor.delooping} and \cref{rmk:frame.bundle} to deloop $\Gamma$ as $\B \Gamma :\equiv \dsum{V : \B O(n)} \type{Subtors}_{\Gamma}(\type{Frame}(V))$.
\end{ex}

    \begin{ex}\label{defn:configuration.space}
      We can describe the configuration space $X \sslash n!$ of $n$ unlabeled points in a given type $X$ quite simply as a homotopy quotient. We may take $n$-element sets as our \exemplars of the symmetric group $\Aut(\ord{n})$, with the standard finite cardinal $\ord{n} :\equiv \{0, \ldots, n-1\}$ as our canonical \exemplar. That is, we define
      $$\B\Aut(n) :\equiv \dsum{F : \Type} \trunc{F \simeq \ord{n}}$$
      to be the type of $n$-element sets, which are equivalently types $F$ which are somehow identifiable with $n$. We may then act on the cartesian power $X^{n}$ by sending an $n$-element set $F$ to the type of functions $X^{F}$. This gives us the following construction of the configuration space of $n$ unlabled points as the type of pairs of an $n$-element set $F$ and an $F$-tuple of elements of $X$:
      $$X^{n} \sslash n! :\equiv \dsum{F : \B\Aut(\ord{n})} X^{F}.$$
    \end{ex}

  \begin{ex}
    We can describe the moduli stack $\Ma_{1,1}$ of elliptic curves over $\Cb$ as the homotopy quotient of the type $\type{Lattice}(\Cb)$ of lattices in $\Cb$ by the action of $\Cb^{\ast}$.

    To this end, we will define the notion of a lattice in a complex line. In fact, we might as well define the notion of lattice in $n$-dimensional (real) space.
    \begin{defn}\label{defn:M11}
      Let $V : \B \type{GL}_{n}(\Rb)$ be an $n$-dimensional real vector space. A \emph{lattice} in $V$ is a subset $\Lambda \subseteq V$ which is
      \begin{enumerate}
        \item an additive subgroup of $V$,
        \item metrically discrete, in that for any norm $\langle - ,-\rangle$ on $V$ there exists a (rational) $\ep > 0$ so that if $x \in \Lambda$ has norm $\langle x,\, x \rangle$ less than $\ep$, then $x = 0$.
        \item non-degenerate, in that it has rank $n$ as an abelian group.
      \end{enumerate}
    We denote by $\type{Lattice}(V)$ the type of lattices in $V$. To consider a lattice in a complex vector space, we first consider that vector space as a real vector space.

    We can then define $\Ma_{1,1}$ as the type of pairs consisting of a $1$-dimensional complex vector space and a lattice in it.
    \begin{defn}
We define $\Ma_{1,1}$ to be the type of pairs consisting of a $1$-dimensional complex vector space $L$, and a lattice $\Lambda$ within it.
    \[
\Ma_{1,1} :\equiv \type{Lattice}(\Cb) \sslash \type{GL}_{1}(\Cb) \equiv \dsum{\La : \B \type{GL}_{1}(\Cb)}\type{Lattice}(\La).
\]
    \end{defn}
    \end{defn}

It is not clear from this description that $\Ma_{1,1}$ is an orbifold, however. In order to do that, we will use the recognition theorem, \cref{thm:recognize.homotopy.quotient}, to show that $\Ma_{1,1}$ is the homotopy quotient of the upper half plane $\mathfrak{h} :\equiv \{a + b i : \Cb \mid b > 0\}$ by the action of $\type{SL}_{2}(\Zb)$ via M{\:o}bius transformations.

We may define a map $q : \mathfrak{h} \to \Ma_{1,1}$ by $q(\tau) \equiv (\Cb, \Zb \oplus \tau \Zb)$. We will show that the fibers of $q$ may be equipped with the structure of a $\type{SL}_{2}(\Zb)$-torsor; by \cref{thm:recognize.homotopy.quotient}, this will show that $q$ is a homotopy quotient and that $\Ma_{1,1} = \mathfrak{h} \sslash \type{SL}_{2}(\Zb)$.

\begin{prop}\label{prop:M11.homotopy.quotient}
  Let $q : \mathfrak{h} \to \Ma_{1,1}$ be the map
  $$q(\tau) \equiv (\Cb, \latticegen{\tau}).$$

  Then every fiber $\fib_{q}(\La, \Lambda)$ may be equipped with the structure of a $\type{SL}_{2}(\Zb)$-torsor with the action given by M{\"o}bius transformations. Consequently,
  \[
\Ma_{{1,1}} \simeq \mathfrak{h} \sslash \type{SL}_{2}(\Zb).
  \]
  \end{prop}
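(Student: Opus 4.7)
The plan is to apply \cref{thm:recognize.homotopy.quotient} by equipping each fiber of $q$ with a $\type{SL}_{2}(\Zb)$-torsor structure in such a way that the induced action on $\mathfrak{h}$ is by M\"obius transformations.

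First I would unfold the fiber. By the standard characterization of identifications in $\Sigma$-types, an element of $\fib_{q}(\La, \Lambda)$ amounts to a $\tau \in \mathfrak{h}$ together with a $\Cb$-linear isomorphism $\varphi : \Cb \xto{\sim} \La$ satisfying $\varphi_{\ast}(\latticegen{\tau}) = \Lambda$. Such a $\varphi$ is determined by the pair $(\omega_{1}, \omega_{2}) :\equiv (\varphi(1), \varphi(\tau))$, which is a $\Zb$-basis of $\Lambda$ whose ratio $\omega_{2}/\omega_{1} = \tau$ lies in $\mathfrak{h}$; conversely, any such \emph{positively oriented} $\Zb$-basis of $\Lambda$ determines $\tau :\equiv \omega_{2}/\omega_{1}$ and the unique $\Cb$-linear extension $\varphi$ of $1 \mapsto \omega_{1}$. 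I would therefore identify the fiber with the set of $\Zb$-bases $(\omega_{1}, \omega_{2})$ of $\Lambda$ with $\omega_{2}/\omega_{1} \in \mathfrak{h}$.

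Next I would define a left $\type{SL}_{2}(\Zb)$-action on these bases by
\[
\begin{pmatrix} a & b \\ c & d \end{pmatrix} \cdot (\omega_{1}, \omega_{2}) :\equiv (d\omega_{1} + c\omega_{2},\, b\omega_{1} + a\omega_{2})
\]
and verify three items: (i) the ratio of the new basis is $(a\tau + b)/(c\tau + d)$, so the action intertwines with the M\"obius action on $\mathfrak{h}$; (ii) the condition $ad - bc = 1$ ensures the new ratio still lies in $\mathfrak{h}$ via the identity $\Im((a\tau + b)/(c\tau + d)) = \Im(\tau)/|c\tau + d|^{2}$, so the action preserves orientation; and (iii) the action is free and transitive, since any two $\Zb$-bases of the rank-$2$ lattice $\Lambda$ differ by a unique element of $\type{GL}_{2}(\Zb)$, and the orientation constraint selects precisely the $\type{SL}_{2}(\Zb)$-coset.

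Finally, to see each fiber is inhabited, the non-degeneracy of $\Lambda$ produces some $\Zb$-basis $(\omega_{1}', \omega_{2}')$; metric discreteness in a $1$-dimensional complex line forces $\omega_{2}'/\omega_{1}' \notin \Rb$, and after possibly swapping the two basis elements this ratio lies in $\mathfrak{h}$. With the fibers of $q$ equipped with compatible $\type{SL}_{2}(\Zb)$-torsor structures, \cref{thm:recognize.homotopy.quotient} then yields $\Ma_{1,1} \simeq \mathfrak{h} \sslash \type{SL}_{2}(\Zb)$. The main obstacle is not a single hard computation but the orientation step: one must handle the ``swap if necessary'' in a constructively acceptable way, which here is unproblematic because $\omega_{2}'/\omega_{1}'$ is a specific complex number whose imaginary part is apart from $0$ by metric discreteness, so its sign is decidable.
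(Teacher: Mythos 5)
Your approach is the paper's, just written in the coordinates $(\omega_{1}, \omega_{2}) :\equiv (p(1), p(\tau))$ rather than in terms of the identification $p : \Cb = \La$ directly. The paper's action $\begin{bmatrix}a & b \\ c & d\end{bmatrix}(\tau, p) :\equiv \bigl(\tfrac{a\tau + b}{c\tau + d},\ (c\tau + d)p\bigr)$, rewritten in your coordinates, is exactly $(d\omega_{1} + c\omega_{2},\ b\omega_{1} + a\omega_{2})$, so the verifications of the action axioms and of freeness and transitivity are the same computations. Your item (iii) actually spells out something the paper glides over: the base-change matrix between two $\Zb$-bases lies in $\type{GL}_{2}(\Zb)$, and its determinant is $+1$ rather than merely $\pm 1$ because the imaginary part of the ratio transforms by the factor $(ad-bc)/|c\tau+d|^{2}$ and both ratios lie in $\mathfrak{h}$.

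Where you genuinely diverge is the inhabitedness step. You are right that \cref{thm:recognize.homotopy.quotient} requires a torsor and a torsor must be inhabited; the paper's proof shows the action is free and transitive but never produces a point of $\fib_{q}(\La, \Lambda)$, so you are filling a real gap. But your one-line justification, ``metric discreteness in a $1$-dimensional complex line forces $\omega_{2}'/\omega_{1}' \notin \Rb$,'' compresses two separate uses of the lattice hypotheses and asserts the hard part rather than proving it. That $\omega_{2}'/\omega_{1}'$ is not \emph{rational} follows from the rank hypothesis alone: if $(\omega_{1}', \omega_{2}')$ is a free $\Zb$-basis then $q\omega_{2}' \neq p\omega_{1}'$ for all nonzero $(p,q)$. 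That it is not an \emph{irrational real} is where metric discreteness enters, via a pigeonhole argument producing nonzero lattice elements of arbitrarily small modulus; in the smooth-real setting this needs some care (an approximate integer floor via the Archimedean law, and the rank hypothesis again to see the small element produced is nonzero). Only after establishing that the imaginary part of $\omega_{2}'/\omega_{1}'$ is nonzero does your appeal to Postulate O.4 --- to decide its sign and swap if needed --- go through.
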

  \begin{proof}
   Let $\La$ be a $1$-dimensional complex vector space and $\Lambda \subseteq \La$ a lattice in it. By definition,
    $$\fib_{q}(\La, \Lambda) :\equiv \dsum{\tau : \mathfrak{h}} \big((\Cb, \latticegen{\tau}) = (\La, \Lambda)\big).$$
    By the calculation of identifications in pair types, this is equivalently
    $$\dsum{\tau : \mathfrak{h}} \dsum{p : \Cb = \La} (\latticegen{\tau} = p\inv(\Lambda)).$$
    Note that since lattices are subsets, equality between lattices is a proposition. Therefore, we are free to consider the fiber as a type of pairs $(\tau, p)$ which satisfy a proposition. We will describe a $\type{SL}_{2}(\Zb)$ action on this type and then prove that it is free and transitive.

Given a matrix $\begin{bmatrix}a & b \\ c & d  \end{bmatrix} : \type{SL}_{2}(\Zb)$, its associated M{\"o}bius transform is the function $f(z) = \frac{az + b}{cz + d}$ which acts on the upper half plane $\mathfrak{h}$. We define the action of $\type{SL}_{2}(\Zb)$ on $\fib_{q}(\La, \Lambda)$ by
\[
  \begin{bmatrix}
    a & b \\ c & d
  \end{bmatrix}
  (\tau, p) :\equiv \left(\frac{a \tau + b}{c \tau + d},\, (c \tau + d)p\right).
\]
We check that
\begin{align*}
  \latticegen{\left(\frac{a\tau + b}{c \tau + d}\right)} &= \frac{1}{c\tau + d}\left( \Zb(c \tau + d) \oplus \Zb(a\tau + b) \right) \\
                                                         &= \frac{1}{c \tau + d}\left(\latticegen{\tau}\right) \\
                                                         &=\frac{1}{c \tau + d} p\inv(\Lambda) \\
  &= ((c \tau + d) p)\inv(\Lambda).
  \end{align*}

  We also check that this is an action, which is to say that
  \[
  \begin{bmatrix}
    x & y \\ u & v
  \end{bmatrix}
  \begin{bmatrix}
    a & b \\ c & d
  \end{bmatrix}
  (\tau, p)  =
  \begin{bmatrix}
    xa +yc & xb +yd \\ ua + vc & ub + vd
  \end{bmatrix}
  (\tau, p) .
  \]
  That is, we need that
  \[
\left(\frac{x\left(\frac{a\tau + b}{c \tau + d}\right) + y}{u\left(\frac{a\tau + b}{c \tau + d}\right) + v}, \left(u\left(\frac{a\tau + b}{c \tau + d}\right) +v\right)(c\tau + d)p\right) = \left(\frac{(xa + yc)\tau + (xb + yd)}{(ua + vc)\tau + (ub + vd)}, ((ua + vc)\tau + (ub + vd))p\right)
  \]
  which amounts to a bit of algebra. Next, we show that this action is free and transitive. If $(\tau, p)$ and $(\sigma, q)$ are in the fiber, then we have that
  $$\latticegen{\tau} = p\inv(\Lambda) = p\inv(q(\latticegen{\sigma})) = z(\latticegen{\sigma})$$
  where $z \equiv p\inv q(1)$ is a non-zero complex scalar. This tells us that $z$ and $z\sigma$ are in $\latticegen{\tau}$, or, in other words, we have that
  \begin{align*}
    z &= a\tau + b &\mbox{for $a, b : \Zb$, and } \\
    z\sigma &= c \tau + d &\mbox{for $c, d : \Zb$, so that} \\
    \sigma &= \frac{a\tau + b}{c \tau + d}.
  \end{align*}
  We also know that $z\sigma$ and $z$ generate $\latticegen{\tau}$ (as an ordered basis), since multiplication by $z$ is an isomorphism of abelian groups. Therefore, the matrix
  $\begin{bmatrix} a & b \\ c & d \end{bmatrix}$
  must have determinant $1$, since it transforms one ordered basis of this rank 2 free abelian group into another.
    \end{proof}
    \end{ex}

  \begin{ex}
    Let $\Lambda$ be a lattice in a $1$-dimensional complex vector space $V$ --- that is, let $(V, \Lambda) : \Ma_{{1,1}}$. The pillowcase orbifold $\mathfrak{P}(\Lambda)$ associated to $\Lambda$ is $(V/\Lambda) \sslash \type{O}(1)$ with the action of $\type{O}(1) = \{-1, 1\}$ on the torus $V/\Lambda$ given by $[v] \mapsto [-v]$. We can describe the action of $O(1)$ on $V/\Lambda$ by acting on $V$ via the map $L \mapsto L \otimes_{\Rb} V : \B O(1) \to \B \Aut_{\B \type{GL}_{1}(\Ca)}(V)$, where the complex action on $L \otimes_{\Rb} V$ is induced by $i(\ell \otimes v) :\equiv \ell \otimes iv$. Therefore, we can construct the pillowcase orbifold as type of pairs consisting of a $1$-dimensional real inner product space $L$ and an element of the torus $((L \otimes_{\Rb} V))/(L \otimes_{\Rb} \Lambda)$:
    \[
\mathfrak{P}(\Lambda) :\equiv \dsum{L : \B O(1)} ((L \otimes_{\Rb} V)/(L \otimes_{\Rb} \Lambda)).
    \]
  \end{ex}

  The next few examples are described as quotients in Section 1.6 of \cite{Adem-Leida-Ruan:Orbifolds.and.Stringy.Topology}.

\begin{ex}\label{ex:kummer}
  The Kummer surface $K$ is the quotient $\Tb^{4} \sslash \type{Gal}(\Ca:\Rb)$ of a $4$-torus $\Tb^{4} :\equiv (U(1))^{4}$ with the action of $\type{Gal}(\Ca:\Rb) :\equiv \{1, \sigma\}$ given by complex conjugation: $\sigma(z_{1}, z_{2}, z_{3}, z_{4}) :\equiv (\bar{z}_{1},\bar{z}_{2},\bar{z}_{3},\bar{z}_{4})$.

  To describe the action of $\type{Gal}(\Cb:\Rb)$ on $\Tb^{4}$, we should first describe its action on $U(1)$, since the action on $\Tb^{4}$ is diagonal. To do this smoothly, we should choose a judicious delooping of $\type{Gal}(\Cb:\Rb)$. We can define an exemplar of $\type{Gal}(\Cb:\Rb)$ to be an algebraic closure of $\Rb$ --- or, to be a bit safer, a degree $2$ algebraic extension of $\Rb$. The canonical \exemplar is of course taken to be $\Cb$. If $\Kb$ is any degree $2$ algebraic extension of $\Rb$, then its Galois group $\type{Gal}(\Kb:\Rb)$ has at most two elements, one of which must be the identity. Call the other element $\sigma$; for $\Kb \equiv \Cb$ this is of course complex conjugation. We may then define
   \[
  \Sb^{1}(\Kb) :\equiv \left\{z : \Kb \,\middle|\, z \sigma(z) = 1\right\}
\]
and then $\Tb^{4}(\Kb) :\equiv \Sb^{1}(\Kb)^{4}$. This gives us the desired action of $\type{Gal}(\Cb:\Rb)$ on $\Tb^{4} \equiv \Tb^{4}(\Cb)$. We may therefore define the Kummer surface as
\[
K :\equiv \dsum{\Kb : \B\type{Gal}(\Cb: \Rb)} \Tb^{4}(\Kb).
\]
  \end{ex}

  \begin{ex}
    The teardrop orbifold $\Sb^{2}(n)$ for $n : \Nb$ ($n > 1$) may be constructed as $\Sb^{3}\sslash U(1)$ where $U(1)$ acts on $\Sb^{3} :\equiv \{z : \Cb^{2} \mid |z| = 1\}$ via $(z_{1}, z_{2}) \mapsto (\lambda z_{1}, \lambda^{n} z_{2})$. We can describe this action as the map which sends $L : \B U(1)$ to $\Sb(L \oplus L^{\otimes n})$, where we consider $L \oplus L^{\otimes n} : \B U(2)$ as a 2-dimensional Hermitian vector space and define the unit sphere $\Sb(V)$ for $V : \B U(n)$ by $\Sb(V) :\equiv \{v : V \mid \langle v, v \rangle = 1\}$. Therefore, we may construct the teardrop as the type of pairs of a $1$-dimensional Hermitian vector space $L$ and a unit length element of $L \oplus L^{\otimes n}$:
    \[
\Sb^{2}(n) :\equiv \dsum{L : \B U(1)} \Sb(L \oplus L^{\otimes n}).
    \]

    We can generalize this definition to the weighted projective spaces $W\Pb(n_{1}, \ldots, n_{k})$ (for a natural numbers $n_{1}, \ldots, n_{k}$ all coprime).
    \[
W\Pb(n_{1}, \ldots, n_{k}) :\equiv \dsum{L : \B U(1)} \Sb(L^{\otimes n_{1}} \oplus \cdots \oplus L^{\otimes n_{k}}).
    \]
\end{ex}

  \begin{ex}\label{ex:torus.mod.group}
A large class of orbifolds which appear in practice may be constructed as quotients $\Tb^{n}\sslash \Gamma$ where $\Gamma \subseteq \type{GL}_{n}(\Zb)$ is a finite subgroup of $\type{GL}_{n}(\Zb)$ acting on $\Tb^{n} = \Rb^{n}/\Zb^{n}$ via the action of $\type{GL}_{n}(\Zb)$ on $\Rb^{n}$ by matrix multiplication. The Kummer surface of \cref{ex:kummer} is one example of this sort of orbifold, as are the pillowcases.

Here is one general construction of this sort of orbifold. We may deloop $\type{GL}_{n}(\Zb)$ by noting that this is the type of symmetries of the vector space $\Rb^{n}$ which preserve the lattice $\Zb^{n} \subseteq \Rb^{n}$. This suggests
\[
\B\type{GL}_{n}(\Zb) :\equiv \dsum{V : \B \type{GL}_{n}(\Rb)} \type{Lattice}(V)
\]
pointed at $(\Rb^{n}, \Zb^{n})$. We need to check that this is $0$-connected, so let $(V, \Lambda)$ be a lattice in an $n$-dimensional real vector space. There is some isomorphism $p : V = \Rb^{n}$, and this identifies $\Lambda$ with the lattice $p(\Lambda)$. Now choose generators for $p(\Lambda)$; this gives us an isomorphism $q : p(\Lambda) = \Zb^{n}$ considered as abstract groups; however, we may extend $q$ to an automorphism $\bar{q} : \Rb^{n} = \Rb^{n}$ of $\Rb^{n}$ by noting that the standard generators of $\Zb^{n}$ form a basis for $\Rb^{n}$. The composite $\bar{q}\inv \circ p : V = \Rb^{n}$ is an identification of $V$ with $\Rb^{n}$ which sends $\Lambda$ to $\Zb^{n}$.

For a finite subgroup $\Gamma$ of $\type{GL}_{n}(\Zb)$, we can now define $\B \Gamma$ out of $\B \type{GL}_{n}(\Zb)$ using \cref{prop:subtorsor.delooping}. We can then define
\[
\Tb^{n} \sslash \Gamma :\equiv \dsum{(V, \Lambda, T) : \B \Gamma} (V/\Lambda).
\]
Explicitly, the points of $\Tb^{n} \sslash \Gamma$ consist of an $n$-dimensional vector space $V$, a lattice $\Lambda$ in $V$, a $\Gamma$-subtorsor $T$ of the space of frames of $V$, and an point on the torus $V / \Lambda$.
\end{ex}

\section{Cohesion and the Homotopy Theory of Orbifolds}

We will now move beyond bare homotopy type theory and into \emph{modal} homotopy type theory. In this section, we will briefly survey the homotopy theory of orbifolds, and in particular their covering theory. This means, in particular, defining the \emph{homotopy type} of an orbifold.

Defining the homotopy type of a type is luckily quite straightforward. We would like to be able to identify points by giving continuous deformations between them.  In other words, we should have a sort of quotient map $(-)^{{\shape}} : X \to \shape X$ sending any point in $X$ to its homotopy class in the homotopy type $\shape X$, and given a path $\gamma : \Rb \to X$ we should get an identification $\gamma(0)^{\shape} = \gamma(1)^{\shape}$ in $\shape X$. In other words, we want to nullify maps out of $\Rb$, or more precisely, we want to localize our type $X$ at the terminal map $\Rb \to \ast$. The theory of localizations in HoTT is developed in \cite{RSS:Modalities} and \cite{LOPS:Localization.in.HoTT}, and we may use this theory to define the $\shape$ modality.

\begin{defn}[Definition 9.6 \cite{Shulman:Real.Cohesion}]
We define the \emph{shape} modality $\shape$ to be localization at the type of real numbers $\Rb$.\footnote{For now, I will leave ambiguous which type of real numbers we are localizing at. In \cref{sec:SDG.axioms}, we will see axioms for the type of \emph{smooth reals} which will play the role of the real numbers in synthetic differential geometry.} The \emph{$n$-shape} $\shape_{n}$ modality is the localization at $\Rb$ and the homotopy $(n+1)$-sphere $S^{n+1}$, and its modal types are those types which are both $\shape$-modal and $n$-truncated. A definition of this localization can as Definition 9.6 of \cite{Shulman:Real.Cohesion} or (for a general localization) in Section 2.2 of \cite{RSS:Modalities}.
  \end{defn}

  This definition is short and sweet, but without a supporting apparatus it is unfortunately underspecified. That supporting apparatus is the \emph{cohesive homotopy type theory} which adds a \emph{comodality} $\flat$ that strips types of their spatial structure. For this reason, we begin this section with a review of cohesive homotopy type theory in \cref{sec:review.cohesion}.

  In \cref{sec:modal.covering.theory}, we will then review the cohesive covering theory developed in \cite{Jaz:Good.Fibrations}. We will use this covering theory to quickly compute the homotopy type of $\Ma_{1,1}$ in \cref{thm:covering.classification}: it is a $\B\type{SL}_{2}(\Zb)$.

  Then, in \cref{sec:orbifold.maps}, we will  take a brief look at maps between good orbifolds. With the modal approach to covering theory and the HoTT approach to group theory, we will see that maps into a configuration space $X^{n} \sslash n!$ correspond to maps out of $n$-fold covers (\cref{thm:map.into.config.space}), and that maps into a quotient $X \sslash \Gamma$ corresopnd to $\Gamma$-equivariant maps out of $\Gamma$-principal bundles (\cref{thm:map.into.quotient}).

  \subsection{A review of cohesive homotopy type theory.}\label{sec:review.cohesion}
  To understand what cohesion adds to type theory, let's think in terms of models for a bit. We intend to interpret our type theory in a topos of smooth stacks, such as the Dubuc $\infty$-topos. As with any $\infty$-topos, this topos of smooth stacks lives over the topos of homotopy types (stacks on the point) by its global sections functor.

  \[
    \begin{tikzcd}[row sep = large]
      \text{smooth stacks} \ar[d, shift left = 25, bend left = 20, "\mbox{global sections}"] \ar[d, hookleftarrow, "\mbox{locally constant stacks}" description] \ar[d, shift right = 25, bend right = 20, "\mbox{shape}"'] \\
      \text{homotopy types}
      \end{tikzcd}
  \]
  Left adjoint to the global sections functor is the functor sending a homotopy type $X$ the stack of locally constant sections valued in $X$. In our case, this functor really is the inclusion of the constant stacks --- it is fully faithful. Then there is a further left adjoint: this sends a stack to its \emph{shape} (in the sense of Lurie). If that stack is represented by a manifold, then its shape will be the homotopy type of that manifold. In the terminology of higher toposes, the topos of smooth stacks is $\infty$-connected and locally $\infty$-connected.\footnote{It is furthermore \emph{local}, in that the global sections functor also admits a \emph{right} adjoint. This right adjoint is the inclusion of codiscretes. We will only need this other adjoint modality briefly, for \cref{lem:crisp.Penon.manifold.disk}, and so we will not dwell on it here.}

  Cohesive HoTT \cite{Shulman:Real.Cohesion} formalizes this relationship between smooth stacks and homotopy types by adding \emph{crisp} variables to homotopy type theory. Every expression in HoTT occurs in a \emph{context}, which is a list of the free variables in the expression, together with the type these variables have. To define a function $f : X \to Y$, we construct $f(x) : Y$ in the context of a free variable $x : X$. Since any such function $f : X \to Y$ will be interpreted as a map of stacks --- and if these stacks are represented by manifolds, therefore potentially as a smooth map of manifolds --- the dependence of an expression $f(x)$ on its free variable $x : X$ implies a sort \emph{smoothness}. But not all dependencies in mathematics should be smooth; sometimes an expression $f(x)$ should vary \emph{discontinuously} in $x$.

  Shulman allows for discontinuous dependency with a new type of free variable declaration: \emph{crisp} variables $x :: X$. To say that $f(x) : Y$ for a \emph{crisp variable} $x :: X$ is to say that $f(x)$ depends on $x$ in a (possibly) discontinous way. If all of the free variables in an expression $f$ are crisp, we say that $f$ is crisp. Importantly, crisp variables must also have crisp type. In particular, every expression with no free variables is \emph{crisp}. For example, $\Zb$ and $\Rb$ are crisp types, and $0 : \Zb$ and $\pi : \Rb$ are crisp elements. While the expression $x^{2} + 1 : \Rb$ with $x : \Rb$ is not crisp, the function $(x \mapsto x^{2} + 1) : \Rb \to \Rb$ is crisp since the variable $x$ has been bound.

  One way to ensure that crisp variable behave discontinuously is the crisp law of excluded middle.
  \begin{axiom}[\cite{Shulman:Real.Cohesion}]\label{axiom:LEM}
For any crisp proposition $P :: \Prop$, either $P$ holds or $\neg P$ holds.
\end{axiom}

The crisp law of excluded middle lets us define functions of crisp variables by cases. For example, if $x :: \Rb$ is a crisp variable, then the proposition $(x > 0) : \Prop$ is also crisp (since every free variable in it is crisp). Therefore, either $(x > 0)$ or $\neg(x > 0)$. We can therefore define the real number
\[
  f(x) :\equiv \begin{cases} -1 &\mbox{if $x > 0$} \\ 1 &\mbox{otherwise} \end{cases}
\]
by cases. Clearly, $f(x)$ is a discontinuous function of $x$ which we were able to define using law of excluded middle. Without the law of excluded middle, it is impossible to define discontinuous functions $\Rb \to \Rb$.

An expression such as $f(x) : Y$ above depending on a crisp variable $x :: X$ can't give a function $X \to Y$, because $X \to Y$ is supposed to be the type of smooth functions (or, really, the mapping stack). To internalize the crisp variables, we can add a type $\flat X$ which is ``freely generated by the crisp variables of $X$'' in the sense that $f(x) : Y$ depending on $x :: X$ gives rise to a function $\flat X \to Y$. We can think of $\flat X$ as $X$ stripped of its smooth structure; in terms of stacks, $\flat X$ is the stack constant at the global sections of $X$.

More formally, for any \emph{crisp} type $X$ we have a type $\flat X$ and for every crisp $x :: X$ we have $x^{\flat} : \flat X$. We then have the following induction principle: if $C : \flat X \to \Type$ is any type family, and if for $x :: X$ we have $c(x) : C(x^{\flat})$, then for any $u : \flat X$ we have an element
\[
(\mbox{let $x^{\flat} :\equiv u$ in $c(x)$}) : C(u).
\]
Furthermore, if $u \equiv y^{\flat}$ for $y :: X$, we have
\[
(\mbox{let $x^{\flat} :\equiv y^{\flat}$ in $c(x)$}) \equiv c(y).
\]
We can define a counit $(-)_{\flat} : \flat X \to X$ by $u_{\flat} :\equiv (\mbox{let $x^{\flat} :\equiv u$ in $x$})$. Less formally, we might say that $(-)_{\flat}$ is defined by $(x^{\flat})_{\flat} :\equiv x$. Given any smooth function $f : X \to Y$, we can precompose by $(-)_{\flat} : \flat X \to X$ to get its underlying discontinuous function $f \circ (-)_{\flat} : \flat X \to Y$.

A type $X$ should be discrete when any discontinuous function out of it is already smooth. That is, a type $X$ should be discrete precisely when precomposition by $(-)_{\flat}$ gives an equivalence of $X \to Y$ with $\flat X \to Y$ for any type $Y$. This will happen precisely when $(-)_{\flat}$ is an equivalence.
\begin{defn}[\cite{Shulman:Real.Cohesion}]
A crisp type $X :: \Type$ is \emph{(crisply) discrete} if $(-)_{\flat} : \flat X \to X$ is an equivalence.
\end{defn}

In terms of stacks, a crisply discrete type is a constant stack --- that is, one for which the canonical map from the constant stack at its global sections into it is an equivalence.

In order to relate the liminal spatiality of crisp variables to the concrete topology of the reals, we will relate the discreteness of $\flat$ with a discreteness measured by $\Rb$.

\begin{axiom}[$\Rb\flat$: \cite{Shulman:Real.Cohesion}]
  A crisp type $X :: \Type$ is discrete if and only if the inclusion $\term{const} : X \to (\Rb \to X)$ of constant paths is an equivalence. That is, $X$ is discrete if and only if every path $\gamma : \Rb \to X$ is constant.
  \[
(\flat X \simeq X) \quad\iff\quad (X \simeq \shape X).
  \]
  \end{axiom}

This axiom justifies extending the definition of discreteness to types which aren't crisp. We say a type $X$ is discrete just when $\term{const} : X \to (\Rb \to X)$ is an equivalence, or when it is $\Rb$-null. By construction, this is precisely when $(-)^{\shape} : X \to \shape X$ is an equivalence, so we see that a type is discrete if and only if it is $\shape$-modal.

  Let's end this review of cohesive homotopy type theory by quoting Theorem 9.15 of \cite{Shulman:Real.Cohesion}.
  \begin{thm}[Theorem 9.15 \cite{Shulman:Real.Cohesion}]
    For any crisp types $X$ and $Y$, we have an equivalence
    \[
\flat (X \to \flat Y) \simeq \flat (\shape X \to Y)
    \]
    exhibiting the adjointness of $\shape$ and $\flat$.
\end{thm}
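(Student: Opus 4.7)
The plan is to assemble the equivalence as a composite of two natural equivalences, each arising from a universal property of one of the modalities. First I would observe that $\flat Y$ is discrete: the counit $(-)_\flat : \flat\flat Y \to \flat Y$ is an equivalence by idempotence of $\flat$, so by the $\Rb\flat$ axiom $\flat Y$ is also $\shape$-modal. Consequently, the universal property of $\shape$-localization yields that precomposition with the shape unit is an equivalence $(\shape X \to \flat Y) \simeq (X \to \flat Y)$, and applying the functor $\flat$ produces
\[
\flat(\shape X \to \flat Y) \simeq \flat(X \to \flat Y).
\]

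The second equivalence $\flat(\shape X \to \flat Y) \simeq \flat(\shape X \to Y)$ I would construct directly. In one direction, postcomposition with the counit $(-)_\flat : \flat Y \to Y$ is a crisp operation, and $\flat$-functoriality lifts it to the required map. For the inverse, I would exploit that $\shape X$ is discrete, so $(-)_\flat : \flat \shape X \to \shape X$ is an equivalence with inverse $e : \shape X \to \flat\shape X$. Given a crisp $g :: \shape X \to Y$, define a crisp lift $g^\# :: \shape X \to \flat Y$ by
\[
g^\#(s) :\equiv \bigl(\text{let } u^\flat :\equiv e(s) \text{ in } g(u)^\flat\bigr),
\]
where the body is well-typed by $\flat$-induction since, for crisp $u :: \shape X$, the value $g(u) :: Y$ is crisp and hence admits the $\flat$-unit $g(u)^\flat : \flat Y$. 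Packaging this construction through $\flat$-induction on the outer element of $\flat(\shape X \to Y)$ yields the inverse map.

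Verifying that the two maps are mutually inverse is then a routine unfolding, using the $\flat$-computation rule $\bigl(\text{let } u^\flat :\equiv v^\flat \text{ in } c(u)\bigr) \equiv c(v)$, the identity $(-)_\flat \circ e = \id_{\shape X}$, and the observation that a crisp element of $\flat Y$ is recovered from its underlying crisp $Y$-value via $(-)^\flat$. Composing the two equivalences delivers the theorem.

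The main obstacle I anticipate is the careful management of crispness. The unit $(-)^\flat$ is only defined on crisp arguments, so the construction of $g^\#$ must stage its use through a $\flat$-elimination of $e(s)$ in order to expose a crisp representative before $g$ is applied and the result lifted along $(-)^\flat$. Once this discipline is in place, both the construction and the verification of the equivalence follow formally from the universal properties of $\shape$ and $\flat$ together with the $\Rb\flat$ axiom linking them through the shared class of discrete types.
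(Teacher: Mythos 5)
Your proof is correct. The paper only cites this result from Shulman's \emph{Real Cohesion} (Theorem~9.15 there) without reproducing the argument, but your two-step decomposition --- first using the discreteness of $\flat Y$ together with the universal property of the $\shape$-localization to obtain $\flat(X \to \flat Y) \simeq \flat(\shape X \to \flat Y)$, then using the crisp discreteness of $\shape X$ to exchange $Y$ for $\flat Y$ under the outer $\flat$ --- is the same structure as Shulman's proof, with your explicit $\flat$-induction construction of $g^\#$ standing in for the packaged lemma there about $\flat$ applied to function types out of a crisply discrete domain.
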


\subsection{Modal covering theory} \label{sec:modal.covering.theory}
Let's take a minute to recall the modal covering theory developed in Section 9 of \cite{Jaz:Good.Fibrations}. A covering map $\pi : C \to X$ satisfies the \emph{unique path lifting} property:
\[
\begin{tikzcd}
	\ast & C \\
	\Rb & X
	\arrow["0"', from=1-1, to=2-1]
	\arrow["\forall"', from=2-1, to=2-2]
	\arrow["\pi", from=1-2, to=2-2]
	\arrow["\forall", from=1-1, to=1-2]
	\arrow["{\exists!}", dashed, from=2-1, to=1-2]
\end{tikzcd}
\]
For any path $\gamma : \Rb \to X$ and $c : C$ over $\gamma(0)$, there is a unique lift $\tilde{\gamma} : \Rb \to C$ of $\gamma$. Furthermore, if $f : A \to B$ is any map which induces an equivalence on fundamental groupoids, then $\pi : X \to Y$ lifts uniquely on the right against $f$. We can use this property to define the notion of covering using the fundamental groupoid modality $\shape_{1}$, which is given by localization both at $\Rb$ and the homotopy circle $S^{1}$ and whose modal types are discrete groupoids.

We will use the notion of a \emph{modal \'etale} map, studied in \cite{Cherubini-Rijke:Modal.Descent}.
\begin{defn}[\cite{Cherubini-Rijke:Modal.Descent}]\label{defn:modal.etale}
  For a modality $\modal$, a map $f : X \to Y$ is $\modal$-{\'e}tale when the modal naturality square
  \[
\begin{tikzcd}
	X & {\modal X} \\
	Y & {\modal Y}
	\arrow["f"', from=1-1, to=2-1]
	\arrow["{(-)^{\modal}}", from=1-1, to=1-2]
	\arrow["{(-)^{\modal}}"', from=2-1, to=2-2]
	\arrow["{\modal f}", from=1-2, to=2-2]
\end{tikzcd}
  \]
  is a pullback.
\end{defn}

\begin{defn}[Definition 9.1 \cite{Jaz:Good.Fibrations}]
A map $\pi : C \to X$ is a \emph{covering} if it is $\shape_{1}$-{\'e}tale and its fibers are sets.
\end{defn}

The justification of these definitions comes from Theorem 7.2 of \cite{Cherubini-Rijke:Modal.Descent} which shows that $\modal$-equivalences and $\modal$-{\'e}tale maps form an orthogonal factorization system.
\begin{thm}[Theorem 7.2 \cite{Cherubini-Rijke:Modal.Descent}]
A map $f : X \to Y$ is $\modal$-{\'e}tale if and only if it lifts uniquely on the right against all $\modal$-equivalences --- maps $g : A \to B$ for which $\modal g$ is an equivalence. Furthermore, the $\modal$-{\'e}tale maps and the $\modal$-equivalences form the right and left classes respectively of an orthogonal factorization system.
\end{thm}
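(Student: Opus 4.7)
The plan is to prove both halves simultaneously by constructing, for any map $f : X \to Y$, a factorization through the pullback $P :\equiv \modal X \times_{\modal Y} Y$ as $X \xrightarrow{u} P \xrightarrow{\pi} Y$, where $u$ is induced by the $\modal$-unit $\eta_X : X \to \modal X$ together with $f$, and $\pi$ is the second projection. The strategy is to establish three facts: (i) $\pi$ is always $\modal$-\'etale, (ii) $u$ is always a $\modal$-equivalence, and (iii) $\modal$-\'etale maps lift uniquely on the right against $\modal$-equivalences. Together these imply the orthogonal factorization system and also the characterization of $\modal$-\'etale maps as exactly the right lifting class: a map $f$ is $\modal$-\'etale iff its $u$-component is an equivalence, which by (iii) happens iff $f$ has the right lifting property against all $\modal$-equivalences.

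For the forward direction of (iii), let $f$ be $\modal$-\'etale and let $g : A \to B$ be a $\modal$-equivalence, fitting into a commuting square with $h : A \to X$ and $k : B \to Y$ satisfying $f \circ h = k \circ g$. Since $f$ is $\modal$-\'etale, specifying a lift $\ell : B \to X$ amounts to specifying compatible maps $B \to \modal X$ and $B \to Y$ via the pullback description of $X$. For the latter I take $k$ itself; for the former I take
\[
m :\equiv \modal h \circ (\modal g)\inv \circ \eta_B : B \to \modal B \to \modal A \to \modal X,
\]
which exists since $\modal g$ is an equivalence. Compatibility $\modal f \circ m = \eta_Y \circ k$ follows by functoriality of $\modal$ from $f \circ h = k \circ g$, and the identity $\ell \circ g = h$ is checked on the two pullback projections. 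For uniqueness: any other $\ell'$ has the same projection $k$ to $Y$, and its projection $\eta_X \circ \ell' : B \to \modal X$ is determined as $\modal \ell' \circ \eta_B$, where $\modal \ell'$ is forced to equal $\modal h \circ (\modal g)\inv$ by precomposing with $\eta_A$ and using $\eta_B \circ g = \modal g \circ \eta_A$.

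For (i) and (ii), the key point is that the $\modal$-reflection $\modal P$ can be computed as $\modal X \times_{\modal Y} \modal Y \simeq \modal X$, because $\modal$ preserves the defining pullback of $P$ along the leg $\eta_Y : Y \to \modal Y$ whose codomain is already modal. Given this, a direct check shows the naturality square of $\pi$ is a pullback, so $\pi$ is $\modal$-\'etale, and $\modal u$ is the identity on $\modal X$, so $u$ is a $\modal$-equivalence. For the converse direction of (iii) and the characterization of $\modal$-\'etale maps, given $f$ with the right lifting property against all $\modal$-equivalences, apply that property to the square with $\id_X$ on top and $\pi \circ u = f$ on the bottom against the $\modal$-equivalence $u$; the resulting section of $u$ combined with the uniqueness of lifts forces $u$ to be an equivalence, whence $f$ is $\modal$-\'etale. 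The factorization $f = \pi \circ u$ together with the established orthogonality and the closure of each class (left under composition and pushouts, right under composition and pullbacks, both standard from the lifting description) then assemble into the orthogonal factorization system.

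The chief technical obstacle is justifying the claim that $\modal$ preserves the pullback defining $P$, i.e.\ the ``modal descent'' statement $\modal(\modal X \times_{\modal Y} Y) \simeq \modal X$. This is not a formal consequence of the modality axioms in full generality --- it requires knowing that the fiber of $\eta_Y$ over a modal point is $\modal$-connected, so that pulling back along it commutes with $\modal$-reflection. I expect most of the proof effort to go into carefully verifying this preservation, e.g.\ via the reflective-subuniverse presentation of $\modal$ using the unit triangles and the fact that $\modal Y$ is $\modal$-modal. Once this is in hand, everything else reduces to pullback manipulations and functoriality of $\modal$.
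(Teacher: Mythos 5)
Your strategy — factoring $f$ through the canonical pullback $P :\equiv \modal X \times_{\modal Y} Y$, showing $u : X \to P$ is a $\modal$-equivalence and $\pi : P \to Y$ is $\modal$-\'etale, and deriving orthogonality from there — is exactly the standard approach and is also the route taken by Cherubini and Rijke. The paper at hand only cites their Theorem 7.2 without reproducing a proof, so the comparison here is against their argument. On the substance: the ``chief technical obstacle'' you flag is the right one, but you underestimate how formal it is. That $\eta_Y : Y \to \modal Y$ is $\modal$-connected — equivalently, that $\fst : P \to \modal X$ is a $\modal$-unit so that $\modal P \simeq \modal X$ — is a direct consequence of the definition of a higher modality (as opposed to a mere reflective subuniverse). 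A modality has unique dependent elimination into modal families, and a map $g$ being $\modal$-connected is precisely the statement that precomposition by $g$ is an equivalence on sections of modal families over the codomain; applying this to $g \equiv \eta_Y$ is exactly the dependent elimination property of the unit. No topos-level reasoning or subtle unit-triangle juggling is needed, and pullback-stability of $\modal$-connected maps is automatic since $\modal$-connectedness is a fiberwise condition. You should invoke this directly rather than treating it as an open issue.

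There is one genuine, if small, gap in your converse direction. Obtaining a lift $\ell : P \to X$ of the square with $\id_X$ on top against the $\modal$-equivalence $u$ gives you $\ell \circ u = \id_X$, i.e.\ a retraction of $u$, but a retraction alone does not make $u$ an equivalence. To finish, you need to also show $u \circ \ell = \id_P$, and for that you must invoke the \emph{forward} direction of (iii) applied to the $\modal$-\'etale map $\pi$: both $u \circ \ell$ and $\id_P$ are lifts of the commuting square with $u$ on the left and top, $\pi$ on the right and bottom, so the unique lifting property of $\pi$ against $u$ identifies them. Your phrase ``combined with the uniqueness of lifts'' presumably gestures at this, but it reads as though the uniqueness of lifts of $f$ (rather than of $\pi$) would suffice, which it does not. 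Once this step is made explicit, the argument is sound.
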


Finally, we need a theorem from \cite{Jaz:Good.Fibrations}. We can characterize coverings of $X$ by the monodromy action of fundamental groupoid $\shape_{1} X$ of $X$.
\begin{thm}[Theorem 9.2 \cite{Jaz:Good.Fibrations}]\label{thm:covering.classification}
  For a type $X$, let $\type{Cov}(X)$ denote the type of coverings of $X$. Then we have an equivalence
  \[
\type{Cov}(X) \simeq (\shape_{1} X \to \Type_{\shape_{0}})
\]
between coverings of $X$ and discrete set valued functions on the fundamental groupoid of $X$. Given such a map $E : \shape_{1} X \to \Type_{\shape_{0}}$, the associated covering is the first projection from $C :\equiv \dsum{x : X} E_{x^{{\shape_{1}}}}$.
\end{thm}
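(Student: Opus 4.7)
The plan is to combine the type-family classifier with the modal descent property of $\shape_1$-\'etale maps (\cite{Cherubini-Rijke:Modal.Descent}, Theorem 7.2). Let $\Type_{\shape_0} \subseteq \Type$ denote the subuniverse of discrete sets. The type-family classifier applied to $\shape_1 X$ provides an equivalence between $(\shape_1 X \to \Type_{\shape_0})$ and the type of maps $f : D \to \shape_1 X$ whose fibers are all discrete sets, sending $E$ to $\fst : \dsum{y : \shape_1 X} E(y) \to \shape_1 X$. The task therefore reduces to showing that, after pulling back along the unit $(-)^{\shape_1} : X \to \shape_1 X$, such maps into $\shape_1 X$ correspond bijectively to coverings of $X$.

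For the forward direction, given $E : \shape_1 X \to \Type_{\shape_0}$, I would take $\pi_E :\equiv \fst : \dsum{x : X} E(x^{\shape_1}) \to X$ as the associated covering. Its fibers $E(x^{\shape_1})$ are discrete sets, hence sets. Since discrete sets are $\shape_1$-modal and $\shape_1$ is $\Sigma$-closed (as an accessible modality with a modal universe, cf.\ \cite{RSS:Modalities}), the total space $\dsum{y : \shape_1 X} E(y)$ is $\shape_1$-modal. The canonical square with $\pi_E$ on the left and the first projection from $\dsum{y : \shape_1 X} E(y)$ on the right is a pullback by a standard $\Sigma$-type calculation; since its right column is already $\shape_1$-modal, this square coincides with the $\shape_1$-naturality square of $\pi_E$, which is therefore $\shape_1$-\'etale. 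Combined with the set-fiber condition, $\pi_E$ is a covering.

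Conversely, given a covering $\pi : C \to X$, applying $\shape_1$ yields $\shape_1 \pi : \shape_1 C \to \shape_1 X$ between $\shape_1$-modal types, whose fibers are $\shape_1$-modal. Since $\pi$ is $\shape_1$-\'etale, $\fib_{\shape_1 \pi}(x^{\shape_1}) \simeq \fib_\pi(x)$, which is a set by assumption; hence each fiber of $\shape_1 \pi$ is a $\shape_1$-modal set, i.e., a discrete set. Classifying $\shape_1 \pi$ then gives $E_\pi : \shape_1 X \to \Type_{\shape_0}$, and the pullback square witnessing that $\pi$ is $\shape_1$-\'etale exhibits $C \simeq X \times_{\shape_1 X} \shape_1 C \simeq \dsum{x : X} E_\pi(x^{\shape_1})$ over $X$. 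The round-trip identifications then follow from the universal property of the classifier.

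The main obstacle is establishing $\Sigma$-closure of the $\shape_1$-modality: without it the total space $\dsum{y : \shape_1 X} E(y)$ need not be $\shape_1$-modal, and the identification of the defining pullback of $\pi_E$ with its $\shape_1$-naturality square fails. In our setting this closure holds since $\shape_1$ is a nullification at $\Rb$ and $S^1$, and the general theory of accessible modalities in HoTT guarantees closure of the modal subuniverse under $\Sigma$-types whose fibers are modal.
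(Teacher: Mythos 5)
The paper states this result by citation to Theorem~9.2 of \cite{Jaz:Good.Fibrations} and does not reproduce the proof, so there is no in-text argument to compare against. Your reconstruction via the type-family classifier (straightening/unstraightening) and modal $\Sigma$-closure is the standard route and is essentially correct, but one step is elided in a way that deserves to be made explicit. You observe that the pullback square
\[
\begin{tikzcd}
	\dsum{x:X}E(x^{\shape_1}) \ar[r]\ar[d,"\pi_E"'] & \dsum{y:\shape_1 X}E(y) \ar[d,"\fst"]\\
	X \ar[r,"{(-)^{\shape_1}}"'] & \shape_1 X
\end{tikzcd}
\]
has $\shape_1$-modal right column, and conclude that it ``coincides with the $\shape_1$-naturality square.'' For this identification you need the top horizontal map to be a $\shape_1$-unit, which requires both that its codomain is modal (you address this) and that the map itself is $\shape_1$-connected (you do not). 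The latter does hold: the top map is the base change of the bottom $\shape_1$-unit along $\fst$, its fibers are therefore the fibers of $(-)^{\shape_1}$ (which are $\shape_1$-connected), and $\modal$-connectedness of a map is a fiberwise condition, hence pullback-stable. Once this is said, the naturality square identification is sound.

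Two smaller remarks. First, in the converse direction you argue the fibers of $\shape_1\pi$ are discrete by looking only over points of the form $x^{\shape_1}$; this is fine, but it silently uses that the $\shape_1$-unit is surjective (which holds since $\shape_1$ is a nullification at inhabited types) together with the fact that ``is a discrete set'' is a proposition. Second, you flag $\Sigma$-closure of $\shape_1$ as ``the main obstacle,'' but this is part of what it means for $\shape_1$ to be a \emph{modality} rather than a mere reflective subuniverse and is not really in doubt; the genuinely nontrivial step is the one identified above, which does not depend on lexness of $\shape_1$ (which indeed fails) but only on pullback-stability of the $\shape_1$-connected class.
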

\begin{rmk}
Note that if $X$ is connected in the sense that its fudmamental groupoid $\shape_{1} X$ is $0$-connected, and $x : X$ is any point, then $\shape_{1} X$ pointed at $x^{\shape_{1}}$ is a delooping $\B \pi_{1} X$ of the fundamental group $\pi_{1}X$ (based at $x$). In this case, \cref{thm:covering.classification} specializes to the usual theorem that coverings of $X$ are equivalent to actions $\B \pi_{1} X$ on discrete sets, with the action given by monodromy.
\end{rmk}

\paragraph{Coverings of orbifolds.}
With all this review out of the way, we can discuss the homotopy theory of orbifolds. We can begin by calculating the homotopy type of $\Ma_{1,1}$.

\begin{thm}\label{thm:homotopy.type.of.M11}
The homotopy type of $\Ma_{1,1}$ (\cref{defn:M11}) is a $\B \type{SL}_{2}(\Zb)$, and $q : \mathfrak{h} \to \Ma_{1,1}$ is the universal cover of $\Ma_{1,1}$.
  \end{thm}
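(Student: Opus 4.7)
The plan is to combine Proposition~\ref{prop:M11.homotopy.quotient}, which presents $q$ as the homotopy quotient map $\mathfrak{h} \to \mathfrak{h} \sslash \type{SL}_{2}(\Zb)$, with the modal covering theory of Section~\ref{sec:modal.covering.theory}. The central observation is that the upper half plane is $\shape$-contractible: because $\mathfrak{h} \subseteq \Cb$ is convex, the straight-line homotopy $(s, t) \mapsto (1-s)\gamma(t) + s\gamma(0)$ stays inside $\mathfrak{h}$ and contracts any path $\gamma : \Rb \to \mathfrak{h}$ to the constant path at $\gamma(0)$, and the analogous linear contraction trivializes all higher homotopies. Hence $\shape \mathfrak{h} = \ast$, and in particular $\shape_{1} \mathfrak{h} = \ast$.

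With this in hand, I would next argue that $q$ is a covering in the sense of \cref{defn:modal.etale} (together with the definition of covering following it). Its fibers are $\type{SL}_{2}(\Zb)$-torsors by Proposition~\ref{prop:M11.homotopy.quotient}, hence sets. For $\shape_{1}$-{\'e}taleness, observe that under the equivalence $\Ma_{1,1} \simeq \dsum{e : \B \type{SL}_{2}(\Zb)} \mathfrak{h}\twisted{e}$ the map $q$ is the inclusion of the fiber of the first projection $p : \Ma_{1,1} \to \B \type{SL}_{2}(\Zb)$ over the canonical exemplar; that is, $q$ arises as the pullback of $\pt : \ast \to \B \type{SL}_{2}(\Zb)$ along $p$. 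Because $\type{SL}_{2}(\Zb)$ is a crisply discrete $1$-group, its delooping $\B \type{SL}_{2}(\Zb)$ is $\shape_{1}$-modal, so $\pt$ is trivially $\shape_{1}$-{\'e}tale; and $\shape_{1}$-{\'e}tale maps are closed under pullback as the right class of the orthogonal factorization system of Theorem 7.2 of \cite{Cherubini-Rijke:Modal.Descent}. Hence $q$ is $\shape_{1}$-{\'e}tale, and together with $\shape_{1} \mathfrak{h} = \ast$ this makes $q$ the universal cover.

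Finally, the homotopy type calculation uses that $\shape$ is a left adjoint (to the inclusion of discrete types) and so preserves the colimit presentation of the $\sslash$-quotient. Since $\shape \mathfrak{h} = \ast$ and $\B \type{SL}_{2}(\Zb)$ is already $\shape$-modal, this yields
\[
\shape \Ma_{1,1} \simeq \shape(\mathfrak{h} \sslash \type{SL}_{2}(\Zb)) \simeq (\shape \mathfrak{h}) \sslash \type{SL}_{2}(\Zb) \simeq \ast \sslash \type{SL}_{2}(\Zb) \equiv \B \type{SL}_{2}(\Zb).
\]
The main technical point to justify carefully is that $\B \type{SL}_{2}(\Zb)$ is $\shape_{1}$-modal --- equivalently, that the delooping of a crisply discrete $1$-group is discrete. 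I expect to argue this directly, by showing that any $\type{SL}_{2}(\Zb)$-torsor over $\Rb$ is trivial (since $\Rb$ is connected and $\type{SL}_{2}(\Zb)$ is a discrete set), and similarly on $S^{n+1}$.
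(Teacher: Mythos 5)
Your proposal is essentially correct and follows the same route as the paper. Both proofs rest on the two key facts you identify: $\mathfrak{h}$ is $\shape$-contractible (by convexity), and the delooping of a crisply discrete group is discrete (Theorem 5.9 of \cite{Jaz:Good.Fibrations}). Your observation that $q$ is the pullback of $\pt : \ast \to \B\type{SL}_2(\Zb)$ along the first projection $p : \Ma_{1,1} \to \B\type{SL}_2(\Zb)$ is exactly the paper's observation that $q$ is the fiber of $\fib_q : \Ma_{1,1} \to \type{Tors}_{\type{SL}_2(\Zb)}$, and your argument that $q$ is universal because $\shape_1\mathfrak{h} = \ast$ is valid: a $\shape_1$-\'etale $q$ with $\shape_1$-contractible domain is the fiber of $(-)^{\shape_1} : \Ma_{1,1} \to \shape_1\Ma_{1,1}$, which is the definition of the universal cover.

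Where your proposal is weaker is the final computation of $\shape\Ma_{1,1}$. The appeal to ``$\shape$ preserves the colimit presentation of the $\sslash$-quotient'' is imprecise in the internal setting: the quotient $\dsum{t : \B G}\mathfrak{h}\twisted{t}$ is a $\Sigma$-type, not a colimit in the naive sense, and $\shape$ (a nullification) is a priori only known to preserve \emph{crisp} colimits. Moreover, even granting the heuristic, the formula $\shape(X \sslash G) \simeq \shape X \sslash \shape G$ requires producing an action of $\shape G$ on $\shape X$, which is not immediate from the action of $G$ on $X$; the paper needs a full subsequent theorem with the $\shape$-separated factorization to prove it in general. The correct and shorter argument --- the one the paper uses --- observes that $p : \Ma_{1,1} \to \B\type{SL}_2(\Zb)$ has fibers identified with the $\shape$-connected type $\mathfrak{h}$, so $p$ is a $\shape$-connected map into the $\shape$-modal type $\B\type{SL}_2(\Zb)$, and is therefore a $\shape$-unit. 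This single observation delivers both $\shape\Ma_{1,1} \simeq \B\type{SL}_2(\Zb)$ and, upon taking the fiber, the universality of $q$. You already have the map $p$ set up; you just stop short of drawing this conclusion.
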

  \begin{proof}
    By \cref{prop:M11.homotopy.quotient}, $q : \mathfrak{h} \to \Ma_{1,1}$ has fibers which are $\type{SL}_{2}(\Zb)$ torsors and so is the fiber of a map $\fib_{q} : \Ma_{1,1} \to \type{Tors}_{\type{SL}_{2}(\Zb)}$. Since $\type{SL}_{2}(\Zb)$ is a crisply discrete group, $\type{Tors}_{\type{SL}_{2}(\Zb)}$ is also discrete by Theorem 5.9 of \cite{Jaz:Good.Fibrations}. Since $\mathfrak{h}$ is $\shape$-connected, we see that the map $\fib_{q} : \Ma_{1,1} \to \type{Tors}_{\type{SL}_{2}(\Zb)}$ is a $\shape$-connected map into a $\shape$-modal type, making it a $\shape$-unit.

To see that $q : \mathfrak{h} \to \Ma_{1,1}$ is the universal cover, note that it is the fiber of $\fib_{q} : \Ma_{1,1} \to \type{Tors}_{\type{SL}_{2}(\Zb)}$ over the canonical \exemplar. Since $\fib_{q}$ is a $\shape_{1}$-unit, this exhibits $q$ as the universal cover (see Definition 9.3 and Theorem 9.4 of \cite{Jaz:Good.Fibrations}).
    \end{proof}

The argument in \cref{thm:homotopy.type.of.M11} is completely general.
    \begin{thm}
      Let $X\twisted{-} : \B \Gamma \to \Type$ be an action of a discrete higher group $\Gamma$ (in the sense that $\B \Gamma$ is $\shape$-modal) on a $\shape$-connected type $X$. Then the first projection
      $$\fst : X \sslash \Gamma \to \B \Gamma$$
      is a $\shape$-unit.
      \end{thm}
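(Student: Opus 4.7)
The plan is to mimic the argument of \cref{thm:homotopy.type.of.M11} but now carried out fibrewise over the base $\B\Gamma$. Recall that by \cref{defn:quotient} we have $X\sslash \Gamma \equiv \dsum{e : \B\Gamma} X\twisted{e}$, so the first projection $\fst : X\sslash\Gamma \to \B\Gamma$ is a sum projection whose fiber over an \exemplar $e : \B\Gamma$ is the twisted type $X\twisted{e}$. A map into a $\shape$-modal type whose every fiber is $\shape$-connected is automatically a $\shape$-unit, because the first condition makes it $\shape$-connected and the second makes its target $\shape$-modal; composing with its target puts it between a type and its $\shape$-modal reflection. So the proof reduces to two observations.

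First, $\B\Gamma$ is $\shape$-modal by hypothesis, so $\fst$ lands in a $\shape$-modal type.

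Second, I claim each fiber $X\twisted{e}$ is $\shape$-connected. The pointing of the action gives $X\twisted{\pt_{\B\Gamma}} \simeq X$, which is $\shape$-connected by assumption, so the claim holds at the canonical \exemplar. Since $\B\Gamma$ is $0$-connected (it is a delooping), for any $e : \B\Gamma$ we have $\trunc{e = \pt_{\B\Gamma}}$, and any identification $e = \pt_{\B\Gamma}$ induces an equivalence $X\twisted{e} \simeq X\twisted{\pt_{\B\Gamma}}$ by transport. Because $\shape$-connectivity is a proposition, we can extract a mere identification from this truncation, giving us $\shape$-connectivity of $X\twisted{e}$ for every $e$.

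Putting these two observations together, $\fst$ is a $\shape$-connected map into a $\shape$-modal type, hence it is (equivalent to) the $\shape$-unit $(-)^{\shape} : X\sslash\Gamma \to \shape(X\sslash\Gamma)$, identifying $\shape(X\sslash\Gamma) \simeq \B\Gamma$. The only subtle step is justifying that a $\shape$-connected map into a $\shape$-modal type is a $\shape$-unit; this is standard for any (lex or non-lex) modality, since the $\shape$-connected maps and the $\shape$-\'etale maps form an orthogonal factorization system (the modal factorization system), and a $\shape$-connected map into a $\shape$-modal type sits as the left factor in the factorization of that map with trivial right factor. No further input from synthetic differential geometry is needed.
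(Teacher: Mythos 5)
Your proof is correct and takes essentially the same approach as the paper's: observe that the fibers of $\fst$ are each identifiable with the $\shape$-connected type $X$ (using $0$-connectivity of $\B\Gamma$), hence $\fst$ is a $\shape$-connected map into the $\shape$-modal type $\B\Gamma$, and therefore a $\shape$-unit. You just spell out a few of the standard steps that the paper leaves implicit.
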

      \begin{proof}
 The map $\fst : X \sslash \Gamma \to \B \Gamma$ is a map whose fibers are identifiable with the $\shape$-connected type $X$, and it is therefore a $\shape$-connected into the $\shape$-modal type $\B \Gamma$. Therefore, it is a $\shape$-unit.
        \end{proof}

        As a corollary, we see that
        $$\shape(\Rb^{n} \sslash \Gamma) \simeq \B \Gamma$$
        when $\Rb^{n}\sslash \Gamma$ is a coordinate patch for a finite subgroup $\Gamma \subseteq \type{O}(n)$ as in \cref{ex:coordinate.patch}. We can be even more general so long as our higher group is crisp.
        \begin{thm}
          Let $X\twisted{-}: \B G \to \Type$ be an action of a crisp higher group $G$ on a type $X$. Then there is a unique action of $\shape G$ on $\shape X$ so that
          \[
\shape(X \sslash G) \simeq \shape X \sslash \shape G.
          \]
          \end{thm}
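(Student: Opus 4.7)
The plan is to deloop $\shape G$ by $\shape \B G$, pointed at $\pt_{\B G}^{\shape}$; this type is pointed and $0$-connected since $\B G$ is and $\shape$ preserves pointedness and $0$-connectivity, and the identification of $\shape G$ with $\Omega \shape \B G$ follows from $\shape$ commuting with loops of crisp pointed $0$-connected types, a standard consequence of cohesion. To construct the action of $\shape G$ on $\shape X$, I would post-compose $X\twisted{-} : \B G \to \Type$ with $\shape$ to land in the subuniverse $\Type_{\shape}$ of discrete types, then factor through the $\shape$-unit $\B G \to \shape \B G$ using that $\Type_{\shape}$ is $\shape$-modal within each truncation level. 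This yields $(\shape X)\twisted{-} : \shape \B G \to \Type_{\shape}$ satisfying $(\shape X)\twisted{t^{\shape}} \simeq \shape (X\twisted{t})$.

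With this action in hand, the comparison map is $\varphi : X \sslash G \to \shape X \sslash \shape G$ defined by $\varphi(t, x) :\equiv (t^{\shape}, x^{\shape})$, where $x^{\shape}$ lives in $\shape(X\twisted{t}) \simeq (\shape X)\twisted{t^{\shape}}$. I claim $\varphi$ is a $\shape$-unit. Its target $\shape X \sslash \shape G = \dsum{t : \shape \B G}(\shape X)\twisted{t}$ is $\shape$-modal because modalities are $\Sigma$-closed and both the base $\shape \B G$ and the fibers $(\shape X)\twisted{t}$ are $\shape$-modal. It remains to verify the universal property.

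For any $\shape$-modal type $W$, I would chain equivalences
\[
(X \sslash G \to W) \simeq \dprod{t : \B G}(X\twisted{t} \to W) \simeq \dprod{t : \B G}((\shape X)\twisted{t^{\shape}} \to W) \simeq \dprod{t : \shape \B G}((\shape X)\twisted{t} \to W) \simeq (\shape X \sslash \shape G \to W).
\]
The outer equivalences are the mapping-out property of $\Sigma$-types. The second uses the universal property of $\shape$ fiberwise: since $W$ is $\shape$-modal, so is each $X\twisted{t} \to W$, and hence the domain $X\twisted{t}$ may be replaced by $\shape(X\twisted{t}) \simeq (\shape X)\twisted{t^{\shape}}$. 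The third equivalence, which I expect to be the main technical step, extends a dependent function along the $\shape$-unit $\B G \to \shape \B G$; it holds because the total space $\dsum{t : \shape \B G}((\shape X)\twisted{t} \to W)$ is $\shape$-modal (again by $\Sigma$-closure, together with the fact that function types into $\shape$-modal types are $\shape$-modal), so that any section of its first projection over $\B G$ extends uniquely along the $\shape$-unit by the universal property of $\shape$, and uniqueness of the factorization ensures the extension is again a section. Finally, uniqueness of the $\shape G$-action follows because any such action making $\varphi$ well-defined must render $(-)^{\shape} : X \to \shape X$ equivariant along the homomorphism $G \to \shape G$, which pins down the action on $\B G$ and hence, via the universal property of $\shape$, on $\shape \B G$.
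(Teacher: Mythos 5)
Your overall strategy matches the paper's: construct the $\shape G$-action by factoring the $\shape$-twisted family $t \mapsto \shape(X\twisted{t})$ through $\shape\B G$, then show the evident comparison map $\varphi(t,x) \equiv (t^{\shape}, x^{\shape})$ is a $\shape$-unit. But there is a real gap in the factoring step.

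You justify factoring through $\shape\B G$ by asserting that ``$\Type_{\shape}$ is $\shape$-modal within each truncation level.'' That claim is false, already at truncation level $-1$: by Lemma 8.8 of Real Cohesion every proposition is discrete, so the $(-1)$-truncated slice of $\Type_{\shape}$ is just $\Prop$, and $\Prop$ is not $\shape$-modal (the family $x \mapsto (x > 0) : \Rb \to \Prop$ is not constant). What is true is that $\Type_{\shape}$ is $\shape$-\emph{separated}: its identification types (spaces of equivalences between modal types) are modal. Separatedness only buys you a factoring through the $\shape$-separated unit $\B G \to \shape^{(1)}\B G$, not through the $\shape$-unit $\B G \to \shape\B G$. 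To get from $\shape^{(1)}\B G$ to $\shape\B G$ the paper invokes Theorem 8.9 of \cite{Jaz:Good.Fibrations}, which shows the comparison $\shape^{(1)}\B G \to \shape\B G$ is an equivalence --- and that theorem uses in an essential way that $\B G$ is crisp and $0$-connected. Without this ingredient your construction of the action on $\shape X$ does not go through, and neither does your uniqueness argument (which also tacitly relies on extending uniquely along $\B G \to \shape\B G$ into the merely-separated $\Type_{\shape}$).

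Once the action is in hand, your verification that $\varphi$ is a $\shape$-unit is correct, but more laborious than the paper's: rather than the four-step chain of equivalences verifying the universal property by hand, the paper simply observes that $\varphi$ is a $\shape$-connected map into a $\shape$-modal type --- the target is modal by $\Sigma$-closure, and $\varphi$ is $\shape$-connected because it is a fiberwise pairing of the $\shape$-connected maps $(-)^{\shape} : \B G \to \shape\B G$ and $(-)^{\shape} : X\twisted{t} \to \shape(X\twisted{t})$ (Lemma 1.39 of \cite{RSS:Modalities}). Your dependent-elimination argument for the third equivalence amounts to the same underlying fact, but deriving it from the closure of the modal subuniverse is cleaner.
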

          \begin{proof}
            Consider the map $t \mapsto \shape X\twisted{t} : \B G \to \Type_{\shape}$ sending an \exemplar $t$ of $G$ to the homotopy type of $X$ twisted by $t$. This lands in the $\shape$-separated type of discrete types $\Type_{\shape}$ and so factors uniquely through the $\shape$-separated unit $(-)^{\shape^{(1)}}: \B G \to \shape^{(1)}\B G$. But by the proof of Theorem 8.9 of \cite{Jaz:Good.Fibrations}, we see that the factorization $\shape^{(1)}\B G \to \shape\B G$ of the $\shape$-unit of $\B G$ is an equivalence, so that $\shape X\twisted{-} : \B G \to \Type_{\shape}$ factors through $\shape \B G$; we take this as our action of $\shape G$ on $\shape X$, since $\shape \B G$ deloops $\shape G$ by Theorem 8.9 of \cite{Jaz:Good.Fibrations}.

            It remains to show that $\shape X \sslash \shape G$ is $\shape(X \sslash G)$. For $t : \B G$, we have a $\shape$-unit $(-)^{\shape} : X\twisted{t} \to \shape X\twisted{t}$. We can assemble these into a map
            \[
(t, x) \mapsto (t^{\shape}, x^{\shape}) : X \sslash G \to \shape X \sslash \shape G.
\]
Since pair types of modal types are modal, $\shape X \sslash \shape G$ is $\shape$-modal; so it suffices to show that this map is $\shape$-connected. But it is the pairing of $\shape$-connected maps, so by Lemma 1.39 of \cite{RSS:Modalities}, it is $\shape$-connected.
          \end{proof}

          As a corollary, we can compute the homotopy types of a few more of our examples.
          \begin{cor}
            Let $\Gamma$ be a crisp finite subgroup of $\type{GL}_{n}(\Zb)$. Then
            \[
\shape(\Tb^{n} \sslash \Gamma) \simeq  \B \tilde{\Gamma}
\]
where $\tilde{\Gamma}$ is a \emph{crystallographic group} extending $\Gamma$:
\[
0 \to \Zb^{n} \to \tilde{\Gamma} \to \Gamma \to 0.
\]
        \end{cor}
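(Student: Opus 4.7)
The plan is to iteratively apply the preceding theorem $\shape(X \sslash G) \simeq \shape X \sslash \shape G$ and then recognize the result as a delooping. First, writing $\Tb^n \equiv \Rb^n \sslash \Zb^n$ and applying the preceding theorem to the crisp discrete group $\Zb^n$ acting on the $\shape$-contractible type $\Rb^n$ gives
\[
\shape \Tb^n \simeq \shape \Rb^n \sslash \shape \Zb^n \simeq \ast \sslash \Zb^n \equiv \B \Zb^n.
\]
Second, applying the preceding theorem again to the action of the crisp, finite (hence discrete) group $\Gamma$ on $\Tb^n$ gives
\[
\shape(\Tb^n \sslash \Gamma) \simeq \shape \Tb^n \sslash \shape \Gamma \simeq \B \Zb^n \sslash \Gamma,
\]
where $\Gamma$ acts on $\B \Zb^n$ via the delooping of its matrix action on $\Zb^n$; this identification of actions follows from the naturality argument at the heart of the preceding theorem (the factorization through $\shape \B \Gamma \simeq \B \Gamma$).

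It remains to recognize $\B \Zb^n \sslash \Gamma$ as a delooping $\B \tilde{\Gamma}$ of some ordinary group $\tilde{\Gamma}$. The first projection $\fst : \B \Zb^n \sslash \Gamma \to \B \Gamma$ is a map whose fibers are identifiable with $\B \Zb^n$, a $0$-connected $1$-type, and whose base $\B \Gamma$ is likewise a $0$-connected $1$-type. Consequently $\B \Zb^n \sslash \Gamma$ is itself a pointed, $0$-connected $1$-type, hence a delooping $\B \tilde{\Gamma}$ of the group $\tilde{\Gamma} :\equiv \Omega(\B \Zb^n \sslash \Gamma)$. Looping the fiber sequence $\B \Zb^n \to \B \Zb^n \sslash \Gamma \to \B \Gamma$ yields the short exact sequence
\[
0 \to \Zb^n \to \tilde{\Gamma} \to \Gamma \to 0
\]
of ordinary groups, exhibiting $\tilde{\Gamma}$ as the desired extension. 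The outer action of $\Gamma$ on $\Zb^n$ induced by this extension is precisely the original matrix action, so $\tilde{\Gamma}$ is indeed a crystallographic group.

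The main obstacle is merely bookkeeping the identifications of actions at each stage: verifying that the $\shape \Gamma$-action on $\shape \Tb^n$ produced by the preceding theorem matches the delooping of the matrix action of $\Gamma$ on $\Zb^n$. Every other step is an immediate consequence of the preceding theorem, the $\shape$-contractibility of $\Rb^n$, and the standard recognition of a pointed $0$-connected $1$-type as a delooping.
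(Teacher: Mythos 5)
Your proof is correct and arrives at the same conclusion, but it is organized somewhat differently from the paper's argument, so a brief comparison is in order.

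The paper does not apply the theorem $\shape(X \sslash G) \simeq \shape X \sslash \shape G$ directly. Instead it applies Theorem 7.7 of \cite{Jaz:Good.Fibrations} to the fibration $\fst : \Tb^{n} \sslash \Gamma \to \B\Gamma$ to conclude that $\shape$ carries this to a fiber sequence, computes $\shape(\Rb^{n}/\Zb^{n}) \simeq \shape((\Sb^{1})^{n}) \simeq \B\Zb^{n}$ directly, and then establishes $0$-connectedness of the total space by passing through $\shape_{0}(\Tb^{n}\sslash\Gamma)$ via Proposition 4.5 of \cite{Jaz:Good.Fibrations}. You instead apply the preceding theorem twice, first to $\Rb^{n}\sslash\Zb^{n}$ and then to $\Tb^{n}\sslash\Gamma$. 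This is a valid alternative and your $0$-connectedness argument (fiber and base $0$-connected, hence total space $0$-connected) is in fact cleaner than the paper's detour through $\shape_{0}$.

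Two small caveats. First, your opening identification $\Tb^{n} \equiv \Rb^{n} \sslash \Zb^{n}$ is not the paper's definition of $\Tb^{n}$, which is the set quotient $\Rb^{n}/\Zb^{n}$; you are implicitly using that the homotopy quotient of a free action of an ordinary group on a set agrees with the set quotient, or equivalently invoking \cref{thm:recognize.homotopy.quotient} for the quotient map $\Rb^{n} \to \Rb^{n}/\Zb^{n}$ whose fibers are $\Zb^{n}$-torsors. This deserves a word. Second, you flag "bookkeeping the identifications of actions" as your main remaining concern, but in fact this identification can be sidestepped entirely: the short exact sequence $0 \to \Zb^{n} \to \tilde{\Gamma} \to \Gamma \to 0$ follows from the long exact sequence of the fibration $\B\Zb^{n} \to \B\Zb^{n}\sslash\Gamma \to \B\Gamma$ regardless of which action of $\Gamma$ on $\B\Zb^{n}$ is present, using only that $\B\Gamma$ is a $1$-type and $\B\Zb^{n}$ is $0$-connected. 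The paper makes no attempt to identify this action either, since the statement only asserts the existence of the extension.
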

        \begin{proof}
          Consider the fiber sequence
          \[
            \Rb^{n} / \Zb^{n} \to \Tb^{n} \sslash \Gamma \xto{\fst} \B \Gamma
          \]
          where we recall from \cref{ex:torus.mod.group} that $\Tb^{n} \sslash \Gamma :\equiv \dsum{(V, \Lambda, T) : \B \Gamma} (V/\Lambda)$ and that the canonical \exemplar of $\Gamma$ is $(\Rb^{n}, \Zb^{n}, \Gamma)$. By Theorem 7.7 of \cite{Jaz:Good.Fibrations}, the projection $\fst : \Tb^{n} \sslash \Gamma \to \B \Gamma$ is a $\shape$-fibration and therefore
          \[
\shape(\Rb^{n} / \Zb^{n}) \to \shape(\Tb^{n} \sslash \Gamma) \to \shape \B \Gamma
\]
is a fiber sequence. Since $\Gamma$ is a crisply discrete group, $\B \Gamma$ is crisply discrete by Theorem 5.9 of \cite{Jaz:Good.Fibrations}, and $\shape(\Rb^{n}/ \Zb^{n}) = \shape((\Sb^{1})^{n}) = \B \Zb^{n}$, so we have a fiber sequence
\[
\B \Zb^{n} \to \shape (\Tb^{n} \sslash \Gamma) \to \B \Gamma.
\]
Now, we may point $\shape(\Tb^{n} \sslash \Gamma)$ at $\pt :\equiv (\Rb^{n}, \Zb^{n}, \Gamma, [0])^{\shape}$; it remains to show that $\shape(\Tb^{n} \sslash \Gamma)$ is $0$-connected. Since $\Tb^{n} \sslash \Gamma$ is crisp, $\trunc{\shape(\Tb^{n} \sslash \Gamma)}_{0} = \shape_{0}(\Tb^{n} \sslash \Gamma)$ by Proposition 4.5 of \cite{Jaz:Good.Fibrations}. But $\B \Gamma$ is discrete and $0$-connected, so it is also $\shape_{0}$-connected; likewise, the torus $V / \Lambda$ is $\shape_{0}$-connected for any vector space $V$ and lattice $\Lambda$ in it since it is surjected by the $\shape_{0}$-connected type $V$. Therefore, $\Tb^{n} \sslash \Gamma$ is $\shape_{0}$-connected as the sum of $\shape_{0}$-connected types.

Defining $\tilde{\Gamma} :\equiv \pi_{1}(\Tb^{n} \sslash \Gamma)$, which in this case is equivalent to $\Omega(\shape(\Tb^{n} \sslash \Gamma), \pt)$, we see that $\shape(\Tb^{n} \sslash \Gamma)$ is a $\B \tilde{\Gamma}$ and we have an extension
\[
0 \to \Zb^{n} \to \tilde{\Gamma} \to \Gamma \to 0. \qedhere
\]
        \end{proof}

\subsection{Maps between orbifolds}\label{sec:orbifold.maps}

Another upside of working in homotopy type theory is that correct notion of map between orbifolds is simply a function, which can be defined by its action on points in the usual way. That is, if $\Xa$ and $\Ya$ are orbifolds, then the mapping space between them is the space of functions $\Xa \to \Ya$. In particular, since we have defined our orbifolds in terms of their points, it is fairly straightforward to understand what it means to map into them.

\begin{prop}\label{thm:map.into.config.space}
Let $X$ be a type, and consider the configuration space $X^{n} \sslash n!$. A function $f : A \to X^{n} \sslash n!$ is equivalently an $n$-fold cover $\pi : C_{f} \to A$ together with a map $\tilde{f} : C_{f} \to X$.
\end{prop}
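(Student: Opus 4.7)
The plan is to chain a few standard equivalences: first distribute the function type into the sum defining $X^{n} \sslash n!$, then reinterpret the first component as an $n$-fold cover and the second as a map out of its total space.

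Unpacking $X^{n} \sslash n! \equiv \dsum{F : \B\Aut(\ord{n})} X^{F}$ and applying the type-theoretic axiom of choice ($\Pi$ distributes over $\Sigma$), we obtain
\[
(A \to X^{n} \sslash n!) \simeq \dsum{F : A \to \B\Aut(\ord{n})} \dprod{a : A} (F(a) \to X).
\]
The next step is to identify maps $F : A \to \B\Aut(\ord{n})$ with $n$-fold covers of $A$. Since $\Sigma_{n}$ is the automorphism group of the crisp finite set $\ord{n}$, it is a crisp discrete group; hence by Theorem 5.9 of \cite{Jaz:Good.Fibrations} its delooping $\B\Aut(\ord{n})$ is a discrete $1$-type, in particular $\shape_{1}$-modal. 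Therefore every $F : A \to \B\Aut(\ord{n})$ factors uniquely through an $\bar{F} : \shape_{1} A \to \B\Aut(\ord{n}) \hookrightarrow \Type_{\shape_{0}}$, and the covering classification \cref{thm:covering.classification} identifies such maps with coverings of $A$. The covering associated to $F$ is the first projection $\pi : C_{f} \to A$ out of
\[
C_{f} :\equiv \dsum{a : A} F(a);
\]
its fiber over $a$ is $F(a)$, an $n$-element set, so $\pi$ is an $n$-fold cover. Conversely, an $n$-fold cover gives rise to a classifying map landing in $\B\Aut(\ord{n})$ via its fiberwise assignment $a \mapsto \fib_{\pi}(a)$.

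Finally, the universal property of the $\Sigma$-type $C_{f}$ gives the canonical equivalence
\[
\dprod{a : A} (F(a) \to X) \simeq (C_{f} \to X), \qquad g \longleftrightarrow \bigl((a, c) \mapsto g(a)(c)\bigr),
\]
which reinterprets the second component as a map $\tilde{f} : C_{f} \to X$. Composing the three equivalences delivers
\[
(A \to X^{n} \sslash n!) \simeq \dsum{\pi : C \to A \text{ an } n\text{-fold cover}} (C \to X),
\]
as claimed, with $f : A \to X^{n} \sslash n!$ corresponding to the pair $(\pi, \tilde{f})$.

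The main obstacle is the middle step: identifying maps $A \to \B\Aut(\ord{n})$ with $n$-fold covers of $A$. Everything else is formal currying, but this identification rests on two nontrivial inputs --- the $\shape_{1}$-modality of $\B\Aut(\ord{n})$ (which is where the crisp discreteness of $\Sigma_{n}$ enters) and the covering classification theorem of \cite{Jaz:Good.Fibrations}.
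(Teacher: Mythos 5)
Your proof is correct and takes essentially the same approach as the paper: both argue by distributing $\Pi$ over $\Sigma$, invoking \cref{thm:covering.classification} together with the discreteness of $\B\Aut(\ord{n})$ to identify classifying maps $A \to \B\Aut(\ord{n})$ with $n$-fold covers, and then currying. Your explicit factorization through $\shape_{1}A$ and your citation of Theorem~5.9 of \cite{Jaz:Good.Fibrations} are just a slightly more verbose unpacking of what the paper compresses into the remark that ``finite sets are discrete.''
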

\begin{proof}
  We may calculate directly:
  \begin{align*}
    (A \to X^{n} \sslash n!) &\simeq (A \to \dsum{F : \B \Aut(\ord{n})} X^{F}) \\
                             &\simeq \dsum{C : A \to \B \Aut(\ord{n})} (\dprod{a : A} X^{C_{a}}) \\
                             &\simeq \dsum{(C, \pi) : \type{Cov}(A)} \dsum{\dprod{a : A} \trunc{\fib_{\pi}(a) \simeq \ord{n}}} (\dprod{a : A} X^{\fib_{\pi}(a)})\\
                             &\simeq \dsum{(C, \pi) : \type{Cov}(A)}\dsum{\dprod{a : A} \trunc{\fib_{\pi}(a) \simeq \ord{n}}} (\dsum{a : A} \fib_{\pi}(a) \to X) \\
                             &\simeq \dsum{(C, \pi) : \type{Cov}(A)}\dsum{\dprod{a : A} \trunc{\fib_{\pi}(a) \simeq \ord{n}}} (C \to X).\\
\end{align*}
We made use of \cref{thm:covering.classification} and the fact that finite sets are discrete.
\end{proof}

We can prove something slightly more general but along the same lines.
\begin{prop}\label{thm:map.into.quotient}
Suppose that a (higher) group $\Gamma$ acts on a type $X$. Then maps $f : A \to X \sslash \Gamma$ correspond to $\Gamma$-principal bundles $\pi : P \to A$ together with a $\Gamma$-equivariant map $P \to X$.
\end{prop}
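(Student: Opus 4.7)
The plan is to unfold the definition $X \sslash \Gamma :\equiv \dsum{t : \B \Gamma} X\twisted{t}$ and distribute the function space over the sum. By the type-theoretic axiom of choice (i.e., distributivity of $\Pi$ over $\Sigma$), we obtain an equivalence
\[
(A \to X \sslash \Gamma) \;\simeq\; \dsum{E : A \to \B \Gamma} \dprod{a : A} X\twisted{E(a)}.
\]
Each component on the right has a geometric interpretation: a map $E : A \to \B \Gamma$ classifies a $\Gamma$-principal bundle over $A$, and a dependent function of type $\dprod{a : A} X\twisted{E(a)}$ is a section over $A$ of the ``associated bundle'' $\dsum{a : A} X\twisted{E(a)} \to A$.

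Next, I identify $E : A \to \B \Gamma$ with a $\Gamma$-principal bundle $\pi_{E} : P_{E} \to A$ using \cref{thm:recognize.homotopy.quotient}. Explicitly, from $E$ we extract the bundle $P_{E} :\equiv \dsum{a : A} (\pt_{\B \Gamma} = E(a))$ with first projection to $A$; by \cref{prop:associated.torsor} each fiber is a $\Gamma$-torsor, so $\pi_{E}$ is indeed a principal bundle, and \cref{thm:recognize.homotopy.quotient} ensures that this construction is an equivalence between the type $(A \to \B \Gamma)$ and the type of $\Gamma$-principal bundles over $A$.

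The remaining step --- the heart of the proof --- is to identify $\dprod{a : A} X\twisted{E(a)}$ with the type of $\Gamma$-equivariant maps $P_{E} \to X$. Unfolding $P_{E}$, a bare map $P_{E} \to X$ is the same as a dependent function $\dprod{a : A}\bigl((\pt_{\B \Gamma} = E(a)) \to X\bigr)$. The $\Gamma$-action on $P_{E}$ acts by postcomposition on the identification component, while $\Gamma$ acts on $X$ by transport along $X\twisted{-}$; equivariance thus says that for each fixed $a$, the constituent map $(\pt_{\B \Gamma} = E(a)) \to X$ is $\Gamma$-equivariant as a map from the torsor associated to $E(a)$ into $X$. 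But an equivariant map out of a torsor is determined by (and determines) a single element of the twisted type $X\twisted{E(a)}$ --- namely, its ``value at the twisted basepoint'' --- by \cref{lem:group.action.definition}. Assembling these pointwise equivalences over $a : A$ gives the desired identification.

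The main obstacle is making precise the notion of ``$\Gamma$-equivariant map from $P_{E}$ to $X$'' when $\Gamma$ is a higher group, where coherence data matters at every level. The cleanest formulation is as a morphism in the slice $\Type/\B \Gamma$: both objects sit naturally over $\B \Gamma$ (the principal bundle via $E : A \to \B \Gamma$, the action as the first projection $\fst : \dsum{t : \B \Gamma} X\twisted{t} \to \B \Gamma$), and a $\Gamma$-equivariant map is a map in this slice. With this definition the entire equivalence becomes the tautological observation that maps from $(A, E)$ to the total space of the action in the slice over $\B \Gamma$ correspond to sections over $A$ of the pullback, which is exactly the associated bundle. A reader wishing to avoid slicing may instead take this characterization as the \emph{definition} of $\Gamma$-equivariant map, making the proposition essentially a restatement of the distributive rearrangement above.
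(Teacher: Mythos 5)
Your proof is correct and follows essentially the same strategy as the paper's: unfold $X \sslash \Gamma$ as a pair type, distribute $\Pi$ over $\Sigma$, identify the family $E : A \to \B\Gamma$ with a principal bundle, and then observe that the remaining data $\dprod{a : A}X\twisted{E(a)}$ is, by definition, the type of $\Gamma$-equivariant maps once that notion is made precise for higher groups. The paper reaches the same conclusion by a longer explicit chain of rewrites and explicitly flags --- as you do --- that $\type{Hom}_{\Gamma}(P, X) \simeq \dprod{T : \type{Tors}_\Gamma}(P\twisted{T} \to X\twisted{T})$ is a definition for higher $\Gamma$ and general $X$; your slice-over-$\B\Gamma$ packaging of this definition is a clean way to say the same thing, though note that the object $(A, E)$ in the slice is (equivalent to) the \emph{total space} $\dsum{t}P_{E}\twisted{t}$ of the $\Gamma$-action on $P_E$, not $P_E$ itself, so the phrase ``the principal bundle via $E : A \to \B\Gamma$'' slightly conflates the two.
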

\begin{proof}
  We may calculate directly:
  \begin{align*}
    (A \to X \sslash \Gamma) &\equiv (A \to \dsum{T : \type{Tors}_{\Gamma}} X\twisted{T}) \\
                             &\simeq (\dsum{P : \Type}\dsum{\pi : P \to A}\dsum{\dprod{a : A} \type{Tors}_{\Gamma}(\fib_{\pi}(a))}(\dprod{a : A} X\twisted{\fib_{\pi}(a)})) \\
                             &\simeq\left\{ \begin{aligned}
                                 {( P : \Type )}&\times\dsum{\pi : P \to A}({\dprod{a : A} \type{Tors}_{\Gamma}(\fib_{\pi}(a))})\\
                                 &\times(\dprod{(T, a, p) : \dsum{T : \type{Tors}_{\Gamma}} \dsum{a : A} (\fib_{\pi}(a) = T)} X\twisted{\fib_{\pi}(a)}) \end{aligned}
    \right.\\
                             &\simeq\left\{ \begin{aligned}
                                 {( P : \Type )}&\times\dsum{\pi : P \to A}({\dprod{a : A} \type{Tors}_{\Gamma}(\fib_{\pi}(a))})\\
                                 &\times(\dprod{T : \type{Tors}_{\Gamma}} \dprod{(a, p) : \dsum{a : A} (\fib_{\pi}(a) = T)} X\twisted{\fib_{\pi}(a)})) \end{aligned}
    \right.\\
                             &\simeq\left\{ \begin{aligned}
                                 {( P : \Type )}&\times\dsum{\pi : P \to A}({\dprod{a : A} \type{Tors}_{\Gamma}(\fib_{\pi}(a))})\\
                                 &\times\type{Hom}_{\Gamma}(P, X) \end{aligned}
    \right.\\
\end{align*}
The equivalence $\type{Hom_{\Gamma}}(P, X) \simeq (\dprod{T : \type{Tors}_{\Gamma}}\dprod{(a, p) : \dsum{a : A} (\fib_{\pi}(a) = T)} X\twisted{\fib_{\pi}(a)}) $ is either a definition (for general higher groups $\Gamma$ and types $X$) or a theorem (for ordinary groups $\Gamma$ and sets $X$). To understand this last step, first note that the action of $\Gamma$ on the total space $P$ of a $\Gamma$-bundle $\pi : P \to A$ may be described by the map
\[
(T : \type{Tors}_{\Gamma}) \mapsto P\twisted{T} :\equiv \dsum{a : A} ( \fib_{\pi}(a) = T).
\]
When we apply this function to $\Gamma : \type{Tors}_{\Gamma}$, we get may calcuate that $P\twisted{\Gamma}$ is
\begin{align*}
  ( \dsum{a : A} (\fib_{\pi}(a) = \Gamma) ) &\simeq (\dsum{a : A} \fib_{\pi}(a))\\
  &\simeq P,
\end{align*}since a $\Gamma$-equivariant identification with $\Gamma$ is determined by an element. Finally, a $\Gamma$-equivariant map $P \to X$ is equivalently a map $\dprod{T : \type{Tors}_{\Gamma}} (P \twisted{T} \to X\twisted{T})$, which is what appears at the end of the calculation above.
  \end{proof}

\section{Synthetic Differential Geometry}

Synthetic differential geometry began in 1967 with a series of lectures by Lawvere in which he attempted to give a topos-theoretic foundation (in the sense of a distillation of established practices) for the sorts of differential geometry used in engineering and physics that made explicit use of infinitesimals \cite{Lawvere:Chicago.Lectures,Lawvere:SDG}. Lawvere was influenced by the use of nilpotent infinitesimal elements appearing in non-reduced schemes in Grothendieck's reformulated algebraic geometry. The field of synthetic differential geomery was further developed by Kock, Dubuc, Bunge, Penon, Lavendhomme, Reyes, Moerdijk, and others. An introductory text is \cite{Bell:Smooth.Infinitesimal.Analysis}; reference texts are \cite{Kock:SDG} and \cite{Lavendhomme:SDG}. See also \cite{Bunge-Gago-SanLuis:Synthetic.Differential.Topology}. For topos theoretic models, see \cite{Moerdijk-Reyes:SDG}.

The main idea of synthetic differential geometry is to formalize the common arguments using numbers $\ep$ which are so small that their square $\ep^{2}$ is negligable. If we say that $\ep^{2} = 0$ is actually $0$, then such numbers are \emph{nilsquare infinitesimals}. The most crucial axiom of SDG, known as the Kock-Lawvere axiom, implies that for any function $f : \Rb \to \Rb$, there is a unique function $f' : \Rb \to \Rb$ so that for all $x : \Rb$ and $\ep^{2} = 0$, we have:

$$f(x + \ep) = f(x) + f'(x)\ep.$$
This axiom implies that every function $f : \Rb \to \Rb$ is smooth, and for that reason the real numbers $\Rb$ of SDG are known as the \emph{smooth reals}.

Of course, there cannot be any non-zero infinitesimals, at least if $\Rb$ is to be a field where non-zero elements are invertible. If $\ep$ was non-zero, then it would be invertible, and we could conclude that
\[1 = \ep^{2}\cdot\frac{1}{\ep^{2}} = 0 \cdot \frac{1}{\ep^{2}} = 0.\]
Since $1$ does not equal $0$, we may conclude that if $\ep^{2} = 0$, then $\ep$ is \emph{not non-zero}.

Classically, we could conclude from this that $\ep$ must be $0$; but this follows from a separate law of logic, \emph{double negation elimination}, which states that if a proposition is not false, then it is true. We do not have to take this law of logic as an axiom --- it does not follow from the rules of type theory. With our extra logical wiggle room, we can have a non-trivial theory of infinitesimal calculus. In fact, we will follow Penon in defining an \emph{infinitesimal} to be a number $\ep : \Rb$ which is not non-zero in \cref{defn:infinitesimal}.

So far, synthetic differential geometry has only been studied in $1$-topos theory. We will see that the same axioms (introduced in \cref{sec:SDG.axioms}), and the same definitions (such as that of microlinearity, \cref{defn:microlinear})  give us access to the differential structure of orbifolds and other higher types when interpreted in cohesive homotopy type theory.

The SDG literature has settled on the notion of \emph{microlinearity} (\cref{defn:microlinear}) as a good notion of ``smooth space'' for the purposes of proving theorems in SDG. Beautifully, the definition of microlinearity generalizes smoothly from sets to higher types. A type is microlinear when, roughly speaking, it has the same infinitesimal lifting properties as $\Rb$. In \cref{sec:microlinear}, we review the definition of microlinearity, and prove in \cref{thm:inf.linear.R.module} that the tangent spaces of microlinear types have the structure of $\Rb$-modules. While this theorem is standard for microlinear sets, we prove it in such a way that it generalizes to higher types. Specifically, we show that tangent spaces of these higher types --- which themselves may be higher types and not sets --- are models of the Lawvere theory of $\Rb$-modules and so have a $\Rb$-module structure which is coherent up to higher identifications.

In \cref{sec:crystaline.modality}, we will prove  our main theorem of this section concerning the descent of microlinearity along $\Im$-\'etale maps. \cref{thm:microlinear.descends.along.etale} states that if $X$ is microlinear and $f : X \to Y$ is surjective and $\Im$-\'etale, then $Y$ is also microlinear. This will allow us to give examples in \cref{sec:notions.of.smooth.space} of higher microlinear types, such as \'etale groupoids (\cref{thm:etale.groupoid.microlinear}) and the quotients of microlinear types by higher groups (\cref{thm:discrete.homotopy.quotient.is.microlinear}). Here, $\Im$ is the \emph{crystaline} modality given by localizing at the type of infinitesimals in $\Rb$ (\cref{defn:crystaline.modality}). The type $\Im X$ is sometimes known as the \emph{de Rham stack} of $X$, and a map $f : X \to Y$ is $\Im$-\'etale when the it's $\Im$-naturality square is a pullback:
\[
\begin{tikzcd}
	X & {\Im X} \\
	Y & {\Im Y}
	\arrow["f"', from=1-1, to=2-1]
	\arrow["{(-)^{\Im}}", from=1-1, to=1-2]
	\arrow["{(-)^{\Im}}"', from=2-1, to=2-2]
	\arrow["{\Im f}", from=1-2, to=2-2]
	\arrow["\lrcorner"{anchor=center, pos=0.125}, draw=none, from=1-1, to=2-2]
\end{tikzcd}
\]
This is a useful and entirely modal notion of local diffeomorphism. In \cref{prop:etale.means.iso.on.tangent.spaces}, we will see that any $\Im$-\'etale map $f : X \to Y$ induces an isomorphism $f_{\ast} : T_{x} X \to T_{fx} Y$ on tangent spaces, and later, in \cref{cor:crisp.open.is.etale}, we will see that this is an equivalent condition for $f$ to be $\Im$-\'etale so long as $X$ and $Y$ are manifolds.

\subsection{Axioms of synthetic differential geometry}\label{sec:SDG.axioms}

Synthetic differential geomtery proceeds by axiomatizing the \emph{smooth real line}, which we will denote by $\Rb$. We will use the naming convention of \cite{Bunge-Gago-SanLuis:Synthetic.Differential.Topology} (except for Postulates E and S, which do not appear there, and the Covering Property, which is due to Bunge and Dubuc \cite{Bunge-Dubuc:Covering.Property}).

\begin{axiom}\label{axiom:SDG}
The \emph{smooth real line} $\Rb$ is a ring satisfying the following axioms:
\begin{itemize}
  \item (Postulate K) $\Rb$ is a field in the sense of Kock: $0 \neq 1$ and for any $n : \Nb$ and $x : \Rb^{n}$, we have
        $$\neg \left( \bigwedge_{i = 1}^{n} (x_{i} = 0) \right) \to \bigvee_{i = 1}^{n}(x_{i} \mbox{ is invertible}).$$
        Taking the case $n \equiv 1$ tells us that if $x \neq 0$ then $x$ is invertible (and therefore the invertible elements coincide with the non-zero elements of $\Rb$). Taking the case $n \equiv 2$ tells us that $\Rb$ is a local ring in the sense that if $x + y$ is invertible, then one of $x$ or $y$ is invertible.
  \item (Postulate O) $\Rb$ is strictly ordered: there is a binary relation $<$ on $\Rb$ satisfying the following axioms:
        \begin{enumerate}
                \item $1 > 0$, and if $x > 0$ and $y > 0$, then $x + y > 0$ and $xy > 0$.
                \item It is never the case that $x > x$.
                \item If $x > y$, then either $x > z$ or $z > y$ for any $z$.
                \item If $x \neq 0$, then $x < 0$ or $x > 0$.\footnote{At this point in \cite{Bunge-Gabo-SanLuis:Synthetic.Differential.Topology}, the authors have the axiom $\neg\left(\bigwedge_{i = 1}^{n} x_{i} = 0\right) \to \bigvee_{i = 1}^{n}(x_{i} < 0 \vee x_{i} > 0)$, but in light of Postulate K the axiom we are using here is equivalent.}
                \item (Archimedean law) For any $x : \Rb$, there is an $n : \Nb$ with $x < n$.
        \end{enumerate}
        \item (Postulate Exp) There is an isomorphism of ordered groups $\exp : \Rb \simeq (0, \infty) : \log$ between the additive group of real numbers and the multiplicative group of positive real numbers.

        \item (Postulate E, the Covering Property) Let $A, B \subseteq \Rb$ be subsets of $\Rb$. If $A \cup B = \Rb$, then for every $x : \Rb$, either there is an $\ep > 0$ with $B(x, \ep) \subseteq A$, or there is an $\ep > 0$ with $B(x, \ep) \subseteq B$.
        \item (Principle of Constancy) Let $f : \Rb \to \Rb$. If for all $x : \Rb$ and $\ep : \Rb$ with $\ep^{2} = 0$, $f(x + \ep) = f(x)$, then $f$ is constant.
        \item (Postulate W) The crisp infinitesimal varieties (\cref{defn:infinitesimal.variety}) and the type $\Dc$ of infinitesimals in $\Rb$ (\cref{defn:infinitesimal}) are \emph{tiny} (\cref{defn:tiny}).
  \item (Postulate J) The Kock-Lawvere axiom: For every Weil algebra $W$ over $\Rb$, the evaluation map
        $$w \mapsto \varphi \mapsto \varphi(w) : W \to (\Spec_{\Rb} W \to \Rb)$$
        is an isomorphism. We will explain these terms and the consequences of this axiom shortly.
\end{itemize}
\end{axiom}

Let's explain Postulate J, which is the axiom which underlies the differential geometric aspects of synthetic differential geometry. To do this, we need to understand the notion of a \emph{Weil algebra}.

\begin{defn}[Standard, see \cite{Lavendhomme:SDG}]
Let $R$ be a ring. A \emph{Weil algebra} over $R$ is an augmented finitely presented $R$-algebra $\pi : W \to R$ whose augmentation ideal $\ker \pi$ is finitely generated and nilpotent. The category of Weil algebras $\type{Weil}$ is the full subcategory of the augmented $R$-algebras spanned by the Weil algebras.
\end{defn}
\begin{rmk}
Note that a Weil algebra is something entirely different from a Weyl algebra.
\end{rmk}

Any Weil algebra may be put in a standard form as the quotient of a polynomial algebra where the augmentation is given by evaluating at $0$.
\begin{lem}[Standard]\label{lem:weil.alg.standard.form}
Any augmented algebra with $W$ finitely presented $\pi : W \to \Rb$ is merely equivalent to a augmented algebra of the form $\term{ev}_{0} : \Rb[x_{1}, \ldots, x_{n}]/(f_{1}, \ldots, f_{m}) \to \Rb$ with augmentation given by sending each $x_{i}$ to $0$.
\end{lem}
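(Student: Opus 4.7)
The plan is to take any finite presentation of $W$ (which is merely given, by the assumption of finite presentation) and perform a linear change of variables that translates the augmentation to evaluation at zero.

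More concretely, since $W$ is finitely presented, there merely exists an isomorphism $W \simeq \Rb[y_{1}, \ldots, y_{n}]/(g_{1}, \ldots, g_{m})$ for some generators $y_{i}$ and relations $g_{j}$. I would then set $r_{i} :\equiv \pi(y_{i}) : \Rb$ and introduce new generators $x_{i} :\equiv y_{i} - r_{i}$. Since $y_{i} = x_{i} + r_{i}$, the $x_{i}$ still generate $W$, and by construction the given augmentation $\pi$ sends each $x_{i}$ to $0$. Rewriting each original relation in the new variables gives polynomials
\[
f_{j}(x_{1}, \ldots, x_{n}) :\equiv g_{j}(x_{1} + r_{1}, \ldots, x_{n} + r_{n}),
\]
and I would then verify that the assignments $x_{i} \mapsto y_{i} - r_{i}$ and $y_{i} \mapsto x_{i} + r_{i}$ extend to mutually inverse $\Rb$-algebra homomorphisms between $\Rb[x_{1}, \ldots, x_{n}]/(f_{1}, \ldots, f_{m})$ and $W$. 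Well-definedness of the map out of the new quotient follows because $g_{j}(x_{1} + r_{1}, \ldots, x_{n} + r_{n}) = f_{j}(x_{1}, \ldots, x_{n})$ vanishes in the new quotient, and mutual inverseness can be checked on generators.

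Finally, I would check that this isomorphism respects the augmentations. Both sides are $\Rb$-algebra maps to $\Rb$ and so are determined by their values on generators; on the new side $\term{ev}_{0}(x_{i}) = 0$, while on the original side $\pi(y_{i} - r_{i}) = r_{i} - r_{i} = 0$, so the two augmentations agree under the isomorphism.

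There is no real obstacle here; the only subtle point is the word \emph{merely}, which simply reflects that the initial choice of finite presentation is propositionally truncated rather than canonical. Because the whole argument consists of manipulations of such a chosen presentation and the conclusion itself is also a mere existence statement, the truncation may be eliminated in the standard way and the construction goes through.
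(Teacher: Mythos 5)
Your proof is correct and follows essentially the same route as the paper: a linear change of variables $x_i := y_i - \pi(y_i)$ that shifts the augmentation to evaluation at $0$, with the new relations obtained by substituting $x_i + \pi(y_i)$ into the old ones. Your closing remark about eliminating the propositional truncation is sound, since the conclusion is itself a mere existence statement.
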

\begin{proof}
This is a quick change of variables. By hypothesis, $W$ is finitely presented as a $\Rb$-algebra, so it is of the form $\Rb[y_{1}, \ldots, y_{n}]/(g_{1}, \ldots, g_{m})$. Let $\varphi : \Rb[x_{1}, \ldots, x_{n}] \to \Rb[y_{1}, \ldots, y_{n}]$ be the map given by $\varphi(x_{i}) :\equiv y_{i} - \pi(y_{i})$, which we note is an equivalence. Define $f_{i} :\equiv \varphi\inv(g_{i})$, which is to say that $f_{i}(x_{1}, \ldots, x_{n}) = g_{i}(x_{1} + \pi(y_{1}), \ldots, x_{n} + \pi(y_{n}))$. By construction, $\varphi$ descends to an equivalence $\Rb[x_{1}, \ldots, x_{n}]/(f_{1}, \ldots, f_{n}) = \Rb[y_{1}, \ldots, y_{n}]/(g_{1}, \ldots, g_{n})$, and since $\pi\varphi(x_{i}) = \pi(y_{i} - \pi(y_{i})) = 0$, this equivalence commutes with the augmentation.
\end{proof}

\begin{rmk}
The canonical example of a Weil aglebra is $R[x]/(x^{2})$ equipped with the augmentation $x \mapsto 0 : R[x]/(x^{2}) \to R$.
\end{rmk}

\begin{defn}[Standard, see \cite{Lavendhomme:SDG}]
  Let $A$ be an $R$-algebra. The \emph{synthetic spectrum $\Spec_{R}(A)$ of $A$ relative to $R$} is the set of $R$-algebra homomorphisms $A$ to $R$.
  $$\Spec_{R}(A) :\equiv \Hom_{R}(A, R).$$
\end{defn}

\begin{rmk}
  Note that if $A \equiv R[x_{1}, \ldots, x_{n}](f_{1}, \ldots, f_{m})$ is finitely presented $R$-algebra, then by the universal properties of quotients of polynomial algebras, the synthetic spectrum of $A$ over $R$ is the set of solutions to the equations $f_{1}, \ldots, f_{n}$:
  $$\Spec_{R}(A) = \{(r_{1}, \ldots, r_{n}) : R^{n} \mid \forall i.\, f_{i}(\vec{r}) = 0\} $$

\end{rmk}

  In particular, note that
  $$\Spec_{\Rb}(\Rb[x]/(x^{2})) \simeq \{\ep : \Rb \mid \ep^{2} = 0\}\equiv \Db.$$
  is the set of nilsquare infinitesimals $\Db$. The evaluation map $\Rb[x]/(x^{2}) \to (\Spec_{\Rb}(\Rb[x]/(x^{2})) \to \Rb)$ sends $a + bx$ to the function $\ep \mapsto a +  b\ep$. The Kock-Lawvere axiom (Postulate J) says that this map is an equivalence. In other words, every function $f : \Db \to \Rb$ is of the form $f(\ep) = a + b\ep$ for unique $a$ and $b$ in $\Rb$. Of course, plugging in $0$ for $\ep$ shows us that $a = f(0)$, so we see that there is a unique $b : \Rb$ for which $f(\ep) = f(0) + b\ep$ for all $\ep^{2} = 0$.

This axiom is valid in every context; that is, we can make use of it even when there are other free variables floating around. In particular, it gives us the following lemma.
  \begin{lem}[Standard, see \cite{Lavendhomme:SDG}]
    Let $f : \Rb \to \Rb$ be a function. Then there is a unique function $f' : \Rb \to \Rb$ such that for all $x : \Rb$ and $\ep^{2} = 0$, we have
    \[
f(x + \ep) = f(x) + f'(x) \ep.
\]
We refer to $f'$ as the \emph{derivative} of $f$.
  \end{lem}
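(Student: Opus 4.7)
The plan is to apply the Kock-Lawvere axiom (Postulate J) with the specific Weil algebra $W \equiv \Rb[\ep]/(\ep^{2})$, whose synthetic spectrum $\Spec_{\Rb}(W)$ is exactly the type $\Db$ of nilsquare infinitesimals. First I would fix $x : \Rb$ and consider the function $g_{x} : \Db \to \Rb$ given by $g_{x}(\ep) :\equiv f(x + \ep)$. Postulate J says that the evaluation map $\Rb[\ep]/(\ep^{2}) \to (\Db \to \Rb)$ is an equivalence, so there are unique $a(x),\, b(x) : \Rb$ with $g_{x}(\ep) = a(x) + b(x)\ep$ for all $\ep : \Db$. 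Setting $\ep :\equiv 0$ in this identity forces $a(x) = f(x)$, yielding the required expansion $f(x + \ep) = f(x) + b(x)\ep$.

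Next I would package the assignment $x \mapsto b(x)$ into a bona fide function $f' : \Rb \to \Rb$. The one place one has to be careful is ensuring this construction is performed uniformly in $x$, rather than pointwise. In type theory this is automatic, since Postulate J is stated as an equivalence of types and therefore may be invoked in any context, including the context of a free variable $x : \Rb$. Concretely, the currying $\Rb \times \Db \to \Rb$ of $(x, \ep) \mapsto f(x + \ep)$ gives a function $\Rb \to (\Db \to \Rb)$; composing with the inverse of the Kock-Lawvere equivalence yields a map $\Rb \to \Rb[\ep]/(\ep^{2}) \simeq \Rb \times \Rb$, whose two components we may evaluate separately. The first component is $f$ (by plugging in $\ep \equiv 0$ as above), and the second component we take as the definition of $f'$.

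For uniqueness, suppose $h : \Rb \to \Rb$ is another function satisfying $f(x + \ep) = f(x) + h(x)\ep$ for all $x : \Rb$ and $\ep^{2} = 0$. Then for each fixed $x$, the pair $(f(x), h(x))$ is the representative of $g_{x}$ in $\Rb[\ep]/(\ep^{2})$ under the Kock-Lawvere equivalence; by uniqueness of this representative, $h(x) = b(x) = f'(x)$. Function extensionality then gives $h = f'$. There is no real obstacle in the argument beyond noting that Postulate J is used ``with parameters'', which is automatic from the axiom being an equivalence of types in arbitrary context.
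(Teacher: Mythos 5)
Your proof is correct and follows essentially the same approach as the paper's: fix $x$, apply the Kock-Lawvere axiom to $g_x(\ep) :\equiv f(x+\ep)$, and define $f'(x)$ to be the unique linear coefficient. Your additional remarks about uniformity in $x$ and uniqueness via function extensionality make explicit what the paper leaves implicit, but the underlying argument is identical.
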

  \begin{proof}
Given $x : \Rb$, define $g_{x}(\ep) :\equiv f(x + \ep)$ and note that $g_{x} : \Db \to \Rb$. Therefore, there is a unique $b_{x} : \Rb$ for which $g_{x}(\ep) = g_{x}(0) + b_{x}\ep$. We may therefore define $f'(x) :\equiv b_{x}$.
  \end{proof}

  In general, a function $v : \Db \to X$ plays the role of a tangent vector in $X$, based at $v(0) : X$. In particular, a function $v : \Db \to \Rb$ can be considered as a tangent vector based at $v(0)$, and we see that the type of all such vectors is equivalent to $\Rb$ by the association of $b$ with $w(\ep) :\equiv v(0) + b\ep$.

  \begin{defn}[Standard, see \cite{Lavendhomme:SDG}]
    Let $X$ be a type, and $x : X$ an element. We define the tangent space $T_{x} X$ of $X$ based at $x$ to be the type of pointed function $v : \Db \pto X$ sending $0$ to $x$.
    \[
T_{x} X :\equiv \dsum{v : \Db \to X} (v(0) = x).
    \]

    The tangent bundle is the projection $\fst : TX :\equiv \dsum{x : X} T_{x}X \to X$.
  \end{defn}

  Here's an example of how we might compute a tangent space. Specifically, we will show that the Lie algebra of $U(1)$ is $\Rb$.
  \begin{lem}[Standard]
The tangent space $T_{1}U(1)$ of $U(1) :\equiv \{z : \Cb \mid z\bar{z} = 1\}$ at $1$ is identifiable with the set $1 + i\Rb$ of numbers of the form $1 + bi$ in $\Cb$ --- which is itself identifiable with $\Rb$.
    \end{lem}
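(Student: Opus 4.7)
The plan is to unfold the definition of the tangent space and then apply the Kock–Lawvere axiom (Postulate J) twice: once to parametrize functions $\Db\to\Cb$, and once to extract an equational constraint from the unit-circle condition. A tangent vector $v : T_{1}U(1)$ is by definition a function $v : \Db \to U(1)$ with $v(0) = 1$. Composing with the inclusion $U(1) \hookrightarrow \Cb$ and identifying $\Cb$ with $\Rb^{2}$, we may think of $v$ as a function $\Db \to \Cb$ satisfying $v(\ep)\overline{v(\ep)} = 1$ for all $\ep^{2}=0$ and $v(0) = 1$.

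Next I would apply the Kock–Lawvere axiom componentwise (so to $\Cb \cong \Rb^{2}$) to obtain a unique pair $(a, b) : \Cb \times \Cb$ with $v(\ep) = a + b\ep$. The basepoint condition $v(0) = 1$ forces $a = 1$, so $v$ is of the form $v(\ep) = 1 + b\ep$ for a unique $b : \Cb$. Then I would expand the unit-circle constraint:
\[
v(\ep)\overline{v(\ep)} = (1 + b\ep)(1 + \bar{b}\ep) = 1 + (b + \bar{b})\ep + b\bar{b}\ep^{2} = 1 + (b + \bar{b})\ep,
\]
using $\ep^{2} = 0$. Setting this equal to $1$ for all $\ep : \Db$ and invoking the uniqueness clause of Postulate J for functions $\Db \to \Cb$ (applied to the zero function), we obtain $b + \bar{b} = 0$, i.e.\ $b \in i\Rb$.

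Putting this together, the assignment $v \mapsto b$ defines an equivalence $T_{1}U(1) \simeq i\Rb$, with inverse sending $bi : i\Rb$ to the tangent vector $\ep \mapsto 1 + bi\,\ep$, which indeed satisfies $(1 + bi\,\ep)(1 - bi\,\ep) = 1 + b^{2}\ep^{2} = 1$. Writing $1 + i\Rb :\equiv \{1 + bi \mid b : \Rb\}$ and recording the further identification $1 + i\Rb \simeq \Rb$ by $1 + bi \mapsto b$, we obtain the claim.

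I do not anticipate a serious obstacle: the only subtle point is ensuring one uses the Kock–Lawvere axiom in its full force, namely the \emph{uniqueness} of the pair $(a,b)$ representing a function $\Db \to \Rb$ (or $\Db \to \Cb$), in order to conclude $b + \bar{b} = 0$ from the equation $(b+\bar{b})\ep = 0$ for all $\ep^{2} = 0$. Without this uniqueness we could only conclude that $b + \bar{b}$ is itself an infinitesimal, which would not suffice.
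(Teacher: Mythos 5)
Your proof is correct and follows essentially the same route as the paper's: expand $v(\ep)$ to first order via Kock–Lawvere, impose $v(\ep)\overline{v(\ep)} = 1$, and use uniqueness to kill the real part of the derivative. The only cosmetic difference is that you keep $b : \Cb$ and extract $b + \bar b = 0$, while the paper splits into real and imaginary coordinate functions $a(\ep), b(\ep)$ and concludes $a'(0) = 0$; these are the same computation in slightly different notation.
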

    \begin{proof}
      Let $v : \Db \to U(1)$ be a tangent vector at $1$, so that $v(0)  = 1$. We can write $v(\ep) = a(\ep) + b(\ep)i$, and then note that $1 = v(0) = a(0) + b(0)i$ so that $a(0) = 1$ and $b(0) = 0$. We can further expand $v(\ep)$ as
      \[
v(\ep) = a(\ep) + b(\ep)i = (1 + a'(0)\ep) + (b'(0)\ep)i
      \]
      However, we still know that $v(\ep) \overline{v(\ep)} = 1$, so in particular
      \[
1 = (1 + a'(0)\ep)^{2} + (b'(0)\ep)^{2} = 1 + 2a'(0)\ep
      \]
      from which we may conclude that $a'(0) = 0$. Therefore, we see that $v(\ep) = 1 + b'(0)\ep i$ for a unique element $b'(0) : \Rb$, which proves the lemma.
      \end{proof}

      \begin{rmk}
        Note that if $X$ is a higher type (such as an orbifold), then the isotropy group $\Aut_{X}(x)$ of $x : X$ acts on $T_{x} X$. This action is easy to define as a function $T_{x}X\twisted{-} : \B \Aut_{X}(x) \to \Type$, namely:
        \[
T_{x}X\twisted{y} :\equiv T_{y}X.
\]
      \end{rmk}

      Just because every type has a tangent bundle doesn't mean that every type is smooth. While we can always define a scalar action of $\Rb$ on $T_{x} X$ by $rv :\equiv \ep \mapsto v(r\ep)$, this action does not in general extend to the structure of an $\Rb$-module on $T_{x}X$. That is, we can't necessarily add tangent vectors. One pass at a definition of ``smooth type'' would be a type for which the tangent spaces are $\Rb$-modules. But we don't just want the first order algebraic structure of infinitesimals from $\Rb$, we want the higher order structure as well: we want all the algebraic structure of higher order infinitesimals in $\Rb$ to be present in ``smooth types''. This leads us to the notion of \emph{microlinear} types.

      \subsection{Microlinear types}\label{sec:microlinear}

      In this section, we will review the notion of \emph{microlinear} types, and characterize them in terms of the algebraic theory of \emph{finite order algebras}. The synthetic differential geometry community has settled on the notion of microlinearity as the correct notion of ``smooth space'' in the context of SDG. The main theme of this half of the paper will be that all reasonable notions of manifold and orbifold give rise to microlinear types. This means that by naively extending this notion of smoothness to higher types, we correctly pick up the intuitively smooth higher types such as orbifolds.

In order to define microlinear types, we will need the notion of \emph{infinitesimal variety}.
\begin{defn}\label{defn:infinitesimal.variety}
  An \emph{infinitesimal variety} $V$ is the spectrum of a Weil algebra. More formally, a pointed type $V$ is an infinitesimal variety if there merely exists a Weil algebra $W$ for which $V \simeq \Spec_{\Rb}(W)$ as pointed types, where $\Spec_{\Rb}(W)$ is pointed by the augmentation of $W$.

  The category $\type{InfVar}$ of infinitesimal varieties is the full subcategory of pointed sets spanned by the infinitesimal varieties.
\end{defn}

The walking tangent vector $\Db = \type{Spec}_{\Rb}(\Rb[x]/(x^{2}))$ is an example of an infinitesimal variety. In fact, the category of infinitesimal varieties is dual to the category of Weil algebras by the Kock-Lawvere axiom.
\begin{lem}\label{lem:inf.var.is.weil.alg}
  We have an equivalence of categories:
  \[
      \begin{tikzcd}
\type{InfVar}^{\op} \ar[r, bend left, "\Rb^{(-)}"] \ar[r, bend right, leftarrow, "\type{Spec}"'] & \type{WeilAlg}.
      \end{tikzcd}
  \]
\end{lem}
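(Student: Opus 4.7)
The plan is to construct both functors explicitly, give a candidate unit and counit, and verify they are natural isomorphisms. The Kock--Lawvere axiom (Postulate J) supplies the counit immediately, and the unit's invertibility is propositional, so the mere existence in the definition of infinitesimal variety suffices.

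First I would define the functors on morphisms. For $\Spec : \type{WeilAlg} \to \type{InfVar}^{\op}$, a Weil algebra map $\varphi : W \to W'$ is sent to precomposition $\Spec_{\Rb}(\varphi) : \Spec_{\Rb}(W') \to \Spec_{\Rb}(W)$, $\psi \mapsto \psi \circ \varphi$; this is a pointed map because $\varphi$ preserves the augmentation. Dually, $\Rb^{(-)} : \type{InfVar}^{\op} \to \type{WeilAlg}$ sends an infinitesimal variety $V$ to the algebra $\Rb^{V}$ of functions $V \to \Rb$ with pointwise operations, augmented by evaluation at the basepoint of $V$, and sends a pointed map $f : V \to V'$ to the augmented algebra map $f^{\ast} : \Rb^{V'} \to \Rb^{V}$ given by pullback. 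Here one must check that $\Rb^{V}$ really is a Weil algebra when $V$ is an infinitesimal variety; since being a Weil algebra is a proposition and $V$ merely equals $\Spec_{\Rb}(W)$ for some Weil algebra $W$, this follows from $\Rb^{V} \simeq \Rb^{\Spec_{\Rb}(W)} \simeq W$ by Postulate J.

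For the counit I would take the inverses of the Kock--Lawvere evaluation maps $\ev_{W} : W \to \Rb^{\Spec_{\Rb} W}$, $w \mapsto (\varphi \mapsto \varphi(w))$, which Postulate J declares to be isomorphisms. One should then verify that $\ev_{W}$ is an augmented $\Rb$-algebra homomorphism (algebra operations in $\Rb^{\Spec_{\Rb} W}$ are pointwise, so this is immediate, and the augmentation of $\Rb^{\Spec_{\Rb} W}$ is evaluation at the augmentation $\pi : W \to \Rb$, matching $\ev_{W}(w)(\pi) = \pi(w)$) and that $\ev_{W}$ is natural in $W$, which reduces to the identity $\ev_{W'}(\varphi(w))(\psi) = \psi(\varphi(w)) = \ev_{W}(w)(\psi \circ \varphi)$. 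For the unit I would take $\eta_{V} : V \to \Spec_{\Rb}(\Rb^{V})$, $v \mapsto \ev_{v}$, where $\ev_{v}(g) :\equiv g(v)$; this is a pointed natural transformation by inspection. The substantive claim is that $\eta_{V}$ is an isomorphism for every infinitesimal variety $V$. Since being an isomorphism is a proposition, I can strip the propositional truncation in the definition of infinitesimal variety and assume a concrete pointed isomorphism $\alpha : V \xto{\sim} \Spec_{\Rb}(W)$ for some Weil algebra $W$. Pullback along $\alpha$ gives an iso $\alpha^{\ast} : \Rb^{\Spec_{\Rb}(W)} \xto{\sim} \Rb^{V}$ of Weil algebras, so applying $\Spec_{\Rb}$ and composing with the inverse of $\ev_{W}$ produces an iso $\Spec_{\Rb}(\Rb^{V}) \xto{\sim} V$; a short diagram chase using the definition of $\ev_{v}$ identifies this composite as inverse to $\eta_{V}$.

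I expect the main obstacle to be essentially bookkeeping: the propositional nature of ``being an infinitesimal variety'' prevents one from picking a Weil algebra $W$ for $V$ functorially, so every verification involving $\eta_{V}$ must land in a proposition. Fortunately invertibility is propositional, so this works. The analogous issue does not arise for $\Rb^{(-)} \circ \Spec_{\Rb} \simeq \id$ because the counit comes directly from Postulate J, which is stated on the nose. A secondary nuisance is tracking augmentations throughout, but in every case this reduces to preservation of basepoints on the variety side.
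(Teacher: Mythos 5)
Your proposal is correct and takes essentially the same route as the paper: identify the two evaluation maps as the unit and counit of a contravariant adjunction, invoke Postulate J for the map $W \to \Rb^{\Spec_{\Rb}(W)}$, and handle the other direction by stripping the propositional truncation in the definition of infinitesimal variety. The paper phrases the second step more tersely as ``$\Spec$ is fully faithful (unit iso) and essentially surjective by definition,'' where you directly verify the other evaluation map is also an isomorphism — but these are the same argument in different packaging. (A minor terminological note: you and the paper swap the labels ``unit'' and ``counit,'' which is harmless given the contravariance.)
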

\begin{proof}
  First, let's note that this is a contravariant adjunction. That is, we have the unit $\eta :\equiv w \mapsto [\varphi \mapsto \varphi(w)] : W \to \Rb^{\type{Spec}(W)}$ and counit $\ep :\equiv v \mapsto [f \mapsto f(v)] : V \to \type{Spec}(\Rb^{V})$, both given by evaluation. The Kock-Lawvere axiom (Postulate J) says that the unit $\eta$ is an isomorphism for Weil algebras $W$.
   Therefore, the left adjoint $\type{Spec} :  \type{WeilAlg} \to \type{InfVar}\op $ is fully faithful. Since it is by definition essentially surjective, this concludes our proof.
\end{proof}

We can justify the name ``infinitesimal variety'' if we have a good definition of ``infinitesimal'' (due to Penon \cite{Penon:Infinitesimals.and.Intuitionism}).
\begin{defn}[\cite{Penon:Infinitesimals.and.Intuitionism}]\label{defn:infinitesimal}
  A real number $x : \Rb$ is \emph{infinitesimal} if it is not non-zero. More generally, for $x, y : X$ of any set $X$, define the \emph{neighbor} relation $x \approx y$ by
  \[
(x \approx y) :\equiv \neg \neg (x = y).
\]
An infinitesimal is $x$ such that $x \approx 0$. We denote the set of infinitesimals by
\[
\mathfrak{D} :\equiv \{x : \Rb \mid x \approx 0\}.
\]
If $x : X$, then we may define $\mathfrak{D}_{x}X :\equiv \{y : X \mid y \approx x\}$.
\end{defn}

\begin{rmk}
In the Dubuc topos, the type $\Dc$ of infinitesimals is representable by the $\Ca^{\infty}$-algebra $\Ca^{\infty}_{0}(\Rb)$ of germs of smooth functions on $\Rb$ at $0$. This is Proposition 11.5 of \cite{Bunge-Gago-SanLuis:Synthetic.Differential.Topology}.
\end{rmk}

\begin{rmk}
Note that by the Archimedian property (Postulate O.5), a number $x$ is infinitesimal if and only if $x < \frac{1}{n}$ for all $n : \Nb$.
\end{rmk}

\begin{lem}[\cite{Penon:Infinitesimals.and.Intuitionism}]\label{lem:neighbor.relation.functorial}
  Any function $f : X \to Y$ preserves the neighbor relation, in that we have a map
  $$f_{\ast} : (x \approx y) \to (fx \approx fy).$$
  As a corollary, for any point $x : X$ there is a pushforward
  $$f_{\ast} : \mathfrak{D}_{x}X \to \mathfrak{D}_{fx}Y.$$
  \end{lem}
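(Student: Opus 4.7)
The plan is to observe that the neighbor relation $x \approx y :\equiv \neg\neg(x = y)$ is just the double negation of the identity type, so the lemma reduces to the standard fact that double negation is a functor on propositions and that every function preserves identifications. The structural point is that no axiom of synthetic differential geometry is needed here; this is a generic fact about any function in homotopy type theory.

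First I would recall that every function $f : X \to Y$ acts on identifications via $\ap_{f} : (x = y) \to (f(x) = f(y))$. Taking the contrapositive yields a map $\neg(f(x) = f(y)) \to \neg(x = y)$, and applying contraposition a second time produces the desired map
\[
f_{\ast} : \neg\neg(x = y) \to \neg\neg(f(x) = f(y)),
\]
which is precisely the map $(x \approx y) \to (f(x) \approx f(y))$ asked for. Concretely, given $p : \neg\neg(x = y)$ and $q : \neg(f(x) = f(y))$, the composite $q \circ \ap_{f} : (x = y) \to \emptyset$ witnesses $\neg(x = y)$, which contradicts $p$; hence $\neg\neg(f(x) = f(y))$.

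For the corollary, I would simply define the pushforward on the subtype $\mathfrak{D}_{x}X :\equiv \{y : X \mid y \approx x\}$ by $(y, \alpha) \mapsto (f(y), f_{\ast}\alpha)$, where $f_{\ast}\alpha : f(y) \approx f(x)$ is obtained from the first part applied to $\alpha : y \approx x$. Since the membership predicate is propositional, this is well-defined and clearly functorial.

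There is no real obstacle; the only thing to notice is that the argument uses nothing beyond the functoriality of $\ap$ and of negation, so it applies equally well when $X$ and $Y$ are arbitrary types (not just sets), even though we have stated the neighbor relation for sets.
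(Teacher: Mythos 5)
Your proof is correct and matches the paper's, which simply says ``We apply $\neg\neg$ functorially to $\ap\, f$''; you have just unfolded what applying $\neg\neg$ functorially means (double contraposition) and spelled out the evident definition of the pushforward on the subtype. No differences of substance.
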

  \begin{proof}
We apply $\neg\neg$ functorially to $\ap\, f$.
  \end{proof}

The infinitesimal neighborhoods of $0$ in $\Rb^{n}$ consist of the points with infinitesimal coordinates.
  \begin{lem}\label{lem:neighborhood.of.0.in.R.n}
    For any $n : \Nb$, we have an equality of subsets of $\Rb^{n}$:
    $$\Dc_{0}(\Rb^{n}) =  \mathfrak{D}^{n}.$$
  \end{lem}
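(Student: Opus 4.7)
The plan is to unfold both sides as predicates on $y : \Rb^{n}$ and reduce the equality of subsets to a propositional equivalence. By definition, $y \in \Dc_{0}(\Rb^{n})$ means $\neg\neg(y = 0)$, and since equality of tuples is pointwise ($y = 0$ in $\Rb^{n}$ is the conjunction $\bigwedge_{i=1}^{n}(y_{i} = 0)$), this unfolds to $\neg\neg \bigwedge_{i}(y_{i} = 0)$. On the other hand, $y \in \Dc^{n}$ means $\bigwedge_{i}\neg\neg(y_{i} = 0)$. So the lemma reduces to the equivalence
\[
\neg\neg \bigwedge_{i=1}^{n}(y_{i} = 0) \quad\iff\quad \bigwedge_{i=1}^{n}\neg\neg(y_{i} = 0).
\]

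For the forward direction, I would simply apply \cref{lem:neighbor.relation.functorial} to the coordinate projections $\pi_{i} : \Rb^{n} \to \Rb$. Since $\pi_{i}(0) = 0$, functoriality of $\approx$ gives $\pi_{i\ast} : \Dc_{0}(\Rb^{n}) \to \Dc_{0}\Rb$, and thus from $y \approx 0$ we obtain $y_{i} \approx 0$ for each $i$, giving the inclusion $\Dc_{0}(\Rb^{n}) \subseteq \Dc^{n}$.

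The reverse direction is the standard constructive fact that $\neg\neg$ commutes with finite conjunctions. I would prove it by induction on $n$, the key step being the binary case: assuming $\neg\neg A$, $\neg\neg B$, and $\neg(A \wedge B)$, first derive $A \to \neg B$ (since $A$ and $B$ together would contradict $\neg(A \wedge B)$), hence $A \to \bot$ using $\neg\neg B$, hence $\neg A$, contradicting $\neg\neg A$. This yields $\neg\neg(A \wedge B)$. Iterating gives $\bigwedge_{i}\neg\neg(y_{i}=0) \Rightarrow \neg\neg\bigwedge_{i}(y_{i}=0)$, establishing $\Dc^{n} \subseteq \Dc_{0}(\Rb^{n})$.

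There is no real obstacle here; the content of the lemma is entirely a short exercise in intuitionistic propositional logic, packaged with the functoriality of the neighbor relation. The only thing worth being careful about is not accidentally invoking double negation elimination on either coordinate, which would trivialize $\Dc$ itself; the argument above uses only constructively valid manipulations.
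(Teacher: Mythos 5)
Your proof is correct, and it differs from the paper's in an instructive way. The easy inclusion $\Dc_{0}(\Rb^{n}) \subseteq \Dc^{n}$ is essentially the same: your appeal to \cref{lem:neighbor.relation.functorial} on the coordinate projections matches the paper's contrapositive argument (if $x_{i} \neq 0$ then $\vec{x} \neq 0$). For the reverse inclusion, however, the paper invokes Postulate K: from $\neg\bigwedge_{i}(x_{i}=0)$ it extracts an invertible, hence non-zero, coordinate $x_{i_{0}}$, contradicting $x_{i_{0}} \approx 0$. You instead observe that $\bigwedge_{i}\neg\neg(x_{i}=0) \Rightarrow \neg\neg\bigwedge_{i}(x_{i}=0)$ is a pure intuitionistic tautology --- double negation is a nucleus on propositions and hence preserves finite conjunctions --- and your derivation of the binary case is the standard correct one. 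This is a genuine simplification: your proof shows Postulate K plays no role here, and the argument applies verbatim with $\Rb$ replaced by an arbitrary set. It also suggests the remark following the lemma in the paper (that the lemma ``can be seen as a reformulation of Postulate K'') is overstated, since a logical tautology cannot recover the nontrivial disjunction $\bigvee_{i}(x_{i}\text{ invertible})$ that Postulate K asserts.
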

  \begin{proof}
    We prove both inclusions. Going from the left hand side to the right hand side is straightforward. Suppose that $\vec{x} \approx 0$, and let $x_{i}$ be its $i^{\text{th}}$ coefficient. If $x_{i} \neq 0$, then $\vec{x} \neq 0$ since $\vec{x} = 0$ if and only if all its coefficients are $0$; therefore, we conclude that $x_{i}$ is not non-zero, which is to say that $x_{i} \approx 0$.

    It's the other direction that requires Postulate K. Suppose that each coefficient of $\vec{x}$ is not non-zero. Suppose that $\vec{x}$ were non-zero; since $\vec{x} = 0$ means precisely that all of its coefficients are $0$, we are assuming
    \[
\neg \left(\bigwedge_{i = 1}^{n} (x_{i} = 0)\right).
\]
By Postulate K, we may therefore conclude that one of the $x_{i}$ is invertible, which in particular means that it is non-zero. But this contradicts our assumption that each of the $x_{i}$ are not non-zero, so we conclude that $\vec{x}$ is not non-zero, which is to say that $\vec{x} \approx 0$.
  \end{proof}
  \begin{rmk}
If we assume that the smooth reals $\Rb$ are a field in the sense that every non-zero element is invertible, we can see \cref{lem:neighborhood.of.0.in.R.n} as a reformulation of Postulate K.
  \end{rmk}

Infinitesimal varieties are the zero locuses of functions on infinitesimals.
\begin{lem}\label{lem:inf.variety.is.zero.locus}
Let $V$ be an infinitesimal variety. Then there is merely a (polynomial) function $f : \Dc^{n} \to \Dc^{m}$ which send $0$ to $0$ and for which $V \simeq \{x : \Dc^{n} \mid f(x) = 0\}$, identifying the base point of $V$ with $0$.
\end{lem}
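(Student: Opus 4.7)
The plan is to reduce to the standard form of \cref{lem:weil.alg.standard.form} and then extract the polynomial presentation. So I begin by choosing a Weil algebra $W$ presenting $V$, i.e. $V \simeq \Spec_{\Rb}(W)$ as pointed sets. By \cref{lem:weil.alg.standard.form}, I may merely assume $W \equiv \Rb[x_{1},\ldots,x_{n}]/(f_{1},\ldots,f_{m})$ with augmentation $x_{i} \mapsto 0$; well-definedness of this augmentation on the quotient forces $f_{j}(0) = 0$ for each $j$. Under the identification $\Spec_{\Rb}(W) \simeq \{\vec r : \Rb^{n} \mid \forall j.\ f_{j}(\vec r) = 0\}$, the base point (the augmentation) corresponds to $\vec r = 0$.

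The core step is to observe that the coordinates of any point of $V$ automatically lie in $\Dc$, so that the ambient space $\Rb^{n}$ may be replaced by $\Dc^{n}$. Since the augmentation ideal $(x_{1},\ldots,x_{n}) \subseteq W$ is nilpotent by hypothesis, there is an $N$ with $x_{i}^{N} = 0$ in $W$ for each $i$. Then any ring homomorphism $\varphi : W \to \Rb$ satisfies $\varphi(x_{i})^{N} = 0$. If $\varphi(x_{i})$ were non-zero it would be invertible by Postulate K, forcing $1 = 0$; hence $\varphi(x_{i})$ is not non-zero, i.e.\ $\varphi(x_{i}) \in \Dc$. This gives an inclusion $\Spec_{\Rb}(W) \hookrightarrow \Dc^{n}$ and therefore an equivalence
\[
V \simeq \{\vec r : \Dc^{n} \mid \forall j.\ f_{j}(\vec r) = 0\}
\]
sending the base point to $0$.

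It remains to check that $f :\equiv (f_{1},\ldots,f_{m}) : \Rb^{n} \to \Rb^{m}$ restricts to a map $\Dc^{n} \to \Dc^{m}$ sending $0$ to $0$. The latter is clear since each $f_{j}(0) = 0$. For the restriction, given $\vec r : \Dc^{n}$ we have $\vec r \approx 0$ in $\Rb^{n}$ by \cref{lem:neighborhood.of.0.in.R.n}, and polynomial (hence all) functions preserve the neighbor relation by \cref{lem:neighbor.relation.functorial}, so $f_{j}(\vec r) \approx f_{j}(0) = 0$, giving $f(\vec r) \in \Dc^{m}$. Putting these together yields the desired polynomial $f : \Dc^{n} \to \Dc^{m}$ with $f(0) = 0$ and $V \simeq \{x : \Dc^{n} \mid f(x) = 0\}$ as pointed types.

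I expect no serious obstacle: the only subtlety is the Postulate K argument identifying nilpotents with elements of $\Dc$, which is the lone step where the algebraic finiteness of $W$ meets the logical definition of infinitesimal. The quantifier ``merely'' in the statement corresponds precisely to the ``merely equivalent'' in \cref{lem:weil.alg.standard.form}, so no additional truncation bookkeeping is required.
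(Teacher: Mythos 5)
Your proof is correct and takes essentially the same route as the paper: reduce to standard form via \cref{lem:weil.alg.standard.form}, then use nilpotency of the augmentation ideal to argue that the images $\varphi(x_i)$ under any $\Rb$-homomorphism are not non-zero, hence lie in $\Dc$, so the spectrum is the zero locus inside $\Dc^n$. You spell out the Postulate K step a bit more explicitly than the paper (which just says a nilpotent ``cannot be non-zero'') and you also verify that $f$ actually restricts to $\Dc^n \to \Dc^m$ using \cref{lem:neighborhood.of.0.in.R.n} and \cref{lem:neighbor.relation.functorial}, a small point the paper passes over silently.
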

\begin{proof}
Let $V = \Spec(W)$ be the spectrum of a Weil algebra in standard form $W = \Rb[x_{1}, \ldots, x_{n}]/(f_{1}, \ldots, f_{m}) \xto{\term{ev}_{0}} \Rb$, which we may assume by \cref{lem:weil.alg.standard.form}. The polynomials $f_{1}, \ldots, f_{m}$ assemble into a function $f : \Rb^{n} \to \Rb^{m}$. We note that since $\term{ev}_{0}(f_{j}) = 0$, we have that $f(0) = 0$. We may restrict these functions to $\{a : \Rb^{n} \mid a \approx 0\} \to \{b : \Rb^{m} \mid b \approx 0\}$. We will show that $V$ is equivalent to the fiber over $0$ of this restricted function.

To show that $V \simeq \{a : \Dc^{n} \mid f(a) = f(0)\}$, we'll construct an explicit equivalence. Let $a : \Dc^{n}$ so that $f(a) = 0$, and define $\varphi_{a} : W \to \Rb$ by $\varphi_{a}(x_{i}) = a_{i}$. For this to be well defined, we need to know that $f_{j}(a) = 0$, but this was presumed. This gives a map $\{a : \Dc^{n} \mid f(a)= 0\} \to \Spec(W)$ which is evidently injective. To show surjectivity, suppose $\psi : W \to \Rb$ is a $\Rb$-homomorphism; we will show that $(\psi(x_{1}), \ldots, \psi(x_{n})) \in \{a : \Dc^{n} \mid f(a) = 0\}$, splitting the map $a \mapsto \varphi_{a}$. First, we note that $f(\psi(x_{1}), \ldots \psi(x_{n})) = \psi(f)(x_{1}, \ldots, x_{n}) = 0$. It remains to show that $\psi(x_{i}) \approx 0$. Note that since $\term{ev}_{0}(x_{i}) =0$, $x_{i}$ is in the augmentation $\ker(\term{ev}_{0})$ which was assumed to be nilpotent. Therefore, $x_{i}$ is nilpotent, and so $\psi(x_{i})$ is nilpotent, so it cannot be non-zero.
\end{proof}

We can now define microlinear types, though it will take a bit more work to explain why they are a useful class of types.
  \begin{defn}
A square as on the left is said to be an $X$-pushout (for a given type $X$) if the square on the right given by precomposition is a pullback:
\[
\begin{tikzcd}
	A & C && {X^A} & {X^C} \\
	B & D && {X^B} & {X^D}
	\arrow[from=1-1, to=2-1]
	\arrow[from=2-1, to=2-2]
	\arrow[from=1-1, to=1-2]
	\arrow[from=1-2, to=2-2]
	\arrow[from=1-5, to=1-4]
	\arrow[from=2-5, to=1-5]
	\arrow[from=2-5, to=2-4]
	\arrow[from=2-4, to=1-4]
\end{tikzcd}
\]
  \end{defn}

  \begin{defn}
An \emph{infinitesimal $\Rb$-pushout} is a \emph{crisp} commuting square of pointed maps between infinitesimal varieties which is an $\Rb$-pushout.
  \end{defn}

  \begin{defn}[Standard, see \cite{Lavendhomme:SDG}]\label{defn:microlinear}
    A type $X$ is \emph{microlinear} if every infinitesimal $\Rb$-pushout is also an $X$-pushout.
  \end{defn}

We will show that the tangent spaces of microlinear types have a canonical $\Rb$-module structure. We will do this by showing that for any $x : X$, the functor $V \mapsto \dsum{v : V \to X}(v(0) = x) : \type{InfVar}\op \to \Type$ restricts to a product preserving functor $\type{fgFreeMod}_{\Rb}^{\op} \to \Type$ sending $\Rb$ to $T_{x}X$. This shows that $T_{x}X$ is a model of the algebraic theory of $\Rb$-modules.

First, we begin by defining the first order infinitesimal patches of the origin in $\Rb^{n}$.
\begin{defn}[Standard, see \cite{Lavendhomme:SDG}]
  The first order infinitesimal patch of the origin $\Db(n)$ in $\Rb^{n}$ is
  \[
\Db(n) :\equiv \{x : \Rb^{n} \mid \forall i,j,\, x_{i}x_{j} = 0\} = \{x : \Rb^{n} \mid xx^{T} = 0\}.
  \]
  Note that $\Db(1) \equiv \Db$ is the set of first order infinitesimals.
\end{defn}

\begin{lem}
There is a fully faithful functor $\Db : \type{fgFreeMod})_{\Rb} \to \type{InfVar}$ sending $\Rb^{n}$ to $\Db(n)$.
\end{lem}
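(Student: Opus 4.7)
The plan is to define the functor $\Db$ explicitly, check functoriality, and then deduce fully faithfulness by passing through the duality $\type{InfVar}\op \simeq \type{WeilAlg}$ of \cref{lem:inf.var.is.weil.alg}. First I would identify the Weil algebra $W_n :\equiv \Rb[x_1, \ldots, x_n]/(x_i x_j : 1 \leq i, j \leq n)$ with augmentation $\term{ev}_0$, and check that $\Spec_\Rb(W_n) \simeq \Db(n)$ as pointed sets: a homomorphism $\varphi : W_n \to \Rb$ is determined by its values $\varphi(x_i) \in \Rb$ subject to $\varphi(x_i) \varphi(x_j) = 0$, which is precisely the defining condition of $\Db(n)$, with $\varphi = \term{ev}_0$ corresponding to the base point $0$.

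Next, to define $\Db$ on morphisms, I would assign to each linear map $L : \Rb^n \to \Rb^m$ with matrix $(A_{ki})$ its restriction to $\Db(n)$. This restriction lands in $\Db(m)$: for $x \in \Db(n)$,
\[
L(x)_k\, L(x)_l = \sum_{i,j} A_{ki} A_{lj}\, x_i x_j = 0,
\]
and of course $L$ sends $0$ to $0$. Functoriality is then immediate, since restriction preserves identities and composition.

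For fully faithfulness, the duality of \cref{lem:inf.var.is.weil.alg} reduces the computation of pointed maps $\Db(n) \to \Db(m)$ to that of augmented $\Rb$-algebra maps $W_m \to W_n$. Such a $\varphi : W_m \to W_n$ is determined by the images of the generators $y_1, \ldots, y_m$ of $W_m$, and since $\varphi$ preserves augmentations, each $\varphi(y_k)$ lies in the augmentation ideal of $W_n$. That ideal is freely spanned as an $\Rb$-module by $x_1, \ldots, x_n$ (because all products among these generators vanish, $\{1, x_1, \ldots, x_n\}$ is a basis of $W_n$), so we may write $\varphi(y_k) = \sum_i A_{ki} x_i$ for unique scalars $A_{ki} \in \Rb$. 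The defining relations $y_k y_l = 0$ are then automatically preserved, because the augmentation ideal of $W_n$ squares to zero; no further constraint is imposed. This gives a bijection between augmented algebra maps $W_m \to W_n$ and matrices in $M_{m \times n}(\Rb)$, and a direct check — tracing through the duality, in which $y_k \in W_m \simeq \Rb^{\Db(m)}$ corresponds to the $k$-th coordinate projection — shows that this bijection matches the restriction construction of the previous paragraph. The only mild subtlety, rather than a serious obstacle, is verifying the uniqueness of the coefficients $A_{ki}$, which rests on $\{1, x_1, \ldots, x_n\}$ being a genuine $\Rb$-basis of $W_n$, a fact visible directly from the presentation.
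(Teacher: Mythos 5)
Your proof is correct, but it takes a genuinely different route from the paper. The paper argues directly: it applies the Kock--Lawvere axiom to an arbitrary pointed function $f : \Db(n) \to \Db(m)$ (viewed as a function into $\Rb^m$) to obtain $f(x) = f(0) + Ax$ for a unique matrix $A$, so that pointed functions between the first-order patches extend uniquely to linear maps. You instead pass entirely through the duality $\type{InfVar}\op \simeq \type{WeilAlg}$ of \cref{lem:inf.var.is.weil.alg}, reducing the count of pointed maps $\Db(n) \to \Db(m)$ to the purely algebraic problem of computing augmented-algebra homomorphisms $W_m \to W_n$, where $W_n = \Rb[x_1,\ldots,x_n]/(x_i x_j)$. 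That computation — determined by the images of the $y_k$ in the augmentation ideal, which squares to zero, so no relations need checking — is clean and elementary, and the identification of the augmentation ideal as the free module on the $x_i$ is immediate from the presentation and constructively unproblematic. What your approach buys is a sharper separation of concerns: all of the Kock--Lawvere content is quarantined inside the duality lemma, and the remainder is pure finite algebra; the bijection with $m \times n$ matrices is also more transparent this way. The paper's approach is shorter and exhibits Kock--Lawvere more directly at work (it even uses it to show the restriction lands in $\Db(m)$, whereas your direct matrix computation there is simpler and is all that's needed). Your ``direct check'' that the duality carries the restriction functor to the algebraic bijection is indeed routine — tracing the coordinate projections $y_k$ through precomposition by $L$ gives exactly $\sum_i A_{ki} x_i$ — but since fully faithfulness of $\Db$ is a claim about a \emph{specific} functor, that step should not be left entirely implicit in a final write-up.
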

\begin{proof}
  First, we will show that the object assignment $\Rb^{n} \mapsto \Db(n)$ is functorial simply by restricting a linear function $f : \Rb^{n} \to \Rb^{m}$ to $\Db(n)$. If $x \in \Db(n)$, we will show that $f(x) \in \Rb^{m}$. By the Kock-Lawvere axiom, we have that
  \[
f(x) = f(0) + \sum_{i} x_{i}A_{i}
\]
for unique vectors $A_{i} : \Rb^{m}$. By linearity, $f(0) = 0$, so we see that $f(x) = Ax$ where $A$ is the $m \times n$ matrix with columns $A_{i}$. Finally,
\[
 (Ax)(Ax)^{T} = Axx^{T}A^{T} = 0.
\]
because by hypothesis $xx^{T} = 0$.

To show that this assignment is fully faithful, we need to show that any pointed function $f : \Db(n) \to \Db(m)$ extends to a unique linear map $\Rb^{n} \to \Rb^{m}$. The key is again to use the Kock-Lawvere axiom to see that $f(x) = f(0) + Ax$ for a unique matrix $A$, and since we are only looking at pointed functions we know that $f(0) = 0$. Therefore we have a matrix $A$, which gives us a linear map $\Rb^{n} \to \Rb^{m}$.
\end{proof}

\begin{lem}[Proposition 2.2.6 of \cite{Lavendhomme:SDG}]\label{lem:linear.inf.pushout}
  The square
\begin{equation}\label{diag:linear.inf.pushout}
\begin{tikzcd}
	\ast & {\Db(m)} \\
	{\Db(n)} & {\Db(n + m)}
	\arrow["0"', from=1-1, to=2-1]
	\arrow["{x \mapsto (x, 0)}"', from=2-1, to=2-2]
	\arrow["0", from=1-1, to=1-2]
	\arrow["{y \mapsto (0, y)}", from=1-2, to=2-2]
\end{tikzcd}
\end{equation}

 is an infinitesimal $\Rb$-pushout. As a corollary, the functor $\Db : \type{fgFreeMod}_{\Rb} \to \type{InfVar}$ sends coproducts to infinitesimal $\Rb$-pushouts.
\end{lem}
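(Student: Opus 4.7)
The plan is to apply the Kock--Lawvere axiom (Postulate J) to translate the statement into a routine computation with Weil algebras. The square is manifestly crisp and consists of pointed maps between infinitesimal varieties, since each $\Db(k)$ is $\Spec_\Rb(W_k)$ for $W_k :\equiv \Rb[z_1, \ldots, z_k]/(z_i z_j)_{i,j \leq k}$, and $\ast = \Spec_\Rb(\Rb)$. What remains is to verify that the induced square of function-spaces into $\Rb$ is a pullback.

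By Postulate J, $\Rb^{\Db(k)} \simeq W_k$ as $\Rb$-algebras. Under this identification, precomposition by $x \mapsto (x, 0)$ becomes the quotient map $W_{n+m} \to W_n$ sending each $y_j$ to $0$, and precomposition by $y \mapsto (0, y)$ becomes the quotient $W_{n+m} \to W_m$ sending each $x_i$ to $0$. Here I write $W_{n+m} \simeq \Rb[x_1, \ldots, x_n, y_1, \ldots, y_m]/I$, being careful to note that the ideal $I$ kills not only the $x_i x_{i'}$ and $y_j y_{j'}$ but also all mixed products $x_i y_j$, because membership in $\Db(n+m)$ requires every pairwise product of coordinates to vanish. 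Consequently each element of $W_{n+m}$ has a unique normal form $a + \sum_i b_i x_i + \sum_j c_j y_j$.

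Thus the pullback we must verify is
\[
W_{n+m} \xrightarrow{\sim} W_n \times_{\Rb} W_m,
\]
whose right hand side consists of pairs $(a + \sum_i b_i x_i,\, a' + \sum_j c_j y_j)$ with matching constant terms $a = a'$. The forward map (given by the two quotients) is clear; its inverse sends such a pair to $a + \sum_i b_i x_i + \sum_j c_j y_j$, which is well-defined in $W_{n+m}$ precisely because the mixed monomials $x_i y_j$ already vanish there. The two composites are easily checked to be the identity on normal forms. The corollary then follows because every finitely generated free $\Rb$-module is $\Rb^k$ for some $k$ and the coproduct in $\type{fgFreeMod}_\Rb$ is the direct sum $\Rb^n \oplus \Rb^m \simeq \Rb^{n+m}$, so the image under $\Db$ of any coproduct cocone is precisely the square \eqref{diag:linear.inf.pushout}.

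The main (minor) subtlety is to remember that $\Db(n+m)$ is \emph{not} the set-theoretic product $\Db(n) \times \Db(m)$ of infinitesimal varieties: its Weil algebra kills all the mixed products $x_i y_j$ in addition to the within-factor products. It is exactly this extra vanishing that makes $W_{n+m} \to W_n \times_\Rb W_m$ an isomorphism rather than merely a surjection, and hence makes the original square an $\Rb$-pushout rather than only a product-like diagram.
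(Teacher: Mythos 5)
Your proof is correct and follows essentially the same route as the paper's, differing only in which side of the $\Spec \dashv \Rb^{(-)}$ duality the Kock--Lawvere computation is carried out: the paper expands $f : \Db(k) \to \Rb$ directly as $f(0) + a\cdot x$ and builds the unique extension $\langle f, g\rangle(x,y) :\equiv f(0) + a\cdot x + b\cdot y$, whereas you transport the pullback condition to Weil algebras and check $W_{n+m} \simeq W_n \times_\Rb W_m$ via normal forms. The observation you make at the end --- that $\Db(n+m)$ is strictly smaller than $\Db(n)\times\Db(m)$ because it kills the mixed products $x_i y_j$ --- is exactly what the paper is implicitly relying on when it invokes the uniqueness clause of the Kock--Lawvere axiom for $\Db(n+m)$.
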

\begin{proof}
  Suppose $f : \Db(n) \to \Rb$ and $g : \Db(m) \to \Rb$ are such that $f(0) = g(0)$. By the Kock-Lawvere axiom, we have that $f(x) = f(0) + a \cdot x$ and $g(y) = g(0) + b \cdot y$ for unique vectors $a : \Rb^{n}$ and $b : \Rb^{m}$.

  Define $\langle f, g \rangle : \Db(n + m) \to \Rb$ by $\langle f, g \rangle(x, y) :\equiv f(0) + a \cdot x + b \cdot y$. By definition, $\langle f, g \rangle(x, 0) = f(x)$ and $\langle f, g \rangle(0, y) = g(y)$. The uniqueness part of the Kock-Lawvere axiom shows that this extension is unique.

  Finally, we note that every coproduct diagram in $\type{fgFreeMod}_{\Rb}$ is given by the inclusion of axis as in the above square, at least up to isomorphism.
\end{proof}

\cref{lem:linear.inf.pushout} shows that for any microlinear type $X$, the square
\[
\begin{tikzcd}
	{X^{\Db(n + m)}} & {X^{\Db(n)}} \\
	{X^{\Db(m)}} & X
	\arrow[from=1-1, to=2-1]
	\arrow[from=1-1, to=1-2]
	\arrow[from=1-2, to=2-2]
	\arrow[from=2-1, to=2-2]
	\arrow["\lrcorner"{anchor=center, pos=0.125}, draw=none, from=1-1, to=2-2]
\end{tikzcd}
\]
is a pullback. This condition is weaker than microlinearity, but useful in its own right. It appears as Definition 6.3 in \cite{Kock:SDG}.
\begin{defn}[\cite{Kock:SDG}]\label{defn:inf.linear}
A type $X$ is \emph{infinitesimally linear} if the squares
\[
\begin{tikzcd}
	{X^{\Db(n + m)}} & {X^{\Db(n)}} \\
	{X^{\Db(m)}} & X
	\arrow[from=1-1, to=2-1]
	\arrow[from=1-1, to=1-2]
	\arrow[from=1-2, to=2-2]
	\arrow[from=2-1, to=2-2]
	\arrow["\lrcorner"{anchor=center, pos=0.125}, draw=none, from=1-1, to=2-2]
\end{tikzcd}
\]
give by precomposing by the $\Rb$-pushout squares in \cref{lem:linear.inf.pushout} are pullbacks for all $n$ and $m$.
\end{defn}

\begin{rmk}
As an immediate corollary of \cref{lem:linear.inf.pushout}, we see that microlinear types are infinitesimally linear. It is somewhat unfortunate that these two notions have such similar names; it is really ``microlinearity'' which should be called something else, since it also involves higher order infinitesimals.
\end{rmk}

\begin{rmk}
While Kock \cite{Kock:SDG} and Lavendhomme \cite{Lavendhomme:SDG} use ``microlinear'' and ``infinitesimally linear'' in the same sense that we are using here, Bunge, Gago, and San Luis \cite{Bunge-Gago-SanLuis:Synthetic.Differential.Topology} use the term ``infinitesimally linear'' to refer to what we here call ``microlinear''.
\end{rmk}

\begin{thm}\label{thm:inf.linear.R.module}
If $X$ is infinitesimally linear (in particular if $X$ is microlinear), then for all $x : X$, the tangent space $T_{x} X$ has the (coherent) structure of an $\Rb$-module --- even if $X$ is not a set.
\end{thm}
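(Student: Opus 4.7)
The plan is to realize $T_x X$ as the value at $\Rb$ of a finite-product-preserving functor from the Lawvere theory of $\Rb$-modules into $\Type$. The general principle that such a functor is exactly a (coherent) $\Rb$-module valued in types (as opposed to sets) will then give the desired structure, and the higher coherences are packaged for free --- this is the reason the argument extends beyond sets without any extra work.

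Concretely, for each $x : X$ I would define a functor
\[
F_x : \type{fgFreeMod}_\Rb^{\op} \to \Type, \qquad F_x(M) :\equiv \fib_{\ev_0}(x),
\]
where $\ev_0 : X^{\Db(M)} \to X$ is evaluation at the basepoint and $\Db : \type{fgFreeMod}_\Rb \to \type{InfVar}$ is the fully faithful functor of the previous lemma (so $\Db(\Rb^n) \equiv \Db(n)$). Functoriality comes from the fact that a linear map $\varphi : M \to N$ yields a pointed map $\Db(\varphi) : \Db(M) \to \Db(N)$, so precomposition gives $X^{\Db(N)} \to X^{\Db(M)}$ commuting with $\ev_0$ and hence descending to fibers. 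It is immediate that $F_x(\Rb) \simeq T_x X$ since $\Db(\Rb) \equiv \Db$ and the fiber of $X^\Db \to X$ over $x$ is, by definition, the tangent space.

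The substantive step is to show $F_x$ preserves finite products. For the terminal object, $\Db(0) \equiv \ast$ and $F_x(0) \simeq \dsum{y : X}(y = x)$ is contractible. For binary products, the coproduct $\Rb^n \oplus \Rb^m \equiv \Rb^{n+m}$ in $\type{fgFreeMod}_\Rb$ is sent by $\Db$ to the infinitesimal $\Rb$-pushout of \cref{lem:linear.inf.pushout}. Because $X$ is infinitesimally linear, applying $X^{(-)}$ converts this into a pullback square exhibiting $X^{\Db(n+m)} \simeq X^{\Db(n)} \times_X X^{\Db(m)}$ over $X$ via $\ev_0$. Taking fibers over $x$ on both sides --- and using the elementary fact that $\fib_x(A \times_X B \to X) \simeq \fib_x(A \to X) \times \fib_x(B \to X)$ when the two projections are identified by the pullback --- yields $F_x(\Rb^{n+m}) \simeq F_x(\Rb^n) \times F_x(\Rb^m)$, as required.

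Finally, I would invoke that a finite-product-preserving functor $\type{fgFreeMod}_\Rb^{\op} \to \Type$ is by definition a model of the Lawvere theory of $\Rb$-modules valued in types, so the operations $0 : \ast \to \Rb$, $+ : \Rb \oplus \Rb \to \Rb$, $- : \Rb \to \Rb$, and scalar multiplication $r \cdot (-) : \Rb \to \Rb$ of the theory act on $F_x(\Rb) \simeq T_x X$ and satisfy all the module axioms up to coherent higher identifications. At the set level this reproduces the classical $\Rb$-module structure on $T_x X$ of \cite{Lavendhomme:SDG}; the higher-dimensional case is not an obstacle but rather the point --- all coherences are automatic from the Lawvere-theoretic packaging. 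The only place where one genuinely uses the hypothesis on $X$ is in establishing the binary product preservation via \cref{lem:linear.inf.pushout}, and this is precisely what infinitesimal linearity is designed to deliver.
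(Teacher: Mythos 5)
Your proposal is correct and follows essentially the same route as the paper: realize $T_x X$ as the value at $\Rb$ of a finite-product-preserving functor $\type{fgFreeMod}_\Rb^{\op} \to \Type$, obtained by composing $\Db$ with $V \mapsto \fib_{\ev_0}(x)$ and using the infinitesimal $\Rb$-pushouts of \cref{lem:linear.inf.pushout}. The only substantive difference is that you correctly appeal to infinitesimal linearity where the paper's proof opens with ``If $X$ is microlinear'' (a slightly stronger hypothesis than the statement requires), and you spell out the terminal and binary product cases separately, but the argument is the same.
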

\begin{proof}
  If $X$ is microlinear, then the functor
  \[
V \mapsto (\type{ev}_{\pt} : X^{V} \to X) : \type{InfVar}\op \to \Type_{/X}
\]
sends infinitesimal $\Rb$-pushouts to pullbacks. The functor $\type{fib}_{(-)}(x) : \Type_{/X} \to \Type$ taking the fiber over $x$ preserves pullbacks, and so the composite functor $V \mapsto \dsum{v : V \to X}(v(0) = x)$ sends infinitesimal $\Rb$-pushouts to pullbacks. Since the functor $\Db : \type{fgFreeMod}_{\Rb} \to \type{InfVar}$ sends coproducts to infinitesimal $\Rb$-pushouts, the composite functor $\Rb^{n} \mapsto \dsum{v : \Db(n) \to X}(v(0) = x)$ preserves products. This makes $T_{x}X :\equiv \dsum{ v : \Db(1) \to X }(v(0) = x)$ into a model of the Lawvere theory of $\Rb$-algebras, which was to be shown.
\end{proof}

\subsection{The crystaline modality $\Im$}\label{sec:crystaline.modality}

Just as we used the shape modality $\shape$, defined by nullifying the reals $\Rb$, to study the homotopy theory of orbifolds, we can use the \emph{crystaline} modality $\Im$, defined by nullifying the infinitesimals $\mathfrak{D}$, to study the differential structure of orbifolds.

In this section, we will introduce the $\Im$ modality (and in particular the  $\Im$-{\'e}tale maps). We will then prove a crucial result: microlinearity descends along surjective $\Im$-{\'e}tale maps. In the next section, we will use this theorem to show that all reasonable notions of ``smooth space'' are microlinear.

\begin{defn}\label{defn:crystaline.modality}
  We define the \emph{crystaline} modality $\Im$ to be nullification at $\Dc :\equiv \{x : \Rb \mid x \approx 0\}$. We refer to the $\Im$-modal types as \emph{crystaline}.
\end{defn}

\begin{rmk}
The crystaline modality $\Im$ appears in \cite{Schreiber:Differential.Cohomology} as the \emph{infinitesimal shape} modality. The type $\Im X$ is sometimes called the ``de Rham stack'' of $X$. It was studied in homotopy type theory in \cite{Cherubini:Thesis} (see also the appendix to \cite{Cherubini-Rijke:Modal.Descent}). I have decided to call it the \emph{crystaline} cohomology because a map $E : \Im X \to \type{Vect}$ is a \emph{crystal} on $X$, in the sense of \cite{Lurie:Crystal}.

In the setting of \cite{Schreiber:Differential.Cohomology}, $\Im$ may be equivalently defined as the nullification of all crisp infinitesimal varieties. But in the Dubuc $\infty$-topos, it is not clear that these two modalities coincide; we have gone for the definition which works best in the Dubuc $\infty$-topos.
\end{rmk}

Arguing externally, we could use the tinyness of $\Dc$ to prove that $\Im$ has an external right adjoint. However, in our intended models, $\Im$ also has an external left adjoint, and so in particular preserves crisp pullbacks. We take this preservation property as an axiom.
\begin{axiom}
  The $\Im$ modality preserves crisp pullbacks.
\end{axiom}
\begin{rmk}
  In the intendend models, $\Im$ is in fact part of an (external) adjoint triple
  \[
\type{R} \dashv \Im \dashv \&
  \]
  which Schreiber refers to as \emph{infinitesimal cohesion}. Here, both $\type{R}$ and $\&$ are \emph{comodalities}, or comonadic modalities. To add nontrivial comodalities requires modifying the underlying type theory, so we refrain from that here. The comodality $\&$ behaves very much like $\flat$, and adjunction $\Im \dashv \&$ mirrors that of $\shape \dashv \flat$. For this reason, a modification of Shulman's crisp type theory to accomodate $\&$ should be rather straightforward.
  But the comodality $\type{R}$ is not lex (even externally), and so a different method would be necessary to incorporate it into the type theory. We make use of neither here.
  \end{rmk}

The $\Im$-connected types --- those types $X$ for which $\Im X$ is contractible --- are a good class of \emph{infinitesimal types}. We can show that the crisp infinitesimal varieties are $\Im$-connected, using the crisp lexness of $\Im$.
\begin{lem}\label{lem:inf.variety.is.im.connected}
Every crisp infinetismal variety is $\Im$-connected.
\end{lem}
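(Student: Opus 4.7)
The plan is to reduce an arbitrary crisp infinitesimal variety to a crisp pullback built out of copies of $\Dc$, and then apply the crisp-lexness of $\Im$ together with the fact that $\Im \Dc \simeq \ast$ by definition.

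First, I would invoke \cref{lem:inf.variety.is.zero.locus}: a crisp infinitesimal variety $V$ merely admits a presentation as a zero locus $V \simeq \{x : \Dc^{n} \mid f(x) = 0\}$ for some polynomial $f : \Dc^{n} \to \Dc^{m}$ with $f(0) = 0$. Since ``$\Im V$ is contractible'' is a proposition we may assume such a presentation; and since $V$ is \emph{crisp}, an application of crisp LEM (and the crisp derivation of the Weil algebra in standard form used in \cref{lem:inf.variety.is.zero.locus}) lets us take $n$, $m$, and $f$ to be crisp as well. Under this presentation $V$ sits in the crisp pullback square
\[
\begin{tikzcd}
V \ar[r] \ar[d] & \Dc^{n} \ar[d, "f"] \\
\ast \ar[r, "0"'] & \Dc^{m}.
\end{tikzcd}
\]

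Next, I would apply $\Im$ to this square. By the axiom that $\Im$ preserves crisp pullbacks, the resulting square is again a pullback. Now the key computation: $\Im \Dc \simeq \ast$ by the very definition of $\Im$ as nullification at $\Dc$, and because $\Im$ is a (lex, hence in particular finite-product-preserving) modality applied to the crisp finite product $\Dc^{k} \simeq \Dc \times \cdots \times \Dc$, we get $\Im \Dc^{k} \simeq \ast$ for every $k : \Nb$. (One can unfold this as a short induction, writing $\Dc^{k}$ as an iterated crisp product pullback.) Therefore both $\Im \Dc^{n}$ and $\Im \Dc^{m}$ are contractible, and the pullback square becomes
\[
\begin{tikzcd}
\Im V \ar[r] \ar[d] & \ast \ar[d] \\
\ast \ar[r] & \ast,
\end{tikzcd}
\]
from which $\Im V \simeq \ast$, i.e.\ $V$ is $\Im$-connected.

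The only real subtlety — and the step I would be most careful about — is ensuring that the presentation of $V$ as a zero locus is available \emph{crisply}, so that the pullback square above is a crisp square and the axiom ``$\Im$ preserves crisp pullbacks'' actually applies. This is fine because a crisp infinitesimal variety crisply determines a Weil algebra (via crisp LEM on the mere existence in \cref{defn:infinitesimal.variety}), and the proof of \cref{lem:inf.variety.is.zero.locus} constructs $f$ entirely from the crisp data of that Weil algebra in standard form. Once crispness is under control, the rest is just the definition of $\Im$ together with its crisp lexness.
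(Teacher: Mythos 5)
Your proof follows essentially the same route as the paper's: reduce $V$ to a zero locus via \cref{lem:inf.variety.is.zero.locus}, then apply the crisp preservation of pullbacks and the $\Im$-connectedness of powers of $\Dc$. The extra care you take with crispness (using crisp LEM to extract a crisp presentation from the mere existence) is exactly the point the paper glosses over with its notation $f :: \Dc^n \to \Dc^m$, so that attention is well placed. One small slip: you parenthetically justify $\Im \Dc^k \simeq \ast$ by saying $\Im$ is ``lex, hence finite-product-preserving,'' but $\Im$ is \emph{not} lex --- the paper explicitly proves this in \cref{rmk:crystaline.modality.is.not.lex}. The conclusion you need is still fine, since \emph{every} modality preserves finite products (no lexness required); just don't invoke lexness for it.
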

\begin{proof}
By \cref{lem:inf.variety.is.zero.locus}, every infinitesimal variety is the zero-locus of a map between powers of $\Dc$. Since $\Im$ is lex for crisp diagrams and powers of $\Dc$ are $\Im$-connected, the fiber of a crisp map $f :: \Dc^{n} \to \Dc^{m}$ over the crisp element $0 :: \Dc^{m}$ is $\Im$-connected.
\end{proof}

We will mainly use the modality $\Im$ for its {\'e}tale maps. Recall from \cref{defn:modal.etale} that a map $f : X \to Y$ is $\Im$-\'etale if and only if the $\Im$-naturality square is a pullback:
\[
\begin{tikzcd}
	X & {\Im X} \\
	Y & {\Im Y}
	\arrow["f"', from=1-1, to=2-1]
	\arrow["{(-)^{\Im}}", from=1-1, to=1-2]
	\arrow["{(-)^{\Im}}"', from=2-1, to=2-2]
	\arrow["{\Im f}", from=1-2, to=2-2]
	\arrow["\lrcorner"{anchor=center, pos=0.125}, draw=none, from=1-1, to=2-2]
\end{tikzcd}
\]
The $\Im$-{\'e}tale maps $f : X \to Y$ (see ) are a reasonable class of ``local diffeomorphisms'' because they lift uniquely against the base points $\pt : \ast \to V$ for any crisp infinitesimal variety $V$:
\[
\begin{tikzcd}
	\ast & C \\
	V & X
	\arrow["0"', from=1-1, to=2-1]
	\arrow["\forall"', from=2-1, to=2-2]
	\arrow["\pi", from=1-2, to=2-2]
	\arrow["\forall", from=1-1, to=1-2]
	\arrow["{\exists!}", dashed, from=2-1, to=1-2]
\end{tikzcd}
\]
In particular, an $\Im$-{\'e}tale map $f : X \to Y$ induces an equivalence $T_{x}X \xto{\sim} T_{fx}Y$ on tangent spaces for all $x : X$.
\begin{prop}\label{prop:etale.means.iso.on.tangent.spaces}
Let $f : X \to Y$ be $\Im$-{\'e}tale. Then the pushforward $f_{\ast} : T_{x}^{V}X \to T_{fx}^{V}Y$ on the $V$-tangent space for any crisp infinitesimal variety $V$ and $x : X$ is an equivalence.
\end{prop}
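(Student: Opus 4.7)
The plan is to realize $f_\ast$ as the map induced on fibers by a naturality square, and then to reduce the claim to a unique right lifting property of $f$ against the basepoint inclusion $\pt_V : \ast \to V$. Concretely, I would consider the commutative square
\[
\begin{tikzcd}
	{X^V} & {Y^V} \\
	X & Y
	\arrow["{f \circ -}", from=1-1, to=1-2]
	\arrow["{\mathsf{ev}_{\pt_V}}"', from=1-1, to=2-1]
	\arrow["{\mathsf{ev}_{\pt_V}}", from=1-2, to=2-2]
	\arrow["f"', from=2-1, to=2-2]
\end{tikzcd}
\]
and observe that the fiber of the left map over $x$ is $T_x^V X$, the fiber of the right map over $fx$ is $T_{fx}^V Y$, and the induced map on fibers is exactly the pushforward $f_\ast$. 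Hence it suffices to prove that this square is a pullback.

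Next, I would note that the square being a pullback is equivalent to the statement that for every $x' : X$ and every $w : V \to Y$ with $w(\pt_V) = fx'$, there is a unique $v : V \to X$ with $v(\pt_V) = x'$ and $f \circ v = w$. In other words, it is equivalent to $f$ having the unique right lifting property against $\pt_V : \ast \to V$.

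The final step is to explain why $f$ has this lifting property. Since $V$ is a crisp infinitesimal variety, \cref{lem:inf.variety.is.im.connected} gives $\Im V \simeq \ast$, so $\Im(\pt_V) : \Im \ast \to \Im V$ is an equivalence; that is, $\pt_V$ is an $\Im$-equivalence. By the orthogonal factorization system of $\Im$-equivalences and $\Im$-\'etale maps (Theorem 7.2 of \cite{Cherubini-Rijke:Modal.Descent}, cited above), any $\Im$-\'etale map lifts uniquely against any $\Im$-equivalence, so $f$ lifts uniquely against $\pt_V$, completing the argument.

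I do not anticipate a serious obstacle here: the key input (that crisp infinitesimal varieties are $\Im$-connected) is already established, and the remainder is a matter of packaging the lifting property as a statement about fibers of a naturality square. The only thing that needs mild care is to verify that the map on fibers one obtains is literally the pushforward $v \mapsto f \circ v$ restricted to pointed maps, which follows unwinding definitions of $T_x^V X$ as $\mathsf{fib}_{\mathsf{ev}_{\pt_V}}(x)$.
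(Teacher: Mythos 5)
Your argument is correct and matches the paper's own proof in essence: both reduce the equivalence of $f_\ast$ to a unique lifting property of $f$ against $\pt_V : \ast \to V$, observe that $\pt_V$ is an $\Im$-equivalence because crisp infinitesimal varieties are $\Im$-connected (\cref{lem:inf.variety.is.im.connected}), and then invoke the orthogonality of $\Im$-equivalences and $\Im$-\'etale maps. The only cosmetic difference is that you phrase the reduction globally via the naturality square of $\mathsf{ev}_{\pt_V}$ being a pullback, whereas the paper computes the fiber of $f_\ast$ over a single tangent vector $(v,w)$ directly as a type of fillers for a particular lifting square; these amount to the same thing.
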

\begin{proof}
 Let $x : X$ and $(v, w) : T_{fx}^{V}Y$. Consider the following diagram:
 \begin{equation}\label{diag:etale.lifting.square}
\begin{tikzcd}
	\ast & C \\
	V & X
	\arrow["0"', from=1-1, to=2-1]
	\arrow["v"', from=2-1, to=2-2]
	\arrow["\pi", from=1-2, to=2-2]
	\arrow["x", from=1-1, to=1-2]
	\arrow["{\exists!}", dashed, from=2-1, to=1-2]
\end{tikzcd}
\end{equation}

The square commutes by the witness $w : v(0) = fx$ that $v$ sends $0$ to $fx$. The type of fillers to this square is
\[
\dsum{\tilde{v} : V \to X} \dsum{\tilde{w} : \tilde{v}(0) = x} \dsum{q : f \circ \tilde{v} = v} ((q\inv \at 0)\bullet f_{\ast}(\tilde{w}) = w).
\]
This is equivalent to $\fib_{f_{\ast}}(v, w)$ of $f_{\ast} : T_{x}^{V} X \to T_{fx}^{V}Y$ over $(v, w)$:
\begin{align*}
\fib_{f_{\ast}}(v, w) &\equiv \dsum{(\tilde{v},\tilde{w}) : T_{x}^{V}X} ((f \circ \tilde{v}, f_{\ast}\tilde{w}) = (v, w)). \\
                &= \dsum{(\tilde{v}, \tilde{w}) : T_{x}^{V}} \dsum{q : f \circ \tilde{v} = v} (\tr(\lambda z. z(0) = fx,\, q)(f_{\ast}\tilde{w}) = w).
  \end{align*}
By \cref{lem:inf.variety.is.im.connected}, the base point inclusion $\pt : \ast \to V$ is a $\Im$-equivalence. Therefore, the type of fillers to (\ref{diag:etale.lifting.square}) is contractible since $f$ is $\Im$-{\'e}tale. By the equivalence between $\fib_{f_{\ast}}(v, w)$ and the type of fillers, we see that $\fib_{f_{\ast}}(v, w)$ is contractible, showing that $f_{\ast} : T_{x}^{V}X \to T_{fx}^{V}Y$ is an equivalence.
\end{proof}

\begin{rmk}
It would be really useful to know that the $\Im$-\'etale maps are exactly those that lift on the right against the base point inclusion $0 : \ast \to \Dc$. But I do not know under which conditions the \'etale maps of a modality given by nullifying at a pointed type are defined by lifting against the inclusion of the base point. In Theorem 3.10 of \cite{Cherubini-Rijke:Modal.Descent}, Cherubini and Rijke show that this holds for the $n$-truncation modality, and ask the question in general.
\end{rmk}

Let's find some ways to construct $\Im$-{\'e}tale maps. Firstly, any covering map is $\Im$-{\'e}tale. Even more generally, every $\shape$-modal map is $\Im$-{\'e}tale.
\begin{lem}\label{lem:discrete.is.crystaline}
  Any discrete ($\shape$-modal) type  is crystaline ($\Im$-modal). As a corollary:
  \begin{enumerate}
    \item Any $\shape$-modal map is $\Im$-modal.
          \item Any $\shape$-{\'e}tale map is $\Im$-{\'e}tale.
    \item Any $\Im$-connected map is $\shape$-connected.
          \item Any $\Im$-equivalence is a $\shape$-equivalence.
\end{enumerate}
\end{lem}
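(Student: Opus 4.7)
The plan is to prove the main statement --- every $\shape$-modal type is $\Im$-modal --- by showing that $\Dc$ is $\shape$-contractible. The $\shape$-modal types are by definition the $\Rb$-null types, which by standard modality theory are exactly those local with respect to every $\shape$-equivalence. So if $\Dc \to \ast$ is a $\shape$-equivalence (equivalently, $\shape \Dc \simeq \ast$), then any $\shape$-modal type $X$ is automatically $\Dc$-null, which is precisely what $\Im$-modality means.

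To contract $\Dc$ I would use the scaling map $m : \Rb \times \Dc \to \Dc$ given by $m(t, \ep) :\equiv t\ep$. Well-definedness is a direct check: if $t\ep$ were non-zero then $\ep$ would also be non-zero (otherwise $t\cdot 0 = 0$), contradicting $\ep \in \Dc$; hence $t\ep \approx 0$. For each $\ep : \Dc$, the function $t \mapsto t\ep : \Rb \to \Dc$ is then a path whose endpoints are $0$ (at $t = 0$) and $\ep$ (at $t = 1$). After applying the $\shape$-unit, any map out of $\Rb$ becomes constant, because $\shape \Rb \simeq \ast$. Therefore $0^{\shape} = \ep^{\shape}$ in $\shape \Dc$ for every $\ep : \Dc$, which exhibits $0^{\shape}$ as a center of contraction and finishes the main lemma.

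The four corollaries then follow by standard modality theory without further SDG input. For (1), a map is $\modal$-modal iff each of its fibers is, so if every fiber of $f$ is $\shape$-modal then by the main lemma every fiber is $\Im$-modal. For (4), suppose $f : A \to B$ is an $\Im$-equivalence and $X$ is $\shape$-modal; then $X$ is also $\Im$-modal, so precomposition $X^{B} \to X^{A}$ is an equivalence, witnessing that $f$ is a $\shape$-equivalence; (3) is the same content rephrased in the language of $\modal$-connected maps, which for these reflective nullification modalities coincide with the $\modal$-equivalences. For (2), I would invoke the orthogonality characterization of Theorem 7.2 of \cite{Cherubini-Rijke:Modal.Descent}, already recalled above: a $\shape$-\'etale map is one that lifts uniquely against all $\shape$-equivalences, and by (4) every $\Im$-equivalence is in particular a $\shape$-equivalence, so the map also lifts uniquely against all $\Im$-equivalences and is therefore $\Im$-\'etale.

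The only genuinely substantive step is producing the scaling contraction; everything else is formal. The potentially tricky point is that the construction of $m$ does not require $\Rb$ to act on $\Dc$ as a module, only that infinitesimals absorb scalar multiplication --- a property that, pleasantly, follows just from the meaning of ``not non-zero'' and not from any of the more sophisticated axioms of synthetic differential geometry.
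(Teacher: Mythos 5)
Your main argument is essentially the paper's: the paper also shows $\Dc$ is $\shape$-connected via the scaling action $x \mapsto tx$ contracting $\Dc$ onto $0$, and then observes that any map from $\Dc$ into a discrete type factors through $\shape\Dc \simeq \ast$. The well-definedness check for $t\ep \approx 0$ is correct and is exactly the elementary ``infinitesimals absorb scalars'' fact the paper silently relies on. For the corollaries the paper cites a general modality-comparison theorem (Theorem 3.17 of \cite{Jaz:Good.Fibrations}), whereas you reproduce the comparisons by hand, which is fine and arguably more self-contained.

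One step in your corollary (3) is, however, false as stated. You claim that $\modal$-connected maps ``coincide with the $\modal$-equivalences'' for nullification modalities. This is not true even for $\shape$ itself: a point inclusion $\ast \to \Rb$ is a $\shape$-equivalence (both sides have contractible shape), but it is not $\shape$-connected, since its fiber over $y \neq r_0$ is the non-contractible proposition $(r_0 = y)$, which is discrete and hence unchanged by $\shape$. The coincidence you want holds only for \emph{types}, i.e.\ for the special case of the terminal map $Y \to \ast$: there $Y$ is $\modal$-connected iff $Y \to \ast$ is a $\modal$-equivalence. The correct derivation of (3) from (4) therefore goes fiber-by-fiber: if $f$ is $\Im$-connected then each fiber $F = \fib_f(b)$ has $\Im F \simeq \ast$, so $F \to \ast$ is an $\Im$-equivalence, hence by (4) a $\shape$-equivalence, hence $\shape F \simeq \ast$, so $f$ is $\shape$-connected. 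Your arguments for (1), (2), and (4) are correct.
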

\begin{proof}
This follows from the fact that $\Dc$ is $\shape$-connected, since it admits a multiplicative action by $\Rb$ giving us an explicit contraction $x \mapsto tx$ onto $0$. Therefore, if $X$ is discrete --- which is to say, $\shape$-modal --- then any map $\Dc \to X$ factors uniquely through the shape of $\Dc$, which is the point.

The corollaries follow from Theorem 3.17 of \cite{Jaz:Good.Fibrations}.
  \end{proof}

  \begin{rmk}
    Combining \cref{lem:discrete.is.crystaline} with \cref{prop:etale.means.iso.on.tangent.spaces} shows us that $q : \mathfrak{h} \to \Ma_{1,1}$ is $\Im$-{\'e}tale and that therefore the induces an isomorphism on tangent spaces. In particular, we can say that $\Ma_{1, 1}$ is a $2$-dimensional orbifold.
    \end{rmk}

  \begin{warn}\label{rmk:crystaline.modality.is.not.lex}
Thanks to \cref{lem:discrete.is.crystaline}, we can prove that $\Im$ is \emph{not} lex. Since all propositions are discrete (Lemma 8.8 of \cite{Shulman:Real.Cohesion}), all propositions are crystaline. This means that every embedding is $\Im$-modal. If $\Im$ were lex, this would mean that every embedding would be $\Im$-\'etale. But then the inclusion $\{0\} \hookrightarrow \Rb$ would be $\Im$-\'etale, which would imply that the inclusion $\Dc \hookrightarrow \Rb$ is constant at $0$ --- that is, every infinitesimal would be $0$. This obviously trivializes the theory.
  \end{warn}

Let's now turn our attention to the relationship between microlinearity and $\Im$-{\'e}tale maps. Since $\Im$-{\'e}tale maps are intuitively local diffeomorphisms and microlinearity is a local --- or even infinitesimal --- property, it stands to reason that if $f : X \to Y$ is $\Im$-{\'e}tale, then it should be the case that $X$ is microlinear if and only if $Y$ is. We will show that this is the case, as long as $f$ is surjective as well.

First we will show that microlinearity ascends along $\Im$-{\'e}tale maps. In order to prove this, we will need to re-express the notion of microlinearity as a lifting condition.
  \begin{lem}\label{lem:microlinear.lifting.property}
    A type $X$ is microlinear if and only if it lifts uniquely on the right against all codiagonal maps $\nabla_{V} : V_{2} +_{V_{1}} V_{3} \to V_{4}$ where
    \[
\begin{tikzcd}[sep = small]
	{V_1} && {V_3} \\
	& V \\
	{V_2} && {V_4}
	\arrow[from=1-1, to=3-1]
	\arrow[from=3-1, to=3-3]
	\arrow[from=1-1, to=1-3]
	\arrow[from=1-3, to=3-3]
\end{tikzcd}
    \]
    is an infinitesimal $\Rb$-pushout.
  \end{lem}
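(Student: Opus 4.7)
The plan is to unwind both notions using the universal property of the pushout of types. The key observation is that for any commuting square of the form given, with pushout $V_2 +_{V_1} V_3$ and codiagonal $\nabla_V : V_2 +_{V_1} V_3 \to V_4$, the universal property of the pushout gives a natural equivalence
\[
X^{V_2 +_{V_1} V_3} \simeq X^{V_2} \times_{X^{V_1}} X^{V_3}
\]
for any type $X$. Under this equivalence, the canonical comparison map from $X^{V_4}$ into the pullback induced by the square is identified with precomposition by $\nabla_V$.

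Using this identification, I would then show the two conditions match up: by definition the square is an $X$-pushout precisely when the comparison $X^{V_4} \to X^{V_2} \times_{X^{V_1}} X^{V_3}$ is an equivalence, and by the previous paragraph this holds iff the precomposition map $\nabla_V^{\ast} : X^{V_4} \to X^{V_2 +_{V_1} V_3}$ is an equivalence.

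Finally, I would note that $\nabla_V^{\ast}$ being an equivalence is exactly the statement that $X$ (or rather, the terminal map $X \to \ast$) has the unique right lifting property against $\nabla_V$: a commuting square with bottom map $V_4 \to \ast$ and top map $V_2 +_{V_1} V_3 \to X$ has a unique filler $V_4 \to X$ iff every map out of $V_2 +_{V_1} V_3$ uniquely extends along $\nabla_V$, which is just the statement that precomposition with $\nabla_V$ is an equivalence on function types into $X$.

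Chaining these three equivalences, $X$ is microlinear iff for every infinitesimal $\Rb$-pushout square the induced map $X^{V_4} \to X^{V_2} \times_{X^{V_1}} X^{V_3}$ is an equivalence, iff precomposition by each corresponding codiagonal $\nabla_V$ is an equivalence, iff $X$ lifts uniquely on the right against each such $\nabla_V$. There are no real obstacles here; the only care needed is in formalizing the claim that the comparison to the pullback factors through $X^{V_2 +_{V_1} V_3}$ as precomposition by $\nabla_V$, which is immediate from the universal property of the pushout $V_2 +_{V_1} V_3$ together with the fact that both triangles defining $\nabla_V$ commute by construction.
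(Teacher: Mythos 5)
Your argument is correct and is essentially the paper's own proof: identify $(V_2 +_{V_1} V_3 \to X)$ with the pullback $X^{V_2} \times_{X^{V_1}} X^{V_3}$ via the universal property of the pushout, and observe that under this identification the $X$-pushout comparison map becomes precomposition by $\nabla_V$, whose invertibility is exactly the unique right lifting property.
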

  \begin{proof}
The unique lifting condition says that pre-composition by $\nabla_{V}$ gives an equivalence between $X^{{V_{4}}}$ and $(V_{2} +_{V_{1}} V_{3} \to X)$. But by the universal property of the pushout, this latter type is equivalent to the pullback $X^{V_{2}} \times_{{X^{V_{1}}}} X^{{V_{3}}}$, and so $V$ is an $X$-pushout if and only if $X$ lifts uniquely against $\nabla_{V}$.
  \end{proof}

    \begin{lem}\label{lem:microlinear.ascends.along.etale}
If $Y$ is microlinear and $f : X \to Y$ is $\Im$-{\'e}tale, then $X$ is microlinear.
    \end{lem}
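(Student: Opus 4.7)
The plan is to apply the lifting characterization of microlinearity in \cref{lem:microlinear.lifting.property}: we must show $X$ lifts uniquely on the right against every codiagonal $\nabla_V : V_2 +_{V_1} V_3 \to V_4$ arising from an infinitesimal $\Rb$-pushout. Since $Y$ already has this property and $f$ is $\Im$-\'etale, the strategy is to exhibit every such $\nabla_V$ as an $\Im$-equivalence and then invoke the orthogonality between $\Im$-equivalences and $\Im$-\'etale maps (Theorem 7.2 of \cite{Cherubini-Rijke:Modal.Descent}).

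The first step is to verify that $\nabla_V$ is an $\Im$-equivalence. By definition of an infinitesimal $\Rb$-pushout, the vertices $V_1, V_2, V_3, V_4$ are crisp infinitesimal varieties and are therefore $\Im$-connected by \cref{lem:inf.variety.is.im.connected}. Because $\Im$ is a reflective modality it preserves pushouts of types, so
\[
\Im(V_2 +_{V_1} V_3) \simeq \Im V_2 +_{\Im V_1} \Im V_3 \simeq \ast +_\ast \ast \simeq \ast,
\]
and the domain of $\nabla_V$ is itself $\Im$-connected. Any map between $\Im$-connected types induces a map between contractible types after applying $\Im$, so $\nabla_V$ becomes an equivalence in $\Im$.

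The second step assembles these ingredients. Given any $g : V_2 +_{V_1} V_3 \to X$, microlinearity of $Y$ produces a unique $h : V_4 \to Y$ with $h \circ \nabla_V = f \circ g$, yielding the commuting square
\[
\begin{tikzcd}
V_2 +_{V_1} V_3 \ar[d, "\nabla_V"'] \ar[r, "g"] & X \ar[d, "f"] \\
V_4 \ar[r, "h"] & Y
\end{tikzcd}
\]
Since $\nabla_V$ is an $\Im$-equivalence and $f$ is $\Im$-\'etale, orthogonality gives a unique diagonal filler $\tilde g : V_4 \to X$; this $\tilde g$ is the desired extension of $g$. Uniqueness of extensions along $\nabla_V$ into $X$ follows the same pattern: any two such extensions $\tilde g, \tilde g'$ postcompose with $f$ to maps which both extend $f \circ g$, hence agree by microlinearity of $Y$, and then the two diagonals fit the same orthogonality square and must coincide.

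The only step requiring any real care is the observation that crisp infinitesimal $\Rb$-pushouts land among the $\Im$-equivalences; concretely, one needs that $\Im$ preserves the relevant pushout. This is a general property of reflective modalities, but it is worth stating explicitly. Alternatively, one can argue directly using the stability of $\Im$-connected maps under pushout and composition: the map $V_1 \to V_2$ is $\Im$-connected (both sides are $\Im$-connected), so its pushout $V_3 \to V_2 +_{V_1} V_3$ is $\Im$-connected, and composing with the $\Im$-connected map $\ast \to V_3$ shows $V_2 +_{V_1} V_3$ is $\Im$-connected, after which $\nabla_V$ is automatically an $\Im$-equivalence.
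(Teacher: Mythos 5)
Your proof is correct and follows essentially the same route as the paper: identify $\nabla_V$ as an $\Im$-equivalence and then use orthogonality of $\Im$-equivalences against the $\Im$-\'etale map $f$. The paper's only cosmetic difference is in packaging the last step: since $X \to \ast$ factors as $(Y \to \ast) \circ f$ and both lift uniquely against the $\nabla_V$, so does the composite; your unpacked diagram chase is the same content. One small caution on your intermediate step: a reflective modality does \emph{not} preserve pushouts of types on the nose; in general one only has $\Im(A +_B C) \simeq \Im(\Im A +_{\Im B} \Im C)$, which happens to suffice here because $\Im V_2 +_{\Im V_1} \Im V_3 \simeq \ast$ is already modal. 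Your closing remark — that $\Im$-connected types are closed under pushout — is the cleaner argument, and is exactly the content of the paper's \cref{lem:modal.connected.closed.under.pushout}; it would be better to lead with that than with the ``$\Im$ preserves pushouts'' shortcut.
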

    \begin{proof}
Let $V$ be an infinitesimal $\Rb$-pushout. Then the codiagonal $\nabla_{V} : V_{2} +_{V_{1}} V_{3} \to V_{4}$ is a map between $\Im$-connected types, and is therefore an $\Im$-equivalence. Since $f$ is $\Im$-{\'e}tale, it therefore lifts against $\nabla_{V}$, making $f$ microlinear. Since the terminal map $X \to \ast$ is the composite $X \xto{f} Y \to \ast$ and by hypothesis $Y \to \ast$ was microlinear, $X \to \ast$ is microlinear as well (by closure under composition of maps defined by lifting on the right against a given class of maps).
    \end{proof}

Next, we will show that microlinearlity descends along surjective $\Im$-{\'e}tale maps. To do this we will need a general lemma.
      \begin{lem}\label{lem:modal.connected.closed.under.pushout}
For any modality $\modal$, the $\modal$-connected types are closed under pushout.
      \end{lem}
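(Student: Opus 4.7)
The plan is to compute $\modal(B \sqcup_A C)$ by combining the universal property of the pushout with the reflection adjunction for $\modal$. Let $A$, $B$, $C$ be $\modal$-connected and set $P :\equiv B \sqcup_A C$. I will show that for every $\modal$-modal type $X$ the canonical map $X \simeq \Map(\ast, X) \to \Map(P, X)$ induced by $P \to \ast$ is an equivalence; since this is natural in modal $X$, the Yoneda lemma inside the modal subuniverse then forces the induced map $\modal P \to \ast$ to be an equivalence, which is the conclusion.

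The first step is the auxiliary fact that if $Y$ is $\modal$-connected and $X$ is $\modal$-modal, then evaluation exhibits $\Map(Y, X) \simeq X$, and this equivalence is natural with respect to maps $f : Y \to Y'$ of $\modal$-connected types. This unfolds directly from the reflection adjunction as
\[
\Map(Y, X) \simeq \Map(\modal Y, X) \simeq \Map(\ast, X) \simeq X,
\]
and naturality in $Y$ amounts to the observation that $\modal f : \modal Y \to \modal Y'$ is a map between contractible types, hence is uniquely identified with the identity of $\ast$.

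Next, applying the pushout universal property together with this lemma to $P$, I compute, for any modal $X$,
\[
\Map(P, X) \simeq \Map(B, X) \times_{\Map(A, X)} \Map(C, X) \simeq X \times_X X \simeq X,
\]
where the two legs of the pullback become identities by the naturality clause, and where the pullback of a pair of identity maps collapses to $X$ via the diagonal. Chasing the canonical comparison $\Map(\ast, X) \to \Map(P, X)$ through these equivalences identifies it with the diagonal $X \to X \times_X X$, which is an equivalence; this yields the desired natural equivalence of representable functors on modal types.

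The main care lies in the naturality clause of the auxiliary fact: one must verify that the restriction maps $\Map(B, X) \to \Map(A, X)$ and $\Map(C, X) \to \Map(A, X)$ really do correspond to $\mathrm{id}_X$ under the evaluation equivalences. As indicated, this reduces to the contractibility of $\modal A$, $\modal B$, $\modal C$ and the fact that any map between contractible types is uniquely identified with the identity, so no further hypotheses on the modality (such as lexness or the $\modal$-connected types being closed under finite products) are required.
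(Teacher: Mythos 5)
Your proof is correct and takes essentially the same approach as the paper's. The paper organizes the computation as a cube: the top and bottom faces are pullback squares (by the universal property of the pushout applied to $\Map(-,X)$), the three front vertical edges $X \to X^{A}$, $X \to X^{B}$, $X \to X^{C}$ are equivalences because $A$, $B$, $C$ are $\modal$-connected, and hence the back edge $X \to X^{D}$ is an equivalence. You reach the same conclusion $\Map(P, X) \simeq X \times_X X \simeq X$ by unwinding the same equivalences by hand and then closing the argument with a Yoneda-style step; the cube diagram in the paper just packages that chase. One small simplification available to you at the end: rather than invoking Yoneda in the modal subuniverse, note that $\Map(\modal P, X) \simeq X$ for all modal $X$ says precisely that $\modal P$ is $\modal$-connected, and a type that is simultaneously $\modal$-modal and $\modal$-connected is contractible.
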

      \begin{proof}
        Consider a pushout square
        \[
          \begin{tikzcd}
            A \ar[r, "f"] \ar[d, "h"'] & B \ar[d, "k"] \\
            C \ar[r, "g"'] & D
          \end{tikzcd}
        \]
        in which $A$, $B$, and $C$ are $\modal$-connected. Then for any $\modal$-modal $X$, consider the following cube:
        \[
\begin{tikzcd}[sep=small]
	& X && X \\
	X && X \\
	& {X^D} & {} & {X^C} \\
	{X^B} && {X^A}
	\arrow[from=4-1, to=4-3]
	\arrow[from=3-4, to=4-3]
	\arrow[from=3-2, to=4-1]
	\arrow[from=3-2, to=3-4]
	\arrow[from=1-2, to=2-1, equals]
	\arrow[from=1-2, to=1-4, equals]
	\arrow[from=1-4, to=2-3, equals]
	\arrow[from=2-1, to=4-1]
	\arrow[from=1-2, to=3-2]
	\arrow[from=1-4, to=3-4]
	\arrow[from=2-1, to=2-3, crossing over, equals]
	\arrow[from=2-3, to=4-3, crossing over]
\end{tikzcd}
        \]
        Since the square we began with was a pushout, the bottom face is a pullback. The top face is also trivially a pullback. Since $A$, $B$, and $C$ are $\modal$-connected, the front three vertical edges are equivalences. Therefore, the back edge is also an equivalence, which proves that $D$ is also $\modal$-connected.
        \end{proof}

\begin{thm}\label{thm:microlinear.descends.along.etale}
Let $f : X \to Y$ be a surjective, $\Im$-{\'e}tale map. If $X$ is microlinear, then so is $Y$.
  \end{thm}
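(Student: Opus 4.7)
The plan is to apply the lifting characterization of microlinearity from \cref{lem:microlinear.lifting.property}: $Y$ is microlinear if and only if for every infinitesimal $\Rb$-pushout $V$ and every map $g : V_2 +_{V_1} V_3 \to Y$, the type $\type{Ext}_g(Y)$ of extensions of $g$ to $V_4$ is contractible. I will reduce this to the corresponding statement for $X$ by combining the surjectivity of $f$ with its $\Im$-{\'e}taleness.

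Write $K :\equiv V_2 +_{V_1} V_3$ and note that $K$ is $\Im$-connected: all of $V_1, V_2, V_3$ are $\Im$-connected by \cref{lem:inf.variety.is.im.connected}, and $\Im$-connectedness is preserved under pushouts by \cref{lem:modal.connected.closed.under.pushout}. The variety $V_4$ is also $\Im$-connected. Let $g : K \to Y$, and let $k_0$ denote the base point of $K$. Since ``$\type{Ext}_g(Y)$ is contractible'' is a proposition, I may invoke the surjectivity of $f$ to assume a preimage $x_0 : X$ with $f(x_0) = g(k_0)$. The basepoint inclusion $\pt \to K$ is an $\Im$-equivalence because $K$ is $\Im$-connected, and $f$ is $\Im$-{\'e}tale, so there is a unique map $\tilde{g} : K \to X$ with $\tilde{g}(k_0) = x_0$ and $f \circ \tilde{g} = g$.

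Now define $\Phi : \type{Ext}_{\tilde{g}}(X) \to \type{Ext}_g(Y)$ by postcomposition with $f$. The fiber of $\Phi$ over an extension $G : V_4 \to Y$ of $g$ is precisely the type of fillers of the square with top edge $\tilde{g}$, bottom edge $G$, left edge the codiagonal $\nabla_V : K \to V_4$, and right edge $f$. Since $\nabla_V$ is a map between $\Im$-connected types, it is an $\Im$-equivalence, and because $f$ is $\Im$-{\'e}tale this type of fillers is contractible. Hence $\Phi$ has contractible fibers and is therefore an equivalence. The domain $\type{Ext}_{\tilde{g}}(X)$ is contractible because $X$ is microlinear (again by \cref{lem:microlinear.lifting.property}), so $\type{Ext}_g(Y)$ is contractible as required.

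The main subtlety lies in the interaction between the mere truth of surjectivity and the uniqueness required for microlinearity. It is resolved by observing that contractibility of an extension type is itself a proposition, so only a pointwise mere lift of the single point $g(k_0)$ is needed; the $\Im$-{\'e}tale property of $f$ then upgrades this basepoint lift to a canonical extension $\tilde{g}$ of $g$ itself, reducing the descent problem on $Y$ to the already-known microlinearity of $X$.
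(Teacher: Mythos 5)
Your proof is correct and follows essentially the same route as the paper's: reduce to the lifting characterization of \cref{lem:microlinear.lifting.property}, use surjectivity (legitimately, because contractibility of the extension type is a proposition) to lift the base point, use $\Im$-{\'e}taleness against the $\Im$-equivalence $\pt \to V_2 +_{V_1} V_3$ to produce $\tilde{g}$, and then identify the fibers of the postcomposition map with filler types for a square whose left edge is the $\Im$-equivalence $\nabla_V$. The only difference is presentational: the paper spells out the fiber computation in full, whereas you assert the identification of fibers with filler types more briefly.
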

  \begin{proof}
    We will show that $Y$ lifts on the right against the gap maps $\nabla_{V} : V_{2} +_{V_{1}} V_{3} \to V_{4}$ of infinitesimal $\Rb$-pushouts. This means showing that for any $k : V_{2} +_{V_1} V_{3} \to Y$, the type $\dsum{g : V_{4} \to Y}(g \circ \nabla_{V} = k)$ is contractible. As we are seeking to prove a proposition and $f$ is surjective, we may assume we have $(x, p) : \fib_{f}(k(0))$. Then, since by \cref{lem:modal.connected.closed.under.pushout} the inclusion $0 : \ast \to V_{2} +_{V_{1}} V_{3}$ is a $\Im$-equivalence, we have a unique dashed filler in the following square:
    \[
\begin{tikzcd}
	\ast & X \\
	{V_2 +_{V_1} V_3} & Y
	\arrow["0"', from=1-1, to=2-1]
	\arrow["x", from=1-1, to=1-2]
	\arrow["k"', from=2-1, to=2-2]
	\arrow["f", from=1-2, to=2-2]
	\arrow["{\tilde{k}}", dashed, from=2-1, to=1-2]
\end{tikzcd}
\]
Call the bottom commutative triangle $\beta : f \circ \tilde{k} = k$.
    We will show that
    $\dsum{g : V_{4} \to Y}(g \circ \nabla_{V} = k)$ is equivalent to the type $\dsum{\tilde{g} : V_{4} \to X}(\tilde{g} \circ\nabla_{V} = \tilde{k})$. Since this latter type is contractible by the assumption that $X$ is microlinear, this will complete our proof.

    Define $\varphi : \dsum{\tilde{g} : V_{4} \to X}(\tilde{g} \circ\nabla_{V} = \tilde{k}) \to \dsum{g : V_{4} \to Y}(g \circ \nabla_{V} = k)$ by
    $$(\tilde{g}, \tilde{w}) \mapsto (f \circ \tilde{g}, (f \circ)_{\ast}\tilde{w} \bullet \beta).$$
    We will show that the fibers of this map are contractible. We begin with a calculation:
    \begin{align*}
      \fib_{\varphi}(g, w) &= \dsum{\tilde{g} : V_{4} \to X} \dsum{\tilde{w} : \tilde{g} \circ \nabla_{V} = \tilde{k}} ((f \circ \tilde{g}, (f \circ)_{\ast}\tilde{w} \bullet \beta)=(g, w))\\
      &=\dsum{\tilde{g} : V_{4} \to X} \dsum{\tilde{w} : \tilde{g} \circ \nabla_{V} = \tilde{k}} \dsum{z : f \circ \tilde{g} = g} (\tr\, (\lambda h. h \circ \nabla_{V} = k)\, z\, ((f\circ)_{\ast}\tilde{w}\bullet \beta) = w)\\
      &=\dsum{\tilde{g} : V_{4} \to X} \dsum{\tilde{w} : \tilde{g} \circ \nabla_{V} = \tilde{k}} \dsum{z : f \circ \tilde{g} = g} ((\circ\nabla_{V})_{\ast}z \bullet (f\circ)_{\ast}\tilde{w} \bullet \beta = w)\\
      &=\dsum{\tilde{g} : V_{4} \to X} \dsum{\tilde{w} : \tilde{g} \circ \nabla_{V} = \tilde{k}} \dsum{z : f \circ \tilde{g} = g} ((\circ\nabla_{V})_{\ast}z \bullet (f\circ)_{\ast}\tilde{w}= w \bullet \beta\inv)
    \end{align*}
    This final type is the type of fillers to the square:
    \[
\begin{tikzcd}
	{V_2 +_{V_1} V_3} & X \\
	{V_4} & Y
	\arrow["{\nabla_V}"', from=1-1, to=2-1]
	\arrow["{\tilde{k}}", from=1-1, to=1-2]
	\arrow["g"', from=2-1, to=2-2]
	\arrow["f", from=1-2, to=2-2]
	\arrow["{\tilde{g}}", dashed, from=2-1, to=1-2]
\end{tikzcd}
    \]
    where the underlying square commutes by $w \bullet \beta\inv : g \circ \nabla_{V} = f \circ \tilde{k}$. But since $\nabla_{V}$ is an $\Im$-equivalence and $f$ is $\Im$-{\'e}tale, there is a unique filler to this square. Therefore, $\fib_{\varphi}(g, w)$ is contractible.
  \end{proof}

  \section{Smooth Spaces are Microlinear}\label{sec:notions.of.smooth.space}

  In this section, we will use \cref{thm:microlinear.descends.along.etale} to show that most reasonable notions of smooth space in synthetic differential geometry are microlinear. Our main theorem will be \cref{thm:etale.groupoid.microlinear}, which proves that \'etale groupoids are microlinear.

  We will begin in \cref{sec:ordinary.manifold} by recalling the definition of a manifold in the ordinary sense, which we will call ordinary manifolds for emphasis. We will make use of ordinary manifolds in our final theorem, \cref{thm:ordinary.proper.etale.groupoid.is.orbifold}, which proves that crisp ordinary proper \'etale groupoids --- which are proper \'etale groupoids in the ordinary sense --- are orbifolds in the sense of \cref{defn:orbifold}.

  In \cref{thm:manifold.is.microlinear}, we will show that ordinary manifolds are microlinear. We will do this by showing that any ordinary manifold is a \emph{Penon manifold} (\cref{defn:Penon.manifold}), and then by showing that all Penon manifolds are microlinear --- a result which is likely folklore. We will also take the opportunity in \cref{thm:Penon.etale.is.im.etale} to show that Penon's notion of \'etale map coincides with that of $\Im$-\'etale maps, at least between crisp Penon manifolds.

  In \cref{sec:Schreiber.manifold}, we will recall Schreiber's notion of $V$-manifold and show in \cref{thm:Schreiber.manifold.microlinear} that any Schreiber $V$-manifold for microlinear $V$ is itself microlinear. Unlike ordinary manifolds and Penon manifolds, which must be sets, Schreiber manifolds can be higher types. This is therefore our first taste of higher microlinear types.

  We will fully turn our attention to higher microlinear types in \cref{sec:etale.groupoid}. In \cref{thm:discrete.homotopy.quotient.is.microlinear}, we will use \cref{thm:microlinear.descends.along.etale} to quickly prove that the quotient $X \sslash\, \Gamma$ of a microlinear type $X$ by the action of a crisply discrete higher group $\Gamma$ is microlinear. This is enough to show that all of our examples of good orbifolds from \cref{sec:orbifold.examples} are microlinear. But not all orbifolds are good orbifolds. In order to justify \cref{defn:orbifold} and show that any orbifold in the ordinary sense --- that is, any proper \'etale groupoid --- is microlinear, we prove in \cref{thm:etale.groupoid.microlinear} that crisp \'etale groupoids are microlinear. This provides one pillar of the upcoming \cref{thm:ordinary.proper.etale.groupoid.is.orbifold} which shows that all crisp, ordinary proper \'etale groupoids are orbifolds in the sense of \cref{defn:orbifold}.

  The proof of \cref{thm:etale.groupoid.microlinear} is not trivial. It relies on a form of \'etale descent, \cref{thm:etale.descent}, which proves that if $f :: X \to Y$ is crisp and surjective and its pullback along itself is $\Im$-\'etale, then it is $\Im$-\'etale. This makes essential use of the commutation of $\Im$ with crisp pushouts and colimits of crisp sequences, a fact which appears in this section as \cref{thm:Im.preserves.crisp.colimits} but whose proof is deffered to \cref{sec:tiny}.

Finally, we show in \cref{sec:deloopings.inf.linear} that a delooping of an infinitesimally linear group (such as a Lie group) is itself infinitesimally linear. This gives examples of infinitesimally linear higher types which are not locally discrete. However, the method we use to prove \cref{thm:inf.linear.BG} does not enable us to show that a delooping of a microlinear group is microlinear. I have not been able to prove this, nor have I come up with a counterexample.

Before even starting, let's observe a number of well known closure properties of microlinear types. This will suffice to show that a wide number of spaces encountered in practice are microlinear, without relying on any other notions of smooth space.
\begin{prop}
Microlinear types are closed under limits, and if $X$ is microlinear then $X^{A}$ is microlinear for any $A$.
\end{prop}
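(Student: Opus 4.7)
The plan is to work directly from the definition: $X$ is microlinear exactly when, for every infinitesimal $\Rb$-pushout built from $V_1, V_2, V_3$ with apex $V_4$, the comparison map $X^{V_4} \to X^{V_2} \times_{X^{V_1}} X^{V_3}$ is an equivalence. Both closure claims will then follow from the fact that $X \mapsto X^V$ commutes appropriately with the operations in question; alternatively, one could cite the lifting-property characterisation from \cref{lem:microlinear.lifting.property} and invoke the general principle that classes of maps defined by unique right lifting are automatically closed under limits and cotensors in the arrow category.

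First I would handle closure under limits. Suppose $X \simeq \lim_i X_i$ with each $X_i$ microlinear, and fix an infinitesimal $\Rb$-pushout as above. Since $(-)^V$ is a right adjoint (to $V \times (-)$), it preserves limits, so $X^{V_k} \simeq \lim_i X_i^{V_k}$ for $k = 1,2,3,4$. Pullbacks also commute with limits, so the microlinearity comparison map for $X$ is the limit over $i$ of the microlinearity comparison maps for the $X_i$. Each of those is an equivalence by hypothesis, and a limit of equivalences is an equivalence, so $X$ is microlinear.

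Next I would treat the exponential case. If $X$ is microlinear and $A$ is any type, then $(X^A)^V \simeq X^{V \times A} \simeq (X^V)^A$ naturally in $V$, so applying $(-)^A$ to the microlinearity comparison square for $X$ yields the microlinearity comparison square for $X^A$ (using that $(-)^A$ preserves the pullback $X^{V_2} \times_{X^{V_1}} X^{V_3}$, since it is a right adjoint). Because $(-)^A$ sends equivalences to equivalences, the comparison for $X^A$ is an equivalence, and $X^A$ is microlinear.

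I do not anticipate any genuine obstacle: both statements are formal consequences of the shape of the definition combined with the standard commutation of limits and exponentials with pullbacks. The only thing worth being slightly careful about is reading the microlinearity condition the right way around, i.e.\ as a condition on $X^{(-)}$ turning certain $\Rb$-pushouts into pullbacks, so that all the relevant preservation properties lie on the right-adjoint side.
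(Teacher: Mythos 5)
Your argument is correct, and you even anticipate the paper's own proof in your opening remark: the paper simply cites \cref{lem:microlinear.lifting.property} and appeals to the general fact that classes of objects defined by unique right lifting against a fixed set of maps are closed under limits and cotensors (i.e., under $(-)^A$). Your fleshed-out direct argument is a perfectly sound alternative route: for limits you use that $(-)^{V}$ is a right adjoint so $X^{V} \simeq \lim_i X_i^{V}$, that pullbacks commute with limits, and that a limit of equivalences is an equivalence; for exponentials you use $(X^{A})^{V} \simeq (X^{V})^{A}$ and that $(-)^{A}$ preserves the relevant pullback. What the paper's one-line proof buys is brevity, at the cost of presupposing familiarity with orthogonal-factorization-style closure properties; what your direct argument buys is self-containedness, at the cost of having to track the commutation of $(-)^{V}$, $(-)^{A}$, limits, and pullbacks explicitly. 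Either is acceptable, and you correctly identify that the heart of the matter is that the microlinearity condition lives entirely on the right-adjoint side.
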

\begin{proof}
By \cref{lem:microlinear.lifting.property}, microlinearity is characterized by lifting uniquely on the right against a class of maps. These closure properties are the closure properties of the class of maps defined by such a lifting property.
\end{proof}

As a corollary, we see that the zero loci of (arbitrarily indexed) families of real valued functions are microlinear.
\begin{prop}
Let $f : \Rb^{N} \to \Rb^{M}$ be any map where $N$ and $M$ are any types. Then the zero-locus $Z_{f} :\equiv \{x : \Rb^{N} \mid f(x) = 0\}$ is microlinear.
\end{prop}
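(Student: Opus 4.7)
The plan is to exhibit $Z_{f}$ as a small limit built from $\Rb$ and the terminal type, and then invoke the closure properties of microlinear types established in the preceding proposition.

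First I would observe that $\Rb$ is itself microlinear, essentially by definition: an infinitesimal $\Rb$-pushout is, by \cref{defn:microlinear} and the definition preceding it, precisely a crisp pointed square between infinitesimal varieties which is an $\Rb$-pushout, so asking that it be an $\Rb$-pushout is a tautology. The terminal type $\ast$ is trivially microlinear, since every square of maps into $\ast$ is a pushout.

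Next, applying the previous proposition (microlinearity is closed under arbitrary exponentials), both $\Rb^{N}$ and $\Rb^{M}$ are microlinear, regardless of what the types $N$ and $M$ are.

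Finally, $Z_{f}$ fits into a pullback square
\[
\begin{tikzcd}
Z_{f} \ar[r] \ar[d] & \ast \ar[d, "0"] \\
\Rb^{N} \ar[r, "f"'] & \Rb^{M}
\end{tikzcd}
\]
so $Z_{f}$ is a limit of microlinear types; invoking closure under limits from the previous proposition concludes the proof. There is no genuine obstacle here — the statement is a direct corollary of the preceding closure properties, once one notices that a zero-locus is a fiber and hence a pullback.
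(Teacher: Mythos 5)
Your proof is correct and follows the same route as the paper: use closure of microlinear types under exponentials to get $\Rb^{N}$ and $\Rb^{M}$, then closure under pullback to get the zero-locus as the fiber of $f$ over $0$. Your added observation that $\Rb$ is microlinear tautologically (an infinitesimal $\Rb$-pushout is by definition an $\Rb$-pushout) is a nice explicit touch that the paper leaves implicit.
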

\begin{proof}
  As $\Rb$ is microlinear, $\Rb^{N}$ and $\Rb^{M}$ are microlinear. The result then follows by the closure of microlinear types under pullback.
\end{proof}

\subsection{Ordinary manifolds}\label{sec:ordinary.manifold}

First we begin by giving a formulation of the standard notion of manifold in homotopy type theory. By the standard notion of manifold, I mean a second countable Hausdorff topological space which is locally homeomorphic to $\Rb^{n}$. For emphasis, we will always call these ``ordinary manifolds'' in this paper. Classically, this would only define a continuous manifold; but we are using the \emph{smooth reals}, so any transition function between charts will already be smooth.

Since we are working in a constructive setting, we should take a bit of care about the topological points. Namely, instead of asking that an ordinary manifold be Hausdorff ($T_{2}$), we will ask that it be regular Hausdorff ($T_{3}$) --- that is, regular and $T_{0}$, which together imply Hausdorff.

\begin{defn}
  We recall the following topological definitions:
  \begin{enumerate}
  \item A topological space $X$ is \emph{regular} if for any open set $U$ and $x \in U$, there is an open neighborhood $V$ of $U$ and an open set $G$ disjoint from $V$ (that is, $G \cap V = \emptyset$) but complementary with $U$ (that is, $G \cup U = X$).
          \item A topological space is $T_{0}$ if for any pair of \emph{distinct} points $x$ and $y$ (that is, $x \neq y$), there is either an open set containing $x$ but not $y$, or an open set containing $y$ but not $x$.
          \item A topological space is regular Hausdorff if it is regular and $T_{0}$.
          \item A topological space $X$ is Hausdorff if for any pair of \emph{distinct} points $x$ and $y$, there are disjoint open sets $U_{x} \cap U_{y} = \emptyset$ with $x \in U_{x}$ and $y \in U_{y}$.
          \end{enumerate}
\end{defn}

Let's recall the simple proof that regular Hausdorff spaces as defined above are also Hausdorff.
\begin{lem}
A regular Hausdorff topological space is Hausdorff.
\end{lem}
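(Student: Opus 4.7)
The plan is the standard textbook argument, adapted to the constructive setting: given distinct points, apply $T_0$ to get a one-sided separating open, then upgrade this using regularity to a pair of disjoint opens. No delicate constructivity issue arises, because the disjunction we need to case-split on is already the one built into the $T_0$ axiom, and the witness of distinctness $x \neq y$ is used only negatively (to rule out a point lying in a given open).

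Concretely, I would proceed as follows. Fix points $x,\,y : X$ with $x \neq y$. By the $T_0$ axiom I obtain either an open set $U$ with $x \in U$ and $y \notin U$, or an open set $U'$ with $y \in U'$ and $x \notin U'$. I handle both cases uniformly; by the symmetry of the conclusion it suffices to treat the first. So suppose $x \in U$ and $y \notin U$. Applying regularity to the pair $x \in U$ yields an open neighborhood $V$ of $x$ together with an open set $G$ satisfying $V \cap G = \emptyset$ and $G \cup U = X$. Since $y \in X = G \cup U$ and $y \notin U$, we conclude $y \in G$. Thus $V$ and $G$ are disjoint opens with $x \in V$ and $y \in G$, witnessing Hausdorffness for this pair. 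In the symmetric case I instead apply regularity to $y \in U'$ to obtain the same conclusion with the roles of $x$ and $y$ exchanged.

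There is no real obstacle here: the proof is essentially a one-line deduction once the definitions are unfolded, and the argument is manifestly constructive. The only point worth flagging is the reading of the regularity axiom as stated in the excerpt, where $V$ should be an open neighborhood of the chosen point $x$ (rather than of the set $U$); this is what makes the implication go through and is the usual formulation. Given this reading, the proof is complete in two case splits of at most three lines each.
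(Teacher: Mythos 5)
Your proof is correct and follows essentially the same route as the paper's: use $T_0$ to get a one-sided separating open $U$ around (say) $x$, apply regularity to $(x, U)$ to obtain $V \ni x$ and $G$ with $V \cap G = \emptyset$ and $G \cup U = X$, and then conclude $y \in G$ since $y \notin U$. Your observation about the phrasing of the regularity axiom (``open neighborhood $V$ of $U$'' should read ``of $x$'') is also a fair catch and matches the reading the paper itself uses in its proof.
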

\begin{proof}
Let $x$ and $y$ be distinct points of a regular Hausdorff space $X$. Since $X$ is $T_{0}$, there is (without loss of generality) an open set $U$ containing $x$ but not $y$. By regularity, there is then an open set $V$ containing $x$ and an open set $G$ disjoint from $V$ and with $G \cup U = X$. Therefore $y$ is either in $G$ or $U$; but we already know it is not in $U$, so it must be in $G$. Therefore, there are disjoint open neighborhoods separating $x$ and $y$.
  \end{proof}

There is a good reason for using regular Hausdorff instead of just Hausdorff: open sets in regular spaces are \emph{infinitesimally stable} in the sense that if $x$ is in an open set $U$ and $x \approx y$, then $y$ is also in $U$. We will use this property in \cref{prop:ordinary.is.penon} to show that manifolds have the correct \emph{infinitesimal} structure in addition to the local structure which we assume.
\begin{lem}\label{lem:regular.is.inf.stable}
Let $X$ be a regular topological space. Then open sets $U$ of $X$ are infinitesimally stable in the sense that if $x \in U$ and $y \approx x$, then $y \in U$.
\end{lem}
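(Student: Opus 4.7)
The plan is to use regularity at the pair $(x, U)$ to separate $x$ from the complement of $U$ by disjoint opens, and then exploit the double-negated form of $y \approx x$ to rule out one side of the resulting cover.

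First I would apply regularity to $x \in U$ to obtain an open neighborhood $V$ of $x$ together with an open set $G$ such that $V \cap G = \emptyset$ and $G \cup U = X$. From $G \cup U = X$ we obtain a disjunction $y \in G \vee y \in U$, so it is enough to rule out $y \in G$ and then conclude $y \in U$ by disjunction elimination (both $y \in G$ and $y \in U$ are propositions, as they are memberships in subsets).

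Suppose toward a contradiction that $y \in G$. Since $x \in V$ and $V \cap G = \emptyset$, we have $x \notin G$. If $x = y$ held, then from $y \in G$ we would derive $x \in G$, contradicting $x \notin G$; hence $\neg(x = y)$. But by definition $y \approx x$ is exactly $\neg\neg(y = x)$, which is the same as $\neg\neg(x = y)$ and so is incompatible with $\neg(x = y)$. This contradiction establishes $\neg(y \in G)$, and disjunction elimination on $y \in G \vee y \in U$ then gives $y \in U$, as desired.

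There is no real obstacle here; the argument is entirely constructive and uses only that $V \cap G = \emptyset$ allows us to negate the equality $x = y$ from the mere assumption $y \in G$, which is precisely strong enough to conflict with the double-negated equality packaged in $y \approx x$. The lemma in fact illustrates why the stronger regularity hypothesis is preferred over bare Hausdorffness in the constructive setting: Hausdorffness would only separate \emph{distinct} points, whereas infinitesimal neighbors are not known to be distinct, so one needs the stronger separation of a point from an arbitrary open neighborhood to push the neighbor into $U$.
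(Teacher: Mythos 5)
Your proof is correct and follows the same route as the paper's: apply regularity to produce $V$ and $G$, observe that $y \in G$ would force $x \neq y$, which is blocked by $y \approx x \equiv \neg\neg(y = x)$, and conclude $y \in U$ from the cover $G \cup U = X$. Your write-up simply makes the disjunction elimination and the transport-along-equality step slightly more explicit.
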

\begin{proof}
  Let $x \in U$ and suppose that it is not the case that $x \neq y$. Since $X$ is regular, for $x \in U$ there is an open $V$ containing $x$ and an open $G$ disjoint from $V$ and complementary to $U$. Therefore, $y \in G$ or $U$. But $G$ is disjoint from $V$ and $x \in V$; if $y$ were in $G$, then $y$ would be distinct from $x$ (we would have $x \neq y$). Therefore, $y \in U$.
  \end{proof}

  Before going forward to define manifold, we should equip our Euclidean spaces with an appropriate topology and check that it is regular Hausdorff. We will use the metric topology.
  \begin{defn}
    A subset $U \subseteq \Rb^{n}$ is \emph{metrically open} if for every $x \in U$ there is a $\ep > 0$ (which may be chosen to be rational, by the Archimedian property) so that for all $y : X$, if
    $$\sum_{i = 1}^{n}(x_{i} - y_{i})^{2} < \ep^{2}$$
    then $y \in U$.

    Defining $B(x, \ep) :\equiv \{y : \Rb^{n} \mid \sum_{i = 1}^{n}(x_{i} - y_{i})^{2} < \ep^{2}\}$, this means that a set $U$ is metrically open if for all $x \in U$ there is an $\ep > 0$ for which $B(x, \ep) \subseteq U$.
  \end{defn}

  \begin{thm}
The metrically open subsets of $\Rb^{n}$ form a regular Hausdorff topology.
  \end{thm}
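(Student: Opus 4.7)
My plan is to verify the three topology axioms and the two separation conditions in turn. The topology axioms are routine: $\emptyset$ and $\Rb^n$ are trivially open; an arbitrary union of opens is open by promoting a ball for any summand to a ball for the union; and for $x \in U \cap V$ with $B(x, \ep_U) \subseteq U$ and $B(x, \ep_V) \subseteq V$, the ball $B(x, \min(\ep_U, \ep_V))$ lies in both.

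For the $T_0$ axiom, suppose $x \neq y$ in $\Rb^n$. By Postulate K applied to the vector $x - y$, some coordinate $x_i - y_i$ is invertible, hence nonzero, so $(x_i - y_i)^2 > 0$; as the remaining terms are squares and non-negative, $|x - y|^2 > 0$. Postulate Exp then provides a positive square root $r :\equiv \sqrt{|x - y|^2}$, and the ball $B(x, r/2)$ contains $x$ but not $y$.

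The creative step is regularity. Given a metrically open $U$ and $x \in U$, pick $\ep > 0$ with $B(x, \ep) \subseteq U$, set $V :\equiv B(x, \ep/2)$, and define
$$G :\equiv \{y : \Rb^n \mid |y-x|^2 > \ep^2/4\}.$$
Then $V \subseteq U$ is immediate, and $V \cap G = \emptyset$ follows from the irreflexivity of $<$ (Postulate O.2). For the covering $G \cup U = \Rb^n$ it suffices to show $G \cup B(x, \ep) = \Rb^n$, and this follows from Postulate O.3 applied to the strict inequality $\ep^2 > \ep^2/4$ with $z :\equiv |y-x|^2$: for any $y$, either $\ep^2 > |y-x|^2$ (so $y \in B(x,\ep)$) or $|y-x|^2 > \ep^2/4$ (so $y \in G$).

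It remains to verify that $G$ is itself metrically open. Given $y \in G$, Postulate Exp supplies $r :\equiv \sqrt{|y-x|^2} > \ep/2$; setting $\delta :\equiv (r - \ep/2)/2 > 0$, the reverse triangle inequality gives $|z - x| \geq |y - x| - |z - y| > r - \delta = (r + \ep/2)/2 > \ep/2$ for every $z \in B(y, \delta)$, and squaring non-negative reals preserves the strict inequality to yield $|z - x|^2 > \ep^2/4$. I expect this openness of $G$ to be the main technical obstacle, since manipulating strict inequalities involving positive square roots demands constructive care; however, all the steps are justified by the order axioms of Postulate O together with the exponential--logarithm isomorphism of Postulate Exp, and no appeal to the Covering Property or the Kock--Lawvere axiom is needed.
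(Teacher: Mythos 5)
Your core argument for regularity is the same as the paper's: the same choice of $V :\equiv B(x,\ep/2)$ and $G :\equiv \{y \mid |y-x|^2 > \ep^2/4\}$, with the covering $G \cup B(x,\ep) = \Rb^n$ obtained from cotransitivity (Postulate O.3) applied to $\ep^2 > \ep^2/4$. The paper, however, only spells out the $1$-dimensional case, and its $T_0$ step uses trichotomy (O.4) on $x-y$ together with the Archimedean law, whereas you handle general $n$ by invoking Postulate K to pick an invertible coordinate and then Postulate Exp to extract a positive square root --- a genuinely different (and $n$-dimensional) route.

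The most substantive difference is that you verify $G$ is metrically open, a step the paper's proof silently omits. You are right to flag it: it is a real obligation of the definition of regularity and does not come for free. Your argument for it, though, leans on the reverse triangle inequality for the Euclidean norm $|z-x| \geq |y-x| - |z-y|$, which in turn needs Cauchy--Schwarz and a norm extracted via Postulate Exp; none of this is established elsewhere in the paper, so if you want this to be fully rigorous you should either prove the triangle inequality for $|\cdot|$ from the axioms or rephrase the openness of $G$ entirely in terms of squared distances (expanding $|z-x|^2 = |y-x|^2 + 2\langle y-x, z-y\rangle + |z-y|^2$ and bounding the cross term via an Archimedean bound on $|y-x|^2$). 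Also, your $T_0$ step quietly uses that a sum of a positive real and squares is positive; this does hold (from $a>0$, O.3 gives $a+b>0$ or $b<0$, and $\neg(b<0)$ for a square), but it is worth recording, since $\geq$ is not among the primitives.
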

  \begin{proof}
     We explain the proof of the $1$-dimensional case to more clearly communicate the main ideas. First, we will show that $\Rb$ is $T_{0}$. Suppose that $x \neq y$ are real numbers; then either $x < y$ or $y < x$, so suppose the latter without loss of generality. Then $x - y \neq 0$ and is therefore invertible. Let $n$ be a natural number so that $\frac{1}{x - y} < n$; then $x - y < \frac{1}{n}$. Then the ball $B(x, \frac{1}{n})$ is a metrically open subset containing $x$ but not $y$.

Next, we will show that $\Rb$ is regular. Let $U$ be metrically open and let $x \in U$. Then there is an $\ep \in \Qb$ for which $B(x, \ep) \subseteq U$. Define $V :\equiv  B(x, \frac{\ep}{2})$. Define $G :\equiv \{y : \Rb \mid (x - y)^{2} > \left(\frac{\ep}{2}\right)^{2}\}$, and note that $G \cap V = \emptyset$. It remains to show that $G \cup U = \Rb$; it suffices to show that $G \cup B(x, \ep) = \Rb$. But this follows from Postulate O.3: since $\ep^{2} > \left(\frac{\ep}{2}\right)^{2}$, either $\ep^{2} >  (x - y)^{2}$ or $(x- y)^{2} >\left(\frac{\ep}{2}\right)^{2}$.
    \end{proof}

    Now we can define the notion of ``ordinary manifold''.
\begin{defn}
 A \emph{ordinary $n$-dimesional manifold} $M$ is a regular Hausdorff topological space which is a locally isomorphic to $\Rb^{n}$ in that for any point $p : M$, there is merely a \emph{chart} around $p$, an open subset $U \subseteq \Rb^{n}$ of the origin and an open embedding $\phi : U \hookrightarrow M$ so that $\phi(0) = p$. We will assume that ordinary manifolds are second countable in that they have a countable base of charts.
\end{defn}

In the next section, we will show that ordinary manifolds are Penon manifolds, and that Penon manifolds are microlinear.

\subsection{Penon manifolds}\label{sec:penon.manifold}

In his paper \emph{Infinitesimaux et Intuisionisme} \cite{Penon:Infinitesimals.and.Intuitionism}, Jacques Penon emphasizes that infinitesimal neighbors of a point $x$ of a space (for us, a set or a $0$-type) $X$ are the points $y$ which are \emph{not distinct} from $x$ in the sense that
$$\neg \neg (y = x).$$
This is the relaton $x \approx y$ of \cref{defn:infinitesimal}.

Accordinly, Penon suggests that a ``manifold'' should be a set which is infinitesimally isomorphic to $\Rb^{n}$ in the following sense.
\begin{defn}\label{defn:Penon.manifold}
  A \emph{Penon manifold} of dimension $n$ is a set $M$ so that for all $p : M$, the infinitesimal neighborhood $\Dc_{p}M \equiv \{x : M \mid x \approx p\}$ is identifiable with the infinitesimal neighborhood $\Dc^{n} \subseteq \Rb^{n}$ of the origin of real $n$-space as a pointed set. That is, for all $p$, we have
  \[
\trunc{\dsum{f : \Dc_{p}M = \Dc^{n}} (f(p) = 0)}.
  \]
  \end{defn}

  Ordinary manifolds are Penon manifolds.
  \begin{prop}\label{prop:ordinary.is.penon}
Every ordinary manifold is a Penon manifold.
  \end{prop}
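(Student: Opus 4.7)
\emph{Proof plan.} The task is to exhibit, for each $p : M$, a pointed bijection $\Dc_{p}M \simeq \Dc^{n}$. The natural strategy is to extract a chart around $p$ and show that it restricts to the desired equivalence on infinitesimal neighborhoods. Since the conclusion is a proposition (being an existence statement about identifications), I may use the merely-given chart: pick an open $U \subseteq \Rb^{n}$ with $0 \in U$ and an open embedding $\phi : U \hookrightarrow M$ with $\phi(0) = p$.

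First I would identify $\Dc_{0}U$ with $\Dc^{n}$. Since $x \approx_{U} 0$ is $\neg\neg(x = 0)$, which does not see the ambient type, one has $\Dc_{0}U = U \cap \Dc_{0}\Rb^{n}$. By \cref{lem:neighborhood.of.0.in.R.n} the right-hand set is $U \cap \Dc^{n}$, and since $U$ is metrically open and contains $0$, infinitesimal stability of open sets in a regular space (\cref{lem:regular.is.inf.stable}) gives $\Dc^{n} \subseteq U$. Hence $\Dc_{0}U = \Dc^{n}$ as pointed subsets.

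Next I would show that $\phi$ restricts to a pointed equivalence $\Dc_{0}U \xto{\sim} \Dc_{p}M$. Well-definedness is immediate from \cref{lem:neighbor.relation.functorial}: $u \approx 0$ implies $\phi(u) \approx \phi(0) = p$. Injectivity is the remark that any injective function on sets reflects $\approx$ (because $\phi(u) = \phi(v)$ implies $u = v$, so $\neg\neg(\phi(u) = \phi(v))$ implies $\neg\neg(u = v)$). For surjectivity, suppose $q \approx p$; the set $\phi(U)$ is an open neighborhood of $p$ in $M$, so by \cref{lem:regular.is.inf.stable} applied to the regular space $M$ we conclude $q \in \phi(U)$, yielding some $u \in U$ with $\phi(u) = q$. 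Then $\phi(u) = q \approx p = \phi(0)$, and the injectivity argument above, run in reverse, gives $u \approx 0$, so $u \in \Dc_{0}U$.

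The one point to watch is the last surjectivity step, where I silently need $\phi(U)$ to be open in $M$; this is exactly the hypothesis that the chart $\phi$ is an \emph{open} embedding, so no extra work is required. Combining the identifications yields $\Dc_{p}M \simeq \Dc_{0}U \simeq \Dc^{n}$ as pointed sets, which is the condition of \cref{defn:Penon.manifold}.
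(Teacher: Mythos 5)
Your proof is correct and follows essentially the same route as the paper: take a chart $\phi : U \hookrightarrow M$, use regularity plus openness of $\phi(U)$ to get $\Dc_p M \subseteq \phi(U)$, and then identify $\Dc_0 U = \Dc^n$ and transport along $\phi$ using functoriality of $\approx$. The only small presentational difference is that you spell out the bijection argument (injective plus surjective, with $\approx$-reflection for the surjectivity step) where the paper simply invokes that an embedding restricts to an equivalence onto its image; the content is the same.
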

  \begin{proof}
    Let $M$ be an ordinary manifold and let $p : M$ be a point, seeking to prove that $\Dc_p M$ is identifiable with $\Dc^{n} \subseteq \Rb^{n}$. Since we are seeking to prove a proposition, we may take as given a chart $\phi : U \hookrightarrow M$ around $p$ --- that is, with $\phi(0) = p$. Since $\phi$ is an open embedding, it's image $\phi(U)$ is open. Since $M$ is regular, $\Dc_p M \subseteq \phi(U)$ by \cref{lem:regular.is.inf.stable}. Since $\phi$ is an embedding, it restricts to an equivalence on its image; therefore, it also restricts to an equivalence $\phi : \phi\inv(\Dc_p M) \simeq \Dc_{p}$. It remains to show that $\phi\inv(\Dc_p M) = \Dc^{n}$. First, we note that $0 \in \phi\inv(\Dc_p M)$ since by assumption $\phi(0) = p$. Then, by \cref{lem:neighbor.relation.functorial}, the equivalence $\phi$ restricts to an equivalence $\Dc^{n}$ with $\Dc_p M$.
  \end{proof}

We will now show that Penon manifolds are microlinear by proving that microlinearity is a local property.
\begin{lem}\label{lem:microlinearity.is.local}
Let $X$ be a type. Then $X$ is microlinear if and only if for each $x : X$, the infinitesimal neighborhood $\Dc_{x}X :\equiv \dsum{y : X} (y \approx x)$ of $x$ is microlinear.
\end{lem}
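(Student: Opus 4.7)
The plan is to exploit the lifting characterization of microlinearity (\cref{lem:microlinear.lifting.property}) together with the basic fact that any infinitesimal variety lives inside the $\approx$-neighborhood of its base point. The inclusion $\iota : \Dc_x X \hookrightarrow X$ is an embedding, and the whole argument amounts to showing that every lift problem against a codiagonal $\nabla_V$ of an infinitesimal $\Rb$-pushout can be transported freely across $\iota$ in both directions.

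The first step I would take is to establish a key observation: every point $v$ of an infinitesimal variety $V$ satisfies $v \approx \pt_V$. This follows from \cref{lem:inf.variety.is.zero.locus}, which realises $V$ as a subset of $\Dc^n$ sending $\pt_V$ to $0$, together with \cref{lem:neighborhood.of.0.in.R.n}, which gives $\Dc^n = \Dc_0(\Rb^n)$. Since the subset inclusion $V \hookrightarrow \Rb^n$ preserves and reflects equality, $\approx$-neighborship in $V$ coincides with that in $\Rb^n$. I would then extend this observation by pushout induction (legitimate since $\approx$ is a proposition) to every pointed pushout $V_2 +_{V_1} V_3$ of infinitesimal varieties along pointed maps: every element of the pushout is $\approx$-related to the base point $\inl(\pt_{V_2}) = \inr(\pt_{V_3})$. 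Both summand cases reduce to the inf-variety claim via functoriality of $\approx$ under $\inl$ and $\inr$ (\cref{lem:neighbor.relation.functorial}).

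With these observations in hand, both directions become bookkeeping. For the forward direction I would start with a lift problem $k : V_2 +_{V_1} V_3 \to \Dc_x X$ against some $\nabla_V : V_2 +_{V_1} V_3 \to V_4$, compose with $\iota$ to obtain a lift problem for $X$, and let $g' : V_4 \to X$ be the unique filler provided by microlinearity of $X$. For every $v : V_4$ the key observation gives $v \approx \pt_{V_4}$, so $g'(v) \approx g'(\pt_{V_4}) = \iota(k(\pt))$; since $\iota(k(\pt)) \approx x$ by definition of $\Dc_x X$ and since $\approx$ is transitive (because $\neg\neg$ is a monad), we conclude $g'(v) \approx x$, so $g'$ factors through $\Dc_x X$ as the desired lift, and uniqueness transfers because $\iota$ is an embedding. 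For the reverse direction I would take any $k : V_2 +_{V_1} V_3 \to X$, set $x :\equiv k(\pt)$, use the pushout version of the observation to factor $k$ through $\Dc_x X$, and apply microlinearity of $\Dc_x X$ to obtain a unique lift $g' : V_4 \to \Dc_x X$. Any competing lift $g_0 : V_4 \to X$ satisfies $g_0(\pt_{V_4}) = k(\pt) = x$, and the same argument forces $g_0$ to land in $\Dc_x X$, so $g_0 = \iota \circ g'$ by uniqueness there.

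The only step that deserves real care is the pushout extension of the key observation: although the conclusion holds on each summand of $V_2 +_{V_1} V_3$ almost by construction, one must invoke pushout induction and the fact that $\inl$ and $\inr$ send the base points of $V_2$ and $V_3$ to the shared base point of the pushout. The path-compatibility obligations in the induction are automatic because neighborship is a proposition, so no higher coherence is required and the rest is routine.
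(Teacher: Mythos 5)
Your proof is correct and follows essentially the same strategy as the paper's: characterize microlinearity via the lifting property of \cref{lem:microlinear.lifting.property}, observe that every point of an infinitesimal variety is $\approx$-related to its base point, and transfer lift problems across the embedding $\iota : \Dc_x X \hookrightarrow X$. Your version is more careful than the paper's terse two-line argument, notably in spelling out the pushout-induction step for $V_2 +_{V_1} V_3$ and the appeal to transitivity of $\approx$ to reconcile the fixed $x$ with $g(\pt_{V_4})$ in the forward direction, both of which the paper leaves implicit.
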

\begin{proof}
  For any $x : X$, the projection $i : \Dc_{x}X \to X$ is an embedding since $(y \approx x)$ is a proposition. Furthermore, if $V_{4}$ is any infinitesimal variety, then since every $\ep : V_{4}$ is near $0$ --- $\ep \approx 0$ --- we have that $v(\ep) \approx v(0)$ for every $v : V_{4} \to X$. Therefore, we have a lift of $\nabla_{V}$ into $X$ for any infinitesimal $\Rb$-pushout if and only if we have a lift into $\Dc_{x}X$ where $x$ is the image of the base point $0$.
\end{proof}

\begin{thm}\label{thm:manifold.is.microlinear}
Every Penon manifold, and hence every ordinary manifold, is microlinear.
\end{thm}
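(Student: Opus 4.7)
The plan is to string together three facts already in hand: the locality of microlinearity (\cref{lem:microlinearity.is.local}), the Penon-manifold condition $\Dc_p M \simeq \Dc^n$, and the microlinearity of $\Rb^n$ itself. First, by \cref{lem:microlinearity.is.local}, it suffices to show that $\Dc_p M$ is microlinear for every $p : M$. Since $M$ is a Penon manifold, we are given (merely) an identification of pointed sets $\Dc_p M \simeq \Dc^n$, and microlinearity is an intrinsic property of a type, transported along any equivalence; so the task reduces to proving that $\Dc^n \subseteq \Rb^n$ is microlinear.

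Now $\Rb^n$ is a finite product of copies of $\Rb$, and $\Rb$ is microlinear (this is the basic instance of Postulate J, which guarantees exactly the lifting property of \cref{lem:microlinear.lifting.property} against the codiagonals of infinitesimal $\Rb$-pushouts). Since microlinearity is closed under limits, $\Rb^n$ is microlinear. Applying \cref{lem:microlinearity.is.local} in the other direction, we conclude that the infinitesimal neighborhood $\Dc_0 \Rb^n$ of the origin in $\Rb^n$ is microlinear. By \cref{lem:neighborhood.of.0.in.R.n}, $\Dc_0 \Rb^n = \Dc^n$ as subsets of $\Rb^n$, so $\Dc^n$ is microlinear, which was what remained to be shown. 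Together with \cref{prop:ordinary.is.penon}, this gives the ``hence'' for ordinary manifolds.

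I expect no significant obstacle here; the content has been packaged into the locality lemma and the Penon condition in advance. The one place that could have been a snag is the identification $\Dc_p M \simeq \Dc^n$: since this is only assumed to be an equivalence of pointed sets, one might worry we need more structure to transport microlinearity. But microlinearity of a type $T$ depends only on $T$ as a type (it is a lifting condition for the functor $(-)^T$), so a bare equivalence of types is enough, and the pointing is used only to marry the identification to the specific point $p$ via \cref{lem:microlinearity.is.local}.
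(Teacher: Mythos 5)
Your proof is correct and matches the paper's argument step for step: reduce to $\Dc_p M$ via \cref{lem:microlinearity.is.local}, transport to $\Dc^n$ along the Penon chart, and identify $\Dc^n = \Dc_0(\Rb^n)$ to apply \cref{lem:microlinearity.is.local} again. The only quibble is your parenthetical attribution of the microlinearity of $\Rb$ to Postulate J --- in fact $\Rb$ is microlinear tautologically, since an infinitesimal $\Rb$-pushout is by definition an $\Rb$-pushout --- but this does not affect the argument.
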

\begin{proof}
Since for every point $p : M$ in a Penon manifold, the infinitesimal neighborhood $\Dc_{p}$ is identifiable with $\Dc^{n}$, it will suffice to show that $\Dc^{n}$ is microlinear by \cref{lem:microlinearity.is.local}. But $\Dc^{n} = \Dc_{0}( \Rb^{n} )$ and $\Rb^{n}$ is microlinear, so this also follows by \cref{lem:microlinearity.is.local}.
\end{proof}

In \cite{Penon:Infinitesimals.and.Intuitionism}, Penon gives a definition of \'etale map between his manifolds.
\begin{defn}
A map $f : X \to Y$ between sets is Penon \'etale if for all $x : X$, the induced map $f_{\ast} : \Dc_{x} \to \Dc_{fx}$ is an equivalence.
\end{defn}
We already have a notion of \'etale map: the $\Im$-\'etale maps. This is not to mention the ordinary notion of local diffeomorphism between manifolds: a map $f : X \to Y$ for which the pushforward $f_{\ast} : T_{x} X \to T_{fx} X$ is an isomorphism for all $x : X$. Luckily, all of these notions coincide where they are jointly defined, at least for crisp manifolds.

To prove this, we take a definition from Cherubini's \cite{Cherubini:Thesis}.
\begin{defn}[\cite{Cherubini:Thesis}]
  Let $X$ be a type and $x : X$ be an element. The $\Im$-disk around $x$ is defined to be the fiber of the unit $(-)^{\Im} : X \to \Im X$ over $x^{\Im}$:
  \[
D^{\Im}_{x}X :\equiv \fib_{(-)^{\Im}}(x^{\Im}) \equiv (\dsum{y : X} (x^{\Im} = y^{\Im})).
  \]
\end{defn}

Cherubini and Rijke show in Proposition 3.7 of \cite{Cherubini-Rijke:Modal.Descent} that if the modal unit $(-)^{\Im} : X \to \Im X$ is surjective, then a map $f : X \to Y$ is $\Im$-\'etale if and only if the induced map $f_{\ast} : D^{\Im}_{x}X \to D^{\Im}_{fx}Y$ is an equivalence. But as $\Im$ is given by localizing at a pointed type, all modal units are surjective.

We will show that in a crisp set $X$, the $\Im$-disk $D^{\Im}_{x}X$ around $x$ coincides with the infinitesimal disk $\Dc_{x}X$ around $x$. Together, this will show that a map between crisp Penon manifolds is Penon \'etale if and only if it is $\Im$-\'etale. However, we will need a few lemmas to do this. First, we need to know that for a crisp set $X$, $\Im X$ is also a set.
\begin{lem}\label{lem:im.of.crisp.set.is.set}
Let $X$ be a crisp set. Then $\Im X$ is a set.
\end{lem}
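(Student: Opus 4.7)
The plan is to prove sethood of $\Im X$ by transporting the pullback square that characterizes sethood of $X$ across the modality, using only the axiom that $\Im$ preserves crisp pullbacks.

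First I would recall that a type $A$ is a set if and only if its diagonal $\delta_A : A \to A \times A$ is an embedding, which is in turn equivalent to the square
\[
\begin{tikzcd}
X \ar[r, "\id"] \ar[d, "\id"'] & X \ar[d, "\delta_X"] \\
X \ar[r, "\delta_X"'] & X \times X
\end{tikzcd}
\]
being a pullback (since this square is a pullback precisely when the canonical map $X \to X \times_{X \times X} X$ is an equivalence, i.e.\ the fibers of $\delta_X$ are propositions). Since $X$ is crisp by hypothesis, this is a crisp pullback square, and so by the axiom that $\Im$ preserves crisp pullbacks the image square under $\Im$ is again a pullback.

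Next, I would identify the image square with the analogous square for $\Im X$. Finite products are crisp pullbacks over the terminal type, so crisp pullback preservation gives $\Im(X \times X) \simeq \Im X \times \Im X$, with the inverse obtained from the two projections. Under this identification the unit $(x,y) \mapsto (x^{\Im}, y^{\Im})$ is the pairing, so naturality of $(-)^{\Im}$ applied to $\delta_X$ identifies $\Im \delta_X$ with $\delta_{\Im X} : \Im X \to \Im X \times \Im X$. Combining this with the previous step, the square
\[
\begin{tikzcd}
\Im X \ar[r, "\id"] \ar[d, "\id"'] & \Im X \ar[d, "\delta_{\Im X}"] \\
\Im X \ar[r, "\delta_{\Im X}"'] & \Im X \times \Im X
\end{tikzcd}
\]
is a pullback, which is the same as saying $\delta_{\Im X}$ is an embedding, i.e.\ $\Im X$ is a set.

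I do not expect any real obstacle here; the entire argument is a direct application of crisp-pullback preservation together with the standard characterization of sethood via the diagonal. The only minor care needed is verifying the two identifications $\Im(X \times X) \simeq \Im X \times \Im X$ and $\Im \delta_X \simeq \delta_{\Im X}$, both of which are immediate from crisp-pullback preservation and naturality of the unit $(-)^{\Im}$ respectively.
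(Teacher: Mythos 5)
Your proof is correct and follows essentially the same route as the paper: characterize sethood by the diagonal pullback square, invoke crisp-pullback preservation by $\Im$, and identify $\Im(X\times X)$ with $\Im X \times \Im X$. The only difference is that you spell out the identification of $\Im\delta_X$ with $\delta_{\Im X}$ explicitly, which the paper leaves implicit.
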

\begin{proof}
  A type $X$ is a set if and only if the square
  \[
\begin{tikzcd}
	X & X \\
	X & {X \times X}
	\arrow[from=1-1, to=2-1, equals]
	\arrow[from=1-1, to=1-2, equals]
	\arrow["\Delta"', from=2-1, to=2-2]
	\arrow["\Delta", from=1-2, to=2-2]
\end{tikzcd}
  \]
  is a pullback. If $X$ is a crisp set, then that square is a pullback, and since $\Im$ preserves crisp pullbacks (and binary products) so is the square
  \[
\begin{tikzcd}
	\Im X & \Im X \\
	\Im X & {\Im X \times \Im X}
	\arrow[from=1-1, to=2-1, equals]
	\arrow[from=1-1, to=1-2, equals]
	\arrow["\Delta"', from=2-1, to=2-2]
	\arrow["\Delta", from=1-2, to=2-2]
\end{tikzcd}
  \]
  which shows that $\Im X$ is a set.
\end{proof}

Next, we need to investigate the relationship between $\Im$ and the other modality of cohesion: the codiscrete modality $\sharp$ (see Section 3 of \cite{Shulman:Real.Cohesion}). We have not talked about $\sharp$ very much in this paper, and we won't need it for anything but this.
\begin{lem}\label{lem:codiscrete.is.crystaline}
Every codiscrete type is crystaline. As a corollary, the unit $(-)^{\sharp} : X \to \sharp X$ factors uniquely through the unit $(-)^{\Im} : X \to \Im X$, and the factor $\Im X \to \sharp X$ is itself a $\sharp$-counit.
\end{lem}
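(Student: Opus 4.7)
The plan is to reduce the claim to showing that $\sharp\Dc$ is contractible. Once we have this, the unique map $\Dc \to \ast$ is a $\sharp$-equivalence, so by the universal property of $\sharp$-localization, for every codiscrete $Y$ the precomposition $Y \simeq Y^{\ast} \to Y^{\Dc}$ is an equivalence. But this is precisely the statement that $Y$ is local at $\Dc$, i.e., $\Im$-modal.

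The key input is that every crisp element of $\Dc$ is $0$. Given a crisp infinitesimal $x :: \Dc$, both $x$ and $0$ are crisp, so the proposition $(x = 0)$ is crisp. By the crisp law of excluded middle (\cref{axiom:LEM}) either $x = 0$ or $\neg(x = 0)$; the latter contradicts the given witness of $\neg\neg(x = 0)$, so $x = 0$. Consequently $\flat\Dc$ is contractible, with center $0^{\flat}$.

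To pass from $\flat\Dc \simeq \ast$ to $\sharp\Dc \simeq \ast$, I would invoke a standard fact from Shulman's cohesion: for any crisp type $A$, the counit $(-)_{\flat} : \flat A \to A$ becomes an equivalence after applying $\sharp$, since $\sharp A$ depends only on the ``crisp point'' information of $A$ that $\flat A$ already captures faithfully. Applied to $\Dc$ this gives $\sharp \Dc \simeq \sharp \flat \Dc \simeq \sharp \ast \simeq \ast$, completing the first claim.

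For the corollary, $\sharp X$ is $\sharp$-modal and hence $\Im$-modal by the first part, so the unit $(-)^{\sharp} : X \to \sharp X$ factors uniquely through $(-)^{\Im} : X \to \Im X$ via some map $\alpha : \Im X \to \sharp X$. Because every codiscrete type is $\Im$-modal, every $\Im$-equivalence is automatically a $\sharp$-equivalence; in particular $(-)^{\Im}$ is a $\sharp$-equivalence, and since $(-)^{\sharp}$ is also a $\sharp$-equivalence, 2-out-of-3 forces $\alpha$ to be a $\sharp$-equivalence into the $\sharp$-modal type $\sharp X$, hence a $\sharp$-unit. The one subtle step in all of this is the passage $\flat\Dc \simeq \ast \Rightarrow \sharp\Dc \simeq \ast$, which is where I would need to lean on the external structure of the $\flat \dashv \sharp$ adjunction rather than something provable purely from the axioms stated in this paper.
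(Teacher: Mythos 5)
Your proposal is correct and follows essentially the same route as the paper: reduce to $\sharp$-connectedness of $\Dc$, prove $\flat \Dc \simeq \ast$ via crisp LEM on the proposition that a crisp infinitesimal equals $0$, and then transfer to $\sharp$. The step you flag as the ``one subtle step'' --- passing from $\flat\Dc \simeq \ast$ to $\sharp\Dc \simeq \ast$ --- is exactly Theorem 6.22 of Shulman's \emph{Real Cohesion}, which the paper cites for this purpose; it is a theorem of cohesive HoTT, not something requiring genuinely external reasoning. One small stylistic difference: the paper argues directly about $u : \flat\Dc$ (using that $\flat\Dc$ is a set and $(-)_{\flat}$ is an embedding since $\Dc$ is a set by Shulman's Theorem 8.21), whereas you first show every crisp $x :: \Dc$ is $0$ and then implicitly invoke $\flat$-induction to get contractibility of $\flat\Dc$ --- both are fine.
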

\begin{proof}
It suffices to show that $\Dc$ is $\sharp$-connected: $\sharp \Dc \simeq \ast$. For this, it suffices to show that $\flat \Dc \simeq \ast$ by Theorem 6.22 of \cite{Shulman:Real.Cohesion}. We note that $\flat \Dc$ is a set, and so by the crisp law of excluded middle, for any $u : \flat Dc$, either $u = 0^{\flat}$ or not. Suppose that $u \neq 0^{\flat}$; then, since $(-)_{\flat} : \flat \Dc \to \Dc$ is an embedding since $\Dc$ is a set by Theorem 8.21 of \cite{Shulman:Real.Cohesion}, it follows that $u_{\flat} \neq 0$. But every element of $\Dc$ is not distinct from $0$, so this is a contradiction and we may conclude that $u = 0^{\flat}$, so that $\flat \Dc$ is contractible.
\end{proof}

Finally, we can prove that infintesimal disks and $\Im$-disks coincide for Penon manifolds.
\begin{lem}\label{lem:crisp.Penon.manifold.disk}
  Let $X$ be a crisp Penon manifold. Then for $x : X$, we have
  \[
\Dc_{x}X \simeq D^{\Im}_{x}X
\]
over $X$.
\end{lem}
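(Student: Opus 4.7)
The plan is to establish the two mutual containments $\Dc_x X \subseteq D^{\Im}_x X$ and $D^{\Im}_x X \subseteq \Dc_x X$ as propositional subsets of $X$; both memberships are propositions --- the first by definition, and the second because $\Im X$ is a set by \cref{lem:im.of.crisp.set.is.set} --- so this suffices to conclude.

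For the forward direction $\Dc_x X \subseteq D^{\Im}_x X$, since the target statement is a proposition I may untruncate the Penon manifold structure at $x$ and assume a pointed equivalence $\Dc_x X \simeq \Dc^n$. The crisp infinitesimal variety $\Dc^n$ is $\Im$-connected by \cref{lem:inf.variety.is.im.connected}, and hence so is $\Dc_x X$. Any map from an $\Im$-connected type into the crystaline type $\Im X$ must factor through a point, which in our case must be $x^\Im$ since $x \in \Dc_x X$. Therefore $y^\Im = x^\Im$ for every $y \in \Dc_x X$, as required.

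For the reverse direction $D^{\Im}_x X \subseteq \Dc_x X$, the plan is to route through the $\sharp$ comodality, as the paper anticipates. Given $y^\Im = x^\Im$, the canonical factor $\Im X \to \sharp X$ of \cref{lem:codiscrete.is.crystaline} yields $y^\sharp = x^\sharp$. I will then exhibit a separating predicate $\chi_x : X \to \Prop$ sending $y$ to $\neg\neg(y = x)$. Each value of $\chi_x$ is a double negation and hence codiscrete (since $\bot$ is codiscrete and the codiscrete types are closed under exponentials into them), and by the left-exactness of $\sharp$ \cite{Shulman:Real.Cohesion} the type of codiscrete propositions is itself codiscrete. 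Thus $\chi_x$ factors through $\sharp X$; feeding the equality $y^\sharp = x^\sharp$ through the factored map gives $\chi_x(y) = \chi_x(x) = \top$, i.e.\ $\neg\neg(y = x)$, which is exactly $y \approx x$.

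The main obstacle is this second inclusion: the first follows quickly from the $\Im$-connectedness of crisp infinitesimal varieties, but the second requires organising the $\neg\neg$-stable propositions into a codiscrete subuniverse of $\Prop$, and this is where the supplementary modality $\sharp$ --- promised to appear just once in this proof --- does its real work.
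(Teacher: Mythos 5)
Your proof is correct and closely parallels the paper's; the only substantive divergence is in the reverse inclusion, where the paper takes a shorter route. Two remarks.

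First, a minor imprecision in the forward direction: you cite \cref{lem:inf.variety.is.im.connected} to get the $\Im$-connectedness of $\Dc^n$, but $\Dc^n$ is not an infinitesimal variety in the sense of \cref{defn:infinitesimal.variety} (it is not the spectrum of a Weil algebra). The $\Im$-connectedness of $\Dc^n$ holds for a more elementary reason: $\Dc$ is $\Im$-connected by construction of $\Im$ as nullification at $\Dc$, and $\Im$-connected types are closed under finite products. With this substituted, your argument is the same as the paper's, which simply observes that the composite $\Dc^n \simeq \Dc_x X \hookrightarrow X \to \Im X$ is constant.

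Second, for the reverse inclusion, both you and the paper enter via the factor $\Im X \to \sharp X$ from \cref{lem:codiscrete.is.crystaline}. Where the paper then directly invokes the two Shulman equivalences $(x^\sharp = y^\sharp) \simeq \sharp(x = y)$ (Theorem 3.7) and $\sharp(x = y) \simeq \neg\neg(x = y)$ (Theorem 3.15), you instead build a codiscrete-valued predicate $\chi_x = \neg\neg(-\, = x)$ and run it through the universal property of $\sharp X$. This is a legitimate and arguably more hands-on route; what it buys you is an explicit view of $\neg\neg$-stable propositions as forming a codiscrete subuniverse, rather than citing the equivalence $\sharp P \simeq \neg\neg P$ as a black box. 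However, the independence from Theorem 3.15 is partly illusory: your step ``$\bot$ is codiscrete'' is exactly the instance of Theorem 3.15 at $P \equiv \bot$, and is not easy to verify by other means (lexness of $\sharp$ alone does not give preservation of $\emptyset$). So the detour is sound but is, in effect, an unpacking of the proof of the cited theorem rather than a replacement for it. For the record, the claim that $\Prop_\sharp$ is codiscrete needs a touch more than lexness: from lexness one gets that $\Type_\sharp$ is codiscrete, and then one must additionally observe that $\mathrm{isProp}$ is a codiscrete-valued predicate on $\Type_\sharp$ (it is, since codiscrete types are closed under identity types and dependent products).
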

\begin{proof}
  By \cref{lem:im.of.crisp.set.is.set}, $\Im X$ is a set as well and so for any $x, y : X$, the type $(x^{\Im} = y^{\Im})$ is a proposition. Therefore, $D^{\Im}_{x}$ is a subset of $X$ for any $x : X$, so it suffices to show that for $y : X$ we have that $(x \approx y)$ if and only if $(x^{\Im} = y^{\Im})$.

  First, suppose that $x \approx y$, seeking to show the proposition $(x^{\Im} = y^{\Im})$. Since $X$ is a Penon manifold, choose a coordinate chart $\phi : \Dc^{n} \simeq \Dc_{x}X$, and note that the composite $\Dc^{n} \xto{\phi} \Dc_{x}X \hookrightarrow X \xto{(-)^{\Im}} \Im X$ is constant at $x^{\Im}$, which shows in particular that $(x^{\Im} = y^{\Im})$.

  On the other hand, suppose that $(x^{\Im} = y^{\Im})$. By \cref{lem:codiscrete.is.crystaline}, we have a map $\Im X \to \sharp X$, and this gives us a map $(x^{\Im} = y^{\Im}) \to (x^{\sharp} = y^{\sharp})$. By Theorem 3.7 of \cite{Shulman:Real.Cohesion}, we have an equivalence $(x^{\sharp} = y^{\sharp}) \simeq \sharp (x = y)$, and by Theorem 3.15 of \cite{Shulman:Real.Cohesion} we have an equivalence $\sharp(x = y) \simeq \neg\neg(x = y)$, which by definition was $(x \approx y)$. In total, we see that $(x^{\Im} = y^{\Im})$ implies $(x \approx y)$.
\end{proof}

As a corollary, we can finally deduce that Penon \'etale maps between crisp Penon manifolds are the same as $\Im$-\'etale maps.
\begin{thm}\label{thm:Penon.etale.is.im.etale}
Let $X$ and $Y$ be crisp Penon manifolds and let $f : X \to Y$.\footnote{Note, $f$ does not need to be crisp here, just the manifolds do.} Then $f$ is $\Im$-\'etale if and only if it is Penon \'etale.
\end{thm}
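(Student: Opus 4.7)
The plan is to sandwich the notion of Penon \'etaleness between the full $\Im$-\'etale condition and a disk-wise criterion, using the infinitesimal-disk identification just established. Concretely, I would use the characterization of $\Im$-\'etale maps via $\Im$-disks together with \cref{lem:crisp.Penon.manifold.disk} to translate between $\Dc_x X$ and $D^{\Im}_x X$.

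First I would recall Proposition 3.7 of \cite{Cherubini-Rijke:Modal.Descent}: whenever the unit $(-)^{\modal} : X \to \modal X$ is surjective, a map $f : X \to Y$ is $\modal$-\'etale if and only if for every $x : X$ the induced map $f_\ast : D^{\modal}_x X \to D^{\modal}_{fx} Y$ on modal disks is an equivalence. Since $\Im$ is the nullification at the pointed type $\Dc$ (pointed at $0$), all $\Im$-units are surjective, so this criterion applies to $\Im$. Thus $f$ is $\Im$-\'etale iff $f_\ast : D^{\Im}_x X \to D^{\Im}_{fx} Y$ is an equivalence for every $x : X$.

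Next I would apply \cref{lem:crisp.Penon.manifold.disk} to both crisp Penon manifolds $X$ and $Y$ to obtain, for each $x : X$, equivalences $\Dc_x X \simeq D^{\Im}_x X$ over $X$, and for each $fx : Y$, equivalences $\Dc_{fx} Y \simeq D^{\Im}_{fx} Y$ over $Y$. Because both $\Dc_{-}$ and $D^{\Im}_{-}$ are realized as subsets of the ambient set (the types $y \approx x$ and $x^{\Im} = y^{\Im}$ are propositions), and because the equivalences are statements of equality of subsets over the ambient manifold, the restriction of $f : X \to Y$ to either kind of disk is literally the same map; the two candidate pushforwards $f_\ast$ agree under the identifications. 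Consequently the condition that $f_\ast : D^{\Im}_x X \to D^{\Im}_{fx} Y$ is an equivalence for every $x$ is equivalent to the condition that $f_\ast : \Dc_x X \to \Dc_{fx} Y$ is an equivalence for every $x$, which is the definition of Penon \'etaleness.

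The only subtle point --- and it is not really an obstacle --- is the naturality step: one needs to be sure that the equivalence $\Dc_x X \simeq D^{\Im}_x X$ of \cref{lem:crisp.Penon.manifold.disk} commutes with the inclusions into $X$, so that pushforward along $f$ on one disk transports to pushforward on the other. But this is exactly what ``over $X$'' in the statement of \cref{lem:crisp.Penon.manifold.disk} buys us: both subsets are characterized by propositions and the identification between them respects the ambient inclusion, so no additional coherence is required.
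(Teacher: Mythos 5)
Your proof is correct and takes essentially the same route as the paper's: both invoke Proposition 3.7 of Cherubini--Rijke for the disk-wise characterization of $\Im$-\'etaleness (noting surjectivity of the units), then use \cref{lem:crisp.Penon.manifold.disk} to identify $\Dc_x$ with $D^\Im_x$ over the manifold. The paper packages the naturality check you discuss as a commuting square of equivalences and pushforwards, but the content is identical.
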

\begin{proof}
  By \cref{lem:crisp.Penon.manifold.disk}, for any $f : X \to Y$ between crisp Penon manifolds and $x : X$, we have a commuting square
  \[
\begin{tikzcd}
	{\Dc_xX} & {D^\Im_{x}X} \\
	{\Dc_{fx}Y} & {D^\Im_{fx}X}
	\arrow["\sim", from=1-1, to=1-2]
	\arrow["\sim"', from=2-1, to=2-2]
	\arrow["{f_\ast}"', from=1-1, to=2-1]
	\arrow["{f_\ast}", from=1-2, to=2-2]
\end{tikzcd}
  \]
  Therefore, the left vertical map is an equivalence if and only if the right vertical map is. The map $f$ is Penon \'etale if and only if the left vertical map is an equivalence, and the right vertical map is an equivalence if and only if $f$ is $\Im$-\'etale by Proposition 3.7 of \cite{Cherubini-Rijke:Modal.Descent} (noting that the modal units of $\Im$ are surjective, since it is given by nullifying a pointed type).
\end{proof}

\begin{cor}\label{cor:crisp.open.is.etale}
Let $S \subseteq X$ be a crisp subset of a (necessarily crisp) Penon manifold $X$. If $S$ is infinitesimally stable (meaning that if $x \in S$ and $x \approx y$, then $y \in S$) the inclusion $S\hookrightarrow X$ is $\Im$-\'etale.
 \end{cor}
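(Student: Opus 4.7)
The plan is to reduce this to \cref{thm:Penon.etale.is.im.etale}, which characterizes $\Im$-\'etale maps between crisp Penon manifolds as precisely the Penon \'etale maps. So the two things I need to do are (a) upgrade $S$ from a mere subset to a crisp Penon manifold, and (b) verify that the inclusion $i : S \hookrightarrow X$ is Penon \'etale.

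For (a), fix $x : S$ and consider the infinitesimal disk $\Dc_{x} S :\equiv \{y : S \mid y \approx x\}$. Since $S$ is a subset of $X$ and the neighbor relation is defined on $X$, there is an evident inclusion $\Dc_{x} S \hookrightarrow \Dc_{x} X$ of subsets of $X$. Infinitesimal stability of $S$ says precisely that any $y : X$ with $y \approx x$ and $x \in S$ already lies in $S$, which gives the reverse inclusion. Hence $\Dc_{x} S = \Dc_{x} X$ as pointed sets. Since $X$ is a Penon manifold, $\Dc_{x} X \simeq \Dc^{n}$ pointedly, and so $S$ is a Penon manifold of the same dimension. Crispness of $S$ was assumed.

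For (b), the same identification $\Dc_{x} S = \Dc_{x} X$ shows that the pushforward $i_{\ast} : \Dc_{x} S \to \Dc_{ix} X$ is (the identity on) this common subset; in particular it is an equivalence for every $x : S$. Thus $i$ is Penon \'etale.

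Applying \cref{thm:Penon.etale.is.im.etale} to the map $i : S \to X$ between crisp Penon manifolds finishes the proof. The only delicate point is making sure the definitional tokens line up — in particular that $\Dc_{x} S$ as the subset $\{y : S \mid y \approx_{X} x\}$ and $\Dc_{x} X$ as a subset of $X$ are really being compared as subsets of the same ambient type — but no calculation beyond unfolding definitions is needed, so I do not expect any serious obstacle.
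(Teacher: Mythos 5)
Your proof is correct and takes essentially the same route as the paper: use infinitesimal stability to identify $\Dc_{x}S$ with $\Dc_{x}X$ for each $x : S$, and then invoke \cref{thm:Penon.etale.is.im.etale}. The only difference is that you explicitly spell out the intermediate observation that $S$ is itself a crisp Penon manifold (needed for the hypothesis of \cref{thm:Penon.etale.is.im.etale}) and flag the matching of intrinsic versus ambient neighbor relations, which the paper leaves implicit.
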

 \begin{proof}
Infinitesimal stability implies that for any $x \in S$, the infinitesimal neighborhodd $\Dc_{x}S$ of $x$ in $S$ is equivalent to the infinitesimal neighborhood $\Dc_{x} X$ of $x$ in $X$. Therefore, the result follows by \cref{thm:Penon.etale.is.im.etale}.
  \end{proof}

Using the final proposition of \cite{Penon:Infinitesimals.and.Intuitionism}, we can prove as a corollary that the $\Im$-\'etale maps between the crisp ordinary manifolds are precisely the local diffeomorphisms in the usual sense. This does require an extra axiom which is considered by Penon (and proved to hold in the intended models by the same): the ``infinitesimal inverse function theorem''.
\begin{cor}\label{lem:etale.means.etale.for.ordinary.manifolds}
  Suppose that the ``infinitesimal inverse function theorem'' holds: for every $f : \Dc^{n} \to \Dc^{n}$ with $f(0) = 0$, if $f$ is invertible when restricted to the first order infinitesimals $\Db(n)$, then $f$ is invertible.

  Let $X$ and $Y$ be crisp Penon manifolds (or ordinary manifolds). Then $f : X \to Y$ is $\Im$-\'etale if and only if the square
  \[
\begin{tikzcd}
	TX & TY \\
	X & Y
	\arrow[from=1-1, to=2-1]
	\arrow["f"', from=2-1, to=2-2]
	\arrow[from=1-2, to=2-2]
	\arrow["Tf", from=1-1, to=1-2]
	\arrow["\lrcorner"{anchor=center, pos=0.125}, draw=none, from=1-1, to=2-2]
\end{tikzcd}
  \]
  is a pullback.
\end{cor}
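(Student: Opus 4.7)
The forward direction is essentially immediate from \cref{prop:etale.means.iso.on.tangent.spaces}. If $f$ is $\Im$-\'etale then, taking $V\equiv \Db$, the pushforward $f_{\ast}\colon T_{x}X\to T_{fx}Y$ is an equivalence for every $x:X$. Since the fiber of $Tf$ over $v:TY$ at $(fx,v)$ is (equivalently) $\fib_{f_{\ast}}(v)$ sitting over $\fib_{f}(fx)\ni x$, checking the pullback square fiberwise over $Y$ reduces to checking that $f_{\ast}$ is an equivalence on each tangent space, which we have.

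For the converse, I would use \cref{thm:Penon.etale.is.im.etale} to reduce to showing that $f$ is Penon \'etale, i.e.\ that the induced map $f_{\ast}\colon \Dc_{x}X\to \Dc_{fx}Y$ is an equivalence for every $x:X$. Since both $X$ and $Y$ are Penon manifolds, we may choose (pointed) identifications $\varphi\colon \Dc^{n}\simeq \Dc_{x}X$ and $\psi\colon \Dc^{m}\simeq \Dc_{fx}Y$, and consider the transferred pointed map $\bar{f}\colon \Dc^{n}\to\Dc^{m}$ defined by $\bar{f}:\equiv \psi^{-1}\circ f_{\ast}\circ \varphi$. The goal becomes: show $\bar{f}$ is an equivalence.

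Restricting $\bar f$ to the first-order patch $\Db(n)\subseteq \Dc^{n}$ (the inclusion is justified because nilsquare elements are infinitesimal by Postulate O and Postulate K) and applying the Kock--Lawvere axiom to the Weil-algebra spectrum $\Db(n)$, every pointed map $\Db(n)\to \Rb^{m}$ is of the form $x\mapsto Ax$ for a unique $m\times n$ matrix $A$; since $\bar f(\Db(n))\subseteq \Db(m)$, this identifies $\bar f|_{\Db(n)}$ with the linear map $A$. I would then argue, by the same Kock--Lawvere computation, that the tangent map $\bar f_{\ast}\colon T_{0}\Dc^{n}\to T_{0}\Dc^{m}$ agrees, under the canonical identifications $T_{0}\Dc^{n}\simeq \Rb^{n}$ and $T_{0}\Dc^{m}\simeq \Rb^{m}$, with $A$ itself (a tangent vector $v(\ep)=b\ep$ is sent to $\ep\mapsto Ab\,\ep$). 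By hypothesis, the tangent-square pullback condition says $f_{\ast}\colon T_{x}X\to T_{fx}Y$ is an equivalence, and this transfers across $\varphi,\psi$ to $\bar f_{\ast}$, so $A$ is a linear isomorphism. In particular $n=m$ and $\bar f|_{\Db(n)}$ is invertible as a map $\Db(n)\to \Db(n)$.

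Now invoke the assumed infinitesimal inverse function theorem: the pointed map $\bar f\colon \Dc^{n}\to \Dc^{n}$ with $\bar f(0)=0$ is invertible when restricted to $\Db(n)$, hence invertible. Therefore $f_{\ast}\colon \Dc_{x}X\to \Dc_{fx}Y$ is an equivalence for every $x$, so $f$ is Penon \'etale and hence $\Im$-\'etale by \cref{thm:Penon.etale.is.im.etale}. The main technical point to watch is the naturality step identifying the transferred tangent map $\bar f_{\ast}$ with the linear part $A$ of $\bar f|_{\Db(n)}$ (via the chart-induced identification $T_{0}\Dc^{n}\simeq \Rb^{n}$); once this compatibility is in place, the rest is a straightforward application of Kock--Lawvere and the assumed inverse function theorem.
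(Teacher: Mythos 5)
Your argument is correct, and your overall strategy --- reducing to Penon \'etaleness via \cref{thm:Penon.etale.is.im.etale} --- matches the paper's. The difference is that the paper dispatches the equivalence of Penon \'etaleness with the tangent-bundle pullback condition in a single sentence, by citing Penon's final proposition from \cite{Penon:Infinitesimals.and.Intuitionism}, whereas you re-derive that proposition in full, transferring everything across a chart and applying the Kock--Lawvere axiom to reduce to the assumed infinitesimal inverse function theorem. Your version is more self-contained and makes visible exactly where the inverse-function hypothesis gets used, at the cost of length; the paper's citation hides precisely the chart-transfer and the linear-algebraic computation you carry out. One presentational point worth tightening: you invoke $\bar{f}(\Db(n)) \subseteq \Db(m)$ before justifying it. The cleaner order is to first apply Kock--Lawvere to the composite $\Db(n) \to \Dc^{m} \hookrightarrow \Rb^{m}$ to see that $\bar{f}|_{\Db(n)}$ is given by a matrix $A$, and then observe that linear maps carry $\Db(n)$ into $\Db(m)$ because $(Ax)(Ax)^{T} = A\, x x^{T} A^{T} = 0$. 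Everything else in the converse direction --- the naturality square identifying $\bar{f}_{\ast}$ with $A$, the deduction $n = m$, and the final application of the inverse function theorem --- is sound, as is the forward direction via \cref{prop:etale.means.iso.on.tangent.spaces}.
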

\begin{proof}
Penon shows in the final proposition of \cite{Penon:Infinitesimals.and.Intuitionism} that such a map is Penon \'etale if and only if the square of tangent bundles is a pullback, and so the result follows by \cref{thm:Penon.etale.is.im.etale}.
\end{proof}

\subsection{Schreiber manifolds}\label{sec:Schreiber.manifold}

Schreiber describes his notion of manifold in Definition 5.3.88 of \cite{Schreiber:Differential.Cohomology.v2}. We will use a slightly more general definition allowing for multiple different sorts of coordinate spaces.
\begin{defn}[Definition 5.3.88 \cite{Schreiber:Differential.Cohomology.v2}]
  Let $V : I \to \Type$ be a fixed family of \emph{coordinate spaces} indexed by a discrete type $I$. A \emph{Schreiber $V$-manifold} is a type $M$ for which there merely exists a \emph{$V$-atlas}, which consists of:
  \begin{enumerate}
\item A family of types $U : A \to \Type$ indexed by a discrete type $A$.
          \item For every $a : A$, an $\Im$-\'etale map $i_{a} : U_{a} \hookrightarrow M$. We assume that these are jointly surjective: for every $p : M$ there is merely some $a : A$ and $u : U_{a}$ with $p = i_{a}(u)$.
          \item For every $a : A$, an index $ka : I$ and an $\Im$-{\'e}tale embedding $c_{a} : U_{a} \hookrightarrow V_{ka}$.
  \end{enumerate}
\end{defn}

A variant of Schreiber manifolds were studied in homotopy type theory by Cherubini \cite{Cherubini:Thesis}. As a special case, we can consider Satake's notion of orbifold as a space locally modelled on $\Rb^{n} \sslash\, \Gamma$ where $\Gamma$ is a finite subgroup of $\type{O}(n)$.
\begin{defn}\label{defn:Satake.Orbifold}
Let $I :\equiv \flat\{\mbox{finite subgroup of $\type{O}(n)$}\}$ be the type of crisp, finite subgroups of $\type{O}(n)$, and let $V : I \to \Type$ be defined by $V_{{\Gamma^{\flat}}} :\equiv \Rb^{n} \sslash\, \Gamma$. A \emph{Schreiber-Satake orbifold} is a Schreiber $V$-manifold for this choice of $V$.
\end{defn}

As a quick corollary of \cref{thm:microlinear.descends.along.etale}, we can prove that any Schreiber $V$-manifold where $V$ is a family of microlinear types is also microlinear. We need two quick lemmas first: crystaline sum of $\Im$-\'etale maps are $\Im$-\'etale and crystaline sum of microlinear types are microlinear.
\begin{lem}\label{lem:discrete.sum.of.etale.is.etale}
Let $U : A \to \Type$ and $X : I \to \Type$ be type families indexed by $\Im$-crystaline types $A$ and $I$. Let $k : A \to I$ be a map and let $i_{a} : U_{a} \to X_{ka}$ be $\Im$-{\'e}tale for all $a : A$. Then the map $\sum_{k}i : \dsum{a : A} U_{a} \to \dsum{i : I}X_{i}$ defined by $\sum_{k}i(a, u) :\equiv i_{a}(u)$ is $\Im$-{\'e}tale.
\end{lem}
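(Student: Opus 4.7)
The plan is to show that the $\Im$-naturality square
\[
\begin{tikzcd}
\dsum{a:A}U_a \ar[r, "{(-)^\Im}"] \ar[d, "{\sum_k i}"'] & \Im(\dsum{a:A}U_a) \ar[d, "{\Im \sum_k i}"] \\
\dsum{i:I}X_i \ar[r, "{(-)^\Im}"'] & \Im(\dsum{i:I}X_i)
\end{tikzcd}
\]
is a pullback. The first step is to recognise $\Im(\dsum{a:A}U_a) \simeq \dsum{a:A}\Im U_a$, and similarly $\Im(\dsum{i:I}X_i) \simeq \dsum{i:I}\Im X_i$. This uses the fact that a $\Sigma$-type of $\modal$-modal types over a $\modal$-modal base is $\modal$-modal (a standard closure property of any reflective subuniverse), together with the fact that the fiberwise unit $(a, u) \mapsto (a, u^\Im)$ is a sum (over the $\Im$-modal base $A$) of $\Im$-connected maps, hence $\Im$-connected by Lemma 1.39 of \cite{RSS:Modalities}. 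A map into an $\Im$-modal type which is $\Im$-connected is an $\Im$-unit, which gives the desired equivalence, and similarly for the sum over $I$.

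Second, I would translate the naturality square under these identifications. The right-hand vertical map becomes the fibered map $(a, [u]) \mapsto (ka, \Im i_a([u]))$ sending the pair of an index $a$ and a point of $\Im U_a$ to a pair of $ka$ and the image under $\Im i_a$. The horizontal maps are now the fiberwise $\Im$-units $(a, u) \mapsto (a, u^\Im)$ and $(i, x) \mapsto (i, x^\Im)$, which do not alter the first coordinate.

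Third, because both horizontal maps and the right-hand vertical map act trivially on the first coordinate in an appropriate sense — and because the only change in the first coordinate happens on the left via $k : A \to I$ — the pullback of the right-hand vertical map along the bottom horizontal decomposes as a $\Sigma$-type over $A$. Concretely, contracting the pair $(i, p : ka = i)$ in the pullback formula shows that the pullback is equivalent to $\dsum{a:A} P_a$, where $P_a$ is the pullback of $\Im U_a \to \Im X_{ka} \leftarrow X_{ka}$. By hypothesis each $i_a$ is $\Im$-\'etale, so $P_a \simeq U_a$, and hence the total pullback is equivalent to $\dsum{a:A}U_a$ via the comparison map from the top-left corner. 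This exhibits the square as a pullback, so $\sum_k i$ is $\Im$-\'etale.

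I do not expect any serious obstacle here; the only subtlety is checking that the equivalence $\Im(\dsum{a:A}U_a) \simeq \dsum{a:A}\Im U_a$ really commutes with the structure maps in the right way, which is a routine diagram chase once the equivalence is established as an $\Im$-unit. The essential content is that $\Im$ commutes with $\Sigma$-types over $\Im$-modal bases, and that taking a $\Sigma$-type of pullback squares over a common base produces a pullback square.
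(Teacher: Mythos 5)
Your proposal is correct and follows essentially the same route as the paper: identify $\Im\bigl(\dsum{a:A}U_a\bigr) \simeq \dsum{a:A}\Im U_a$ (and likewise over $I$) using the fact that the fiberwise unit over a modal base is the unit, then verify that the resulting naturality square is a pullback by reducing to the fiberwise $\Im$-naturality squares of the $i_a$. The only variation is cosmetic: the paper checks the pullback condition by comparing fibers of the two vertical maps over a point $(i,x) : \dsum{i:I}X_i$, while you compute the pullback type directly and contract the $(i, p : ka = i)$ pair; these are the same computation phrased two ways. (One small caveat: the sentence claiming the right-hand vertical map ``acts trivially on the first coordinate'' is misleading, since it sends $(a, z) \mapsto (ka, \Im i_a(z))$ and so does change the first coordinate via $k$ --- but your actual contraction of $(i, p)$ is correct and doesn't rely on that claim.)
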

\begin{proof}
  Consider the square
  \[
\begin{tikzcd}
	{\dsum{a : A}U_a} & {\dsum{a : A} \Im U_a} \\
	\dsum{i : I}X_{i} & \dsum{i : I}{\Im X_{i}}
	\arrow[from=2-1, to=2-2]
	\arrow["{\sum_k i}"', from=1-1, to=2-1]
	\arrow["{\sum_k \Im i}", from=1-2, to=2-2]
	\arrow[from=1-1, to=1-2]
\end{tikzcd}
  \]
  where $\sum_{k} \Im i(a, z) :\equiv (ka, \Im i_{a}(z))$. By Lemma 1.24 of \cite{RSS:Modalities} and the fact that $A$ and $I$ are $\Im$-modal, the horizontal maps are $\Im$-units and this square is an $\Im$-naturality square. Therefore, to show that $i \equiv \sum_{a} i_{a}$ is $\Im$-{\'e}tale, we just need to show that this square is a pullback. Consider a point $(i, x) : \dsum{i : I} X_{i}$ and the induced map
  \[
    (\dsum{(a, p) : \fib_{k}(i)}\fib_{i_{a}}(x)) \to (\dsum{(a, p) : \fib_{k}(i)} \fib_{\Im i_{a}}(x^{\Im}))
  \]
  on fibers of the vertical maps. Note that this map is the sum of the maps $\fib_{i_{a}}(x) \to \fib_{\Im i_{a}}(x^{\Im})$ induced by the $\Im$-naturality squares of $i_{a} : U_{a} \to X_{ka}$, which by hypothesis were equivalences. Therefore, this map is an equivalence, and $\sum_{k}i$ is $\Im$-{\'e}tale.
\end{proof}

\begin{lem}\label{lem:crystaline.sum.of.microlinear.is.microlinear}
Let $X : I \to \Type$ be a family of microlinear types indexed by a crystaline type $I$. Then the sum $\dsum{i : I}X_{i}$ is microlinear.
\end{lem}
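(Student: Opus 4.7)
The plan is to invoke the lifting characterization of microlinearity from \cref{lem:microlinear.lifting.property}. It then suffices to show that $Y :\equiv \dsum{i : I} X_i$ lifts uniquely against every codiagonal $\nabla_V : V_2 +_{V_1} V_3 \to V_4$ coming from an infinitesimal $\Rb$-pushout, i.e.\ that precomposition
\[
(-) \circ \nabla_V : (V_4 \to Y) \to (V_2 +_{V_1} V_3 \to Y)
\]
is an equivalence.

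The key observation is that both sides split over $I$. Each of $V_1, V_2, V_3, V_4$ is a crisp infinitesimal variety, hence $\Im$-connected by \cref{lem:inf.variety.is.im.connected}, and then $V_2 +_{V_1} V_3$ is $\Im$-connected by \cref{lem:modal.connected.closed.under.pushout}. Since $I$ is $\Im$-modal, for any $\Im$-connected type $W$ the type of maps $W \to I$ is equivalent to $I$ via the unique factorization through $\Im W \simeq \ast$. Applying this to the composite $W \to Y \xrightarrow{\fst} I$ for a map $W \to Y$ gives a natural equivalence
\[
(W \to Y) \;\simeq\; \dsum{p : W \to I} \dprod{w : W} X_{p(w)} \;\simeq\; \dsum{i : I} (W \to X_i),
\]
since a constant $p$ at $i$ makes the dependent product into the function type $(W \to X_i)$.

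Applying this equivalence to $W \equiv V_4$ and $W \equiv V_2 +_{V_1} V_3$ (both $\Im$-connected), and using its naturality in $W$, the precomposition map $(-) \circ \nabla_V$ is identified with the map
\[
\dsum{i : I} (V_4 \to X_i) \;\to\; \dsum{i : I} (V_2 +_{V_1} V_3 \to X_i)
\]
which is the identity on the $I$-component and fiberwise the precomposition map $(V_4 \to X_i) \to (V_2 +_{V_1} V_3 \to X_i)$. By microlinearity of each $X_i$ (via \cref{lem:microlinear.lifting.property}), each fiberwise map is an equivalence, so the total map is an equivalence, establishing microlinearity of $Y$.

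The only delicate point — and the step to execute carefully — is verifying naturality in $W$ of the equivalence $(W \to Y) \simeq \dsum{i : I}(W \to X_i)$, so that precomposition by $\nabla_V$ really does correspond to the $\dsum{i : I}$ of the fiberwise precomposition maps. This is routine: the equivalence is built from the universal property of $\Im$ applied to $\fst : Y \to I$, which is functorial in $W$.
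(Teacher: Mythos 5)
Your proof is correct and takes essentially the same approach as the paper: both observe that $V_{2} +_{V_{1}} V_{3}$ and $V_{4}$ are $\Im$-connected (via \cref{lem:inf.variety.is.im.connected} and \cref{lem:modal.connected.closed.under.pushout}), use crystalinity of $I$ to split the mapping types as $\dsum{i:I}(W \to X_{i})$, and then discharge the lifting condition fiberwise by microlinearity of each $X_{i}$. The paper phrases the final step by explicitly computing that the type of lifts is contractible, while you phrase it as the precomposition map being an equivalence; these are the same statement.
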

\begin{proof}
  Let $V$ be an infinitesimal $\Rb$-pushout and consider a map $v : V_{2} +_{V_{1}} V_{3} \to \dsum{i : I} X_{i}$, seeking to show that the type of lifts $\dsum{\tilde{v} : V_{4} \to \dsum{i : I} X_{i}} (\tilde{v} \circ \nabla_{V} = v)$ is contractible. Now, since $V_{2} +_{V_{1}} V_{3}$ and $V_{4}$ are $\Im$-connected, any map from them into a crystaline type is constant and so
  \begin{align*}
    (V_{2} +_{V_{1}} V_{3} \to \dsum{i : I} X_{i}) &\simeq (\dsum{v^{1} : V_{2} +_{V_{1}} V_{3} \to I} (\dprod{\ep : V_{2} + _{V_{1}} V_{3}} X_{v^{1}\ep})) \\
    &\simeq \dsum{i : I} (V_{2} +_{V_{1}}V_{3} \to X_{i})
  \end{align*}
  and similarly $V_{4} \to \dsum{i : I} X_{i}$ is equivalent to $\dsum{i : I} (V_{4} \to X_{i})$. Let $v$ correspond to $(i_{0}, v')$ under this equivalence; the type of lifts is therefore equivalent to $\dsum{i : I} \dsum{\tilde{v}' : V_{4} \to X_{i}} (\tilde{v}' \circ \nabla_{V} = v') \times (i = i_{0})$. We can contract $i$ into $i_{0}$ so that this type is equivalent to $\dsum{\tilde{v}' : V_{4} \to X_{i_{0}}}(\tilde{v}'\circ\nabla_{V} = v')$ which was contractible by hypothesis.
\end{proof}

\begin{thm}\label{thm:Schreiber.manifold.microlinear}
Any Schreiber $V$-manifold for a family of microlinear types $V : I \to \Type$ is itself microlinear.
\end{thm}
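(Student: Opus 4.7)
The plan is to reduce to \cref{thm:microlinear.descends.along.etale} by exhibiting $M$ as the image of a surjective $\Im$-\'etale map out of a microlinear total space assembled from the atlas. Since microlinearity is a mere proposition, we may assume a $V$-atlas consisting of a discrete index type $A$, types $U : A \to \Type$, jointly surjective $\Im$-\'etale maps $i_{a} : U_{a} \hookrightarrow M$, and for each $a : A$ an index $ka : I$ together with an $\Im$-\'etale embedding $c_{a} : U_{a} \hookrightarrow V_{ka}$.

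First I would check that each chart domain $U_{a}$ is microlinear. By hypothesis $V_{ka}$ is microlinear, and $c_{a} : U_{a} \to V_{ka}$ is $\Im$-\'etale, so \cref{lem:microlinear.ascends.along.etale} yields microlinearity of $U_{a}$. Since $A$ is discrete, hence crystaline by \cref{lem:discrete.is.crystaline}, I can then apply \cref{lem:crystaline.sum.of.microlinear.is.microlinear} to conclude that the total space $\dsum{a : A} U_{a}$ is microlinear.

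Next I would assemble the $i_{a}$'s into a single map $\iota :\equiv (a, u) \mapsto i_{a}(u) : \dsum{a : A} U_{a} \to M$. Applying \cref{lem:discrete.sum.of.etale.is.etale} in the degenerate case $I \equiv \ast$ with constant family $\ast \mapsto M$ (using that $A$ is crystaline), I see that $\iota$ is $\Im$-\'etale. The joint surjectivity clause of the atlas is precisely the statement that $\iota$ is surjective. Therefore $\iota$ is a surjective $\Im$-\'etale map from a microlinear type to $M$, and \cref{thm:microlinear.descends.along.etale} gives that $M$ is microlinear.

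No step here looks like a real obstacle: the only things to verify are bookkeeping around the hypotheses of the three previously established lemmas, and specializing \cref{lem:discrete.sum.of.etale.is.etale} to a single-indexed family so that the codomain is just $M$ rather than a sum.
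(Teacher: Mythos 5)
Your proof is correct and uses exactly the same three lemmas as the paper (\cref{lem:microlinear.ascends.along.etale}, \cref{lem:crystaline.sum.of.microlinear.is.microlinear}, and \cref{thm:microlinear.descends.along.etale}), with only a minor reordering: you ascend microlinearity along each $c_a$ individually and then take the crystaline sum of the $U_a$, whereas the paper first forms $\dsum{i:I}V_i$, shows it is microlinear by crystaline summation, and ascends once along the summed map $\sum_k c$. Both routes conclude that $\dsum{a:A}U_a$ is microlinear before descending along the surjective $\Im$-\'etale map $\iota$, so this is essentially the paper's proof; your variant has the slight advantage of never needing to form $\dsum{i:I}V_i$ at all.
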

\begin{proof}
  Consider an atlas of $X$ and the associated span
  \[
\begin{tikzcd}
	& {\dsum{a : A} U_a} \\
	\dsum{i : I} V_{i} && X
	\arrow[from=1-2, to=2-1, "\sum_{k}c"']
	\arrow["\sum i", two heads, from=1-2, to=2-3]
\end{tikzcd}
\]
given by summing up the maps $i_{a}  : U_{a} \to X$ and $c_{a} : U_{a} \to V_{ka}$
  By \cref{lem:discrete.sum.of.etale.is.etale}, these maps are are $\Im$-{\'e}tale, and by hypothesis the right leg $i$ is surjective. Therefore, by combining \cref{lem:microlinear.ascends.along.etale} and \cref{thm:microlinear.descends.along.etale}, we see that $X$ is microlinear when $\dsum{i : I}V_{i}$ is. And $\dsum{i : I}V_{i}$ is microlinear when each of the $V_{i}$ is by \cref{lem:crystaline.sum.of.microlinear.is.microlinear}.
\end{proof}

\subsection{{\'E}tale groupoids}\label{sec:etale.groupoid}
Given \cref{thm:microlinear.descends.along.etale}, it is easy to show that the quotient
 $X \sslash \Gamma$ of the action of a crisply discrete higher group $\Gamma$ on a microlinear type $X$ is microlinear.

\begin{thm}\label{thm:discrete.homotopy.quotient.is.microlinear}
Let $\Gamma$ be a crisply discrete group, and let $X\twisted{-} : \B\Gamma \to \Type$ be an action of $\Gamma$ on a microlinear type $X :\equiv X \twisted{\pt_{\B \Gamma}}$. Then the homotopy quoteint $X \sslash \Gamma$ is microlinear.
\end{thm}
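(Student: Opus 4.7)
The plan is to apply \cref{thm:microlinear.descends.along.etale} to the canonical quotient map
\[
q :\equiv x \mapsto (\pt_{\B\Gamma}, x) : X \to X \sslash \Gamma.
\]
Since $X$ is microlinear by hypothesis, it suffices to show that $q$ is both surjective and $\Im$-\'etale. Surjectivity of $q$ is an immediate consequence of the $0$-connectedness of $\B\Gamma$: given $(e, y) : X \sslash \Gamma \equiv \dsum{e : \B\Gamma} X\twisted{e}$, since surjectivity is a proposition we may assume $p : \pt_{\B\Gamma} = e$, and then $\tr(X\twisted{-}, p\inv)(y) : X$ is a preimage of $(e, y)$ up to the evident identification along $p$.

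The main point is therefore to show that $q$ is $\Im$-\'etale. The plan is to recognize $q$ as the fiber of the first projection $\fst : X \sslash \Gamma \to \B\Gamma$ over $\pt_{\B\Gamma}$, and hence as the pullback of the point inclusion $\pt_{\B\Gamma} : \ast \to \B\Gamma$ along $\fst$. By hypothesis $\Gamma$ is crisply discrete, so $\B\Gamma$ is $\shape$-modal, and hence crystaline by \cref{lem:discrete.is.crystaline}. Whenever the codomain $B$ is crystaline, both $\Im$-units in the naturality square of a point inclusion $b : \ast \to B$ are equivalences, so $b$ is trivially $\Im$-\'etale. Since $\Im$-\'etale maps form the right class of the orthogonal factorization system provided by the Cherubini--Rijke theorem, they are stable under pullback, and so $q$ is $\Im$-\'etale as well. (This is the ``good fibrations'' slogan in its mildest form: any bundle classified by a map into a crystaline base is $\Im$-\'etale.)

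With both properties in hand, \cref{thm:microlinear.descends.along.etale} applied to $q$ yields the microlinearity of $X \sslash \Gamma$. The only substantive conceptual step is the identification of $q$ with a pullback of a point inclusion into a crystaline type; everything else is formal from the definition of $X \sslash \Gamma$ and the closure properties already established for $\Im$-\'etale maps.
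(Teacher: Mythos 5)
Your proof is correct and reaches the conclusion by the same outer step as the paper — apply \cref{thm:microlinear.descends.along.etale} to the quotient map $q : X \to X \sslash \Gamma$ — but it differs in how it establishes that $q$ is $\Im$-\'etale. The paper simply cites Theorem~7.7 of \cite{Jaz:Good.Fibrations}, which says that any fibration classified by a map into a $\shape$-modal type is $\shape$-\'etale, and then transfers to $\Im$-\'etale via \cref{lem:discrete.is.crystaline}. You instead reprove the relevant special case of that theorem from first principles: you recognize $q$ as the pullback of the point inclusion $\pt_{\B\Gamma} : \ast \to \B\Gamma$ along $\fst : X \sslash \Gamma \to \B\Gamma$, observe that a point inclusion into a crystaline type is $\Im$-\'etale because both horizontal units in its $\Im$-naturality square are equivalences, and then close under pullback using the Cherubini--Rijke orthogonal factorization system. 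Both routes are valid; yours is more self-contained and makes the ``good fibrations'' mechanism explicit rather than black-boxing it, at the cost of a slightly longer argument. One small point you gloss over: the step from ``$\Gamma$ is crisply discrete'' to ``$\B\Gamma$ is $\shape$-modal'' is itself a theorem (Theorem~5.9 of \cite{Jaz:Good.Fibrations}, cited elsewhere in the paper), not an immediate consequence of the definitions, so a citation would be warranted there as well.
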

\begin{proof}
By Theorem 7.7 of \cite{Jaz:Good.Fibrations}, the quotient map $q : X \to X \sslash \Gamma$ is $\shape$-{\'e}tale and therefore by \cref{lem:discrete.is.crystaline} is $\Im$-{\'e}tale. Since $q$ is surjective and $\Im$-{\'e}tale, by \cref{thm:microlinear.descends.along.etale}, we see that if $X$ is microlinear then $X\sslash \Gamma$ is as well.
\end{proof}

As a corollary, we can give a satisfying condition for the microlinearity of a crisp type.
\begin{cor}
Suppose that $X$ is a crisp, pointed type, and that $X$ is path connected in the sense that $\shape_{1} X$ is $0$-connected. If the universal cover $\tilde{X}$ of $X$ is microlinear, then $X$ is microlinear.
\end{cor}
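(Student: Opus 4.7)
The plan is to apply \cref{thm:microlinear.descends.along.etale} directly to the universal cover projection $p : \tilde X \to X$, after verifying that $p$ is both surjective and $\Im$-\'etale. Since $X$ is crisp, pointed, and $\shape_1$-connected by hypothesis, we may construct $\tilde X$ as the fiber of the $\shape_1$-unit $(-)^{\shape_1} : X \to \shape_1 X$ over $\pt_X^{\shape_1}$, so that $p$ fits into the pullback square
\[
\begin{tikzcd}
\tilde X & \ast \\
X & \shape_1 X
\arrow[from=1-1, to=1-2]
\arrow["p"', from=1-1, to=2-1]
\arrow["{\pt_X^{\shape_1}}", from=1-2, to=2-2]
\arrow["{(-)^{\shape_1}}"', from=2-1, to=2-2]
\arrow["\lrcorner"{anchor=center, pos=0.125}, draw=none, from=1-1, to=2-2]
\end{tikzcd}
\]

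First I will argue that $p$ is $\Im$-\'etale. The type $\shape_1 X$ is $\shape_1$-modal, hence in particular $\shape$-modal (i.e.\ discrete), and therefore $\Im$-modal by \cref{lem:discrete.is.crystaline}. Since both $\ast$ and $\shape_1 X$ are $\Im$-modal, the $\Im$-naturality square of $\pt_X^{\shape_1} : \ast \to \shape_1 X$ has its horizontal $\Im$-units as equivalences and is therefore trivially a pullback, so $\pt_X^{\shape_1}$ is $\Im$-\'etale. The class of $\Im$-\'etale maps forms the right class of an orthogonal factorization system and is in particular stable under pullback, so $p$ is $\Im$-\'etale as well. Surjectivity of $p$ is immediate from the $0$-connectedness of $\shape_1 X$: for each $x : X$ there merely exists some $q : x^{\shape_1} = \pt_X^{\shape_1}$, and then $(x, q) : \fib_p(x)$ witnesses that $\fib_p(x)$ is inhabited.

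With $p$ identified as a surjective $\Im$-\'etale map, \cref{thm:microlinear.descends.along.etale} transports microlinearity from $\tilde X$ to $X$, completing the proof. The only real content is that $\shape_1 X$ is discrete (equivalently $\Im$-modal): this is what makes the pullback argument for $\Im$-\'etaleness of $p$ go through, and it rests squarely on the crispness hypothesis on $X$ (together with the fact that $\shape_1$-modal types are $\shape$-modal). Everything else is a formal manipulation using the orthogonal factorization system of $\Im$ and the descent theorem already in hand.
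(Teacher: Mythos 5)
Your proof is correct and takes a genuinely different route from the paper's. The paper proves the corollary by identifying $X$ with the homotopy quotient $\tilde{X}\sslash\pi_1(X)$ via the monodromy action of $\pi_1(X) :\equiv \trunc{\Omega\shape_1 X}_0$, and then invokes \cref{thm:discrete.homotopy.quotient.is.microlinear}. That theorem in turn appeals to Theorem 7.7 of \cite{Jaz:Good.Fibrations} to get that the quotient map is $\shape$-\'etale, then downgrades to $\Im$-\'etale via \cref{lem:discrete.is.crystaline} before applying \cref{thm:microlinear.descends.along.etale}. You instead go straight at the universal cover projection $p : \tilde X \to X$: since $\ast$ and $\shape_1 X$ are both $\Im$-modal (the latter by \cref{lem:discrete.is.crystaline}), the point inclusion $\pt_X^{\shape_1} : \ast \to \shape_1 X$ is $\Im$-\'etale, and pulling it back along the $\shape_1$-unit --- using pullback stability of the right class of the $\Im$-equivalence/$\Im$-\'etale factorization system --- gives that $p$ is $\Im$-\'etale. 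Both arguments terminate at \cref{thm:microlinear.descends.along.etale}; yours avoids the detour through the homotopy quotient identification and the external fibration theorem, at the cost of not producing the identification $X \simeq \tilde{X}\sslash\pi_1(X)$ as a byproduct.

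One small correction to your closing remark: the discreteness of $\shape_1 X$ does not ``rest squarely on the crispness hypothesis.'' For any type $X$, crisp or not, $\shape_1 X$ is $\shape_1$-modal by construction, and $\shape_1$-modal types are precisely the $1$-truncated $\shape$-modal (i.e.\ discrete) types. So $\shape_1 X$ is discrete unconditionally, and your argument therefore never actually uses the crispness of $X$. The paper's proof does use crispness: it needs $\pi_1(X)$ to be \emph{crisply} discrete so that \cref{thm:discrete.homotopy.quotient.is.microlinear} (and, beneath it, the appeal to Theorem 7.7 of \cite{Jaz:Good.Fibrations}) applies. Your argument sidesteps that dependency, so in effect you have proved a mildly stronger statement than the one asserted.
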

\begin{proof}
Since $\shape_{1} X$ was presumed to be $0$-connected, it is a $\B \pi_{1}(X)$ (where we define $\pi_{1}(X) :\equiv \trunc{\Omega \shape_{1}X}_{0}$ to be the fundamental group of $X$). By definition, the universal cover $\tilde{X}$ is the fiber of the unit $(-)^{\shape_{1}} : X \to \shape_{1}X$. Therefore, we can see the map $t \mapsto \fib_{(-)^{\shape_{1}}}(t) : \B \pi_{1} X \to \Type$ as giving the monodromy action of $\pi_{1}(X)$ on the universal cover $\tilde{X}$. The homotopy quotient is therefore \(\dsum{t : \B \pi_{1}(X)}\fib_{(-)^{\shape_{1}}}(t)\), which is equivalent to $X$; in other words, we have $X \simeq \tilde{X}\sslash \pi_{1}(X) $. Since $\pi_{1}(X)$ is a crisp group, \cref{thm:discrete.homotopy.quotient.is.microlinear} then shows that if $\tilde{X}$ is microlinear, so is $X$.
\end{proof}

However, not every orbifold may be presented as the quotient of a smooth space by the action of a discrete group. A general way to present orbifolds is with \emph{proper \'etale groupoids}, as first defined by Moerdijk and Pronk \cite{Moerdijk-Pronk:Orbifolds}. In this section, we will show that the wider class of \'etale groupoids are microlinear; in the next section, we'll discuss the notion of compactness appropriate for synthetic differential geometry, define proper \'etale groupoids, and prove that they are orbifolds in the sense of \cref{defn:orbifold}.

First, let's recall the notion of \emph{pregroupoid}. In HoTT, a groupoid is best understood as a type which is $1$-truncated. However, the traditional definition of a groupoid as having a set of objects and a set of isomorphism between these objects can still be performed in HoTT; the resulting notion is that of a \emph{pregroupoid}. The terminology is by analogy with the relation between preorders and ordered sets.

\begin{defn}
A \emph{precategory} $\Ca$ consists of a type of objects $\Ca_{0}$, and for each two objects $x, y : \Ca_{1}$ a set $\Ca(x, y)$ of morphisms, together with an associative, unital composition of morphisms. A \emph{pregroupoid} is a precategory where every morphism is an isomorphism.

A precategory is a \emph{category} (or a \emph{univalent category}, for emphasis) if the map $\term{idtoiso} : (x = y) \to \type{Iso}_{\Ca}(x, y)$ defined by $\refl_{x} \mapsto \id_{x}$ is an equivalence for all objects $x, y : \Ca_{0}$.
\end{defn}

\begin{rmk}
A univalent pregroupoid $\Ga$ --- one for which the map $\term{idtoiso} : (x = y) \to \term{Iso}_{\Ga}(x, y)$ is an equivalence for all $x, y : \Ga_{0}$ --- carries no more information than its type $\Ga_{0}$ of objects, since  $\term{Iso}_{\Ga}(x, y) = \Ga(x, y)$. Furthermore, since by hypothesis there is a set of morphisms $\Ga(x, y)$, we find that there is a set of identifications $(x = y)$, making $\Ga_{0}$ into a groupoid in the sense of being a $1$-type. Therefore, we are free to identify univalent pregroupoids with their groupoids ($1$-types) of objects. We will therefore drop the subscript on $\Ga_{0}$ when talking about groupoids.
\end{rmk}

There is a universal groupoid generated by any pregroupoid: the \emph{Rezk completion}. For more, see Section 9.9 of the HoTT Book \cite{HoTTBook}.
\begin{defn}
The \emph{Rezk completion} $\term{r}\Ca$ of a precategory $\Ca$ is the essential image of the Yoneda embedding --- the full subcategory of the category $\hat{C}$ of presheaves on $\Ca$ spanned by the representable functors. Explicitly $\term{r}\Ca$ has objects those presheaves $F$ for which there merely exists an $x : \Ca$ and a natural isomorphism $\Ca(-, x) \cong F$.
\end{defn}

With these definitions, we can now define the notion of \'etale groupoid.
\begin{defn}\label{defn:etale.groupoid}
  An \emph{{\'e}tale pregroupoid} is a pregroupoid $\Ga$ where
  \begin{itemize}
\item The type $\Ga_{0}$ of objects is microlinear.
\item The source map $( (x, y, p) \mapsto x ) : \Ga_{1} \to \Ga_{0}$ from the type of morphisms $\Ga_{1} :\equiv \dsum{x , y : \Ga_{0}} \Ga(x, y)$ to the type of objects is $\Im$-{\'e}tale.
  \end{itemize}

  An \emph{\'etale groupoid} is a groupoid which is equivalent to the Rezk completion of an \'etale pregroupoid.
\end{defn}

In order to prove that \'etale groupoids are microlinear, we will show that the Rezk completion of \'etale pregroupoids are microlinear. To do this, we will show that the Yoneda embedding $\yo : \Ga_{0} \to \type{r}\Ga$ is $\Im$-\'etale. Since $\yo$ is by definition surjective, and since $\Ga_{0}$ is by hypothesis microlinear, it will follow by \cref{thm:microlinear.descends.along.etale} that $\type{r}\Ga$ is \'etale.

The proof that $\yo : \Ga_{0} \to \type{r}\Ga$ is $\Im$-\'etale for an \'etale pregroupoid $\Ga$ is not trivial. It will follow from the following theorem, and only in the case that $\Ga$ is crisp.
\begin{thm}\label{thm:etale.descent}
Suppose that $\modal$ is a modality with surjective units which preserves $\emptyset$, crisp pushouts, and colimits of crisp sequences.  Let $f :: A \to B$ be a crisp, surjective map with inhabited domain $A$, and suppose that the pullback $\fst : A \times_{B} A \to A$ of $f$ along itself is $\modal$-{\'etale}:
  \[
\begin{tikzcd}
	{A \times_B A} & A \\
	A & B
	\arrow["f", two heads, from=1-2, to=2-2]
	\arrow["f"', two heads, from=2-1, to=2-2]
	\arrow["\fst"', from=1-1, to=2-1]
	\arrow["\snd", from=1-1, to=1-2]
	\arrow["\lrcorner"{anchor=center, pos=0.125}, draw=none, from=1-1, to=2-2]
\end{tikzcd}
  \]
  Then $f$ is also $\modal$-\'etale
\end{thm}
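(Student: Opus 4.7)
The plan is to prove that the modal naturality square of $f$ is a pullback by a fiberwise analysis. That square is a pullback if and only if, for each $b : B$, the canonical comparison $\phi_b : \fib_f(b) \to \fib_{\modal f}(b^{\modal})$ is an equivalence. Being a pullback is a proposition, and $f$ is surjective with $A$ inhabited, so I may fix some $a_0 :: A$ and verify the equivalence only for $b :\equiv f(a_0)$. For such $b$, the fiber $\fib_f(f(a_0))$ is definitionally the fiber $\fib_{\fst}(a_0)$ of $\fst : A \times_B A \to A$ over $a_0$, and the hypothesized $\modal$-\'etaleness of $\fst$ immediately identifies this with $\fib_{\modal \fst}(a_0^{\modal})$.

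The problem therefore reduces to producing a canonical equivalence
\[
\gamma : \modal(A \times_B A) \longrightarrow \modal A \times_{\modal B} \modal A,
\]
induced by applying $\modal$ to the two projections of the fiber product, because pullback pasting then identifies $\fib_{\modal \fst}(a_0^{\modal})$ with $\fib_{\modal f}(f(a_0)^{\modal})$, supplying the desired $\phi_b$.

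To produce $\gamma$, I would invoke Rijke's join construction. Since $f :: A \to B$ is crisp and surjective, $B$ is the sequential colimit of the iterated fibered joins $J_{n+1} :\equiv J_n *_B A$ starting from $J_0 :\equiv A$, where each $J_n *_B A$ is the crisp pushout $J_n \sqcup^{J_n \times_B A} A$. Because $\modal$ preserves $\emptyset$, crisp pushouts, and colimits of crisp sequences, we obtain $\modal B \simeq \colim_n \modal J_n$, expressed as an iterated crisp pushout built out of $\modal A$ and the successive $\modal(J_n \times_B A)$. On the other hand, $\modal f : \modal A \to \modal B$ is itself surjective — its precomposition with the surjective unit $\eta_A$ equals the composition $\eta_B \circ f$ of surjections — so running the join construction for $\modal f$ internally to the $\modal$-modal types exhibits $\modal B$ also as the sequential colimit of the analogous iterated joins of $\modal A$ over $\modal B$. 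The canonical map from one presentation to the other is an equivalence in the colimit, and propagating the $\modal$-\'etaleness of $\fst$ through the successive pushouts (using pullback-stability and composition-stability of $\modal$-\'etale maps) lets one identify the two presentations termwise. Reading off the identification at the earliest nontrivial stage yields $\gamma$.

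The main obstacle is carrying out this termwise comparison rigorously: one needs a compatible family of maps between the two colimit presentations and must verify that the pushouts defining $J_{n+1}$ align, after applying $\modal$, with the pushouts defining the iterated joins of $\modal A$ over $\modal B$. This relies essentially on the crisp-pushout preservation hypothesis, so that each modal pushout is computed correctly, and on the closure of $\modal$-\'etale maps under pullback and composition, so that the \'etaleness supplied by the hypothesis at the first stage propagates to every subsequent stage. The preservation of $\emptyset$ handles the trivial join at stage zero, inhabitedness of $A$ ensures the induction starts in a non-degenerate way, and surjectivity of modal units provides the final link between the two presentations of $\modal B$. Once these ingredients are marshalled, extracting $\gamma$ at the base stage completes the proof that $f$ is $\modal$-\'etale.
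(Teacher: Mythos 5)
Your overall instinct to invoke Rijke's join construction is on target --- that is exactly the tool the paper uses --- but the way you deploy it does not close. Two problems, in order of severity.

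The central gap is the construction of $\gamma : \modal(A \times_B A) \to \modal A \times_{\modal B} \modal A$ by a ``termwise comparison'' of the two join presentations of $\modal B$ (one obtained by applying $\modal$ to the join sequence of $f$, the other by running the join construction for $\modal f$). This is circular. Write $A_n$ for the $n$th stage of the join sequence of $f$ and $A'_n$ for that of $\modal f$. At stage $2$ you must compare
\[
\modal A_2 \;=\; \modal(A *_B A) \;\simeq\; \modal A \mathop{\sqcup}^{\modal(A \times_B A)} \modal A
\qquad\text{with}\qquad
A'_2 \;=\; \modal A *_{\modal B} \modal A \;=\; \modal A \mathop{\sqcup}^{\modal A \times_{\modal B} \modal A} \modal A,
\]
and an identification of these two pushouts over $\modal A$ is precisely (up to coherence) the datum $\gamma$ you were trying to produce. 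There is no ``earliest nontrivial stage'' at which $\gamma$ can be read off for free; it is an \emph{input} at stage $2$, not an output. Moreover, knowing that the two sequential colimits agree (both compute $\modal B$) gives you no termwise identification --- a pushout does not determine its defining cospan --- so the comparison cannot be extracted from the colimit level either. Finally, notice that your target $\gamma$ is, given the $\modal$-\'etaleness of $\fst$, essentially \emph{equivalent} to $f$ being $\modal$-\'etale (if $f$ is $\modal$-\'etale one can paste the naturality squares of $f$ and $\fst$ and then descend along the surjective unit $B \to \modal B$ to show $\gamma$ is an equivalence, and conversely as you argue). So you have reformulated the theorem, not reduced it; the ``pullback-stability and composition-stability of $\modal$-\'etale maps'' you invoke do not tell you that $\modal$ preserves the pullback defining $\fst$, and in general $\modal$ is not lex (\cref{rmk:crystaline.modality.is.not.lex}).

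The smaller issue is the reduction ``fix some $a_0 :: A$ and verify the equivalence only for $b :\equiv f(a_0)$.'' To show the naturality square of $f$ is a pullback you must show $\phi_b$ is an equivalence for \emph{all} $b : B$. Surjectivity of $f$, combined with the fact that ``$\phi_b$ is an equivalence'' is a proposition, lets you, \emph{for each given $b$}, assume a preimage $a$ with $f(a) = b$. It does not let you fix a single $a_0$ once and for all.

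The paper's proof uses the join construction in a genuinely different way, one that sidesteps $\gamma$ entirely. It works with only \emph{one} presentation --- the join sequence $A_0 = \emptyset \to A_1 \simeq A \to A_2 \to \cdots$ of $f$ --- and proves by induction that each transition map $i_n : A_n \to A_{n+1}$ is $\modal$-\'etale. The induction uses the pullback paste expressing $\fst : A_n \times_B A \to A_n$ and $\snd : A_n \times_B A \to A$ as pullbacks involving $\fst : A \times_B A \to A$ together with a two-out-of-three argument for $\modal$-\'etale maps (\cref{lem:etale.2of3}), and then closes the pushout step by Mather's cube theorem (\cref{lem:pushout.of.etale.is.etale}), which is where preservation of crisp pushouts enters. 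Once every $i_n$ is known to be $\modal$-\'etale, a colimit-descent lemma (\cref{lem:sequential.colimit.of.etale.is.etale}, where preservation of colimits of crisp sequences is used) shows each cocone leg $A_n \to B$ is $\modal$-\'etale, and in particular $f \simeq i_{1,\infty}$. The moral is that the colimit-preservation hypotheses are used to make \'etaleness \emph{descend} to $B$ along the sequence, not to compare $\modal$ applied to a pullback with a pullback of $\modal$'s.
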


We state \cref{thm:etale.descent} in this abstract way so that it applies not only to $\Im$, but also to $\shape$. That \cref{thm:etale.descent} applies to the modality $\Im$ at all is due to Postulate $W$: the type $\Dc$ is presumed to be \emph{tiny}. Results of \cref{sec:tiny} allow us to show that $\Im$ preserves crisp pushouts and colimits of sequences.
\begin{thm}\label{thm:Im.preserves.crisp.colimits}
The modality $\Im$ preserves crisp pushouts and colimits of crisp sequences.
\end{thm}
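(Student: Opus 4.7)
My strategy is to use the tinyness of $\Dc$ (Postulate W) to prove that the ambient pushout (respectively, ambient sequential colimit) of $\Im$-modal types is itself already $\Im$-modal, and then to deduce preservation from the general fact that a modal reflection takes ambient colimits to colimits computed in the modal subcategory, which collapse back to ambient colimits precisely when the latter happen to be modal.

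First, I would extract from \cref{sec:tiny} the key internal consequence of Postulate W: since $\Dc$ is crisply tiny, the exponential functor $X \mapsto X^{\Dc}$ admits a right adjoint, and therefore preserves crisp pushouts and colimits of crisp sequences. Recall also that a type $X$ is $\Im$-modal if and only if the constant map $c_X : X \to X^{\Dc}$ is an equivalence.

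Second, suppose $A \leftarrow C \to B$ is a crisp pushout diagram of $\Im$-modal types with pushout $P$. Tinyness together with modality of $A$, $B$, $C$ yields
\[
P^{\Dc} \;\simeq\; A^{\Dc} \cup_{C^{\Dc}} B^{\Dc} \;\simeq\; A \cup_{C} B \;\equiv\; P,
\]
and a short diagram chase checks that the composite equivalence is $c_P$. Hence $P$ is $\Im$-modal. The identical calculation, with a crisp sequential diagram in place of the span, shows that the colimit of a crisp sequence of $\Im$-modal types is $\Im$-modal.

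Third, for an arbitrary crisp pushout $P = A \cup_C B$, consider the ambient pushout $Q = \Im A \cup_{\Im C} \Im B$. By the previous step $Q$ is $\Im$-modal. Since $\Im$ is a reflection it takes ambient colimits to colimits computed in the modal subcategory, so $\Im P \simeq \Im Q$; but $Q$ is already modal, so $\Im Q \simeq Q$, and we conclude $\Im P \simeq \Im A \cup_{\Im C} \Im B$. The argument for crisp sequences is word-for-word the same. The main obstacle is the careful internalization of tinyness: externally $(-)^{\Dc}$ is just a left adjoint in the ambient $\infty$-topos, and the crispness hypothesis on the diagram is exactly the handle needed to invoke this external adjoint on internal data. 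Once \cref{sec:tiny} supplies a workable internal statement of ``$(-)^{\Dc}$ commutes with this colimit'', the remainder of the argument is routine.
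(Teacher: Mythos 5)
Your proof takes essentially the same route as the paper: the paper proves a more general statement (\cref{thm:tiny.null.preservers.crisp.colimits}, for localizations at arbitrary families of tiny types) and specializes it, but the argument there is exactly your two-step reduction—first show via \cref{prop:tiny.commutes.colimits} that the ambient colimit of modal types is itself modal because $(-)^{\Dc}$ commutes with the crisp colimit, then invoke the universal property of the reflection to identify $\Im(\colim D)$ with the ambient colimit of $\Im D$. One small care point, which you correctly flag as a "short diagram chase": to conclude that $\colim L_T D_j$ is modal from the equivalence $(\colim L_T D_j)^{\Dc} \simeq \colim L_T D_j$, one must check that this equivalence is homotopic to the constant-map unit; the paper glosses over this as well, so your level of detail matches the original.
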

\begin{proof}
This is a special case of \cref{thm:tiny.null.preservers.crisp.colimits}, since $\Dc$ is tiny.
 \end{proof}

Before proving \cref{thm:etale.descent}, we will use it to show that crisp \'etale groupoids are microlinear.
\begin{thm}\label{thm:etale.groupoid.microlinear}
Crisp \'etale groupoids are microlinear.
\end{thm}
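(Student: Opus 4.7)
The plan is to reduce the microlinearity of a crisp \'etale groupoid $\term{r}\Ga$ to that of the type $\Ga_{0}$ of objects of a presenting crisp \'etale pregroupoid $\Ga$, by showing that the Yoneda embedding $\yo : \Ga_{0} \to \term{r}\Ga$ is a surjective $\Im$-\'etale map and then invoking \cref{thm:microlinear.descends.along.etale}. By the very definition of $\term{r}\Ga$ as the essential image of $\yo$, the map $\yo$ is surjective, and by hypothesis $\Ga_{0}$ is microlinear. So the whole task is to verify that $\yo$ is $\Im$-\'etale.

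To this end I would apply the descent theorem \cref{thm:etale.descent} with $\modal \equiv \Im$. The crisp colimit preservation hypotheses are furnished by \cref{thm:Im.preserves.crisp.colimits}, the units of $\Im$ are surjective since $\Im$ is obtained by nullifying a pointed type, and crispness of $\yo$ with inhabited (in fact microlinear) domain follows from crispness of $\Ga$. It remains to check that the kernel pair projection $\fst : \Ga_{0} \times_{\term{r}\Ga} \Ga_{0} \to \Ga_{0}$ is $\Im$-\'etale.

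The key step is to identify this pullback with $\Ga_{1}$. By the univalence of $\term{r}\Ga$ and the Yoneda lemma (applied to the Yoneda embedding of $\Ga$ into its presheaf category, of which $\term{r}\Ga$ is the full subcategory on representables), we have
\[
  \Ga_{0} \times_{\term{r}\Ga} \Ga_{0} \;:\equiv\; \dsum{x, y : \Ga_{0}} (\yo(y) = \yo(x)) \;\simeq\; \dsum{x, y : \Ga_{0}} \Ga(y, x) \;\simeq\; \Ga_{1}.
\]
Under this equivalence the projection $\fst$ is identified with the source map $s : \Ga_{1} \to \Ga_{0}$, which is $\Im$-\'etale by \cref{defn:etale.groupoid}. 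Thus the hypothesis of \cref{thm:etale.descent} holds, giving us that $\yo$ is itself $\Im$-\'etale, and then \cref{thm:microlinear.descends.along.etale} produces microlinearity of $\term{r}\Ga$.

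The main obstacle I anticipate is the identification of the pullback with $\Ga_{1}$: one needs to carefully assemble fully faithfulness of $\yo$ (an instance of the Yoneda lemma) with univalence of the presheaf category to pass from the type of identifications $(\yo(y) = \yo(x))$ to the hom-set $\Ga(y, x)$, and then check that the projection really does correspond to the source map on the nose (up to naming conventions; otherwise one uses instead that the target map is also $\Im$-\'etale, being the composite of $s$ with the inversion equivalence of $\Ga_{1}$).
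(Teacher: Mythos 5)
Your proof is essentially the same as the paper's: reduce to showing $\yo : \Ga_0 \to \type{r}\Ga$ is $\Im$-\'etale, identify the kernel pair $\Ga_0 \times_{\type{r}\Ga} \Ga_0$ with $\Ga_1$ via the Yoneda lemma so that the first projection becomes the source map $s$, and conclude by the \'etale descent theorem \cref{thm:etale.descent}. You just spell out the hypothesis checks for \cref{thm:etale.descent} (crisp colimit preservation from \cref{thm:Im.preserves.crisp.colimits}, surjective units, etc.) more explicitly than the paper does.

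One small slip in your parenthetical: microlinearity of $\Ga_0$ does \emph{not} imply inhabitedness, and crispness of $\Ga$ certainly does not either, so the inhabitedness hypothesis of \cref{thm:etale.descent} is not ``furnished'' by those facts. The paper's proof glosses over this same point. To be careful, one would invoke crisp LEM (\cref{axiom:LEM}) on the crisp proposition $\trunc{\Ga_0}$: if $\Ga_0$ is empty then so is $\type{r}\Ga$, which is microlinear vacuously (as every codiagonal $\nabla_V$ has inhabited domain, so there are no maps into $\emptyset$ to lift); otherwise the descent theorem applies.
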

\begin{proof}
  If $\Ha$ is a crisp \'etale groupoid, then by hypothesis there is a crisp \'etale pregroupoid $\Ga$ with $\type{r}\Ga \simeq \Ha$. It will therefore suffice to show that $\type{r}\Ga$ is microlinear. Since $\Ga_{0}$ is by hypothesis microlinear, it suffices to show that $\yo : \Ga_{0} \to \type{r}\Ga$ is $\Im$-{\'etale}.

  Now, for every pair of objects $x, y : \Ga_{0}$, we have that $(\yo(x) = \yo(y)) \simeq \Ga(x, y)$ by the Yoneda lemma. Therefore, we have a pullback square:
  \[
\begin{tikzcd}
	{\Ga_1} & {\Ga_0} \\
	{\Ga_0} & {\type{r}\Ga}
	\arrow["s"', from=1-1, to=2-1]
	\arrow["t", from=1-1, to=1-2]
	\arrow["\yo"', from=2-1, to=2-2]
	\arrow["\yo", from=1-2, to=2-2]
\end{tikzcd}
  \]
  Since $\yo$ is crisp and surjective, and $s$ is $\Im$-\'etale, it follows by \cref{thm:etale.descent} that $\yo$ is is $\Im$-\'etale.
  \end{proof}

  In order to prove \cref{thm:etale.descent}, we will need a few useful lemmas. The first two lemmas that we need show that if all the maps in a crisp diagram $\Da$ are $\Im$-\'etale, then so are all the maps in the cocone $\Da \to \colim \Da$. In all of the following lemmas, we will assume that $\modal$ is a modality with surjective units which preserves $\emptyset$, crisp pushouts, and crisp colimits of sequences.
  \begin{lem}\label{lem:pushout.of.etale.is.etale}
    Suppose that
    \[
\begin{tikzcd}
	A & B \\
	C & D
	\arrow["f"', from=1-1, to=2-1]
	\arrow["g", from=1-1, to=1-2]
	\arrow["h"', from=2-1, to=2-2]
	\arrow["k", from=1-2, to=2-2]
	\arrow["\lrcorner"{anchor=center, pos=0.125, rotate=180}, draw=none, from=2-2, to=1-1]
\end{tikzcd}
    \]
    is a crisp pushout. If $f$ and $g$ are $\modal$-{\'e}tale, then so are $h$ and $k$.
  \end{lem}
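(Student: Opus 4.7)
The plan is to argue by descent, using universality of pushouts in the $\infty$-topos. By the symmetry of the pushout square between $B$ and $C$, it suffices to prove $h$ is $\modal$-\'etale; the argument for $k$ is identical after swapping roles. Concretely, I need to show that the canonical comparison map $\alpha_C : C \to D \times_{\modal D} \modal C$ arising from the $\modal$-naturality square of $h$ is an equivalence.

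First, since $\modal$ preserves crisp pushouts by hypothesis, applying $\modal$ to the given crisp pushout yields a pushout $\modal D \simeq \modal C \sqcup_{\modal A} \modal B$ of modal types. Together with the modal units as vertical edges, this assembles into a cube whose top and bottom faces are pushouts and whose two vertical faces for $f$ and $g$ are pullbacks. Second, by descent (universality of colimits in the $\infty$-topos), pulling this modal pushout back along $\eta_D : D \to \modal D$ produces a second pushout with apex $D$,
\[
D \;\simeq\; (D \times_{\modal D} \modal C) \sqcup_{D \times_{\modal D} \modal A} (D \times_{\modal D} \modal B),
\]
in a cube where all four vertical faces are pullbacks. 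The universal property then gives a canonical map of pushout cocones from the original cocone to this pulled-back one, with components $\alpha_V : V \to D \times_{\modal D} \modal V$ for $V \in \{A, B, C\}$ and the identity on $D$.

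The hypotheses on $f$ and $g$ identify $A \simeq C \times_{\modal C} \modal A$ and $A \simeq B \times_{\modal B} \modal A$. Pasting each pullback square with the corresponding descent face shows that $\alpha_A \simeq \alpha_C \times_{\modal C} \modal A$ and $\alpha_A \simeq \alpha_B \times_{\modal B} \modal A$; in other words, $\alpha_A$ is the common base-change of $\alpha_C$ along $\modal f$ and of $\alpha_B$ along $\modal g$. The final step---and the main obstacle---is to extract from this that $\alpha_B$ and $\alpha_C$ are themselves equivalences. This is a cube-lemma in the $\infty$-topos: a map of pushout spans that induces the identity on the pushout apex and satisfies these base-change identifications must be degreewise an equivalence. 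I would prove this by pushout induction on $d : D$, computing fibers of $\alpha_C$ at $d = h(c_0)$ and $d = k(b_0)$ using van Kampen for the identification types in the pushout, with the $\modal$-\'etaleness of $f$ and $g$ supplying the zigzag comparison between the original and modal pushouts.
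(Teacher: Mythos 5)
The paper proves this lemma in one stroke: it builds the cube you describe (top face the given pushout, bottom face its image under $\modal$, vertical edges the modal units) and then simply cites Mather's second cube theorem, which is already established in HoTT as Theorem~2.2.11 of Rijke's thesis. Given that the top and bottom are pushouts and the two back faces (the naturality squares of $f$ and $g$) are pullbacks, the cube theorem immediately yields that the front two faces (the naturality squares of $h$ and $k$) are pullbacks. Your proposal recreates the cube and then attempts to \emph{re-derive} the cube theorem from scratch via descent, which is where the gap lies.

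Specifically, after pulling the modal pushout back along $\eta_D$ and producing the span map $\alpha_V : V \to D \times_{\modal D} \modal V$, you observe that $\alpha_A$ is the common base change of $\alpha_B$ and $\alpha_C$ and that the induced map on pushout apexes is the identity, and then assert: ``a map of pushout spans that induces the identity on the pushout apex and satisfies these base-change identifications must be degreewise an equivalence.'' This is exactly the content of Mather's cube theorem, and it is not obvious. As stated, your hypotheses record only that $\alpha_A$ arises as a base change of $\alpha_B$ and of $\alpha_C$; there is no visible constraint that directly determines $\alpha_B$ and $\alpha_C$ themselves from the identity on $D$. The suggestion to finish ``by pushout induction on $d : D$, computing fibers\dots\ using van Kampen'' points at a genuine computation of identity types in a pushout, which in general is recursive and delicate; you have not shown how it terminates in the needed equivalence, and the \'etale hypotheses do not feed into such an induction in any visible way.

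The upshot is that your argument reduces the lemma to an unproved cube lemma that is essentially equivalent to what the paper cites. This is a legitimate strategy for proving Mather's cube theorem, but you would then need to supply its proof, which is a nontrivial piece of work in its own right (see Rijke, Theorem~2.2.11). The most economical fix is simply to invoke Mather's cube theorem at the point where you appeal to the ``cube-lemma,'' replacing the sketch with the citation. If you do want a self-contained descent proof, the cleaner route is not to compare $\alpha_B, \alpha_C$ against a base-change of $\alpha_A$, but to use descent for the bottom pushout to identify the family $\fib_{\eta_D} : \modal D \to \Type$ with descent data over the span $\modal C \leftarrow \modal A \to \modal B$, and then use the \'etaleness of $f$ and $g$ to match this descent data against the families $\fib_{\eta_B}$ and $\fib_{\eta_C}$ fiberwise; the uniqueness half of the descent equivalence is what forces the front faces to be pullbacks. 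Either way, a missing step must be filled.
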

  \begin{proof}
    This follows by the fact that $\modal$ preserves crisp pushouts and by Mather's cube theorem, which is proven in HoTT in Theorem 2.2.11 of \cite{Rijke:Thesis}. Consider the cube
    \[
\begin{tikzcd}[sep = small, row sep = tiny]
	& A \\
	B &&& C \\
	&& D \\
	& {\modal A} \\
	{\modal B} &&& {\modal C} \\
	&& {\modal D}
	\arrow[from=2-1, to=5-1]
	\arrow[from=1-2, to=2-1]
	\arrow[from=1-2, to=2-4]
	\arrow[from=2-4, to=3-3]
	\arrow[from=1-2, to=4-2]
	\arrow[from=2-4, to=5-4]
	\arrow[from=4-2, to=5-1]
	\arrow[from=4-2, to=5-4]
	\arrow[from=5-1, to=6-3]
	\arrow[from=5-4, to=6-3]
	\arrow[from=2-1, to=3-3, crossing over]
	\arrow[from=3-3, to=6-3, crossing over]
\end{tikzcd}
    \]
    The top face is a pushout by hypothesis, and since $\modal$ was presumed to preserve crisp pushouts, the bottom face is as well. If $f$ and $g$ are $\modal$-\'etale, then the back two faces are pullbacks. This implies that the front two faces are pullbacks, which shows that $h$ and $k$ are $\modal$-\'etale.
  \end{proof}

  \begin{lem}\label{lem:sequential.colimit.of.etale.is.etale}
    Suppose that
    \[
A_{0} \xto{i_{0}} A_{1} \xto{i_{1}} \cdots
    \]
    is a crisp sequence of types with colimit $A_{\infty}$. If $i_{j}$ is $\modal$-{\'etale} for all $j$, then so are the maps $i_{j,\infty} : A_{j} \to A_{\infty}$.
  \end{lem}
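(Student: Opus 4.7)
The plan is to mimic the proof of \cref{lem:pushout.of.etale.is.etale} with the sequential colimit in place of the pushout, invoking \cref{thm:Im.preserves.crisp.colimits} (well, its abstract form built into our hypothesis on $\modal$) together with universality of colimits. Since the tail $A_j \xto{i_j} A_{j+1} \xto{i_{j+1}} \cdots$ is again a crisp sequence whose colimit is $A_\infty$ equipped with cocone leg $i_{j, \infty}$, it suffices to show $i_{0, \infty} : A_0 \to A_\infty$ is $\modal$-\'etale; the argument then applies verbatim to each $j$.

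First I would observe, by induction on $k$, that every finite composite $i_{0, k} :\equiv i_{k-1} \circ \cdots \circ i_{0} : A_0 \to A_k$ is $\modal$-\'etale. The base case $k = 0$ is trivial, and the inductive step uses closure of $\modal$-\'etale maps under composition (they are the right class of an orthogonal factorization system by Theorem 7.2 of \cite{Cherubini-Rijke:Modal.Descent}). Unpacking, we obtain canonical equivalences $e_k : A_0 \xto{\sim} A_k \times_{\modal A_k} \modal A_0$ given by $x \mapsto (i_{0, k}(x), x^{\modal})$. A direct check shows that these equivalences intertwine the transition maps of the two sequences: the map $A_k \times_{\modal A_k} \modal A_0 \to A_{k+1} \times_{\modal A_{k+1}} \modal A_0$ induced by $i_k$ sends $(i_{0, k}(x), x^\modal)$ to $(i_{0, k+1}(x), x^\modal) = e_{k+1}(x)$, so under the $e_k$'s this map is just $\id_{A_0}$.

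Next I would form the pullback
\[
P :\equiv A_\infty \times_{\modal A_\infty} \modal A_0,
\]
so that the claim that $i_{0, \infty}$ is $\modal$-\'etale is precisely the claim that the canonical comparison map $A_0 \to P$ coming from the $\modal$-naturality square is an equivalence. By hypothesis on $\modal$, we have $\modal A_\infty \simeq \colim_k \modal A_k$, and $A_\infty \simeq \colim_k A_k$ by definition. Universality of colimits in HoTT (pullback preserves colimits) then gives
\[
P \;\simeq\; \bigl(\colim_k A_k\bigr) \times_{\colim_k \modal A_k} \modal A_0 \;\simeq\; \colim_k\bigl( A_k \times_{\modal A_k} \modal A_0 \bigr) \;\simeq\; \colim_k A_0 \;\simeq\; A_0,
\]
where the penultimate equivalence applies the $e_k$'s (which commute with transitions, by the previous paragraph), and the final equivalence is the colimit of a constant sequence at $A_0$ with identity transition maps. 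A diagram chase shows this composite equivalence is inverse to $A_0 \to P$, completing the proof.

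The main obstacle is the third equivalence in the display, namely pullback commuting with the sequential colimit. This is a general fact in any $\infty$-topos, and has been established for sequential colimits in plain HoTT (sequential colimits are constructed as a higher inductive type whose mapping-out behaviour interacts well with pullbacks); once this is granted, the argument reduces to the elementary composition-closure of $\modal$-\'etale maps plus the identification $A_0 \simeq A_k \times_{\modal A_k} \modal A_0$ built above. Everything else is bookkeeping of the standard sort used in \cref{lem:pushout.of.etale.is.etale}.
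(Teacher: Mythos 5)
Your overall strategy---reduce to $i_{0,\infty}$, exhibit $A_0 \simeq A_k \times_{\modal A_k} \modal A_0$ via the \'etaleness of the composites $i_{0,k}$, and then pass to colimits---is plausible, but the middle equivalence in your displayed chain hides a genuine gap. ``Universality of colimits'' (flattening for sequential colimits) applied to $A_\infty = \colim_k A_k$ gives
\[
P \;:\equiv\; A_\infty \times_{\modal A_\infty} \modal A_0 \;\simeq\; \colim_k\bigl(A_k \times_{\modal A_\infty} \modal A_0\bigr),
\]
where the pullback of $A_k$ and $\modal A_0$ is taken \emph{over $\modal A_\infty$}, via $A_k \to A_\infty \to \modal A_\infty$ and $\modal A_0 \to \modal A_\infty$. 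Your display instead has $\colim_k\bigl(A_k \times_{\modal A_k} \modal A_0\bigr)$, with the base $\modal A_k$ varying with $k$. These two diagrams are related by the canonical maps $A_k \times_{\modal A_k} \modal A_0 \to A_k \times_{\modal A_\infty} \modal A_0$ obtained by applying $\modal i_{k,\infty}$ to the witnessing identification, but those maps are not equivalences in general: $\modal i_{k,\infty}$ need not be an embedding. Establishing that their colimits agree is exactly where the \'etale hypothesis on the $i_j$ (equivalently, that the transformation $(A_k) \to (\modal A_k)$ is cartesian) has to be used, and that is precisely the sequential-colimit descent theorem --- the analogue of Mather's cube theorem --- which the paper invokes as a black box. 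Once you have that descent theorem, though, it \emph{directly} gives $A_k \simeq A_\infty \times_{\modal A_\infty} \modal A_k$ for every $k$, which is the statement of the lemma; the detour through your pullback calculus becomes unnecessary. So as written, the step you attribute to ``pullback preserves colimits'' is carrying essentially the entire content of the lemma.

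To close the gap you would either (a) state and cite the descent theorem for cartesian transformations of sequential diagrams, at which point your $e_k$-computation is redundant and you should just apply the descent theorem directly (which is the paper's one-line proof), or (b) carry out the more delicate double-colimit argument showing $\colim_k(A_k \times_{\modal A_\infty}\modal A_0) \simeq \colim_k(A_k \times_{\modal A_k}\modal A_0)$, which requires the characterization of identity types in a sequential colimit as a further sequential colimit and is well beyond ``bookkeeping.'' Note by contrast that your Eq.~3 (using the $e_k$'s, which do intertwine transitions by the calculation you sketch) and the reduction to $i_{0,\infty}$ via tail-shifting are both fine.
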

  \begin{proof}
This follows from the analogous descent property for sequences as Mathers's cube theorem is for pushouts, by essentially the same argument.
    \end{proof}

    Next, we need a useful closure property of \'etale maps.
    \begin{lem}\label{lem:etale.2of3}
Suppose that $f : A \to B$ is $\modal$-\'etale and that $g : B \to A$. If $g \circ f$ is $\modal$-\'etale and $\modal f$ is surjective, then $g$ is $\modal$-\'etale.
    \end{lem}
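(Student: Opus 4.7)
The plan is to prove this by descent along a surjection applied to a standard pullback-pasting setup. Stacking the three $\modal$-naturality squares vertically gives
\[
\begin{tikzcd}
A \ar[r, "\eta_A"] \ar[d, "f"'] & \modal A \ar[d, "\modal f"] \\
B \ar[r, "\eta_B"] \ar[d, "g"'] & \modal B \ar[d, "\modal g"] \\
A \ar[r, "\eta_A"'] & \modal A
\end{tikzcd}
\]
in which the top square is a pullback because $f$ is $\modal$-\'etale, and the outer rectangle is a pullback because $g \circ f$ is $\modal$-\'etale. The goal is to show that the middle square is a pullback.

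Let $P := A \times_{\modal A} \modal B$ denote the pullback of the bottom span and $\phi : B \to P$ the canonical comparison; being $\modal$-\'etale for $g$ is exactly the assertion that $\phi$ is an equivalence. The strategy will be to show that $\phi$ pulled back along $\modal f : \modal A \to \modal B$ is an equivalence, and then to descend this along the surjection $\modal f$. The top-square pullback hypothesis gives $B \times_{\modal B} \modal A \simeq A$. By pasting of pullback squares, $P \times_{\modal B} \modal A \simeq A \times_{\modal A} \modal A$ over the composite $\modal g \circ \modal f = \modal(g \circ f)$, and the outer-rectangle pullback hypothesis identifies this in turn with $A$. A direct chase through the universal properties, using the naturality equation $\eta_B \circ f = \modal f \circ \eta_A$, will show that the induced map $A \to A$ is the identity.

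It then remains to invoke descent for equivalences along surjections: being an equivalence amounts to contractibility of all fibers, which is a mere proposition, so whenever the pullback of a map along a surjection is an equivalence, the map itself is an equivalence. Since $\modal f$ is surjective by hypothesis, $\phi$ is an equivalence and so $g$ is $\modal$-\'etale. The only real point to take care with is the identification of the pulled-back comparison with the identity on $A$; this amounts to bookkeeping with the universal properties of the iterated pullbacks and the naturality of the unit, but presents no substantive obstacle.
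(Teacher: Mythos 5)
Your proof is correct, and it is essentially the same argument the paper makes — the paper's entire proof is the one-line assertion that "this follows from the analogous property for pullbacks" followed by the horizontal ladder diagram, and the "analogous property" is precisely the right-cancellation-of-pullbacks-modulo-a-surjection fact that you prove from scratch. Your vertical stacking is the same diagram rotated, and the descent-of-equivalences step (contractibility of a fiber is a proposition, so a merely existing lift along the surjective $\modal f$ suffices to transfer it) is exactly the right way to internalize this in HoTT.

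Two small remarks. First, the "bookkeeping" you defer — checking that the pulled-back comparison map is the identity on $A$ — goes away entirely if you phrase things in terms of gap maps rather than pre-composing with chosen equivalences to $A$. Write $\gamma_f : A \to B \times_{\modal B} \modal A$, $\gamma_g : B \to P$, and $\gamma_{gf} : A \to P \times_{\modal B} \modal A \simeq A \times_{\modal A} \modal A$ for the three gap maps, and $\gamma_g'$ for the base change of $\gamma_g$ along $\modal f$. The pullback-pasting identity gives $\gamma_{gf} = \gamma_g' \circ \gamma_f$ directly, so $\gamma_g'$ is an equivalence by two-out-of-three with no further identification required; then your descent step finishes as you say. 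Second, the statement as printed has a typo — $g : B \to A$ should be $g : B \to C$ (as both the paper's diagram and the application inside the proof of the étale descent theorem make clear). You carried the typo along, making $g \circ f$ a self-map of $A$; the argument is unaffected, but it is worth noticing when reading the proof.
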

    \begin{proof}
      This follows from the analogous property for pullbacks. In the given situation, we have a diagram
      \[
\begin{tikzcd}
	A & B & C \\
	{\modal A} & {\modal B} & {\modal C}
	\arrow["f", from=1-1, to=1-2]
	\arrow["g", from=1-2, to=1-3]
	\arrow[from=1-1, to=2-1]
	\arrow["{\modal f}"', from=2-1, to=2-2]
	\arrow[from=1-2, to=2-2]
	\arrow["{\modal g}"', from=2-2, to=2-3]
	\arrow[from=1-3, to=2-3]
	\arrow["\lrcorner"{anchor=center, pos=0.125}, draw=none, from=1-1, to=2-2]
\end{tikzcd}
      \]
      in which the left and composite squares are pullbacks, and where $\modal f$ is surjective. It follows that the right square is also a pullback, which means that $g$ is $\modal$-\'etale.
    \end{proof}

    Finally, we are ready to prove \cref{thm:etale.descent}.
    \begin{proof}[Proof of \cref{thm:etale.descent}]
      By Rijke's join construction \cite{Rijke:join.construction}, $B$ is the sequential colimit of the sequence
      \[
A_{0} \xto{i_{0}} A_{1} \xto{i_{1}} A_{2} \xto{i_{2}} \cdots
      \]
      which is inductively defined by defining $A_{0}$ to be $\emptyset$, $i_{0} :A_{0} \to A_{1}$ and $f_{0} : A_{0} \to B$ to be the unique maps, and then defining $A_{n + 1} \equiv A_{n} \ast_{B} A$ and $f_{n + 1} : A_{n + 1} \to X$, and $i_{n+1} : A_{n} \to A_{n+1}$ by the universal property:
      \[
\begin{tikzcd}
	{A_n \times_B A} & A \\
	{A_n} & {A_{n+1}} \\
	&& B
	\arrow["f", from=1-2, to=3-3, bend left]
	\arrow["{f_n}"', from=2-1, to=3-3, bend right]
	\arrow[from=1-2, to=2-2]
	\arrow[from=2-1, to=2-2, "i_{n+1}"']
	\arrow[from=1-1, to=2-1]
	\arrow[from=1-1, to=1-2]
	\arrow["\lrcorner"{anchor=center, pos=0.125, rotate=180}, draw=none, from=2-2, to=1-1]
	\arrow["{f_{n+1}}" description, dashed, from=2-2, to=3-3]
\end{tikzcd}
      \]
      Here, the outer square is a pullback, and the inner square is a pushout defining $A_{n+1}$ as the join of $A_{n}$ and $A$ over $B$. Note that this sequence is crisp, and that $A_{1} \simeq A$. By \cref{lem:sequential.colimit.of.etale.is.etale}, it will suffice to show that each $i_{n}$ is $\modal$-\'etale. We can argue this by induction.

      First, we note that $i_{0} : A_{0} \to A_{1}$ is $\modal$-\'etale since $\modal$ preserves $\emptyset$ and any square
      \[
\begin{tikzcd}
	\emptyset & {A_1} \\
	{\modal \emptyset} & {\modal A_1}
	\arrow[from=1-1, to=2-1, equals]
	\arrow["{i_0}", from=1-1, to=1-2]
	\arrow["{\modal i_0}"', from=2-1, to=2-2]
	\arrow[from=1-2, to=2-2]
\end{tikzcd}
      \]
      is a pullback. Now, suppose that all maps $i_{j}$ for $j < n$ are $\modal$-\'etale, seeking to show that $i_{n}$ is $\modal$-\'etale. Since $i_{n}$ is constructed as a pushout inclusion
      \[
\begin{tikzcd}
	{A_n \times_B A} & A \\
	{A_n} & {A_{n+1}} \\
	\arrow[from=1-2, to=2-2]
	\arrow[from=2-1, to=2-2, "i_{n+1}"']
	\arrow[from=1-1, to=2-1, "\fst"']
	\arrow[from=1-1, to=1-2, "\snd"]
	\arrow["\lrcorner"{anchor=center, pos=0.125, rotate=180}, draw=none, from=2-2, to=1-1]
\end{tikzcd}
      \]
it will suffice to prove that both projections $\fst : A_{n} \times_{B} A \to A_{n}$ and $\snd : A_{n} \times_{B} A \to A$ are $\modal$-\'etale by \cref{lem:pushout.of.etale.is.etale}.
      Consider the following diagram:
      \[
\begin{tikzcd}
	{A \times_B A} & {A_{n} \times_B A} & A \\
	A & {A_n} & B
	\arrow["f", from=1-3, to=2-3]
	\arrow["{f_n}"', from=2-2, to=2-3]
	\arrow["\fst"', from=1-2, to=2-2]
	\arrow["\snd", from=1-2, to=1-3]
	\arrow["i"', from=2-1, to=2-2]
	\arrow["\fst"', from=1-1, to=2-1]
	\arrow["{i\times_B A}", from=1-1, to=1-2]
	\arrow["\snd", bend left, from=1-1, to=1-3]
	\arrow["f"', bend right, from=2-1, to=2-3]
	\arrow["\lrcorner"{anchor=center, pos=0.125}, draw=none, from=1-2, to=2-3]
	\arrow["\lrcorner"{anchor=center, pos=0.125}, draw=none, from=1-1, to=2-2]
\end{tikzcd}
      \]
      Here the map $i : A \to A_{n}$ is the composite of all maps $A \xto{\sim} A_{1} \xto{i_{1}} A_{2}\xto{i_{2}} \cdots \xto{i_{n-1}} A_{n}$. Note that since $A$ is inhabited, each $i_{j} : A_{j} \to A_{j + 1}$ is surjective and therefore $i : A \to A_{n}$ is surjective. The right square is a pullback by definition, and so is the outer square; therefore, the left square is also a pullback. We know by assumption that $\fst : A \times_{B} A \to A$ is $\modal$-\'etale, and by inductive hypothesis that $i : A \to A_{n}$ is $\modal$-\'etale. By symmetry, we also know that $\snd : A \times_{B} A \to A$ is $\modal$-\'etale.

      To show that both $\fst : A_{n} \times_{B} A \to A_{n}$ and $\snd  : A_{n} \times_{B} A \to A$ are $\modal$-\'etale, it therefore suffices by \cref{lem:etale.2of3} to show that $i \times_{B} A : A \times_{B} A \to A_{n} \times_{B} A$ is surjective and $\modal$-\'etale. But this is the pullback of $i : A\to A_{n}$, which as we noted above is surjective and $\modal$-\'etale, so this concludes our proof.
    \end{proof}

\subsection{Deloopings of  infinitesimally linear groups are infinitesimally linear}\label{sec:deloopings.inf.linear}

So far, we have been showing that various notions of smooth spaces are microlinear. This in particular implies that their tangent spaces are $\Rb$-modules by \cref{thm:inf.linear.R.module}. But there is a weaker condition which implies the same thing: infinitesimal linearity (\cref{defn:inf.linear}). Infinitesimal linearity says essentially nothing but the fact that the tangent spaces are $\Rb$-modules.

\cref{thm:inf.linear.R.module} was proven with no truncation conditions on the infinitesimally linear type $X$. We have seen some higher types which are infinitesimally linear since they are microlinear --- for example $X \sslash \Gamma$ where $\Gamma$ is a discrete higher group (\cref{thm:discrete.homotopy.quotient.is.microlinear}). But these example have all had discrete types of identifications. If we restrict ourselves to infinitesimal linearity, we can find examples of higher types whose spaces of identifications are not discrete. In particular, we can show that if $G$ is a crisp infinitesimally linear higher group (for example, a Lie group), then any delooping $\B G$ will be infinitesimally linear.

To prove this, we first need a general lemma about higher groups.
\begin{lem}\label{lem:square.of.higher.groups.pullback}
  Consider a square of homomorphisms of higher groups:
  \[
\begin{tikzcd}
	{\B G_1} & {\B G_3} && {G_1} & {G_3} \\
	{\B G_2} & {\B G_4} && {G_2} & {G_4}
	\arrow["{\B h}"', from=1-1, to=2-1]
	\arrow["{\B f}", from=1-1, to=1-2]
	\arrow["{\B k}", from=1-2, to=2-2]
	\arrow["{\B g}"', from=2-1, to=2-2]
	\arrow["f", from=1-4, to=1-5]
	\arrow["h"', from=1-4, to=2-4]
	\arrow["k", two heads, from=1-5, to=2-5]
	\arrow["g"', from=2-4, to=2-5]
	\arrow["\lrcorner"{anchor=center, pos=0.125}, draw=none, from=1-4, to=2-5]
\end{tikzcd}
  \]
  That is, consider a square of pointed maps between pointed, $0$-connected types on the left. If $k :\equiv \Omega \B k$ is surjective and the looped square on the right is a pullback, then the square on the left is a pullback.
\end{lem}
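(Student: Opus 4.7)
The plan is to apply the fundamental theorem of higher groups stated at the beginning of Section 2.1. Writing $\B f \times_{\B g} \B k$ for the induced map $\B G_1 \to \B G_2 \times_{\B G_4} \B G_3$ from the assumed commuting square, it suffices to show that both $\B G_1$ and $\B G_2 \times_{\B G_4} \B G_3$ are pointed and $0$-connected, and that the looped map $\Omega(\B f \times_{\B g} \B k)$ is an equivalence. The hypothesis handles $\B G_1$, and pointedness of the pullback is automatic. Loops preserve pullbacks, so the looped comparison factors through the pullback $G_2 \times_{G_4} G_3$, which by hypothesis agrees with $G_1$ via $(h,f)$; thus $\Omega(\B f \times_{\B g} \B k)$ is an equivalence.

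The main obstacle is to show that $\B G_2 \times_{\B G_4} \B G_3$ is $0$-connected. The plan is to analyze the first projection $\fst : \B G_2 \times_{\B G_4} \B G_3 \to \B G_2$ via pullback pasting, which identifies its fiber over any point with the fiber of $\B k : \B G_3 \to \B G_4$. A base is $0$-connected and the total space is $0$-connected whenever the fiber is, so it suffices to establish that $\fib(\B k)$ is $0$-connected.

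For this, I will use the long exact sequence of homotopy groups associated to $\B k : \B G_3 \to \B G_4$ (proved in HoTT for pointed $0$-connected types; see \cite{SymmetryBook,Buchholtz-vanDoorn-Rijke:Higher.Groups}). The relevant portion reads
\[
G_3 \xrightarrow{k} G_4 \longrightarrow \pi_0\bigl(\fib(\B k)\bigr) \longrightarrow \pi_0(\B G_3),
\]
and since $\B G_3$ is $0$-connected, $\pi_0(\B G_3)$ is a point. Hence $\pi_0\bigl(\fib(\B k)\bigr)$ is the cokernel $G_4 / \im(k)$, which vanishes precisely because $k$ is surjective. Thus $\fib(\B k)$ is $0$-connected, the pullback $\B G_2 \times_{\B G_4} \B G_3$ is $0$-connected, and the fundamental theorem of higher groups delivers the equivalence.

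I expect the only subtle point to be invoking the long exact sequence cleanly; alternatively, one may argue directly by propositional truncation: given $(b, c) : \B G_2 \times_{\B G_4} \B G_3$ one must produce an identification with $(\pt, \pt)$. Since $\B G_2$ is $0$-connected, we may assume $b = \pt_{\B G_2}$; it then remains to identify $c$ with $\pt_{\B G_3}$ compatibly with the pullback witness, i.e.\ to find a $\gamma : \pt_{\B G_3} = c$ with $(\B k)_{\ast}\gamma$ equal to the given identification $\pt_{\B G_4} = \B k(c)$. The $0$-connectedness of $\B G_3$ gives some $\gamma$ after truncation, and surjectivity of $k \equiv \Omega \B k$ lets us adjust $\gamma$ by a loop in $G_3$ to correct its image in $G_4$. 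This avoids citing the long exact sequence and is the step where surjectivity of $k$ is genuinely used.
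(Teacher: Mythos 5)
Your proposal is correct, and it takes a genuinely different (though closely related) route from the paper's own proof. The paper proves the square is a pullback by showing that the induced map between the vertical fibers, $\B f_{\ast} : \fib_{\B h}(\pt_{\B G_2}) \to \fib_{\B k}(\pt_{\B G_4})$, is an equivalence: it first argues that both fibers are pointed and $0$-connected (using surjectivity of $k$ for the right fiber, and then observing that $h$ is surjective because it is the pullback of $k$ in the looped square to get the left fiber), and then applies the fundamental theorem of higher groups to $\B f_{\ast}$, whose loop map is $f_{\ast} : \fib_{h}(1) \to \fib_{k}(1)$, an equivalence because the looped square is a pullback. You instead apply the fundamental theorem directly to the gap map $\B G_1 \to \B G_2 \times_{\B G_4} \B G_3$; this means you must establish $0$-connectedness of the pullback, which you do by factoring through $\fst$ and using $0$-connectedness of $\fib(\B k)$. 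A small advantage of your route is that you only need the $0$-connectedness argument once, for the fiber of $\B k$ — you never invoke surjectivity of $h$ — whereas the paper needs both $\fib(\B h)$ and $\fib(\B k)$ to be $0$-connected so it can apply the fundamental theorem to $\B f_{\ast}$. Your alternative, truncation-level argument at the end (produce $\gamma$ from $0$-connectedness of $\B G_3$, then correct its image in $G_4$ by a loop in $G_3$ using surjectivity of $k$) is a nice self-contained substitute for either the long exact sequence or the paper's cited fact that a pointed map between $0$-connected types with surjective loop map is $0$-connected; all three establish the same thing. Everything checks out.
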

\begin{proof}
Because $\B G_{4}$ is $0$-connected and $k$ is surjective, $\B k$ is $0$-connected. Since the square on the right is a pullback, $h$ is also surjective and so $\B h$ is $0$-connected. To show that the square on the left is a pullback, it suffices to show that the map $\B f_{\ast} : \fib_{\B h}(\pt_{\B G_{2}}) \to \fib_{\B k}(\pt_{\B G_{4}})$ is an equivalence. But these types are both pointed and $0$-connected, so by the ``fundamental theorem of higher groups'', it suffices to prove that $\Omega \B f_{\ast}$ is an equivalence, or equivalently that $f_{\ast}: \fib_{h}(1_{G_{2}}) \to \fib_{k}(1_{G_{4}})$ is an equivalence. But the square on the right is a pullback, so $f_{\ast}$ is an equivalence.
\end{proof}

\begin{warn}
The surjectivity of $k$ is necessary. Let $k : G_{3} \to G_{4}$ be the doubling map $2 : \Zb \to \Zb$, and let $G_{2} \equiv \ast$ be trivial. Then $G_{1} = \ker k = \ast$ is trivial, but $\fib_{\B k}(\pt_{\B G_{4}}) $ is the quotient $\Zb / 2$ of the action $2 : \Zb \to \Zb$. This is not $0$-connected and so does not deloop $G_{1}$.
\end{warn}

\begin{thm}\label{thm:inf.linear.BG}
Let $G$ be a crisp, infinitesimally linear higher group. Then $\B G$ is infinitesimally linear.
\end{thm}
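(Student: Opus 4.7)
The plan is to apply \cref{lem:square.of.higher.groups.pullback} to the defining square of infinitesimal linearity for $\B G$,
\[
\begin{tikzcd}
	{\B G^{\Db(n + m)}} & {\B G^{\Db(n)}} \\
	{\B G^{\Db(m)}} & \B G
	\arrow[from=1-1, to=1-2]
	\arrow[from=1-1, to=2-1]
	\arrow[from=1-2, to=2-2]
	\arrow[from=2-1, to=2-2]
\end{tikzcd}
\]
with each function-space corner pointed at the constant-at-$\pt_{\B G}$ map and the bottom right pointed at $\pt_{\B G}$ itself. Every arrow in the square is pointed, because it is given by precomposition along an inclusion of infinitesimal varieties and such precomposition sends constant maps to constant maps.

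The first step is to compute the loop space of each function-space corner at its basepoint. By function extensionality, a self-identification of $\term{const}_{\pt_{\B G}} : \Db(k) \to \B G$ unfolds to a map $\Db(k) \to \Omega(\B G, \pt_{\B G}) \simeq G$, so $\Omega(\B G^{\Db(k)}, \term{const}_{\pt_{\B G}}) \simeq G^{\Db(k)}$. Hence the looped square is precisely the infinitesimal linearity square for $G$, which is a pullback by hypothesis. The right vertical $\B G^{\Db(n)} \to \B G$ is evaluation at $0$, and it loops to evaluation $G^{\Db(n)} \to G$ at $0$; this is surjective because $g \mapsto \term{const}_g$ provides a section.

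The main obstacle is to verify the remaining hypothesis of \cref{lem:square.of.higher.groups.pullback}: each corner must be pointed $0$-connected. Each $\B G^{\Db(k)}$ is inhabited by the constant map; since $\B G$ is $0$-connected and $\term{ev}_0 : \B G^{\Db(k)} \to \B G$ is split by the constant-map section, $\B G^{\Db(k)}$ is $0$-connected precisely when its fiber $\dsum{f : \Db(k) \to \B G}(f(0) = \pt_{\B G})$ over $\pt_{\B G}$ is. The nontrivial content, then, is to show that every pointed map $f : \Db(k) \pto \B G$ is merely identifiable with the constant map. I expect to prove this using the crispness of $G$ --- working with a crisp delooping so that crisp-level reasoning about $G$-torsors is available --- together with the fact that $\Db(k)$ is an $\Im$-connected pointed set of infinitesimal vectors, forcing any pointed $G$-torsor on $\Db(k)$ trivialized at $0$ to be merely trivial.

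Once $0$-connectedness is established, each corner is exhibited as a delooping of the higher group $G^{\Db(k)}$, and \cref{lem:square.of.higher.groups.pullback} assembles the looped pullback (coming from infinitesimal linearity of $G$) into a pullback of the original square, yielding infinitesimal linearity of $\B G$.
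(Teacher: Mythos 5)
Your proposal follows the paper's overall strategy exactly: point the mapping-space square at the constant maps, observe that looping at the constant basepoint produces the infinitesimal-linearity square for $G$ (which is a pullback by hypothesis), note the surjectivity of the looped vertical, and then invoke \cref{lem:square.of.higher.groups.pullback}. Those steps are all correct and match the paper's proof.

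The gap is in the step you flag as ``the main obstacle'': establishing that each corner $(\B G)^{\Db(k)}$ is $0$-connected. You propose to attack this via the $\Im$-connectedness of $\Db(k)$ and crisp reasoning about $G$-torsors, ``forcing any pointed $G$-torsor on $\Db(k)$ trivialized at $0$ to be merely trivial.'' But $\Im$-connectedness of $\Db(k)$ only constrains maps \emph{into $\Im$-modal types}, and $\B G$ is typically not $\Im$-modal --- for a Lie group $G$, for instance, $\B G$ has genuine smooth structure. So $\Im\,\Db(k) \simeq \ast$ tells you nothing about $(\B G)^{\Db(k)}$. The correct tool is Postulate W: the crisp infinitesimal varieties $\Db(k)$ are \emph{tiny} (\cref{defn:tiny}), which by \cref{thm:tiny.commute.with.localization} means $(-)^{\Db(k)}$ commutes with truncation on crisp types, so $\trunc{(\B G)^{\Db(k)}}_0 \simeq \trunc{\B G}_0^{\Db(k)} \simeq \ast^{\Db(k)} \simeq \ast$ (this is \cref{cor:tiny.connectedness}). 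This is precisely where crispness of $G$ is really used: tinyness only furnishes the adjunction for crisp types. Your instinct that crispness matters is correct, but the mechanism is the external right adjoint of $(-)^{\Db(k)}$, not torsor triviality over an $\Im$-connected base; the latter route would at best work for discrete $G$, which misses the Lie group examples the theorem is designed for.
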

\begin{proof}
  We need to show that the squares

\[
\begin{tikzcd}
	{(\B G)^{\Db(n + m)}} & {(\B G)^{\Db(n)}} \\
	{(\B G)^{\Db(m)}} & \B G
	\arrow[from=1-1, to=2-1]
	\arrow[from=1-1, to=1-2]
	\arrow[from=1-2, to=2-2]
	\arrow[from=2-1, to=2-2]
\end{tikzcd}
  \]
are pullbacks. By Postulate W, the crisp infinitesimal varieties $\Db(n)$ are tiny in the sense of \cref{defn:tiny}. Therefore, by \cref{cor:tiny.connectedness}, the types $(\B G)^{\Db(n)}$ are $0$-connected. Furthermore, the projection $G^{\Db(n)} \to G$ is surjective since it splits (by precompositiong along the map $\Db(n) \to \ast$). By hypothesis, the looped square is a pullback since $G$ was assumed to be infinitesimally linear. Therefore, \cref{lem:square.of.higher.groups.pullback} applies and we may conclude that this square is a pullback.
\end{proof}

\begin{rmk}
Note that the proof of \cref{thm:inf.linear.BG} made essential use of the surjectivity of the projection $G^{\Db(n)} \to G$. This is why the same argument does not show that the delooping of a crisp microlinear group is microlinear; in the general case, precomposition $G^{V_{2}} \to G^{V_{1}}$ along a map $V_{1} \to V_{2}$ of crisp infinitesimal varieties need not be surjective. I do not know whether $\B G$ of a crisp microlinear (higher) group $G$ is necessarily microlinear.
\end{rmk}

\begin{cor}
Let $G$ be a crisp, infinitesimally linear higher group, and define  $\mathfrak{g} :\equiv T_{1}G$ to be its ``higher Lie algebra''. Then $\mathfrak{g}$ is a higher group with delooping $\B \mathfrak{g} :\equiv T_{\pt_{\B G}}\B G$, and its delooping $\B \mathfrak{g}$ is a (higher) $\Rb$-module.
\end{cor}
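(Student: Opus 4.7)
The plan is to split the statement into two parts: first show that $\B\mathfrak{g} :\equiv T_{\pt_{\B G}} \B G$ is a delooping of $\mathfrak{g}$ (pointed, $0$-connected, with loop space $\mathfrak{g}$), then deduce the higher $\Rb$-module structure. The second part will be immediate from what we have already proved: \cref{thm:inf.linear.BG} establishes that $\B G$ is infinitesimally linear, and \cref{thm:inf.linear.R.module} then gives every tangent space of an infinitesimally linear type a coherent $\Rb$-module structure. All the remaining work lies in the delooping claim.

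I would unpack $\B\mathfrak{g} \equiv \dsum{v : \Db \to \B G}(v(0) = \pt_{\B G})$ and point it at $(\term{const}_{\pt_{\B G}}, \refl)$. To compute the loop space, I would use the standard characterization of identifications in dependent sums together with function extensionality. A pointed self-identification of the basepoint consists of an identification $q : \term{const}_{\pt_{\B G}} = \term{const}_{\pt_{\B G}}$ in $(\B G)^{\Db}$ together with a compatibility condition obtained by computing transport in $\lambda v.\,(v(0) = \pt_{\B G})$, which simplifies to $q(0) = \refl$. Function extensionality converts $q$ into a function $\tilde q : \Db \to (\pt_{\B G} = \pt_{\B G}) \equiv G$, and the compatibility reduces to $\tilde q(0) = 1$. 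This identifies $\Omega \B\mathfrak{g}$ with $\dsum{\tilde q : \Db \to G}(\tilde q(0) = 1) \equiv T_1 G = \mathfrak{g}$, as required.

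The main step is showing $\B\mathfrak{g}$ is $0$-connected. By Postulate W and \cref{cor:tiny.connectedness} (as already invoked in \cref{thm:inf.linear.BG}), the type $(\B G)^{\Db}$ is $0$-connected, so given any $(v, p) : \B\mathfrak{g}$ we may merely pick some $q' : v = \term{const}_{\pt_{\B G}}$. But to identify $(v, p)$ with the basepoint we need a $q$ with $q(0) = p$, and $q'(0)$ may differ from $p$ by a loop $g :\equiv q'(0)\inv \bullet p$ in $G$. The correction is to post-compose $q'$ with the constant family $h$ obtained via function extensionality from $\lambda \ep.\,g : \Db \to G$; setting $q :\equiv q' \bullet h$ yields $q(0) = p$, which provides the required identification of $(v, p)$ with the basepoint.

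I expect no serious obstacle: the loop space calculation is a routine unpacking of an identification in a sum type, the $0$-connectedness argument uses only tiny-connectedness of $(\B G)^{\Db}$ (already in hand) together with the elementary correction trick above, and the $\Rb$-module structure follows by direct application of \cref{thm:inf.linear.BG} and \cref{thm:inf.linear.R.module}. The delooping $\B\mathfrak{g} \to \B G$ itself is simply evaluation at $0$, and the induced map $e \mapsto T_e \B G$ on \exemplars will be a delooping of the adjoint action of $G$ on $\B\mathfrak{g}$.
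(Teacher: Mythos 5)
Your proof is correct, and it takes a genuinely different route from the paper's. The paper argues abstractly at the level of higher groups: $T_{\pt_{\B G}}\B G$ is the fiber of $\type{ev}_0 : (\B G)^{\Db} \to \B G$ over the basepoint, and by \cref{cor:tiny.connectedness} this map is a delooping of the homomorphism $\type{ev}_0 : G^{\Db} \to G$ between higher groups; since that homomorphism is surjective (it splits, as already noted in the proof of \cref{thm:inf.linear.BG}), the fiber of its delooping is a delooping of its kernel, which is exactly $\mathfrak{g} \equiv T_1 G$. You instead unpack $\B\mathfrak{g}$ explicitly: you compute $\Omega\B\mathfrak{g}$ by hand via the characterization of identifications in a dependent sum plus function extensionality, and you establish $0$-connectedness by starting from the $0$-connectedness of $(\B G)^{\Db}$ (same \cref{cor:tiny.connectedness}) and then correcting the basepoint component of the merely-given identification $q'$ by post-composing with the constant homotopy at $g :\equiv q'(0)^{-1}\bullet p$. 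Both proofs lean on the same nontrivial input (tinyness of $\Db$ giving $0$-connectedness of $(\B G)^{\Db}$); the paper then packages the rest into the ``fiber of a delooping of a surjective homomorphism deloops the kernel'' principle, whereas your version carries out the loop-space and connectedness computations directly, which is more self-contained but longer. Your correction step is the content that the abstract argument hides inside the general kernel-delooping lemma, and your reduction of the $\Rb$-module claim to \cref{thm:inf.linear.BG} plus \cref{thm:inf.linear.R.module} matches the paper's intent exactly.
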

\begin{proof}
By definition, $T_{\pt_{\B G}}\B G :\equiv \dsum{v : \Db \to \B G}(v(0) = \pt_{\B G})$ is the fiber of the tangent bundle projection $\type{ev}_{0} : (\B G)^{\Db} \to \B G$. By \cref{cor:tiny.connectedness}, $(\B G)^{\Db} = \B (G^{\Db})$ is $0$-connected and deloops the tangent space of $G$, and the projection $(\B G)^{\Db} \to \B G$ deloops the projection $G^{\Db} \to G$. Therefore, the fiber $T_{\pt_{\B G}}\B G$ of $(\B G)^{\Db} \to \B G$ deloops the fiber $\mathfrak{g}$ of $G^{\Db} \to G$ over $1 : G$.
\end{proof}

    \section{Finiteness, compactness, and proper \'etale groupoids}

    We have seen that \'etale groupoids are microlinear. But not all \'etale groupoids are orbifolds --- there is a finiteness condition in the definition of an orbifold. In an \'etale pregroupoid, this is traditionally handled by asking that the map $(s, t) : \Ga_{1} \to \Ga_{0} \times \Ga_{0}$ be proper (see \cite{Moerdijk-Pronk:Orbifolds}). Using the usual definition of proper (inverse images of compact sets are compact) with the usual definition of compact (every open cover admits a finitely enumerable subcover), we can actually show that the isotropy groups of such an \'etale pregroupoid are \emph{finite}.

    But we don't actually want for the isotropy groups to be finite, because this would imply that they have a constant cardinality over any connected component of our orbifold. This is because there is a function $\type{card} : \type{Fin} \to \Nb$ which sends a finite set $F$ to its cardinality $\type{card}(F)$. If for all $x, y : X$, the type $(x = y)$ were finite, then we would have a function $(x, y) \mapsto \type{card}(x = y) : X \times X \to \Nb$. Because $\Nb$ is discrete, this map would factor through the set $\shape_{0}(X \times X)$ of connected components of $X\times X$. In other words, the cardinality of $(x = y)$ would be constant on each connected component.

  This discussion reveals that being finite is a very strong condition on a set in constructive mathematics. Luckily, and sometimes frustratingly, there are many different, weaker notions of finiteness in constructive mathematics, which we survey in \cref{defn:finite}. In \cref{sec:finiteness}, we will introduce a new notion of finiteness for our own purposes: \emph{properly finite} (see \cref{defn:properly.finite}). A set is properly finite when it is discrete and a subquotient of a finite set.

  Changing our notion of finiteness means that we will also need to change our notion of compactness to match. Dubuc and Penon have investigated a useful notion of compactness which, in the intended models, corresponds on crisp ordinary manifolds with external compactness (\cite{Dubuc-Penon:Compact}). We recall Dubuc and Penon's notion of compactness in \cref{defn:Dubuc.Penon.compact}.

  Using Dubuc-Penon compactness, in \cref{defn:ordinary.proper.etale.groupoid} we will define an ordinary proper \'etale pregroupoid to be an \'etale pregroupoid $\Ga$ where $\Ga_{0}$ and $\Ga_{1}$ are ordinary manifolds, and where the map $(s, t) : \Ga_{1} \to \Ga_{0} \times \Ga_{0}$ is Dubuc-Penon proper in the sense that the inverse image of any Dubuc-Penon compact subset is Dubuc-Penon compact. With this definition of ordinary proper \'etale groupoid, we may then state \cref{thm:ordinary.proper.etale.groupoid.is.orbifold}: the Rezk completion of a crisp, ordinary proper \'etale pregroupoid is an orbifold in the sense of \cref{defn:orbifold}.

  Since we have already shown that the Rezk completion $\type{r}\Ga$ of a crisp \'etale pregroupoid is microlinear, it only remains to show that the types of identifications in $\type{r}\Ga$ are properly finite. The types of identifications in $\type{r}\Ga$ are identifiable with the sets of maps $\Ga(x, y)$ in the pregroupoid $\Ga$. It follows from the assumption that $\Ga$ is proper \'etale that $\Ga(x, y)$ is crystaline and Dubuc-Penon compact. We can therefore argue that $\Ga(x, y)$ is properly finite in two stages: first, in \cref{lem:crystaline.subset.of.manifold.is.discrete}, that crystaline subsets of ordinary manifolds are discrete, and second, in \cref{thm:discrete.compact.subset.of.manifolds.is.properly.finite}, that discrete Dubuc-Penon compact subsets of ordinary manifolds are properly finite.

  In \cref{sec:compact}, we will explore the notion of Dubuc-Penon compactness (\cref{defn:Dubuc.Penon.compact}). In particular, we will show that Dubuc-Penon compact sets are compact in ways more closely resembling ordinary compactness: in \cref{prop:db.compact.is.countably.compact} we show that any Dubuc-Penon compact set is countably compact and \emph{subcountably subcompact}, and in \cref{cor:db.compact.in.manifold.is.refinement.subcompact} that any Dubuc-Penon compact subset of an ordinary manifold is \emph{refinement subcompact}. See \cref{defn:compact} for the definitions of these subtly differing notions of compactness. These results depend in an essential way on the Covering Property appearing in \cref{axiom:SDG}. At the end, we will prove \cref{thm:discrete.compact.subset.of.manifolds.is.properly.finite}: discrete Dubuc-Penon compact subsets of ordinary manifolds are properly finite.

  In \cref{sec:final.theorem}, we will prove \cref{lem:crystaline.subset.of.manifold.is.discrete}: crystaline subsets of ordinary manifolds are discrete. This completes the proof of \cref{thm:ordinary.proper.etale.groupoid.is.orbifold}.

  Finally, in \cref{sec:global.quotient}, we note that the quotient of a microlinear set by the action of a finite group is an orbifold, and that orbifolds are closed under pullback. As a corollary, the inertia orbifold $X^{S^{1}}$ of any orbifold is itself an orbifold.

   \subsection{Notions of finiteness}\label{sec:finiteness}
In constructive mathematics, the notion of ``finiteness'' fractures into a number of inequivalent notions.
    \begin{defn}[Standard]\label{defn:finite}
      Let $X$ be a set. Then:
      \begin{enumerate}
\item $X$ is \emph{finite} if there is an $n : \Nb$ and an equivalence $X \simeq \ord{n}$ with the standard finite ordinal $\ord{n} :\equiv \{0, \ldots, n-1\}$.
              \item $X$ is \emph{subfinite} if there is an $n : \Nb$ and an embedding $X \hookrightarrow \ord{n}$.
              \item $X$ is \emph{finitely enumerable} (also known as Kurotowski finite) if there is an $n : \Nb$ and a surjection $\ord{n} \twoheadrightarrow X$.
              \item $X$ is \emph{subfinitely enumerable} if there is a subfinite set $\tilde{X}$ and a surjection $\tilde{X} \twoheadrightarrow X$.      \end{enumerate}
    \end{defn}

    These notions of finiteness are liminal in the sense that they are only distinct for non-crisp sets (sets which, in some sense, vary continuously in a free variable). Crisply, there is only one notion of finiteness.
    \begin{prop}\label{prop:crisp.subfinitely.enumerable.is.finite}
      Any crisp subfinitely enumerable set is finite.
    \end{prop}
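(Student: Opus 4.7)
The plan is to invoke the crisp law of excluded middle repeatedly to mimic a classical argument, exploiting the fact that ``$X$ is finite'' is itself a crisp proposition about the crisp set $X$.

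First I would extract crisp witnesses for the propositional hypothesis. For each crisp $n :: \Nb$, the proposition ``some subset of $\ord{n}$ surjects onto $X$'' is crisp, and so by \cref{axiom:LEM} it is decidable. Bounded search over $\Nb$ then yields a least such crisp $n_0$. For this $n_0$, subsets of $\ord{n_0}$ are classified by maps $\ord{n_0} \to \Prop$, and pointwise application of crisp LEM shows each such map factors through $\Bool$, leaving only $2^{n_0}$ crisp subsets to consider. Deciding by crisp LEM which of these subsets propositionally surjects onto $X$ produces a specific crisp $S_0 \subseteq \ord{n_0}$ together with a propositional surjection $S_0 \twoheadrightarrow X$.

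Next, since the goal ``$X$ is finite'' is a proposition, I would unpack the propositional existence to obtain a concrete surjection $s : S_0 \twoheadrightarrow X$, and define $i \sim j :\equiv (s(i) = s(j))$ on the crisp finite set $S_0$. Once $\sim$ is known to be decidable, $S_0 / \sim$ is a crisp finite set (concretely realised as the set of least $\sim$-representatives in $\ord{n_0}$, decidable because order on $\ord{n_0}$ is), and the induced map $[i] \mapsto s(i) : S_0/\sim \to X$ is a bijection: well-defined and injective by construction of $\sim$, and surjective because $s$ is. This would exhibit $X$ as finite.

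The hard part is justifying decidability of $\sim$, given that $s$ is produced by unpacking a truncation and is therefore not a priori crisp. One way around this is to sidestep the extraction of $s$ altogether: define $\sim$ intrinsically as the crisp relation $(i \sim j) :\equiv \forall s : S_0 \twoheadrightarrow X.\, s(i) = s(j)$, which depends only on the crisp data $X, S_0, i, j$ and is hence decidable by crisp LEM. One then checks that $\sim$ is an equivalence relation (reflexivity and symmetry are trivial; transitivity follows because a single $s$ quantifies over all three pairs), and verifies that the canonical map $S_0/\sim \to X$ obtained by unpacking the propositional surjection is a bijection, using the original surjection only propositionally. The remainder of the argument is then a standard classical counting argument enabled entirely by \cref{axiom:LEM}.
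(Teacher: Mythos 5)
There is a gap at the pivotal step, and it is precisely the step the paper's proof is built around. You claim that ``subsets of $\ord{n_0}$ are classified by maps $\ord{n_0}\to\Prop$, and pointwise application of crisp LEM shows each such map factors through $\Bool$''. This is false: \cref{axiom:LEM} decides only \emph{crisp} propositions, and for a non-crisp $S : \ord{n_0} \to \Prop$ the propositions $S(i)$ are not crisp, so there is no decision and no factorization through $\Bool$. Only the \emph{crisp} subsets are decidable, and the hypothesis of subfinite enumerability supplies merely some (possibly non-crisp) subset and surjection. You therefore cannot conclude that one of the $2^{n_0}$ crisp subsets surjects onto $X$ without an independent argument. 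This is exactly the content of the paper's first sentence: because ``$X$ is subfinitely enumerable'' is a crisp proposition that holds (and ``$X$ is finite'' is the crisp proposition being proved), one can use the $\flat$-commutation lemmas of \cite{Shulman:Real.Cohesion} ($\flat$ preserving $\Sigma$-types over crisp bases and propositional truncations of crisp types) to extract \emph{crisp} witnesses $\tilde{X}\subseteq\ord{n}$ and $f:\tilde{X}\twoheadrightarrow X$. Once $\tilde{X}$ and $f$ are crisp, the rest is exactly as you and the paper both say: decidability of $i\in\tilde{X}$ makes $\tilde{X}$ finite, and decidability of $f(x)=f(y)$ makes the quotient finite.

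Your intrinsic relation $(i\sim j) :\equiv \forall s : S_0 \twoheadrightarrow X.\, s(i)=s(j)$ also needs more care. In general this is strictly finer than the kernel relation of any single surjection (take $S_0=\ord{3}$, $X=\ord{2}$, $s_0=(0,1,0)$, $s_1=(0,1,1)$: then $0\not\sim 2$ even though $s_0(0)=s_0(2)$, so $S_0/{\sim}\to X$ fails to be injective). The minimality of $n_0$ does rescue you --- it forces $S_0=\ord{n_0}$ and forces every surjection $\ord{n_0}\twoheadrightarrow X$ to be injective, whence $\sim$ is trivial and the induced map is a bijection --- but you never make that argument; you dismiss it as ``a standard classical counting argument''. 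The paper avoids the issue entirely: having a \emph{crisp} surjection $f$, it defines $\sim$ directly as the kernel of $f$, which is then a crisp and hence decidable relation, and no minimality argument is needed. So the key idea to internalize here is that the difficulty is not classical reasoning about finite sets (crisp LEM does enable that), but the extraction of crisp witnesses from a crisp truncated hypothesis, which your plan does not accomplish.
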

    \begin{proof}
      Let $X$ be a crisp subfinitely enumerable set. Then there is a crisp subset $\tilde{X} \subseteq \ord{n}$ and a crisp surjection $f : \tilde{X} \to X$. Since $\tilde{X}$ is crisp and $\ord{n}$ is crisply discrete, the predicate $i \in \tilde{X}$ for $i :: \ord{n}$ is crisp, and therefore decidable. So, as a decidable subset of a finite set, $\tilde{X}$ is finite.

      This leaves $X$ as the crisp quotient of a finite set. The relation $x \sim y$ defined by $f(x) = f(y)$ on $\tilde{X}$ may therefore also be taken as crisp and therefore decidable. So, as the quotient of a finite set by a decideable relation, $X$ is finite.
    \end{proof}

    We will add an additional notion of finiteness to this list, one which has already appeared in our definition of orbifold (\cref{defn:orbifold}).

    \begin{defn}\label{defn:properly.finite}
A set $X$ is said to be \emph{properly finite} if it is subfinitely enumerable and has decideable equality --- for all $x, y : X$, $(x = y) \vee (x \neq y)$. By Lemma 8.15 of \cite{Shulman:Real.Cohesion}, a set is properly finite if and only if it is discrete and subfinitely enumerable.
    \end{defn}

    \begin{rmk}
      It is folklore that a discrete finitely enumerable set is necessarily finite. This is because such a set is the quotient of a finite set $\ord{n}$ by a decidable equivalence relation $\sim \subseteq \ord{n} \times \ord{n}$, which is a decidable subset of a finite set and therefore itself finite. Therefore, the quotient can also be shown to be finite.

      We might therefore expect that a properly finite subset must necessarily be subfinite. This would be very nice, but I have neither managed to prove it nor construct a counterexample.
    \end{rmk}

    \begin{lem}
Subfinite sets (and so also finite sets) are properly finite.
    \end{lem}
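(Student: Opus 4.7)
The plan is to unpack both clauses of \emph{properly finite} --- subfinite enumerability and decidable equality --- and verify each directly from the hypothesis that $X \hookrightarrow \ord{n}$ is an embedding for some $n : \Nb$. Since finite sets are in particular subfinite (the identity is an embedding into $\ord{n}$), it suffices to handle the subfinite case.

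First I would observe that every subfinite set is \emph{automatically} subfinitely enumerable: if $i : X \hookrightarrow \ord{n}$ witnesses subfiniteness, then $X$ itself is subfinite and the identity $\id_{X} : X \twoheadrightarrow X$ is a surjection from a subfinite set onto $X$, so subfinite enumerability holds on the nose.

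Next I would show that $X$ has decidable equality. The standard finite ordinal $\ord{n}$ has decidable equality by induction on $n$. Since $i : X \hookrightarrow \ord{n}$ is an embedding, for any $x, y : X$ the map $i_{\ast} : (x = y) \to (i(x) = i(y))$ is an equivalence. The latter proposition is decidable in $\ord{n}$, and transporting the decision back along $i_{\ast}$ gives decidability of $(x = y)$. Hence $X$ is discrete, and combining this with subfinite enumerability yields that $X$ is properly finite per \cref{defn:properly.finite}. No step here is an obstacle; the lemma is essentially a bookkeeping check that the definitions line up.
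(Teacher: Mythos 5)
Your proof is correct and matches the paper's reasoning: both arguments observe that a subfinite set is trivially subfinitely enumerable (it surjects onto itself) and inherits decidable equality (equivalently, discreteness) from the finite ordinal it embeds into. The only cosmetic difference is that the paper phrases the second half as ``subsets of discrete sets are discrete,'' while you unpack this to decidable equality via the embedding-induced equivalence on identity types --- the same fact in different clothing.
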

    \begin{proof}
As subsets of discrete sets, subfinite sets are discrete. They are also of course subfinitely enumerable.
    \end{proof}

    \begin{warn}
Though subfinite sets are properly finite, it is not necessarily the case that any subfinitely enumerable set is properly finite. Let $z : \Rb$ and let $Q_{z} = \{0, 1\}/\sim$ where $0 \sim 1$ if and only if $z = 0$ (type theoretically, this is the suspension $\Sigma (z = 0)$ of the proposition $z = 0$). This set is finitely enumerable, and therefore also subfinitely enumerable. But it is not discrete, since the proposition $[0] = [1]$ in $Q_{z}$ is equivalent to $z = 0$, which is not decidable.
    \end{warn}

    The following proposition gives us another useful definition of a subfinitely enumerable set, and therefore also of a properly finite set.
    \begin{prop}[Standard]\label{lem:properly.finite.by.relation}
      A set $X$ is subfinitely enumberable when there exists an $n : \Nb$ and a relation $r : X \times \ord{n} \to \Prop$ such that:
      \begin{enumerate}
\item For all $x : X$, there is some $i : \ord{n}$ with $r(x, i)$.
\item If $r(x, i)$ and $r(x, j)$, then $i = j$.
      \end{enumerate}
      We will refer to such a relation as an \emph{association} of $X$ with $\ord{n}$. If $r \subseteq X \times Y$ is an association of $X$ with $Y$ and $Y$ is subfinitely enumberable, then $X$ is also subfinitely enumerable
    \end{prop}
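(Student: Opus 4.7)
The plan is to extract a span witnessing subfinite enumerability of $X$ directly from the relation $r$. First I would take
\[
D :\equiv \{i : \ord{n} \mid \exists\, x : X.\, r(x, i)\},
\]
which is a subset of the finite set $\ord{n}$ and hence subfinite. Property (2) asserts that for each $i$ the witnessing $x$ is unique, so the mere existence defining $D$ is in fact a \emph{unique} existence; propositional unique choice then yields a function $g : D \to X$ with $r(g(i), i)$ for all $i \in D$. Property (1) ensures $g$ is surjective: any $x : X$ has some $i : \ord{n}$ with $r(x, i)$, and this $i$ lies in $D$ with $g(i) = x$. The pair $(D, g)$ is exactly the span needed to exhibit $X$ as subfinitely enumerable.

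For the relative statement, I would transport an association along a subfinite enumeration of $Y$. Suppose $Y$ is subfinitely enumerable, witnessed by a subset $\tilde{Y} \subseteq \ord{n}$ and a surjection $s : \tilde{Y} \twoheadrightarrow Y$, and suppose $r \subseteq X \times Y$ is an association. I would define
\[
r'(x, i) :\equiv \exists\, \tilde{y} \in \tilde{Y}.\, (\tilde{y} = i) \wedge r(x, s(\tilde{y})),
\]
where $\tilde{y} = i$ is equality in $\ord{n}$ via the inclusion $\tilde{Y} \hookrightarrow \ord{n}$. The totality of $r'$ in $x$ combines the totality of $r$ with the surjectivity of $s$, while the uniqueness clause for $r'$ uses that $\tilde{Y} \hookrightarrow \ord{n}$ is an embedding (so a given $i$ picks out at most one $\tilde{y}$) together with the uniqueness clause for $r$. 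Applying the first half of the lemma to $r'$ then yields that $X$ is subfinitely enumerable.

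Neither step involves any real difficulty: the result is a bookkeeping lemma about how ``partial function'' relations package the same data as subfinite enumeration spans. The only mild care required is in the systematic use of propositional truncation --- checking that $r$ and $r'$ are genuinely proposition-valued and that each extraction of a function from a unique existence is a legitimate instance of propositional unique choice --- which is entirely routine.
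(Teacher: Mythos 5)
Your argument is correct, but it silently replaces the stated property (2) with the transposed condition. As written, (2) says that if $r(x, i)$ and $r(x, j)$ then $i = j$ --- for each $x$ there is at most one associated $i$. You instead use: if $r(x, i)$ and $r(y, i)$ then $x = y$ --- for each $i$ there is at most one associated $x$. These are genuinely different, and only your reading makes the proposition true: under (2) as literally written, $X :\equiv \Rb$ with $r(x, i) :\equiv (i = 0)$ is an association of $\Rb$ with $\ord{1}$, yet $\Rb$ is certainly not subfinitely enumerable. Your condition is precisely what makes the second projection $\snd : r \to \ord{n}$ an embedding, hence $r$ a subfinite set, while (1) makes $\fst : r \twoheadrightarrow X$ surjective. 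The paper's own proof uses this same transposed reading --- it takes $r$ itself as the $\tilde{X}$ embedded in $\ord{n}$ --- so the stated (2) is a typo, and you have in fact proven the intended statement. You should nonetheless notice when your use of a hypothesis does not match its written form.

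Granting that correction, your route is essentially the paper's. For the first half, the paper observes directly that the subset $r \subseteq X \times \ord{n}$ is itself subfinite (via $\snd$) and surjects onto $X$ (via $\fst$); your detour through $D :\equiv \im(\snd)$ and propositional unique choice produces the equivalent span and costs nothing, but is not needed. For the second half the paper assumes $Y$ already comes with an association $s \subseteq Y \times \ord{m}$ and forms the composite relation $(s \circ r)(x, i) :\equiv \exists y.\, r(x, y) \wedge s(y, i)$; you first unpack the given subfinite enumeration of $Y$ into such an $s$ and then your $r'$ is exactly $s \circ r$. This is slightly more explicit about the equivalence the first half supplies, but materially the same argument.
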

    \begin{proof}
      The relation itself, considered as a subset of the product $X \times \ord{n}$, defines the $\tilde{X}$ which is a subset of $\ord{n}$ and which surjects onto $X$. If $r \subseteq X \times Y$ and $s \subseteq Y \times \ord{n}$ are associations, then the composite relation $s \circ r \subseteq X \times \ord{n}$ defined by
      $$(s \circ r)(x, i) :\equiv \exists y : Y.\, r(x, y) \wedge s(y, i)$$
      is also an association. For all $x : X$, there is a $y : Y$ with $r(x, y)$, and for this $y$ and $i : \ord{n}$ with $s(y, n)$, so that $(s \circ r)(x, i)$. If there is a $y$ such that $r(x, y)$ and $s(y, i)$ and a $y'$ such that $r(x, y')$ and $r(x, j)$, then $y = y'$ and therefore $i = j$.
    \end{proof}

 We can also show that (subfinitely) enumerable sets may be equally defined as sub(finitely enumerable) sets.
    \begin{lem}[Standard]
$X$ is subfinitely enumerable if and only if there exists a finitely enumerable $\hat{X}$ and an embedding $X \hookrightarrow \hat{X}$.    \end{lem}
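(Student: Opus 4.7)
The plan is to prove both directions by exploiting the (surjection, embedding) orthogonal factorization system on sets, essentially exchanging the order in which one takes a subset and a quotient of a standard finite ordinal. The forward direction builds a quotient of $\ord{n}$ into which $X$ embeds; the reverse direction builds a subset of $\ord{m}$ which surjects onto $X$ by pullback.

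For the ``only if'' direction, suppose $X$ is subfinitely enumerable, witnessed by a subfinite set $\tilde{X} \subseteq \ord{n}$ together with a surjection $q : \tilde{X} \twoheadrightarrow X$. I will take $\hat{X}$ to be the quotient of $\ord{n}$ by the equivalence relation $\sim$ generated by identifying $i$ with $j$ whenever $i, j \in \tilde{X}$ and $q(i) = q(j)$. As a quotient of $\ord{n}$, the set $\hat{X}$ is finitely enumerable via the composite $\ord{n} \twoheadrightarrow \ord{n}/{\sim} \equiv \hat{X}$. To embed $X$ into $\hat{X}$, I will send $x : X$ to $[i]$, where $i \in \tilde{X}$ is any preimage under $q$. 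This is well-defined because the class $[i]$ does not depend on the choice of preimage (the target $\hat{X}$ is a set and the assignment descends along the surjection $q$ by propositional-truncation induction into a set), and it is injective because $[i] = [j]$ with $i, j \in \tilde{X}$ forces $q(i) = q(j)$ by construction of $\sim$.

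For the ``if'' direction, suppose we have a finitely enumerable $\hat{X}$, witnessed by a surjection $p : \ord{m} \twoheadrightarrow \hat{X}$, together with an embedding $e : X \hookrightarrow \hat{X}$. I will form the pullback $\tilde{X} :\equiv \ord{m} \times_{\hat{X}} X$. Since embeddings are stable under pullback, the projection $\tilde{X} \to \ord{m}$ is an embedding, exhibiting $\tilde{X}$ as subfinite. Since surjections (in the $(-1)$-connected sense) are stable under pullback, the other projection $\tilde{X} \twoheadrightarrow X$ is a surjection. Together these witness that $X$ is subfinitely enumerable. The main obstacle, if any, is the care required in the forward direction to justify the map $X \to \hat{X}$ via propositional-truncation induction into the set $\hat{X}$; everything else is formal manipulation of the factorization system.
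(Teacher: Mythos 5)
Your proof is correct. The reverse direction is identical to the paper's: pull back the surjection $\ord{m} \twoheadrightarrow \hat{X}$ along the embedding $X \hookrightarrow \hat{X}$, and use stability of surjections and embeddings under pullback. The forward direction is where you diverge. The paper constructs $\hat{X}$ as the pushout $\ord{n} +_{\tilde{X}} X$ of the span $X \twoheadleftarrow \tilde{X} \hookrightarrow \ord{n}$ and reads off the maps from the cogap; this implicitly appeals to the fact that in a (pre)topos, pushing out a monomorphism along an arbitrary map yields a monomorphism (and pushing out an epi yields an epi), facts the paper leaves unstated. You instead build $\hat{X}$ directly as the set quotient $\ord{n}/\!\sim$, where $\sim$ is generated by $q(i) = q(j)$ on $\tilde{X}$, and verify both the well-definedness of $X \to \hat{X}$ (truncation-induction into a set, justified by constancy on fibers of $q$) and its injectivity by hand. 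The two constructions agree --- since $q$ is surjective, the cogap $\ord{n} \to \ord{n} +_{\tilde{X}} X$ is surjective, so the pushout is exactly the image of $\ord{n}$, which is your quotient --- but your route makes explicit the verifications that the pushout argument buries, at the small cost of one further check you elide: to invoke effectivity of the quotient and conclude that $[i] = [j]$ forces $i = j$ or $q(i) = q(j)$, one should note that your generating relation $R$ is already symmetric and transitive, so the equivalence relation it generates is simply $R \cup \Delta$; without observing this, the claim that "$[i]=[j]$ forces $q(i)=q(j)$ by construction of $\sim$" is not immediate.
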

    \begin{proof}
      Consider the following square:
      \[
\begin{tikzcd}
	{\tilde{X}} & {\ord{n}} \\
	X & {\hat{X}}
	\arrow[two heads, from=1-1, to=2-1]
	\arrow[two heads, from=1-2, to=2-2]
	\arrow[hook, from=2-1, to=2-2]
	\arrow[hook, from=1-1, to=1-2]
\end{tikzcd}
      \]
      If $X$ is subfinitely enumerable, then $\tilde{X}$ exists and we may define $\hat{X}$ by pushout. Conversely, if $\hat{X}$ exists then we may define $\tilde{X}$ by pullback.
    \end{proof}

    Let's now prove some closure properties of properly finite sets.
    \begin{prop}\label{lem:properly.finite.closed.under.union}
      The following closure properties hold of properly finite sets.
      \begin{enumerate}
              \item Subsets of properly finite sets are properly finite.
              \item The product of two properly finite sets is properly finite.
              \item The pullback of properly finite sets is properly finite.
              \item A finite disjoint union of properly finite sets is properly finite.
              \item If $X$ is subfinitely enumerable and $S$ is discrete and $f : X \to S$, then $\im f \subseteq S$ is properly finite.
\item
Properly finite subsets of a discrete set are closed under finite union.
      \end{enumerate}
    \end{prop}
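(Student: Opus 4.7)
The plan is to handle the six closure properties in an order that allows later ones to reuse earlier ones. In each case, proper finiteness requires checking two things: decidable equality and subfinite enumerability. Decidable equality is straightforwardly inherited in every case because subsets, products, finite disjoint unions, images in a discrete target, and finite unions inside a discrete set are all discrete by elementary manipulations of decidable propositions. So the real work in each part is to produce a witness of subfinite enumerability, and for this I will consistently use the association characterization of \cref{lem:properly.finite.by.relation}.

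First I would prove (1): if $X \subseteq Y$ and $r \subseteq Y \times \ord{n}$ is an association, then its restriction $r \cap (X \times \ord{n})$ is an association of $X$ with $\ord{n}$. For (2), given associations $r \subseteq X \times \ord{n}$ and $s \subseteq Y \times \ord{m}$, I would combine them into the association $((x,y), (i, j)) \mapsto r(x, i) \wedge s(y, j)$ valued in $\ord{n} \times \ord{m} \simeq \ord{nm}$. Property (3) is then immediate: a pullback of $X \to S \leftarrow Y$ embeds into $X \times Y$, so it is properly finite by (2) and (1).

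For (4), given properly finite sets $X_{0},\ldots,X_{n-1}$ with associations $r_{i} \subseteq X_{i} \times \ord{k_{i}}$, I would assemble the obvious association of $\sum_{i} X_{i}$ with $\sum_{i} \ord{k_{i}} \simeq \ord{k_{0} + \cdots + k_{n-1}}$ by case analysis on which summand an element lies in; decidability of equality follows because two elements of a disjoint union are equal iff they lie in the same summand (decidable by tagging) and are equal there. For (5), suppose $r \subseteq X \times \ord{n}$ is an association and $f : X \to S$. Define $s \subseteq \im f \times \ord{n}$ by $s(y, i) :\equiv \exists x : X.\, f(x) = y \wedge r(x, i)$. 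Existence of some $i$ for every $y \in \im f$ is immediate from the definition of the image, and uniqueness follows from the uniqueness clause of $r$ together with the fact that $S$ (and hence $\im f$) is a set.

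Finally, (6) will follow by induction from (4) and (5): given properly finite $A, B \subseteq X$ with $X$ discrete, the disjoint union $A + B$ is properly finite by (4), and the canonical map $A + B \to X$ has image $A \cup B$, which is properly finite by (5). The induction on the number of subsets being unioned is immediate. I do not anticipate a serious obstacle; the only subtle point is checking in (4) and (5) that decidable equality is preserved, which in (5) uses that $S$ is discrete and in (4) uses that the tag of the summand is a decidable function of an element.
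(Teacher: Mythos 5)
Your decomposition and the proofs of (1)--(4) are essentially the paper's: restrict the association for (1), take the product relation for (2), embed the pullback in the product for (3), and sum the associations over $\ord{k}$ for (4). Your reduction of (6) to (4) and (5) is also the paper's. The issue is (5). You try to push the association machinery through: you define $s(y, i) :\equiv \exists x : X.\, f(x) = y \wedge r(x, i)$ and assert that the uniqueness clause for $s$ "follows from the uniqueness clause of $r$ together with the fact that $S$ (and hence $\im f$) is a set." This does not go through. The uniqueness clause of $r$ (condition 2 of \cref{lem:properly.finite.by.relation}) only constrains the indices attached to a single $x$. If $x \neq x'$ but $f(x) = f(x') = y$, then $r$ may relate $x$ to $i$ and $x'$ to $j$ with $i \neq j$, whence $s(y, i)$ and $s(y, j)$ both hold. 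Concretely, take $X = \ord{2}$, $S = \ord{1}$, $f$ the terminal map and $r$ the identity relation on $\ord{2}$: then $s(0,0)$ and $s(0,1)$ both hold. The fact that $S$ is a set is irrelevant to this; nothing forces distinct preimages of $y$ to get the same index.

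The paper avoids this by not using associations for (5) at all. Subfinite enumerability of $X$ literally gives a surjection $\tilde{X} \twoheadrightarrow X$ from a subfinite $\tilde{X}$, and postcomposing with the surjection $X \twoheadrightarrow \im f$ gives a surjection $\tilde{X} \twoheadrightarrow \im f$, so $\im f$ is subfinitely enumerable by definition; discreteness of $\im f$ follows since it is a subset of the discrete set $S$. You should replace your argument for (5) with this direct one; then your derivation of (6) from (4) and (5) is fine.
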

    \begin{proof}
      We will make use of \cref{lem:properly.finite.by.relation}.
      \begin{enumerate}
\item Suppose that $S \subseteq X$ and $X$ is properly finite as witnessed by the association $r \subseteq X \times \ord{n}$. Then $r$ restricted to $S$ remains an association, so that $S$ is subfinitely enumerable. Furthermore, as the subset of a discrete set, it is also discrete; it is therefore properly finite.
              \item Suppose that $X$ and $Y$ are properly finite as witnessed by the associations $r \subseteq X \times \ord{n}$ and $s \subseteq X \times \ord{m}$. Then the relation $(r \times s)((x, y), (i, j)) :\equiv r(x, i) \wedge s(y, j)$ gives an association $r \times x \subseteq (X \times Y) \times (\ord{n} \times \ord{m})$. The product of discrete types is discrete as well, since $\shape$ is a modality.
        \item The pullback $X \times_{Z} Y$ of sets is a subset of the product $X \times Y$, so this follows by the previous two properties.
        \item Let $X_{i}$ be properly finite sets for $i : \ord{k}$; we will show that $\dsum{i : \ord{k}} X_{i}$ is properly finite. Suppose that $r_{i} \subseteq X_{i} \times \ord{n_{i}}$ s an association witnessing the proper finiteness of $X_{i}$. Define $r((i, x), (i', j)) :\equiv (i = i') \wedge r_{i}(x, j)$ to be a relation $r \subseteq (\dsum{i : \ord{k}} X_{i}) \times (\dsum{i : \ord{k}} \ord{n_{i}})$; we will show that this is an association. First, for any $(i, x)$, there is a $j$ with $r_{i}(x, j)$, so that $r((i, x), (i, j))$. Second, if $r((i, x), (i_{1}, j_{1}))$ and $r((i, x), (i_{2}), j_{2})$, then $i_{1} = i = i_{2}$ and so $r_{i}(x, j_{1})$ and $r_{i}(x, j_{2})$, from which we conclude that $j_{1} = j_{2}$.
              \item If $X$ is subfinitely enumerable and $f : X \to S$, then $\im f$ is also subfinitely enumerable. But if $S$ is discrete, then $\im f$ is also discrete as the subset of a discrete set; therefore, it is properly finite.
              \item We combine the previous two closure properties. Let $X_{i}$ be properly finite subsets of a discrete set $S$ for $i : \ord{k}$. Then $\dsum{i : \ord{k}}X_{i}$ is properly finite and if we define $f : \dsum{i : \ord{k}}X_{i} \to S$ by $f(i, x) = x$, we see that $\im f = \bigcup_{i : \ord{k}}X_{i}$.
      \end{enumerate}
    \end{proof}

    \subsection{Dubuc-Penon compactness}\label{sec:compact}

In their paper \cite{Dubuc-Penon:Compact}, Dubuc and Penon introduce a very creative notion of compactness suitable for synthetic differential geometry.
    \begin{defn}[Dubuc-Penon, \cite{Dubuc-Penon:Compact}]\label{defn:Dubuc.Penon.compact}
      A type $K$ is \emph{Dubuc-Penon compact} if for every $A : \Prop$ and $B : K \to \Prop$,
      \[
( \forall x : K.\, A \vee B(x) ) \to (A \vee \forall x : K.\, B(x)).
      \]
      A map $f : X \to Y$ is \emph{Dubuc-Penon proper} if the inverse image of any Dubuc-Penon compact subtype of $Y$ is Dubuc-Penon compact.
    \end{defn}

    Dubuc and Penon prove in \cite{Dubuc-Penon:Compact} that the sheaves represented by compact ordinary manifolds in our intended models are Dubuc-Penon compact. This will allow us to assume at least that the unit interval is Dubuc-Penon compact.

\begin{axiom}\label{axiom:compact.interval}
The unit interval $[0, 1]$ is Dubuc-Penon compact.
\end{axiom}

    \begin{lem}\label{lem:compact.facts}
      We prove the following basic fact about Dubuc-Penon compact sets.
      \begin{enumerate}
              \item Finitely enumerable sets are Dubuc-Penon compact.
              \item If $K$ is Dubuc-Penon compact, and $f : K \to X$ is surjective, then $X$ is Dubuc-Penon compact.
              \item If $X + Y$ is Dubuc-Penon compact, then so are $X$ and $Y$.
      \end{enumerate}
    \end{lem}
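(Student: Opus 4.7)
The plan is to verify each clause by unfolding the definition of Dubuc-Penon compactness and performing routine propositional manipulations; I expect no deep obstacle, but the items are interrelated, so the order matters.

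For (2), given a surjection $f : K \twoheadrightarrow X$, a proposition $A$, and a predicate $B : X \to \Prop$ with $\forall x : X.\, A \vee B(x)$, I will pull $B$ back along $f$. Then $\forall k : K.\, A \vee B(f(k))$ holds by composition, and Dubuc-Penon compactness of $K$ applied to $A$ and $B \circ f$ yields $A \vee \forall k : K.\, B(f(k))$. In the second disjunct, I want to upgrade to $\forall x : X.\, B(x)$: given any $x : X$, surjectivity of $f$ gives a merely-existing $k$ with $f(k) = x$, and since $B(x)$ is a proposition I may extract such a $k$ from the truncation and conclude $B(x) = B(f(k))$. This is enough to forward the disjunction to $X$.

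For (1), I will first prove compactness of the standard finite ordinals $\ord{n}$ by induction on $n$. The base case $n = 0$ is immediate: $\forall x : \emptyset.\, B(x)$ holds vacuously, so we may take the right disjunct without ever touching $A$. In the successor step, from $\forall x : \ord{n+1}.\, A \vee B(x)$ I obtain $A \vee B(n)$ directly and $A \vee \forall x : \ord{n}.\, B(x)$ by induction; distributing and pairing these two disjunctions produces $A \vee \forall x : \ord{n+1}.\, B(x)$. An arbitrary finitely enumerable $K$ comes equipped with a surjection $\ord{n} \twoheadrightarrow K$, so (2) transports Dubuc-Penon compactness from $\ord{n}$ to $K$.

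For (3), assume $X + Y$ is Dubuc-Penon compact; I treat $X$, with $Y$ being symmetric. Given $A : \Prop$ and $B : X \to \Prop$ with $\forall x : X.\, A \vee B(x)$, I extend $B$ to $B' : X + Y \to \Prop$ by $B'(\inl x) :\equiv B(x)$ and $B'(\inr y) :\equiv \top$. Then $\forall z : X + Y.\, A \vee B'(z)$ holds by cases: on $\inl x$ it reduces to the hypothesis, and on $\inr y$ the right disjunct is trivially inhabited. Compactness of $X + Y$ produces $A \vee \forall z : X + Y.\, B'(z)$, and the second disjunct restricted to the $\inl$ summand delivers $\forall x : X.\, B(x)$, completing the argument.
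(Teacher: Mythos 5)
Your proof is correct and follows essentially the same route as the paper: for (1) you do an induction to show $\ord{n}$ is compact and then transport along a surjection via (2), which is exactly the paper's "finite conjunction commutes with disjunction" plus part (2); for (2) your pullback-and-push-forward argument is identical; and for (3) your extension $B'(\inl x) \coloneqq B(x)$, $B'(\inr y) \coloneqq \top$ is precisely the paper's $C(z) \coloneqq (z \in X) \to B(z)$ written out by cases. The only difference is level of detail — you make the induction in (1) explicit and carefully justify extracting from the truncation in (2) — but nothing substantive diverges.
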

    \begin{proof}
First, finite sets are Dubuc-Penon compact because universal quantification over a finite set is a finite conjunction, and disjunction commutes with finite conjunction. As the images of maps from finite sets, finitely enumerable sets will be compact by the second property.

Suppose that $K$ is Dubuc-Penon compact and $f : K \to X$ is surjective. Let $A  : \Prop$ and $B : X \to \Prop$, and suppose that for all $x : X$, $A$ holds or $B(x)$ holds. Then also for all $k : K$, $A$ holds or $B(f(k))$ holds, so by the compactness of $K$, $A$ holds or for all $k : K$, $B(f(k))$ holds. But since for all $x : X$, there is a $k$ such that $f(k) = x$, this suffices to show that $A$ holds or for all $x : X$, $B(x)$ holds.

Suppose that $X + Y$ is Dubuc-Penon compact, and let $A : \Prop$ and $B : X \to \Prop$ such that for all $x : X$, $A$ holds or $B(x)$ holds. Define $C : X + Y \to \Prop$ by $C(z) \equiv (z \in X) \to B(x)$. For all $z : X + Y$, either $z$ is in $X$ and so $A$ holds or $B(z)$ holds, or $z$ is in $Y$ and so $C(z)$ holds trivially; in either case, $A$ holds or $C(z)$ holds, so by compactness of $X + Y$, conclude that either $A$ holds or for all $z : X + Y$, $C(z)$ holds. But in that latter case, we see that $x : X$, $B(x)$ holds, and so $X$ is Dubuc-Penon compact. Of course, the argument for $Y$ is symmetric.
    \end{proof}

    \begin{warn}
      Although finitely enumerable sets are Dubuc-Penon compact, subfinite sets are not in general Dubuc-Penon compact. Every proposition is subfinite, but a proposition $P$ is Dubuc-Penon compact if and only if it is decideable: $P \vee \neg P$.
    \end{warn}

    \begin{prop}\label{lem:proper.iff.fibers.compact}
A map $f : X \to Y$ is Dubuc-Penon proper if and only if its fibers are Dubuc-Penon compact. As a corollary, Dubuc-Penon proper maps are closed under pullback.
    \end{prop}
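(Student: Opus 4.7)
The plan is to prove both directions by reducing quantifiers over $f^{-1}(K)$ to a two-stage quantification: first over $y \in K$, then over $x$ in the fiber $f^{-1}(y)$.

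For the forward direction, suppose $f$ is Dubuc-Penon proper. For any $y : Y$, the singleton $\{y\}$ is a one-element set and hence Dubuc-Penon compact by the first item of \cref{lem:compact.facts}. Therefore $f^{-1}(\{y\}) \simeq \fib_f(y)$ is Dubuc-Penon compact by assumption.

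For the backward direction, suppose every fiber of $f$ is Dubuc-Penon compact, and let $K \subseteq Y$ be a Dubuc-Penon compact subset. To show that $f^{-1}(K)$ is Dubuc-Penon compact, let $A : \Prop$ and $B : f^{-1}(K) \to \Prop$ be given with $\forall x : f^{-1}(K).\, A \vee B(x)$. The key move is to introduce the auxiliary predicate
\[
C(y) :\equiv \forall x : f^{-1}(y).\, B(x) : K \to \Prop.
\]
For each $y : K$ we have in particular $\forall x : f^{-1}(y).\, A \vee B(x)$, so Dubuc-Penon compactness of the fiber $f^{-1}(y)$ yields $A \vee C(y)$. Thus $\forall y : K.\, A \vee C(y)$, and Dubuc-Penon compactness of $K$ gives $A \vee \forall y : K.\, C(y)$. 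Unpacking the second disjunct, this is precisely $A \vee \forall x : f^{-1}(K).\, B(x)$, as required.

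For the corollary, consider a pullback square with $f : X \to Y$ Dubuc-Penon proper and $g : Z \times_Y X \to Z$ its pullback along some $h : Z \to Y$. For any $z : Z$, the fiber $\fib_g(z)$ is equivalent to $\fib_f(h(z))$, which is Dubuc-Penon compact by the forward direction already established. Hence by the backward direction, $g$ is Dubuc-Penon proper. No step here is a real obstacle; the only subtlety is noticing that the two-quantifier decomposition $\forall x \in f^{-1}(K) = \forall y \in K.\, \forall x \in f^{-1}(y)$ is exactly what makes the Dubuc-Penon axiom stackable across the two layers.
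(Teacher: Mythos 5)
Your proof is correct and follows the same approach as the paper's: peel off the singleton case for the forward direction, then for the backward direction decompose $\forall x \in f^{-1}(K)$ as $\forall y \in K.\, \forall x \in f^{-1}(y)$ and apply Dubuc-Penon compactness first fiberwise and then over $K$, concluding the corollary from the fact that pullbacks preserve fibers. Your write-up is slightly more explicit in naming the auxiliary predicate $C(y)$, but the argument is identical.
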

    \begin{proof}
Since singletons are clearly compact, if $f : X \to Y$ is proper, then its fibers are compact. Conversely, suppose that the fibers of $f$ are compact, and let $K \subseteq Y$ be a compact subset; we aim to show that $f^{\ast}K :\equiv \{x : X \mid f(x) \in K\}$ is compact. So, let $A : \Prop$ and $B : f^{\ast}K \to \Prop$ and suppose that for all $x \in f^{\ast} K$, $A$ holds or $B(x)$ holds. Then also for all $y \in K$, and for all $x \in f^{\ast}\{y\}$, $A$ holds or $B(x)$ holds. By hypothesis, $f^{\ast}\{y\}$ is compact, so this means that for all $y \in K$, $A$ holds or for all $x \in f^{\ast}\{y\}$ $B(x)$ holds. But then we may appeal to the compactness of $K$ and see that either $A$ holds or for all $y \in K$ and $x \in f^{\ast}\{y\}$, $B(x)$ holds. But this means that either $A$ holds or for all $x \in f^{\ast}K$, $B(x)$ holds.

If $f : X \to Y$ is proper, and $g : A \to X$, then the pullback $g^{\ast}f : A \times_{Y} X \to A$ has equivalent fibers to those of $f$, and so is also proper.
    \end{proof}

    \begin{prop}\label{lem:sum.of.compact.is.compact}
Let $K$ be a Dubuc-Penon compact type and let $F : K \to \Type$ be a family of Dubuc-Penon compact types. Then the type of pairs $\dsum{k : K}F(k)$ is Dubuc-Penon compact. In particular, the product of two Dubuc-Penon compact types is Dubuc-Penon compact.
    \end{prop}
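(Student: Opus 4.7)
The plan is a direct two-step application of the definition of Dubuc-Penon compactness, essentially a Fubini-style argument on the quantifiers. Let $A : \Prop$ and $B : \bigl(\dsum{k : K} F(k)\bigr) \to \Prop$, and assume
\[
\forall (k, y) : \dsum{k : K}F(k).\; A \vee B(k, y).
\]
We want to conclude $A \vee \forall (k, y).\, B(k, y)$.

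First I would curry the hypothesis. For each fixed $k : K$, we have $\forall y : F(k).\, A \vee B(k, y)$. Applying the Dubuc-Penon compactness of $F(k)$ to the predicate $y \mapsto B(k, y)$, we obtain $A \vee \forall y : F(k).\, B(k, y)$ for every $k : K$. This gives a new universally quantified statement over $K$: define the predicate $C : K \to \Prop$ by
\[
C(k) :\equiv \forall y : F(k).\, B(k, y),
\]
so that we have established $\forall k : K.\, A \vee C(k)$.

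Now I would apply Dubuc-Penon compactness of $K$ to the proposition $A$ and the predicate $C$, obtaining $A \vee \forall k : K.\, C(k)$. Unfolding $C$, the second disjunct is precisely $\forall k : K.\, \forall y : F(k).\, B(k, y)$, which is equivalent to $\forall (k, y) : \dsum{k : K} F(k).\, B(k, y)$. This completes the proof of compactness of the total space. The final sentence of the proposition — the product case — follows immediately by specialising $F$ to the constant family $F(k) :\equiv Y$, since then $\dsum{k : K}F(k) \simeq K \times Y$.

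I do not expect any genuine obstacle: the argument is simply nested application of the definition, and no axiom beyond the two compactness hypotheses is required. The only mild care needed is to observe that the curried predicate $y \mapsto B(k, y) : F(k) \to \Prop$ is a well-defined predicate (which it is, pointwise in $k$), so that the inner use of compactness is legitimate in context, and that the proposition $A$ is the same on both sides — which is exactly the asymmetry built into the definition of Dubuc-Penon compactness.
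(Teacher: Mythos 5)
Your proof is correct and is essentially identical to the paper's: curry, apply compactness of each $F(k)$ pointwise, then apply compactness of $K$ to the resulting predicate $C$. The paper presents the same chain of implications as a single displayed calculation rather than in prose, but the steps and their order are the same.
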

    \begin{proof}
      Let $A : \Prop$ and $B : \dsum{k : K} F(k) \to \Prop$ and suppose that for all $(k, x) : \dsum{k: K}F(k)$, $A$ holds or $B(k, x)$ holds. We may compute:
      \begin{align*}
        \forall (k, x) : \dsum {k : K} F(k).\, (A \vee B(k, x)) &\iff \forall k : K \forall x : F(k).\, (A \vee B(k, x)) \\
                                                                &\Rightarrow \forall k : K.\, (A \vee \forall x : F(k).\, B(k, x)) \\
                                                                &\Rightarrow A \vee (\forall k : K.\, \forall x : F(k). B(k, x)) \\
        &\iff A \vee (\forall (k, x) : \dsum{k : K} F(k).\, B(k, x)) \qedhere
      \end{align*}
    \end{proof}

    In his thesis, \cite{Gago:Thesis}, Gago proved that any positive, real valued function on a Dubuc-Penon compact set valued in $\Rb$ was bounded away from $0$ (\cref{lem:compact.bounded}). We will extract the method which Gago uses in his proof as \cref{prop:compact.open.relation.fatness}. This method makes use of Penon's logical definition of open set, and relies crucially on the Covering Property.
    \begin{defn}
A subtype $U \subseteq X$ is \emph{Penon open} if for all $x \in U$ and $y : X$, either $x \neq y$ or $y \in U$. A subtype $C \subseteq X$ is \emph{Penon closed} if its complement $X - C$ is Penon open.
    \end{defn}

    Penon opens form a topology on any type, and any function is continuous for the Penon topology. Any regular topology is finer than the Penon topology on its set of points; in particular, every open set in an ordinary manifold is Penon open.
    \begin{lem}\label{lem:regular.open.is.Penon.open}
Let $X$ be a regular topological space. Then any open set in $X$ is Penon open.
    \end{lem}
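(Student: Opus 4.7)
The plan is to directly unpack the definition of Penon open using regularity. Let $U \subseteq X$ be an open subset, and fix $x \in U$ and $y : X$. I need to produce a disjunction $x \neq y \vee y \in U$.

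First I would invoke regularity at the pair $(U, x)$ to obtain an open neighborhood $V$ of $x$ together with an open set $G$ satisfying $G \cap V = \emptyset$ and $G \cup U = X$. The point $y$, being an arbitrary element of $X$, then lies in the union $G \cup U$, and since $G \cup U = X$ is precisely the statement that for every $z : X$ we have $z \in G \vee z \in U$, this yields directly the case split $y \in G \vee y \in U$. In the case $y \in U$, the right disjunct of Penon openness holds.

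In the case $y \in G$, I would show $x \neq y$ by contradiction. Suppose $x = y$; then since $x \in V$, transport gives $y \in V$, so $y \in G \cap V$, contradicting $G \cap V = \emptyset$. Hence $x \neq y$, which is the left disjunct of Penon openness. Combining both cases gives the required disjunction $x \neq y \vee y \in U$, proving that $U$ is Penon open.

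There is no real obstacle here: the argument is just the constructive unpacking of regularity, and every step works without excluded middle because the disjunction $y \in G \vee y \in U$ is provided \emph{directly} by the hypothesis $G \cup U = X$ on the subset level (read as a pointwise membership statement). The only thing to be slightly careful about is not slipping into a classical reading that would require deciding $y \in G$; but since the covering property $G \cup U = X$ is itself part of the definition of regularity and gives the disjunction on the nose, this concern does not arise.
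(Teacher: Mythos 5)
Your proof is correct and follows essentially the same route as the paper: apply regularity at $(U,x)$ to get $V \ni x$ and $G$ with $G \cap V = \emptyset$ and $G \cup U = X$, read $G \cup U = X$ as the pointwise disjunction $y \in G \vee y \in U$, and in the $y \in G$ case deduce $x \neq y$ from disjointness. The only cosmetic difference is that you spell out the derivation of $x \neq y$ as an explicit (constructively unproblematic) refutation of $x = y$, whereas the paper states it more tersely.
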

    \begin{proof}
Let $U$ be open in $X$, and let $x \in U$ and $y : X$. By the regularity of $X$, there is are open $V \subseteq U$ and $G$ with $x \in V$, $V \cap G = \empty$, and $U \cup G = X$. Therefore, $y \in U$ or $y \in G$; but if $y \in G$, $y$ cannot equal $x$, so we conclude that either $x \neq y$ or $y \in U$.
    \end{proof}

    \begin{thm}\label{prop:compact.open.relation.fatness}
Let $K$ be Dubuc-Penon compact and let $r : K \times \Rb \to \Prop$ be a relation which is Penon open as a subset of the product. If for all $k$, we have $r(k, x)$, then there is an $\ep > 0$ so that $r(k, y)$ for all $y \in B(x, \ep)$ and $k : K$.
    \end{thm}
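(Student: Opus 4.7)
The plan is to reduce the desired uniform $\ep$ to a binary cover of $\Rb$ that we can feed into the Covering Property (Postulate E). The main idea is that Dubuc-Penon compactness of $K$ lets us commute universal quantification over $K$ past a disjunction; applying this \emph{pointwise in $y \in \Rb$} produces exactly the cover we need.

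First I would unwrap Penon openness of $r \subseteq K \times \Rb$ at the points $(k, x) \in r$, using the hypothesis $r(k, x)$: for any $k : K$ and $y : \Rb$, we have $(k, x) \neq (k, y) \vee r(k, y)$, which simplifies to $(x \neq y) \vee r(k, y)$. Next, for each fixed $y$, I would apply Dubuc-Penon compactness of $K$ (\cref{defn:Dubuc.Penon.compact}) with the fixed proposition $A :\equiv (x \neq y)$ and the predicate $B(k) :\equiv r(k, y)$ to conclude $(x \neq y) \vee (\forall k : K.\, r(k, y))$.

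Let $U :\equiv \{y : \Rb \mid \forall k : K.\, r(k, y)\}$ and $N :\equiv \{y : \Rb \mid y \neq x\}$. By the previous step, $U \cup N = \Rb$. Applying the Covering Property at the point $x$, either there is an $\ep > 0$ with $B(x, \ep) \subseteq U$, or there is an $\ep > 0$ with $B(x, \ep) \subseteq N$. The second option is impossible because $x \in B(x, \ep)$ but $\neg(x \neq x)$, so the first holds, giving $r(k, y)$ for all $y \in B(x, \ep)$ and all $k : K$ as required.

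The main subtlety is recognizing that Dubuc-Penon compactness should be used pointwise in $y$, in order to manufacture a binary cover of $\Rb$ that the Covering Property can act on, rather than attempting to commute $\forall k$ past an existential quantifier over $\ep$. That latter move would amount to a form of countable compactness of $K$, which (as noted in the introduction) is a consequence of this very lemma, so the argument has to go through $\Rb$-level covering instead.
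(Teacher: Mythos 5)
Your proof is correct and follows essentially the same route as the paper's: unwrap Penon openness at $(k, x)$, specialize to the second coordinate to get $(x \neq y) \vee r(k, y)$, commute $\forall k$ past $\vee$ by Dubuc–Penon compactness pointwise in $y$, and feed the resulting binary cover of $\Rb$ to the Covering Property, discarding the impossible case since $\neg(x \neq x)$. Your explicit naming of the sets $U$ and $N$ and the closing remark about why the compactness must be applied pointwise in $y$ rather than against the existential in $\ep$ are nice expository touches, but the mathematical content is identical to the paper's argument.
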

    \begin{proof}
      That $r$ is Penon open means that for any $k$ and $x$ so that $r(k, x)$ and any other $q$ and $y$, we have $((k, x) \neq (q, y)$ or $r(q, y)$.
      So, supposing that $r(k, x)$ for all $k$, let $y : \Rb$ and note that for any $k : K$,
      \[
((k, x) \neq (k, y)) \vee r(k, y).
      \]
      Now, $(k, x) \neq (k, y)$ if and only if $x \neq y$, so we conclude that for any $k : K$, $(x \neq y)$ or $r(k, y)$. Therefore, by the compactness of $K$, either $(x \neq y)$ or for all $k : K$, $r(k, y)$. By the Covering Principle, then, either there is an $\ep > 0$ so that $B(x, \ep) \subseteq \{y \mid x \neq y\}$ or there is an $\ep > 0$ so that $B(x, \ep) \subseteq \{y \mid \forall k : K.\, r(k, y)\}$; since the former can't possibly be true, we conclude the latter.
    \end{proof}

    Let's give a careful proof that cartesian products of Penon open sets are open before deriving some corollaries of \cref{prop:compact.open.relation.fatness}.
    \begin{lem}\label{lem:cartesian.product.of.penon.open}
Let $U\subseteq X$ and $V \subseteq Y$ be Penon open subsets. Then $U \times V \subseteq X \times Y$ is Penon open.
    \end{lem}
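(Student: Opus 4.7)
The plan is to unfold the definition of Penon openness and proceed by case analysis on the disjunctions it supplies. Given $(a, b) \in U \times V$ and an arbitrary $(x, y) : X \times Y$, the goal is to establish
\[
\bigl((a, b) \neq (x, y)\bigr) \vee \bigl((x, y) \in U \times V\bigr).
\]
From the Penon openness of $U$ applied to $a \in U$ and $x : X$, I obtain $(a \neq x) \vee (x \in U)$, and symmetrically Penon openness of $V$ gives $(b \neq y) \vee (y \in V)$.

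The key observation underlying the case analysis is that inequality in either coordinate propagates to inequality of the pair: if $(a, b) = (x, y)$, then projecting yields $a = x$ and $b = y$, so $a \neq x$ or $b \neq y$ each suffices to conclude $(a, b) \neq (x, y)$. With this in hand, split on the first disjunction. In the case $a \neq x$, the pair is already inequal and we are done. In the case $x \in U$, split on the second disjunction: if $b \neq y$ we again conclude $(a, b) \neq (x, y)$, while if $y \in V$ we have $x \in U$ and $y \in V$, hence $(x, y) \in U \times V$. In every branch the required disjunction holds.

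There is no real obstacle here; the proof is a pure exercise in constructive logic, requiring only that $\neq$ behaves contravariantly under the projections $\fst$ and $\snd$. I would keep the write-up to a few lines, as the statement is used only as a routine input to \cref{prop:compact.open.relation.fatness} to handle product-shaped Penon open neighborhoods.
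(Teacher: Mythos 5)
Your proof is correct and amounts to the same argument as the paper's, differing only in presentation: you carry out a nested case analysis on the two disjunctions supplied by Penon openness, whereas the paper first reduces the goal to $(u \neq x) \vee (v \neq y) \vee (x \in U \wedge y \in V)$ and then distributes the disjunction over the conjunction so that each conjunct is a weakening of one of the hypotheses. Both are constructively valid (the case analysis is licit because the goal is a proposition), and your observation that $\neq$ propagates contravariantly along $\fst$ and $\snd$ is the same fact the paper uses implicitly when it says the reduction "will suffice."
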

    \begin{proof}
      Suppose that $(u, v) \in U \times V$ and let $(x, y) : X \times Y$, seeking
      \[
((u, v) \neq (x, y)) \vee ((x, y) \in U \times V).
\]
Expanding these assumptions out a bit, we see that $u \in U$ and $v \in V$, and it will suffice to prove
\[
(u \neq x) \vee (v \neq y) \vee (x \in U \wedge y \in V),
\]
which is equivalently
\[
((u \neq x) \vee (v \neq y) \vee (x \in U)) \wedge ((u \neq x) \vee (v \neq y) \vee (y \in V)).
\]
By hypothesis we have that $(u \neq x) \vee (x \in U)$ and $(v \neq y) \vee (y \in V)$ by the openness of $U$ and $V$. This clearly suffices.
    \end{proof}

    \begin{cor}[Gago, \cite{Gago:Thesis}]\label{lem:compact.bounded}
      Let $K$ be a Dubuc-Penon compact set.
      \begin{enumerate}
\item For every $f : K \to (0, \infty)$, there is an $\ep > 0$ so that $\ep < f(k)$ for all $ k : K$.
              \item For every $f : K \to \Rb$, there is a $B > 0$ so that $-B < f(x) < B$ for all $x : K$.
      \end{enumerate}
\end{cor}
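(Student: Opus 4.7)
Both parts will be applications of \cref{prop:compact.open.relation.fatness} to a suitably chosen Penon-open relation on $K \times \Rb$. For part (1), I would set $r(k, x) :\equiv (x < f(k))$. The hypothesis $f(k) \in (0, \infty)$ gives $r(k, 0)$ for every $k$, so the only nontrivial point is Penon-openness of $r$. Since preimages of Penon-open subsets under arbitrary functions are Penon open (a direct unwinding of the definition: if $x_0 \in g\inv(U)$ and $x$ is arbitrary, then $(g(x_0) \neq g(x)) \vee (g(x) \in U)$ implies $(x_0 \neq x) \vee (x \in g\inv(U))$), it suffices to check that $S :\equiv \{(a, b) : \Rb \times \Rb \mid b < a\}$ is Penon open. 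Given $(a_0, b_0) \in S$ and arbitrary $(a, b)$, I would apply cotransitivity (Postulate O.3) to $b_0 < a_0$ with witness $b$: either $b_0 < b$ --- so $b_0 \neq b$ by irreflexivity (Postulate O.2) --- or $b < a_0$; in the latter subcase, cotransitivity again with witness $a$ yields either $b < a$ (done) or $a < a_0$, so $a \neq a_0$. Either way $(a_0, b_0) \neq (a, b)$ or $(a, b) \in S$. Then \cref{prop:compact.open.relation.fatness} produces $\delta > 0$ with $y < f(k)$ for every $k : K$ and every $y \in B(0, \delta)$, and $\ep :\equiv \delta/2$ completes part (1).

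For part (2), I would reduce to (1) via the auxiliary function $g : K \to (0, \infty)$ defined by $g(k) :\equiv 1/(1 + f(k)^2)$. This is well-defined and takes values in $(0, 1]$ because $1 + f(k)^2 \geq 1$ is invertible in the Kock field $\Rb$ (Postulate K). Applying (1) to $g$ produces $\ep > 0$ with $\ep < g(k)$ for all $k$, and rearranging gives $\ep(1 + f(k)^2) < 1$, hence $f(k)^2 < 1/\ep$. Setting $B :\equiv 1/\sqrt{\ep} + 1$ (the square root of the positive real $1/\ep$ is available as $\exp(\tfrac{1}{2}\log(1/\ep))$ by Postulate Exp) yields $B > 0$ and $B^2 > f(k)^2$. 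A short case analysis then concludes: the product $(B - f(k))(B + f(k)) = B^2 - f(k)^2$ is positive and hence invertible, so both factors are invertible and thus nonzero, meaning each lies strictly on one side of $0$ (Postulate O.4); the three cases other than ``both positive'' are each ruled out by adding the two factors (giving $\pm 2B$) against the sign of $B > 0$, leaving only $-B < f(k) < B$.

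The only real obstacle is identifying the correct Penon-open relation in part (1), which hinges on the cotransitivity verification for $S$; once that is secured the fatness lemma does all the work. Part (2) is forced to proceed indirectly because any attempt to apply the fatness lemma directly to a two-sided bound would require a ``center'' value at which the sought-after bound already holds uniformly --- i.e., the very conclusion one wants --- so the detour through an auxiliary function bounded \emph{above} by $1$ on $K$ is essentially the only route.
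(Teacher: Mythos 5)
Your proof of part (1) follows the same route as the paper's — the relation $r(k,x) :\equiv (x < f(k))$, fatness, and $\ep := \delta/2$ — but you supply the Penon-openness verification for $r$ that the paper leaves implicit, and your reduction through preimages and the cotransitivity argument for $\{(a,b) \mid b < a\}$ is correct and worth having spelled out.

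For part (2) you take a genuinely different route. The paper applies part (1) twice — to $\exp f$ and to $1/\exp f$ — obtaining $\ep, \delta$ with $\log \ep < f(k) < -\log\delta$, using a smooth approximation to $\min$ to arrange $\ep,\delta < 1$. You instead apply part (1) once, to $g := 1/(1 + f^2)$, extract the quadratic bound $f(k)^2 < 1/\ep$, and then recover the two-sided bound by a sign analysis on $(B - f(k))(B + f(k)) = B^2 - f(k)^2$. Both reductions ultimately lean on Postulate Exp (the paper for $\exp$ and $\log$, you for the square root $\exp(\tfrac12\log(-))$), and both are sound. What each approach buys: the paper's $\log$/$\exp$ version gives the bound rather directly but pays in the auxiliary smooth-min step; yours avoids that but pays in the concluding case analysis. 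Two small points in your write-up. First, the invertibility of $1 + f(k)^2$ is a touch more delicate than ``$\geq 1$'' suggests constructively: one should use cotransitivity on $0 < 1$ to get $0 < 1 + f(k)^2$ or $f(k)^2 < 0$, then rule out the latter by Postulate K, after which Postulate K again gives invertibility. Second, your phrase ``adding the two factors (giving $\pm 2B$)'' is slightly off — the sum $(B - f(k)) + (B + f(k))$ is always $2B$; the ``both negative'' case is killed by $2B > 0$, while the two mixed cases are killed by the product being positive (a positive times a negative is negative by Postulate O.1). The underlying argument is correct; only the phrasing is imprecise.
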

\begin{proof}
To prove the first statement, let $r(k, x) \equiv (x < f(k))$ in \cref{prop:compact.open.relation.fatness}, and conclude that there is some $\delta > 0$ so that for all $ x \in B(0, \delta)$, we have $r(k, x)$ for all $k : K$. Define $\ep \equiv \frac{\delta}{2}$, and conclude that for all $k : K$, we have $\ep < f(k)$.

To prove the second statement, we use Postulate Exp to transform $f$ into a positive valued function. By the first statement, we have $\ep > 0$ so that $\ep < \exp f(k)$ for all $k$, and we also have $\delta > 0$ so that $\delta < \frac{1}{ \exp f(k) }$. By using a smooth approximation to the minimum function, we can ensure that both $\ep$ and $\delta$ are less than $1$. As a result, we see that $\log \ep < f(k) < - \log \delta$ for all $k : K$, and both $\log \ep$ and $\log \delta$ are positive. Then define $B$ to be any number bigger than both $\log \ep$ and $\log \delta$.
\end{proof}

We can use \cref{prop:compact.open.relation.fatness} to show that Dubuc-Penon compact subsets of $\Rb^{n}$ are ``compact'' in suitably weak senses. Let's give names to a few of these senses now.
\begin{defn}\label{defn:compact}
Consider the following notions of compactness. Let $K$ be a topological space.
\begin{enumerate}
        \item
$K$ is (open-cover) \emph{compact} if every open cover admits a finitely enumerable subcover. Explicitly, $K$ is compact if for any open cover $\Ua \subseteq \Oa(K)$, there is a finitely enumerable subset $\Va \subseteq \Ua$ for which $\bigcup_{V \in \Va} V = K$. \footnote{We will not be using this notion in this paper; it is included only for comparison with the subsequent notions. In particular, when I use the term ``compact'' in a proof, it usually means Dubuc-Penon compact. This should be clear from context.}
        \item $K$ is \emph{countably compact} if every countably enumerable open cover admits a finitely enumerable subcover.
        \item $K$ is \emph{subcompact} if every open cover admits a subfinitely enumerable subcover.
  \item $K$ is \emph{subcountably subcompact} if every subcountably enumerable cover admits a subfinitely enumerable subcover.
  \item
$K$ is \emph{refinement subcompact} if every open cover admits a subfinitely enumberable refinement. Explicitly, $K$ is refinement subcompact if for any open cover $\Ua \subseteq \Oa(K)$, there is a subfinitely enumerable cover $\Va \subseteq \Oa(K)$, so that for any $V \in \Va$, there is a $U \in \Ua$ such that $V \subseteq U$.
\end{enumerate}
\end{defn}

The definitions of ``compact'' and ``countably compact'' are standard in constructive mathematics. On the other hand, the definitions of ``subcompact'', ``subcountably subcompact'', and ``refinement subcompact'' are, as far as I know, novel.

\begin{rmk}
  Clearly, any compact set is countably compact, subcompact, subcountably subcompact, and refinement subcompact. Any subcompact set is subcountably subcompact, and refinement subcompact. However, countable compactness and subcountable subcompactness are generally incomparable; the latter applies to more covers but gives a weaker condition on the resulting subcover. This is summarized in the following diagram:
  \[
\begin{tikzcd}
	& {\text{countably compact}} &  {\text{subcountably subcompact}}
 \\
 {\text{compact}} & {\text{subcompact}} &	{\text{refinement subcompact}}
 \arrow[from=2-1, to=1-2]
	\arrow[from=2-1, to=2-2]
	\arrow[from=2-2, to=1-3]
	\arrow[from=2-2, to=2-3]
\end{tikzcd}
  \]
\end{rmk}

We can prove that any Dubuc-Penon compact set is both countably compact and subcountably subcompact.
\begin{thm}\label{prop:db.compact.is.countably.compact}
Let $K$ be Dubuc-Penon compact. Then $K$ is subcountably subcompact and countably compact with regard to the Penon topology. That is, for any $I \subseteq \Nb$, any $I$-indexed Penon open cover $U : I \to \Oa(K)$ admits a subfinitely enumerable subcover. More explicitly, there is an $n : \Nb$ so that if $I_{< n} :\equiv \{i : I \mid i < n\}$, we have $K \subseteq \bigcup_{i \in I_{< n}} U_{i}$. In the case that $I = \Nb$, we see that $I_{<n} = \ord{n}$ is actually finite, so that we have a finitely enumerable subcover.
\end{thm}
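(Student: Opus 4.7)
The plan is to reduce the statement to the extraction of a single covering stage for a monotone enlarging sequence of Penon opens, then to bring the Covering Property into play by passing from the discrete parameter $\Nb$ to the continuous parameter $\Rb$, so that \cref{prop:compact.open.relation.fatness} becomes applicable.

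First I would set up the reduction. Given an $I$-indexed Penon open cover $U : I \to \Oa(K)$ with $I \subseteq \Nb$, define the monotone sequence $V_n :\equiv \bigcup_{i \in I,\, i < n} U_i$ for each $n : \Nb$. Each $V_n$ is Penon open in $K$, using \cref{lem:cartesian.product.of.penon.open} and the fact that a union of Penon opens is Penon open; the sequence is increasing; and the covering hypothesis becomes $\forall k \in K.\, \exists n \in \Nb.\, k \in V_n$. Producing a single $n$ with $K \subseteq V_n$ then gives the required subfinite subcover $\{U_i \mid i \in I_{<n}\}$, and when $I = \Nb$ the index set $I_{<n} = \ord{n}$ is in fact finite, yielding countable compactness. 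So the problem reduces to: every monotone Penon open cover $V_0 \subseteq V_1 \subseteq \cdots$ of a Dubuc-Penon compact set $K$ admits a stage $V_n$ covering $K$.

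Second, to unlock \cref{prop:compact.open.relation.fatness} and the Covering Property I would pass to a real parameter. Define
\[ r : K \times \Rb \to \Prop, \qquad r(k,t) :\equiv \exists n \in \Nb.\, (n < t) \wedge (k \in V_n). \]
Penon openness of $r$ follows by direct calculation using Penon openness of each $V_n$, the strict trichotomy axiom (Postulate O.3) on $\Rb$, and \cref{lem:cartesian.product.of.penon.open}. Furthermore $r$ is monotone in $t$, and by the Archimedean axiom the hypothesis becomes $\forall k.\, \exists t \in \Rb.\, r(k,t)$. It suffices to produce a single $t_0 \in \Rb$ with $\forall k.\, r(k,t_0)$: by Archimedes we may then take $n > t_0$, and monotonicity of $r$ forces $K \subseteq V_n$.

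The main step, and the main obstacle, is extracting this uniform $t_0$. My plan is to model the argument on Gago's proof of \cref{lem:compact.bounded}: the set $F :\equiv \{t \in \Rb \mid \forall k \in K.\, r(k,t)\}$ is upward-closed, and is Penon open in $\Rb$ once inhabited (this is the content of \cref{prop:compact.open.relation.fatness}). Showing $F$ is inhabited is where Dubuc-Penon compactness enters, together with the Covering Property on $\Rb$: one exploits that the propositional parameter $A$ in the Dubuc-Penon axiom may itself be a countable existential, combined with monotonicity of $r$ in $t$ and Penon openness of $r$ in $k$, to convert the pointwise witnesses $\{t_k\}_{k \in K}$ into a uniform $t_0$. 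The obstacle is precisely that the Dubuc-Penon axiom in its raw form commutes $\forall_K$ only with \emph{binary} disjunction, whereas the hypothesis $\forall k.\, \exists n.\, k \in V_n$ concerns a countable disjunction; translating the problem to the $\Rb$-parameter relation $r$ is what allows the Covering Property to mediate between them, turning the countable disjunction into a continuous one to which Fatness applies.
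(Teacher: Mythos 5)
Your framework is close to the paper's, and the reduction to a monotone cover $V_0 \subseteq V_1 \subseteq \cdots$ is a perfectly good first move, but the relation you propose is pointed in the wrong direction and the gap you acknowledge at the end is a genuine one. To apply \cref{prop:compact.open.relation.fatness} you need a \emph{single fixed} $t_0$ at which $r(k, t_0)$ holds for \emph{all} $k : K$, and then the lemma fattens that to a ball. With your relation $r(k,t) :\equiv \exists n \in \Nb.\, (n < t) \wedge (k \in V_n)$, inhabiting the set $F = \{t \mid \forall k.\, r(k,t)\}$ \emph{is} the conclusion you want — monotonicity of $r$ in $t$ and the Archimedean axiom make ``$F$ inhabited'' exactly equivalent to ``some $V_n$ covers $K$.'' So the step you flag as the ``main obstacle'' is precisely the theorem, and neither the countable-existential reading of Dubuc-Penon nor the Covering Property as you invoke it supplies the base point; the Dubuc-Penon axiom only commutes $\forall_K$ with \emph{binary} disjunction, and \cref{prop:compact.open.relation.fatness} does not create a witness from nothing, it only enlarges one.

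The paper's fix is to reverse the orientation of the real parameter: define $r(k,x) :\equiv \exists i : I.\, (k \in U_i) \wedge (x < \tfrac{1}{i})$. Then $r(k,0)$ holds for every $k$ automatically — the cover hypothesis supplies some $i$ with $k \in U_i$, and $0 < \tfrac{1}{i}$ is trivial — so the hypothesis of \cref{prop:compact.open.relation.fatness} is met at the base point $x = 0$. That lemma then yields $\ep > 0$ with $r(k,y)$ for all $k$ and all $y \in B(0,\ep)$, and choosing $n > \tfrac{1}{\ep}$ forces $i < n$ whenever $\tfrac{1}{n} < \tfrac{1}{i}$, giving the bounded subcover directly. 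The Penon-openness of $r$ is checked the same way you would check it: it is a union $\bigcup_{i : I} (K \cap U_i) \times (-\infty, \tfrac{1}{i})$ of products of Penon opens, which is where \cref{lem:cartesian.product.of.penon.open} earns its keep. The moral is that the reparametrization from the natural-number index $i$ to the real $x$ must be \emph{order-reversing} ($i \mapsto \tfrac{1}{i}$, not $i \mapsto i$) so that the trivial end of the order becomes the universally covered base point.
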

\begin{proof}
  Let $r(k, x)$ be the relation
  \[
    r(k, x) :\equiv \exists i : I.\, (k \in U_{i}) \wedge \left(x < \frac{1}{i}\right).
  \]
This relation is Penon open, since it may be described as the union of a cartesian product of opens (\cref{lem:cartesian.product.of.penon.open}).
\[
r = \bigcup_{i : I} \left((K \cap U_{i}) \times \left(-\infty, \frac{1}{i}\right)\right).
\]
For all $k : K$, we have $r(k, 0)$; therefore, by \cref{prop:compact.open.relation.fatness} we may conclude that there is an $\ep > 0$ so that $B(0, \ep) \subseteq \{x : \Rb \mid \forall k : K. r(k, x) \}$. In particular, letting $n : \Nb$ being any number greater than $\frac{1}{\ep}$, we see that for all $k : K$ there is an $i : \Nb$ with $k \in U_{i}$ and $\frac{1}{n} < \frac{1}{i}$. This shows that $K = \bigcup_{i = 0}^{n} K \cap U_{i}$, which is a finite union.
\end{proof}

Using \cref{prop:db.compact.is.countably.compact}, we can prove that Dubuc-Penon compact subsets of ordinary manifolds are refinement subcompact.
\begin{cor}\label{cor:db.compact.in.manifold.is.refinement.subcompact}
Let $K$ be a Dubuc-Penon compact subset of a second countable topological space $X$ whose opens are Penon open. Then $K$ is refinement subcompact.
\end{cor}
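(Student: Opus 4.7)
The plan is to use second countability of $X$ to replace the given open cover of $K$ by a countably-indexed cover whose members are base opens, then invoke \cref{prop:db.compact.is.countably.compact} to extract a subfinitely enumerable subcover, which will automatically be a refinement of the original.

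Concretely, fix a countable base $B : \Nb \to \Oa(X)$ for $X$, which exists by second countability. Given any open cover $\Ua \subseteq \Oa(X)$ of $K$, define
\[
I :\equiv \{n : \Nb \mid \exists U \in \Ua.\, B_n \subseteq U\} \subseteq \Nb.
\]
The family $(B_n)_{n \in I}$ covers $K$: for any $k \in K$ there is some $U \in \Ua$ containing $k$, and since the $B_n$ form a base for $X$ there is some $n$ with $k \in B_n \subseteq U$, whence $n \in I$. Each $B_n$ is open in $X$, hence Penon open by hypothesis.

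Now \cref{prop:db.compact.is.countably.compact} applies to the $I$-indexed Penon open cover $(B_n)_{n \in I}$ of the Dubuc-Penon compact set $K$, yielding some $N : \Nb$ such that $K \subseteq \bigcup_{n \in I_{<N}} B_n$, where $I_{<N} :\equiv \{n \in I \mid n < N\}$. Since $I_{<N}$ is a subset of the finite set $\ord{N}$, it is subfinite, and hence the indexed family $\Va :\equiv (B_n)_{n \in I_{<N}}$ is a subfinitely enumerable cover of $K$. By the definition of $I$, each $B_n$ with $n \in I_{<N}$ is contained in some member of $\Ua$, so $\Va$ is a refinement of $\Ua$.

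The only obstacle here is really a bookkeeping one: ensuring that the refining cover $\Va$ is presented as a \emph{subfinitely enumerable} family (i.e.\ indexed by a subset of some $\ord{N}$), rather than as, say, a subset of $\Oa(K)$ of unknown cardinality structure. The indexing by $I_{<N} \subseteq \ord{N}$ handles this directly. Everything else is immediate from second countability, the preservation of Penon openness under the given hypothesis on $X$, and the previous theorem.
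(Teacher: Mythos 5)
Your proof is correct and follows essentially the same route as the paper: use a countable base to build a subcountably-indexed refinement of the given cover by base opens, then apply \cref{prop:db.compact.is.countably.compact} to extract the subfinitely enumerable subcover. The only difference is cosmetic; you spell out the indexing by $I_{<N} \subseteq \ord{N}$ slightly more explicitly than the paper does.
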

\begin{proof}
Let $\Ua$ be an open cover of $K$, and let $B : \Nb \to \Oa(X)$ be a countable base for $X$. Define $I :\equiv \{i : \Nb \mid \exists U \in \Ua.\, B_{i} \subseteq U\}$ be the the set of indices of the base opens which are contained in opens of $\Ua$. For any $x \in K$, there is some $U \in \Ua$ with $x \in U$; since $B$ is a base, there is also some $i : \Nb$ so that $x \in B_{i} \subseteq U$. Therefore, $\Ba :\equiv \{B_{i} \mid i \in I\}$ remains an open cover and is a subcountable refinement of $\Ua$. But then, by \cref{prop:db.compact.is.countably.compact}, $\Ba$ admits a subfinitely enumerable subcover by the subcountable subcompactness of $K$.
\end{proof}

Clearly, in the prescence of the law of excluded middle, a space would be refinement subcompact if and only if it were compact --- LEM implies that subfinitely enumerable sets are finite, and we could then choose a subcover out of our refinement since the product of finitely many inhabited types is always inhabited. In fact, we can internalize this observation into a theorem about crisp refinement subcompact sets.
\begin{prop}\label{prop:crisply.refinement.subcompact.is.compact.for.crisp.covers}
Let $K$ be a crisp topological space which is crisply refinement subcompact. Then any crisp open cover $\Ua \subseteq \Oa(X)$ admits a finite subcover.
\end{prop}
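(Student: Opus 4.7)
The plan is to combine the crisp refinement subcompactness hypothesis with \cref{prop:crisp.subfinitely.enumerable.is.finite}, which upgrades any crisp subfinitely enumerable set to a genuinely finite one. This upgrade is really the whole engine of the proof: it converts a subfinitely enumerable refinement into a finite refinement, at which point selecting a subcover reduces to a finite-choice calculation.

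Concretely, I would first apply crisp refinement subcompactness to the given crisp open cover $\Ua$ to obtain a crisp subfinitely enumerable open cover $\Va$ of $K$ with the refinement property that for every $V \in \Va$ there merely exists $U \in \Ua$ with $V \subseteq U$. Since $\Va$ is crisp and subfinitely enumerable, \cref{prop:crisp.subfinitely.enumerable.is.finite} produces an equivalence $\Va \simeq \ord{n}$ for some $n : \Nb$, enumerating the refinement as $V_0, \ldots, V_{n-1}$.

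The second step is to select, for each $V_i$, a containing open $U_i \in \Ua$ and to verify that the resulting $U_i$ form a subcover. Admitting a finite subcover is itself a propositional statement, so I may reason under propositional truncation throughout. Finite choice --- the fact that propositional truncation commutes with $\ord{n}$-indexed products --- converts the hypothesis that each fiber $\{U \in \Ua \mid V_i \subseteq U\}$ is merely inhabited into the mere existence of a tuple $(U_0, \ldots, U_{n-1})$ of elements of $\Ua$ with $V_i \subseteq U_i$. For any such tuple, $\bigcup_{i < n} U_i \supseteq \bigcup_{i < n} V_i = K$, which gives the desired finite subcover.

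I do not anticipate a serious obstacle. The only step worth checking carefully is that the refinement $\Va$ extracted from the hypothesis is itself crisp, but this is automatic because both the refinement subcompactness hypothesis and the input cover $\Ua$ are crisp, so every expression produced en route remains crisp and the crisp law of excluded middle \cref{axiom:LEM} hidden inside \cref{prop:crisp.subfinitely.enumerable.is.finite} is applicable exactly when we need it.
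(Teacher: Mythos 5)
Your route is essentially the paper's: apply crisp refinement subcompactness to get a crisp subfinitely enumerable refinement $\Va$, upgrade it to a finite set via \cref{prop:crisp.subfinitely.enumerable.is.finite}, and then select a containing $U_i \in \Ua$ for each $V_i$. But the final step has a real gap. The selection of the $U_i$ gives you a surjection $\ord{n} \twoheadrightarrow \{U_0, \ldots, U_{n-1}\}$, which only makes the subcover \emph{finitely enumerable}, not finite: you have no way to decide which of the $U_i$ coincide. Since you extract the tuple under propositional truncation via finite choice, the witnesses are not crisp, so you cannot invoke \cref{prop:crisp.subfinitely.enumerable.is.finite} a second time to close the gap.

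The paper sidesteps this by making the choice of $u_v$ \emph{crisply} rather than under truncation: since $\Va$ is crisply finite and the inhabitedness of $\{U \in \Ua \mid v \subseteq U\}$ is a crisp proposition for each crisp $v$, the selected subcover is a crisp, finitely enumerable subset of $\Ua$, and one more application of \cref{prop:crisp.subfinitely.enumerable.is.finite} upgrades it to finite. Your proof would be correct if you replace the finite-choice-under-truncation step with a crisp choice and add that second application of the lemma.
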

\begin{proof}
By hypothesis, $\Ua$ admits a subfinitely enumerable refinement $\Va$, and we may take $\Va$ to be crisp. Since $\Va$ is crisply subfinitely enumerable, by \cref{prop:crisp.subfinitely.enumerable.is.finite}, $\Va$ is finite. But then we may choose for every $v \in \Va$ a $u_{v} \in \Ua$ with $v \subseteq u_{v}$, which gives us a crisp, finitely enumerable subcover of $\Ua$, which is therefore also finite by \cref{prop:crisp.subfinitely.enumerable.is.finite}.
\end{proof}

\begin{rmk}
Putting together \cref{prop:crisply.refinement.subcompact.is.compact.for.crisp.covers} with \cref{cor:db.compact.in.manifold.is.refinement.subcompact} proves that any crisp open cover of any crisp Dubuc-Penon compact ordinary manifold admits a finite subcover. This gives an internal proof of the external theorem of Dubuc and Penon \cite{Dubuc-Penon:Compact} that if the object of the Dubuc topos represented by a manifold $M$ is Dubuc-Penon compact, then $M$ is compact.
\end{rmk}

While we're here, we might as well prove the Heine-Borel property for Dubuc-Penon compact sets: they are both closed and bounded. The converse fails, however, see \cref{warn:not.heine.borel}.
\begin{prop}\label{lem:closed.subset.of.compact.is.compact}
Let $X$ be separated in the sense of Dubuc-Penon \cite{Dubuc-Penon:Compact}, namely if $x \neq y$, then for all $z$, either $x \neq z$ or $z \neq y$. Then if $K \subseteq X$ is Dubuc-Penon compact, it is Penon closed.
\end{prop}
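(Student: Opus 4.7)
The plan is to unpack what it means for $K$ to be Penon closed and then feed the resulting obligation through the Dubuc-Penon compactness of $K$, using separatedness of $X$ as the crucial ingredient that lets us pass between apartness statements.

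By definition, $K$ is Penon closed iff its complement $X - K$ is Penon open, i.e.\ for every $x \notin K$ and every $y : X$ we have $(x \neq y) \vee (y \notin K)$. So fix $x \notin K$ and $y : X$, and aim for this disjunction. First observe that for each $k \in K$ we have $x \neq k$, since $x = k$ together with $k \in K$ would force $x \in K$, contradicting $x \notin K$. Now apply the separatedness of $X$ to the apartness $x \neq k$ with third point $y$: for every $k \in K$,
\[
(x \neq y) \vee (y \neq k).
\]

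At this point the Dubuc-Penon compactness of $K$ does the work. Taking $A :\equiv (x \neq y)$ and $B(k) :\equiv (y \neq k)$ in \cref{defn:Dubuc.Penon.compact}, the previous display says $\forall k \in K.\, A \vee B(k)$, and compactness hands us $A \vee \forall k \in K.\, B(k)$, namely
\[
(x \neq y) \vee (\forall k \in K.\, y \neq k).
\]
The right disjunct implies $y \notin K$: if $y \in K$, instantiating at $k :\equiv y$ yields $y \neq y$, which is absurd. Therefore $(x \neq y) \vee (y \notin K)$, establishing that $X - K$ is Penon open and hence that $K$ is Penon closed.

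There is no real obstacle here; the only subtlety is being careful with the direction of the Dubuc-Penon quantifier-swap axiom and noticing that separatedness is precisely what converts the single apartness $x \neq k$ (which is free because $x \notin K$ and $k \in K$) into the disjunction $(x \neq y) \vee (y \neq k)$ that compactness can then universalize over $k$.
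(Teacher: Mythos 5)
Your proof is correct and follows essentially the same route as the paper's: obtain $x \neq k$ for each $k \in K$ from $x \notin K$, use separatedness to split this into $(x \neq z) \vee (z \neq k)$, and then pass the universal statement over $k$ through Dubuc-Penon compactness. The only difference is that you explicitly spell out the step $x \notin K,\, k \in K \Rightarrow x \neq k$, which the paper's proof leaves implicit.
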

\begin{proof}
We will show that $X - K$ is Penon open. Suppose that $x \not\in K$, and let $z : X$, seeking $x \neq z$ or $z \not\in K$. Now, for all $k \in K$, either $x \neq z$ or $z \neq k$ by the separatedness of $X$. Therefore, by the compactness of $K$, either $x \neq z$ or for all $k \in K$, $z \neq k$. But in the latter case, $z$ cannot be in $K$.
\end{proof}

\begin{lem}\label{lem:R.is.separated}
The smooth reals $\Rb$ are separated in the sense of Dubuc-Penon.
\end{lem}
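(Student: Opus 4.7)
The plan is to derive the separatedness of $\Rb$ directly from Postulates K and O in \cref{axiom:SDG}. Let $x, y, z : \Rb$ and assume $x \neq y$; we must produce a proof of $(x \neq z) \vee (z \neq y)$.

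First I would trade the apartness $x \neq y$ for strict order information. Since $x \neq y$ means $x - y \neq 0$, Postulate K makes $x - y$ invertible, and then Postulate O.4 gives the trichotomous conclusion $x < y$ or $x > y$. Since the goal is symmetric in $x$ and $y$, I may assume without loss of generality that $x > y$.

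Next I would invoke Postulate O.3, which says that whenever $x > y$, for any $z$ we have $x > z$ or $z > y$. In the first case, $z$ cannot equal $x$, for otherwise $x > x$ would violate Postulate O.2; hence $x \neq z$. In the second case, similarly $z \neq y$. Either way we land in the required disjunction, completing the proof.

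There is really no obstacle here, since each step uses a single clause of \cref{axiom:SDG}. The only thing worth being careful about is the symmetry reduction at the start: the output disjunction $(x \neq z) \vee (z \neq y)$ is symmetric under swapping $x$ and $y$, so the case $x < y$ is handled by the same argument after renaming. This makes the entire proof essentially a chaining of Postulates K, O.3, O.4, and O.2.
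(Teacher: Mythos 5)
Your proof is correct, but it takes a genuinely different route from the paper's. The paper reduces (by translation invariance) to proving ``if $x\neq 0$ then for all $y$, $x\neq y$ or $y\neq 0$,'' and then applies Postulate~K's \emph{local ring} clause (the $n=2$ case): since $(x-y)+y=x$ is invertible, one of $x-y$ or $y$ is invertible, hence nonzero. That is a purely algebraic argument. You instead pass through the strict order: Postulate~O.4 converts $x\neq y$ into $x<y$ or $x>y$, Postulate~O.3 (cotransitivity) then splits any $z$ off to one side, and Postulate~O.2 (irreflexivity) recovers apartness. This is an order-theoretic argument, essentially the classical fact that a cotransitive strict order generates an apartness relation. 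Both are fine. One small observation: the invocation of Postulate~K in your first step is superfluous --- Postulate~O.4 only requires $x-y\neq 0$, which you already have from $x\neq y$, so no invertibility is needed; dropping that line makes your argument rely on the order postulates alone. The paper's proof is more compact and avoids case-splitting; yours is arguably more elementary since it never appeals to the local-ring part of Postulate~K.
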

\begin{proof}
It will suffice to show that if $x \neq 0$, then for any $y : \Rb$, either $x \neq y$ or $y \neq 0$.  This is equivalently asking whether $x - y \neq 0$ or $y \neq 0$. The result then follows since $\Rb$ is a local ring and a field (Postulate K): $(x - y) + y = x$ is invertible, and therefore either $(x - y)$ or $y$ is nonzero.
\end{proof}

\begin{thm}
  Any Dubuc-Penon compact subset of $\Rb$ is closed and bounded.
\end{thm}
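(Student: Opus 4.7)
The plan is to use the two halves of the Heine-Borel conclusion as separate, short applications of machinery already built up. For closedness, I would chain together \cref{lem:R.is.separated} and \cref{lem:closed.subset.of.compact.is.compact}: since $\Rb$ is Dubuc-Penon separated, any Dubuc-Penon compact subset $K \subseteq \Rb$ is automatically Penon closed. There is essentially nothing to do beyond citing these two results in the correct order.

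For boundedness, I would invoke \cref{lem:compact.bounded} directly on the inclusion map $\iota : K \hookrightarrow \Rb$, viewed as a function $K \to \Rb$. That corollary produces a $B > 0$ with $-B < \iota(k) < B$ for all $k : K$, which is exactly the statement that $K$ is bounded in $\Rb$. No further argument seems needed.

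I do not expect any serious obstacle here; the proof is a two-line citation of prior results, and the only subtlety is remembering that ``closed'' in the synthetic setting means \emph{Penon closed}, not closed in some stronger topological sense, so the reader should be reminded of this convention in the statement of the conclusion. A clean write-up would simply be: ``By \cref{lem:R.is.separated}, $\Rb$ is Dubuc-Penon separated, so by \cref{lem:closed.subset.of.compact.is.compact} any Dubuc-Penon compact $K \subseteq \Rb$ is Penon closed. By \cref{lem:compact.bounded} applied to the inclusion $K \hookrightarrow \Rb$, there is a $B > 0$ with $-B < k < B$ for all $k \in K$, so $K$ is bounded.''
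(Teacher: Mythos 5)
Your proposal is correct and matches the paper's own argument exactly: boundedness via \cref{lem:compact.bounded} applied to the inclusion $K \hookrightarrow \Rb$, and closedness by combining \cref{lem:R.is.separated} with \cref{lem:closed.subset.of.compact.is.compact}. Nothing is missing, and the remark about ``closed'' meaning \emph{Penon closed} is a reasonable clarification to include.
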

\begin{proof}
  By \cref{lem:compact.bounded}, a Dubuc-Penon compact subset $K$ of $\Rb$ is bounded. Furthermore, as a compact subset of a separated space, $K$ is closed.
\end{proof}

\begin{warn}\label{warn:not.heine.borel}
  While Dubuc-Penon compact subsets of $\Rb$ are closed and bounded, the full Heine-Borel theorem does not hold. We can give an example of a closed and bounded subset which is not Dubuc-Penon compact. Let $z : \Rb$ be a real number, and let $S = \{x : \Rb \mid (x = 0) \wedge (z = 0)\}$. If $z = 0$, then $S = \{0\}$, and if $z \neq 0$, then $S = \emptyset$ --- in fact the set $S$ is equivalent to the proposition $(z = 0)$. If $S$ were Dubuc-Penon compact, then $(z = 0)$ or $(z \neq 0)$; this is because
  \[
(S \to ((z = 0) \vee \emptyset)) \simeq ((z = 0) \vee (S \to \emptyset)) \simeq ((z = 0) \vee (z \neq 0)),
\]
where we use the compactness of $S$ and the fact that $S \simeq (z = 0)$. We can conclude that $S$ is \emph{not} Dubuc-Penon compact, since equality of the reals is not decidable. However, $S$ is clearly bounded, and using Postulate K, we can also show that it is closed. Let $x \not\in S$ and $y : \Rb$, seeking $y \neq x$ or $y \not \in S$. The statement $x \not\in S$ means $\neg((x = 0) \wedge (z = 0))$, which by Postulate K is equivalent to $(x \neq 0) \vee (z \neq 0)$; similary, we are seeking to show that $(y \neq x) \vee (y \neq 0) \vee (z \neq 0)$. Of course, in the case that $(z \neq 0)$, we're done. On the other hand, if $(x \neq 0)$, then by the separatedness of $\Rb$ (\cref{lem:R.is.separated}), either $(x \neq y)$ or $(y \neq 0)$.

  The usual proof of the Heine-Borel theorem relies on the fact that closed subsets of compact sets are compact, in order to conclude the compactness of closed bounded subsets from the compactness of closed intervals. This is also false in general. Suppose that closed subsets of compact sets were compact; then, since $\ord{1}$ is compact, every proposition is compact. But a proposition $P$ is Dubuc-Penon compact if and only if $P \vee \neg P$, so this would imply the law of excluded middle.

Both of these counterexamples make essentially use of non-crisp propositions. This is because, by \cref{axiom:LEM}, all crisp propositions $P$ are decidable: $P \vee \neg P$. This leaves room for a theorem proving that crisp, closed and bounded subsets of $\Rb$ are Dubuc-Penon compact, but I do not know how to prove this.
\end{warn}

We can now set about proving the technical lemmas used in \cref{thm:ordinary.proper.etale.groupoid.is.orbifold}. First, we will show in \cref{thm:discrete.compact.subset.of.manifolds.is.properly.finite} that discrete, Dubuc-Penon compact subsets of ordinary manifolds are properly finite. Then, in the next section, we will show in \cref{lem:crystaline.subset.of.manifold.is.discrete} that crystaline subsets of crisp ordinary manifolds are discrete.

      \begin{lem}\label{thm:discrete.compact.subset.of.manifolds.is.properly.finite}
Let $K$ be a discrete, Dubuc-Penon compact subset of an ordinary manifold $M$. Then $K$ is properly finite.
      \end{lem}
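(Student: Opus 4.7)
The plan is to show $K$ is subfinitely enumerable; combined with the discreteness hypothesis this gives that $K$ is properly finite by \cref{defn:properly.finite}. I will proceed in two stages: first, for each $k \in K$, construct an open neighborhood $W_k \subseteq M$ of $k$ with $W_k \cap K = \{k\}$; then extract a subfinite cover from $\{W_k\}_{k \in K}$ using refinement subcompactness.

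For the isolation step, fix $k \in K$. Since $K$ is discrete it has decidable equality (Lemma 8.15 of \cite{Shulman:Real.Cohesion}), so the decomposition $K = \{k\} \sqcup (K - \{k\})$ together with item 3 of \cref{lem:compact.facts} shows that $K - \{k\}$ is Dubuc-Penon compact. Picking a chart $\phi : U \to M$ with $\phi(0) = k$, I apply \cref{prop:compact.open.relation.fatness} to $K - \{k\}$ and the relation
\[
r(k', \delta) \;:\equiv\; \exists s > \delta,\ k' \notin \phi(B(0, \max(0, s)) \cap U)
\]
at the point $0 \in \Rb$. The hypothesis $r(k', 0)$ for all $k' \in K - \{k\}$ follows from the regular Hausdorff structure of $M$: since $k \neq k'$, there are disjoint opens separating them, and the open neighborhood of $k$ contains some basic chart ball $\phi(B(0, s) \cap U)$ that then excludes $k'$. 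Penon openness of $r$ reduces, via decidable equality on $K - \{k\}$, to Penon openness of each slice in $\Rb$; this in turn follows from cotransitivity of $<$ (Postulate O.3) applied to the existentially chosen witness $s$: if $r(k'_0, \delta_0)$ is witnessed by $s_0 > \delta_0$, then for any $\delta$, either $s_0 > \delta$ (in which case $s_0$ also witnesses $r(k'_0, \delta)$) or $\delta > \delta_0$ (in which case $\delta \neq \delta_0$). The $\ep > 0$ produced by \cref{prop:compact.open.relation.fatness} then gives, via $r(k', \ep/2)$ holding for every $k' \in K - \{k\}$, that $W_k := \phi(B(0, \ep/2) \cap U)$ satisfies $W_k \cap K = \{k\}$.

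With such $W_k$ for every $k \in K$, the family $\{W_k\}_{k \in K}$ is an open cover of $K$. By \cref{cor:db.compact.in.manifold.is.refinement.subcompact}, together with the observation that the refinement it produces (via \cref{prop:db.compact.is.countably.compact}) is indexed by a subfinite set $I \subseteq \ord{n}$, there is a subfinite refinement $\Va = \{V_i\}_{i \in I}$ covering $K$ with each $V_i \subseteq W_{k_i}$ for some $k_i \in K$; hence $V_i \cap K \subseteq \{k_i\}$. Setting $\tilde{K} := \{i \in I : V_i \cap K \text{ is inhabited}\}$, which is subfinite as a subset of $I$, the rule sending $i \in \tilde{K}$ to the unique element of $V_i \cap K$ defines a surjection $\tilde{K} \twoheadrightarrow K$, exhibiting $K$ as subfinitely enumerable.

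The main obstacle is verifying Penon openness of $r$: we cannot decide whether a given $k' \in K - \{k\}$ lies in the chart image $\phi(U)$, so no case analysis on $k'$'s position is available. The trick is the existential quantification over the cutoff radius $s$ inside the definition of $r$, which lets Penon openness fall out of cotransitivity on $\Rb$ alone rather than any decision about the $K$-coordinate.
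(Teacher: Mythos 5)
Your proof is correct (modulo a minor abuse where you write $V_i \subseteq W_{k_i}$ as if $k_i$ were chosen — all that's needed, and all that's available, is that $V_i \cap K$ is merely a subsingleton, which is what the unique-element extraction actually uses), but it takes a genuinely different route from the paper. The paper's proof is much shorter: it covers $K$ directly by its singletons, notes these are Penon open in $K$ by discreteness, invokes the refinement subcompactness of $K$ from \cref{cor:db.compact.in.manifold.is.refinement.subcompact}, and reads off an association from the resulting subfinite refinement by subsingletons. Your proof instead does an extra isolation step: for each $k \in K$ you construct an honest $M$-open neighborhood $W_k$ with $W_k \cap K = \{k\}$, by applying \cref{prop:compact.open.relation.fatness} to the Dubuc-Penon compact set $K - \{k\}$ (with the well-chosen relation $r(k',\delta)$ whose Penon openness you reduce to cotransitivity on $\Rb$ plus decidable equality on $K$), and only then refines the cover $\{W_k\}_{k \in K}$. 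What your extra work buys is precision about which topology on $K$ the refinement theorem applies to: the proof of \cref{cor:db.compact.in.manifold.is.refinement.subcompact} refines a given cover by \emph{base opens of $M$}, so it is really a statement about covers of $K$ by restrictions of $M$-opens, whereas the paper applies it to the singleton cover, which is a priori only a Penon-open cover. Your isolation step shows that the singletons of a discrete Dubuc-Penon compact subset are in fact restrictions of $M$-opens, so the two readings coincide here; the paper's proof is treating that as implicit. The cost is that your argument invokes the compactness of $K$ twice (once per point for the isolation, once globally for the refinement) and is substantially longer.
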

      \begin{proof}
        By \cref{cor:db.compact.in.manifold.is.refinement.subcompact}, $K$ is refinement subcompact. Since it is discrete, every singleton is Penon open: for any $x$, either $y \neq x$ or $y = x$ (and so $y \in \{x\}$). Therefore, there is a subfinitely enumerable refinement of the cover of $K$ by its singletons. That is, there is a subfinitely enumerable set $\Sa \subset \Oa(K)$ consisting of subsingletons, such that $\bigcup_{S \in \Sa}S = K$.

        The relation $r(k, S) :\equiv (k \in S)$ gives an association of $K$ with $\Sa$. For $k : K$, there is some $S \in \Sa$ with $k \in S$ since this is a cover. If $k \in S$ and $k \in S'$, then $S = S'$ because they were assumed to be subsingletons and they both contain $k$ and are therefore both the singleton $\{k\}$. It follows by \cref{lem:properly.finite.by.relation} that $K$ is itself subfinitely enumerable. Since it was assumed to be discrete, this makes $K$ properly finite.
      \end{proof}

      \subsection{Crisp, ordinary proper \'etale groupoids are orbifolds}\label{sec:final.theorem}

\begin{lem}\label{lem:crystaline.in.R.discrete}
If a subset $C \subseteq \Rb^{n}$ is crystaline, then it is discrete.
\end{lem}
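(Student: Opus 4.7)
The plan is to show that every path $\gamma : \Rb \to C$ is constant; by the characterization of discrete types as those for which $\term{const} : X \to (\Rb \to X)$ is an equivalence (Axiom $\Rb\flat$), this suffices to conclude that $C$ is $\shape$-modal, i.e.\ discrete.

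Since $C$ is a subset of $\Rb^n$, I would post-compose $\gamma$ with the inclusion $C \hookrightarrow \Rb^n$ to obtain $\tilde{\gamma} : \Rb \to \Rb^n$. To show $\tilde{\gamma}$ is constant, I would apply the Principle of Constancy (see \cref{axiom:SDG}) coordinate-wise: it suffices to prove that for every $x : \Rb$ and every $\ep : \Db$, we have $\tilde{\gamma}(x + \ep) = \tilde{\gamma}(x)$.

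Fix such $x$ and consider the map $\varphi : \Db \to C$ defined by $\varphi(\ep) :\equiv \gamma(x + \ep)$. The key observation is that $\Db \equiv \Spec_{\Rb}(\Rb[x]/(x^{2}))$ is a crisp infinitesimal variety, so by \cref{lem:inf.variety.is.im.connected} it is $\Im$-connected, i.e.\ $\Im\Db \simeq \ast$. Since $C$ is $\Im$-modal by hypothesis, $\varphi$ factors uniquely through $\Im\Db$, and is therefore constant. Evaluating at $0$ and $\ep$ gives $\gamma(x + \ep) = \varphi(\ep) = \varphi(0) = \gamma(x)$, as required.

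Each coordinate $\tilde{\gamma}_{i} : \Rb \to \Rb$ then satisfies the hypothesis of the Principle of Constancy, so $\tilde{\gamma}_{i}$ is constant for every $i$, hence $\tilde{\gamma}$ is constant; since $C \hookrightarrow \Rb^{n}$ is an embedding, $\gamma$ itself is constant. There is no serious obstacle here: the content of the lemma is simply the combination of (a) $\Im$-connectedness of the first-order infinitesimal disk $\Db$ with (b) the Principle of Constancy, which together translate $\Im$-modality of a subset of $\Rb^{n}$ into $\shape$-modality.
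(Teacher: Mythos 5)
Your proof is correct and follows essentially the same route as the paper's: show that any path $\Rb \to C$ is fixed by nilsquare displacements, then conclude constancy coordinate-wise via the Principle of Constancy. The only cosmetic difference is that you establish the infinitesimal invariance by factoring $\ep \mapsto \gamma(x+\ep) : \Db \to C$ through $\Im\Db \simeq \ast$ using \cref{lem:inf.variety.is.im.connected}, whereas the paper restricts $\gamma$ along $\Dc_t \hookrightarrow \Rb$ and uses directly that $\Dc_t$ (a translate of $\Dc$) is $\Im$-connected by the very definition of $\Im$ as nullification at $\Dc$; both are valid and the resulting arguments are interchangeable.
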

\begin{proof}
  We will show that every path $\gamma : \Rb \to C$ is constant. Since $C$ is crystaline, for any $t : \Rb$, the composite
  \[
\Dc_{t} \hookrightarrow \Rb \xto{\gamma} C
\]
is constant at $\gamma(t)$. In particular, we have that $\gamma(t + \ep) = \gamma(t)$ for each $\ep^{2} = 0$, since $t + \ep \approx \ep$. But then the coordinate functions $\gamma_{i} : \Rb \to \Rb$ are similarly unmoved by first order displacement, and so by the Principle of Constancy, the coordinate functions $\gamma_{i}$ are constant and so $\gamma$ is constant.
\end{proof}

\begin{lem}\label{lem:pullback.of.crystaline.along.etale.is.crystaline}
Let $f : X\to Y$ be $\Im$-\'etale. If $C$ is crystaline and $c : C \to Y$ is any map, then the pullback $f^{\ast}c : X \times_{Y} C \to X$ is crystaline.
\end{lem}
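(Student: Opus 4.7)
The plan is to read the conclusion as stating that the pullback \emph{object} $X \times_Y C$ is a crystaline type --- which is precisely the form needed by the subsequent \cref{lem:crystaline.subset.of.manifold.is.discrete}, where one pulls a crystaline subset of an ordinary manifold back along a chart and applies \cref{lem:crystaline.in.R.discrete} inside each coordinate patch. The strategy is to rotate the pullback square and exploit its symmetric projection $c^{\ast}f : X \times_Y C \to C$, whose codomain is already crystaline by assumption.

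Consider the pullback
\[
\begin{tikzcd}
X \times_Y C & C \\
X & Y
\arrow["{f^{\ast}c}"', from=1-1, to=2-1]
\arrow["{c^{\ast}f}", from=1-1, to=1-2]
\arrow["f"', from=2-1, to=2-2]
\arrow["c", from=1-2, to=2-2]
\arrow["\lrcorner"{anchor=center, pos=0.125}, draw=none, from=1-1, to=2-2]
\end{tikzcd}
\]
Since the $\Im$-\'etale maps form the right class of an orthogonal factorization system by Theorem 7.2 of \cite{Cherubini-Rijke:Modal.Descent}, they are closed under pullback, so $c^{\ast}f$ is $\Im$-\'etale. Its $\Im$-naturality square
\[
\begin{tikzcd}
X \times_Y C & {\Im(X \times_Y C)} \\
C & {\Im C}
\arrow["{c^{\ast}f}"', from=1-1, to=2-1]
\arrow["{(-)^{\Im}}", from=1-1, to=1-2]
\arrow["{(-)^{\Im}}"', from=2-1, to=2-2]
\arrow["{\Im(c^{\ast}f)}", from=1-2, to=2-2]
\arrow["\lrcorner"{anchor=center, pos=0.125}, draw=none, from=1-1, to=2-2]
\end{tikzcd}
\]
is therefore a pullback. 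Because $C$ is crystaline, the bottom unit $(-)^{\Im} : C \to \Im C$ is an equivalence, and as a pullback of an equivalence the top unit $(-)^{\Im} : X \times_Y C \to \Im(X \times_Y C)$ is also an equivalence. Hence $X \times_Y C$ is $\Im$-modal, which is the desired conclusion.

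There is essentially no obstacle here: once the symmetric projection $c^{\ast}f$ is seen to inherit $\Im$-\'etaleness from $f$ by pullback-stability, the result falls out of the defining pullback square of an $\Im$-\'etale map together with the fact that $C$ is its own $\Im$-localization. The only subtlety is interpretational --- ``the pullback\ldots is crystaline'' is taken to refer to the pullback object --- but this is exactly the content consumed downstream by \cref{lem:crystaline.subset.of.manifold.is.discrete}, which needs the pulled-back subset to be crystaline as a type before it can invoke \cref{lem:crystaline.in.R.discrete} inside a chart.
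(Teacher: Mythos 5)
Your proof is correct and follows essentially the same route as the paper: pass to the symmetric projection $c^{\ast}f : X \times_Y C \to C$, note it is $\Im$-\'etale because $\Im$-\'etale maps are pullback-stable, and then read off from its $\Im$-naturality square that the top unit is a pullback of the equivalence $C \to \Im C$ and hence an equivalence. The only cosmetic difference is the citation for pullback-stability --- you derive it from the orthogonal factorization system of Theorem 7.2 of \cite{Cherubini-Rijke:Modal.Descent}, while the paper cites Theorem 3.20 of \cite{Cherubini:Thesis} directly --- both are valid.
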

\begin{proof}
  By Theorem 3.20 of \cite{Cherubini:Thesis}, the pullback of a modally \'etale map is modally \'etale; therefore, the map $c^{\ast} f : X \times_{Y}C \to C$ is $\Im$-{\'etale}, and so the naturality square
  \[
\begin{tikzcd}
	{X \times_Y C} & C \\
	{\Im(X \times_Y C)} & {\Im C}
	\arrow[from=1-1, to=1-2]
	\arrow[from=1-2, to=2-2]
	\arrow[from=2-1, to=2-2]
	\arrow[from=1-1, to=2-1]
	\arrow["\lrcorner"{anchor=center, pos=0.125}, draw=none, from=1-1, to=2-2]
\end{tikzcd}
  \]
  is a pullback. By hypothesis, the unit $C \to \Im C$ is an equivalence, so we conclude that the unit $X \times_{Y} C \to \Im(X \times_{Y}C)$ is an equivalence.
\end{proof}

\begin{prop}[Shulman, Theorem 11.1 \cite{Shulman:Real.Cohesion}]\label{lem:R.is.connected}
The reals $\Rb$ are connected in the sense that if $X \cup Y = \Rb$ and both $X$ and $Y$ are nonempty, then $X \cap Y$ is nonempty.
\end{prop}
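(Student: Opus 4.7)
The plan is to argue by contradiction: assume $X \cap Y = \emptyset$, and exploit the contractibility $\shape \Rb \simeq \ast$ which holds by the definition of $\shape$ as localization at $\Rb$. The two hypotheses together will let us rewrite $\Rb$ as a coproduct type $X + Y$, and then the preservation of coproducts by $\shape$ will force one of the summands to be empty, contradicting nonemptiness.

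First, I would establish an equivalence $X + Y \simeq \Rb$ under the hypotheses $X \cup Y = \Rb$ and $X \cap Y = \emptyset$. For each $z : \Rb$, the fiber of the obvious projection $X + Y \to \Rb$ is $(z \in X) + (z \in Y)$. Disjointness makes this coproduct of propositions itself a proposition, while the hypothesis $X \cup Y = \Rb$ gives a truncated element $\trunc{(z \in X) + (z \in Y)}$, which promotes to a genuine element once the coproduct is known to be a proposition. So every fiber is a contractible proposition and the map is an equivalence.

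Next, I would apply $\shape$ and use that it preserves coproducts, yielding
\[
\shape X + \shape Y \;\simeq\; \shape(X + Y) \;\simeq\; \shape \Rb \;\simeq\; \ast.
\]
The unit $(-)^{\shape}$ carries elements of $X$ and $Y$ to elements of $\shape X$ and $\shape Y$, so both summands on the left are nonempty. But the right-hand side is contractible, and a case split on the unique inhabitant shows that one summand must be empty: if the inhabitant is $\inl \xi$, any $\eta : \shape Y$ would force $\inr \eta = \inl \xi$ in $\ast$, violating the distinctness of $\inl$ and $\inr$ in a coproduct. This contradicts the nonemptiness of $\shape Y$ (symmetrically for the $\inr$ case), completing the proof.

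The one step that goes beyond pure type theory is the preservation of coproducts by $\shape$; this holds in cohesive HoTT because the discrete (that is, $\shape$-modal) types are closed under coproducts, so $\shape A + \shape B$ is already $\shape$-modal and hence models the coproduct in the reflective subcategory of discrete types. With that in hand, the argument is entirely formal and in particular makes no essential use of the Covering Property beyond what is already encoded in the definition of $\shape$.
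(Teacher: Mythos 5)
Your proof is correct, and the overall strategy --- use $X \cap Y = \emptyset$ to recognize $\Rb$ as the coproduct $X + Y$, then derive a contradiction from $\shape \Rb \simeq \ast$ --- is the same as the paper's. The difference lies only in the last step. The paper post-composes the equivalence $\Rb \simeq X + Y$ with the which-summand map to $\ord{2}$ and uses that $\ord{2}$ is crisply discrete: the map $\Rb \to \ord{2}$ then factors through $\shape \Rb \simeq \ast$ and is hence constant, forcing $X = \Rb$ or $Y = \Rb$. You instead apply $\shape$ to the equivalence directly and invoke preservation of binary coproducts by $\shape$. These two moves carry the same content: ``discrete types are closed under binary coproducts'' is equivalent to ``$\ord{2}$ is discrete'' --- given $\gamma : \Rb \to P + Q$ with $P$ and $Q$ discrete, post-compose with $P + Q \to \ord{2}$; constancy of that composite factors $\gamma$ through one summand, and discreteness of that summand finishes. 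So you are not sidestepping the paper's key cohesion input, merely repackaging it. The one thing to be careful about: when you say the coproduct-preservation step ``holds because the discrete types are closed under coproducts,'' you should recognize that this closure is itself the nontrivial cohesion fact, not a reduction to something free; the precise source is that sets with decidable equality are discrete (Lemma 8.15 of \cite{Shulman:Real.Cohesion}), which gives $\ord{2}$ discrete and hence the closure you want.
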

\begin{proof}
Suppose that $X \cap Y = \emptyset$. Then $\Rb \simeq X + Y$ is a disjoint union, and can define a function $f : \Rb \to \ord{2}$ defined by $f(x) = 0$ if $x \in X$ and $f(x) = 1$ if $x \in Y$. But $\ord{2}$ is crisply discrete, and so is in particular discrete, so $f$ factors through the shape $\shape \Rb$, which is the point. In other words, $f$ is constant, and so for all $x : \Rb$, $x \in X$, or for all $x : \Rb$, $x \in Y$. That is, $X = \Rb$ or $Y = \Rb$. Suppose that $X = \Rb$; then since $X \cap Y = \emptyset$, we can conlude that $Y = \emptyset$. But this contradicts our assumption that $Y$ was nonempty. Similarly, if $Y = \Rb$, then $X = \emptyset$, a contradiction. In either case, $X \cap Y$ cannot be empty.
\end{proof}

\begin{lem}\label{lem:crystaline.subset.of.manifold.is.discrete}
If a subset $C \subseteq M$ of a crisp ordinary manifold $M$ is crystaline, then it is discrete.
\end{lem}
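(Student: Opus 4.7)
The plan is to reduce to \cref{lem:crystaline.in.R.discrete} via the chart structure of $M$ and then globalize. Since $M$ is a crisp second-countable ordinary manifold, we may fix a crisp countable atlas $\phi_a : U_a \hookrightarrow M$ with each $U_a$ an open subset of $\Rb^n$. By \cref{cor:crisp.open.is.etale}, each $\phi_a$ is $\Im$-\'etale, so \cref{lem:pullback.of.crystaline.along.etale.is.crystaline} shows that the pullback $C_a :\equiv \phi_a^{-1}(C) \subseteq U_a \subseteq \Rb^n$ is crystaline, and \cref{lem:crystaline.in.R.discrete} then shows that each $C_a$ is discrete. This establishes the conclusion locally inside every chart.

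To deduce that $C$ itself is discrete, I would show that every path $\gamma : \Rb \to C$ is constant. Since $C$ is crystaline and $\Db$ is $\Im$-connected (\cref{lem:inf.variety.is.im.connected}), the restriction $\ep \mapsto \gamma(t + \ep) : \Db \to C$ is constant for every $t : \Rb$, giving $\gamma(t + \ep) = \gamma(t)$ whenever $\ep^2 = 0$. Consequently, for any $f : M \to \Rb$ the composite $f \circ \gamma : \Rb \to \Rb$ satisfies $(f \circ \gamma)(t+\ep) = (f\circ\gamma)(t)$ and is therefore constant by the Principle of Constancy; hence $f(\gamma(t)) = f(\gamma(0))$ for every $t$ and every $f : M \to \Rb$.

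The last step is to upgrade this to $\gamma(t) = \gamma(0)$, using that an ordinary manifold is separated by its smooth real-valued functions. Concretely, I would choose a chart $\phi_{a_0}$ around $\gamma(0)$ and a bump function $\chi : M \to \Rb$ with $\chi(\gamma(0)) > 0$ and support contained in $\phi_{a_0}(U_{a_0})$; the preceding constancy forces $\gamma(t) \in \phi_{a_0}(U_{a_0})$ for every $t$. Each chart coordinate, extended by further bump functions to a global smooth function on $M$, must likewise be constant along $\gamma$, identifying $\gamma(t)$ with $\gamma(0)$ first in the chart and therefore in $M$.

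The main obstacle is the rigorous internal production of bump functions in cohesive synthetic differential geometry --- they exist as ordinary smooth bumps in the intended models, but a fully internal treatment is delicate and would need either an additional axiom or a more abstract substitute. An alternative route I would consider is an \'etale-descent argument: pulling back the \'etale atlas $\sum_{a} U_a \twoheadrightarrow M$ along $C \hookrightarrow M$ produces a surjective $\Im$-\'etale map $\sum_a C_a \twoheadrightarrow C$ whose source is discrete (a crisply indexed sum of discrete sets is discrete), and one would attempt to descend discreteness along this map in the spirit of \cref{thm:etale.descent}, exploiting that $\Im$ commutes with crisp colimits (\cref{thm:Im.preserves.crisp.colimits}).
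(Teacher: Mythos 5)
Your local reduction is correct and matches the paper: pulling back along each chart $\phi_a$ (which is $\Im$-\'etale by \cref{cor:crisp.open.is.etale}) produces crystaline subsets of $\Rb^n$, which are discrete by \cref{lem:crystaline.in.R.discrete}. Your observation that $f\circ\gamma$ is constant for every $f : M \to \Rb$ via the Principle of Constancy is also correct. But both globalization strategies you propose have genuine gaps.

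The bump-function route requires the internal construction of a smooth function $\chi : M \to \Rb$ with $\chi(\gamma(0)) > 0$ and $\{x : \chi(x) > 0\} \subseteq \phi_{a_0}(U_{a_0})$. None of the axioms of the paper (Postulates K, O, Exp, E, W, J, Principle of Constancy) produce such a function. The classical formula $e^{-1/(1-|x|^2)}$ presupposes a case split on $|x|<1$ versus $|x|\geq 1$ and smoothness at the seam, which is exactly the kind of pointwise case analysis that the smooth reals forbid. Crispness does not obviously help: crisp LEM gives decidability of crisp propositions, not the existence of crisp smooth data satisfying an open-support condition. So this route needs either a new axiom or a construction you haven't supplied.

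The \'etale-descent alternative also does not go through. The descent theorems in the paper (\cref{thm:etale.descent}, \cref{thm:microlinear.descends.along.etale}) descend $\Im$-\'etaleness and microlinearity, not discreteness. Discreteness is $\shape$-modality, which is strictly stronger than crystalinity ($\Im$-modality), and an $\Im$-\'etale map carries no direct information about the $\shape$-naturality square. Concretely, if $f : X \to Y$ is surjective and $\Im$-\'etale with $X$ discrete, what you can deduce (by Proposition 3.7 of Cherubini--Rijke applied to the singleton $\Im$-disks of $X$) is that $Y$ has singleton $\Im$-disks, i.e.\ that $Y \to \Im Y$ is an embedding. For $C$ this is already known, since $C$ is crystaline by hypothesis; it does not yield discreteness. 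So the ``spirit of'' \cref{thm:etale.descent} does not supply the needed lemma.

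The paper's proof takes a different and more delicate route. It first reduces the target to the double-negated claim $\gamma(t) \approx \gamma(0)$, which is a negative proposition and therefore amenable to the law of excluded middle and double-negation elimination even though $\gamma$ is not crisp. It then shows $\Dc_{\gamma(0)} \cap C = \{\gamma(0)\}$ (via \cref{lem:crisp.Penon.manifold.disk}, \cref{lem:pullback.of.crystaline.along.etale.is.crystaline}, and \cref{lem:crystaline.in.R.discrete}) so that $\gamma(t)\approx\gamma(0)$ upgrades to $\gamma(t)=\gamma(0)$. For the globalization it picks a large $N$ with $0, t \in [-N,N]$, invokes Dubuc--Penon compactness of $[-N,N]$ (\cref{axiom:compact.interval}) and countable compactness (\cref{prop:db.compact.is.countably.compact}) to extract a finite subcover by charts, and then uses the connectedness of $\Rb$ (\cref{lem:R.is.connected}) to chain $\gamma(0)$ to $\gamma(t)$ through overlapping charts on which $\gamma$ is already known to be constant. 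This is exactly the machinery your proposal is missing; the paper's own Warning after the lemma discusses why most ``obvious'' shortcuts (union of discrete subsets, clopen chain arguments) fail constructively.
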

\begin{proof}
  Let $\gamma : \Rb \to C$; we will show that $\gamma$ is constant by showing that $\gamma(t) = \gamma(0)$ for all $t : \Rb$. First, let's show that it suffices to prove that $\gamma(t) \approx \gamma(0)$. Consider the following pullback:
  \[
\begin{tikzcd}
	{\Dc_{\gamma(0)} \cap C} & C \\
	{\Dc_{\gamma(0)}} & M
	\arrow[from=1-1, to=2-1]
	\arrow[hook, from=2-1, to=2-2]
	\arrow[hook, from=1-2, to=2-2]
	\arrow[from=1-1, to=1-2]
	\arrow["\lrcorner"{anchor=center, pos=0.125}, draw=none, from=1-1, to=2-2]
\end{tikzcd}
  \]
  By \cref{lem:crisp.Penon.manifold.disk}, the inclusion $\Dc_{\gamma(0)} \hookrightarrow M$ is $\Im$-\'etale, and so by \cref{lem:pullback.of.crystaline.along.etale.is.crystaline} the pullback $\Dc_{\gamma(0)} \cap C$ is crystaline. But $\Dc_{\gamma(0)}$ embedds into $\Rb^{n}$ since ordinary manifolds are Penon manifolds, so $\Dc_{\gamma(0)} \cap C$ is a crystaline subset of $\Rb^{n}$ and therefore by \cref{lem:crystaline.in.R.discrete} is discrete. Therefore, equality in $\Dc_{\gamma(0)} \cap C$ is decidable; but since for any $x \in \Dc_{\gamma(0)}$, $x \approx \gamma(0)$, we can conclude that if $x$ is also in $C$ then $x = \gamma(0)$. That is, $\Dc_{\gamma(0)} \cap C = \{\gamma(0)\}$, and so if we prove that $\gamma(t) \approx \gamma(0)$, this will imply that $\gamma(t) = \gamma(0)$.

  Since $M$ was assumed to be crisp, we can take a crisp, countable open cover $M = \bigcup_{i : \Nb} U_{i}$. Any chart is infinitesimally stable, because $M$ is regular. Therefore, for any chart $\phi_{i} : \Rb^{n} \to M$ (with $U_{i} = \phi_{i}(\Rb^{n})$) in this cover, $\phi_{i}\inv C \subseteq \Rb^{n}$ is crystaline by \cref{cor:crisp.open.is.etale} and \cref{lem:pullback.of.crystaline.along.etale.is.crystaline}. The restriction $\gamma_{|U_{i}} : \gamma\inv(U_{i}) \to \phi_{i}\inv C$ is therefore constant.

  Now, let $t : \Rb$, seeking to prove that $\gamma(t) \approx \gamma(0)$. Since we are trying to prove a negative statement (namely, $\neg \neg(\gamma(t) = \gamma(0))$), we are free to use the law of excluded middle and double negation elimination. Let $N$ be a number so that $t$ and $0$ are both in $(-N, N)$, and therefore also $[-N, N]$. By \cref{axiom:compact.interval}, $[-N, N]$ is Dubuc-Penon compact and so by \cref{prop:db.compact.is.countably.compact} there is an $n : \Nb$ such that $[-N, N] \subseteq V_{1}, \ldots, V_{n}$. Therefore, there is some $i$ and $j$ so that $0 \in V_{i}$ and $t \in V_{j}$; let $W = \bigcup_{j\neq k \neq i} V_{k}$ be the rest of this finite cover. Now, either $W$ is empty or it isn't. If $W$ is empty, then $(-N, N) \subseteq V_{i} \cup V_{j}$ and so $V_{i} \cap V_{j}$ is nonempty by \cref{lem:R.is.connected}. But then we can assume that $x \in V_{i} \cap V_{j}$, so that $\gamma(t) = \gamma(x) = \gamma(0)$. On the other hand, if $W$ is nonempty, then either $V_{i} \cap V_{j}$ is nonempty or both of $W \cap V_{i}$ and $W \cap V_{j}$ are nonempty. In either case, we can assume we have inhabitants, and conclude that $\gamma(t) = \gamma(0)$.
\end{proof}

\begin{warn}
  We scraped through \cref{lem:crystaline.subset.of.manifold.is.discrete} by the skin of our teeth. It feels like it should be easier to prove. The real line is connected, and so we would expect that a locally constant function $f : \Rb \to X$ (valued in a set) should be constant. One way we might try to prove this general theorem is by showing that every open cover of $\Rb$ admits a \emph{chain} from $x$ to $y$: opens $U_{1}, \ldots, U_{n}$ in the cover with $x \in U_{1}$ and $y \in U_{n}$ and $U_{i} \cap U_{i+1}$ inhabited. We could then give $f(x) = f(y)$ by $f(x) = f(x_{1}) = \cdots = f(x_{n-2}) = f(y)$ with $x_{i} \in U_{i} \cap U_{i + 1}$, appealing to the constancy of $f$ on each $U_{i}$. There are a number of issues, however. First, the usual proof of this relies on showing that the set of all points $y$ for which there is a chain from $x$ is clopen and inhabited. However, I do not know how to prove that a clopen, inhabited subset $U$ of $\Rb$ is all of $\Rb$ --- the usual argument makes use of the classical fact that $U \cup (\Rb - U) = \Rb$, but this does not hold constructively. Second, even in the binary case it is not clear that if $U \cup V = \Rb$ with $U$ and $V$ open and inhabited, then $U \cap V$ must be inhabited (as opposed to \cref{lem:R.is.connected} which shows that $U \cap V$ is nonempty). This property is called \emph{overt connectedness} by Taylor \cite{Taylor:Real.Analysis}, and is proven for crisp subsets of $\Rb$ as Theorem 11.3 of \cite{Shulman:Real.Cohesion}.

Another approach to proving \cref{lem:crystaline.subset.of.manifold.is.discrete} would be to prove that the union of discrete subsets is discrete, or more narrowly that the union of countably many discrete subsets is discrete. However, discrete subsets are not even closed under binary union. Let $z : \Rb$ and consider the quotient $Q_{z} = \{0, 1\}/\sim$ where $0 \sim 1$ iff $z = 0$ (in more type theoretic language, this is the suspension $\Sigma(z =0)$ of the proposition $(z = 0)$). By definition, $[0] = [1]$ in $Q_{z}$ if and only if $z = 0$, so $Q_{z}$ is not discrete; if it were, then it would have decidable equality and so we could decide whether or not $(z = 0)$. But $\{[0]\} \cup \{[1]\} = Q_{z}$ since the quotient map $[-] : \{0, 1\} \to Q_{z}$ is surjective, and singletons are discrete. There is a nice topological interpretation of this counterexample: the projection $\fst : \dsum{z : \Rb} Q_{z} \to \Rb$ is the codiagonal map $\Rb \vee \Rb \to \Rb$ from the wedge of $\Rb$ with itself, pointed at $0$. The failure of the discreteness of $Q_{z}$ reflects the failure of this map to be a covering map; see also Remark 9.9 of \cite{Jaz:Good.Fibrations}.

However, the use of compactness --- and therefore of \cref{axiom:compact.interval} which asserts the Dubuc-Penon compactness of the unit interval --- is likely inessential.

\end{warn}

Finally, we can introduce the definition of an ordinary proper \'etale pregroupoid, and prove that the Rezk completions of crisp ordinary proper \'etale pregroupoids are orbifolds.
   \begin{defn}\label{defn:ordinary.proper.etale.groupoid}
        An \emph{ordinary proper \'etale pregroupoid} is a pregroupoid $\Ga$ satisfying the following conditions:
        \begin{enumerate}
          \item The type $\Ga_{0}$ of objects and $\Ga_{1}$ of morphisms are ordinary manifolds.
                \item The source map $s : \Ga_{1} \to \Ga_{0}$ is $\Im$-{\'etale} (which, by \cref{lem:etale.means.etale.for.ordinary.manifolds}, means that it is a local diffeomorphism in the usual sense).
                \item The map $(s, t) : \Ga_{1} \to \Ga_{0} \times \Ga_{0}$ sending a morphism to its source and target is Dubuc-Penon proper.
          \end{enumerate}
      \end{defn}

      \begin{thm}\label{thm:ordinary.proper.etale.groupoid.is.orbifold}
The Rezk completion of a crisp ordinary proper \'etale pregroupoid is an orbifold in the sense of \cref{defn:orbifold}.
      \end{thm}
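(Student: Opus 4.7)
The plan is to verify separately the two conditions of \cref{defn:orbifold} for the Rezk completion $\type{r}\Ga$ of a crisp ordinary proper \'etale pregroupoid $\Ga$: microlinearity, and proper finiteness of all identification types.

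For microlinearity, I observe that any ordinary proper \'etale pregroupoid is in particular an \'etale pregroupoid in the sense of \cref{defn:etale.groupoid}, since its object space $\Ga_0$ is an ordinary manifold and hence microlinear by \cref{thm:manifold.is.microlinear}, and its source map $s : \Ga_1 \to \Ga_0$ is $\Im$-\'etale by hypothesis. Since $\Ga$ is crisp, \cref{thm:etale.groupoid.microlinear} immediately yields that $\type{r}\Ga$ is microlinear.

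For proper finiteness of identifications, proper finiteness is a property (proposition), and by definition every object of $\type{r}\Ga$ is merely a representable; so it suffices to show that for all $a, b : \Ga_0$ the hom-set $\Ga(a, b) \simeq (\yo(a) = \yo(b))$ (by Yoneda) is properly finite. This set embeds in the crisp ordinary manifold $\Ga_1$ as the fiber of $(s, t) : \Ga_1 \to \Ga_0 \times \Ga_0$ over $(a, b)$; Dubuc-Penon properness of $(s, t)$ together with \cref{lem:proper.iff.fibers.compact} shows $\Ga(a, b)$ is Dubuc-Penon compact. The remaining job is to show that $\Ga(a, b)$ is discrete, after which \cref{thm:discrete.compact.subset.of.manifolds.is.properly.finite} concludes proper finiteness.

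To show discreteness, I plan to first show $\Ga(a, b)$ is crystaline and then invoke \cref{lem:crystaline.subset.of.manifold.is.discrete}. Since $s$ is $\Im$-\'etale, the fiber $s^{-1}(a)$ is the pullback of $s$ along the singleton inclusion $\{a\} \hookrightarrow \Ga_0$, which is crystaline by \cref{lem:pullback.of.crystaline.along.etale.is.crystaline}. Then $\Ga(a, b) = \{g \in s^{-1}(a) \mid t(g) = b\}$ is a subtype of a crystaline type cut out by a propositional predicate (since $\Ga_0$ is a set). Because every proposition is crystaline (as noted in \cref{rmk:crystaline.modality.is.not.lex}) and $\Im$ is a modality (hence $\Sigma$-closed over modal bases with modal fibers), this subtype is itself crystaline. \cref{lem:crystaline.subset.of.manifold.is.discrete}, applied to the crystaline subset $\Ga(a, b)$ of the crisp ordinary manifold $\Ga_1$, then gives discreteness.

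The main technical nuisance I expect is the crystalinity argument: while the pullback \cref{lem:pullback.of.crystaline.along.etale.is.crystaline} handles $s^{-1}(a)$, transferring crystalinity to the further subset $\Ga(a, b)$ requires the $\Sigma$-closure of $\Im$ on propositional fibers. This is a general-nonsense step, but it must be invoked explicitly since $\Im$ is not lex and so one cannot simply say ``subsets of crystaline types are crystaline'' via pullback.
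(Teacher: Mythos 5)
Your proof is correct and follows essentially the same route as the paper's: microlinearity via \cref{thm:etale.groupoid.microlinear}, Yoneda to reduce to $\Ga(a,b)$, Dubuc-Penon properness to get compactness, crystalinity of the fiber of $s$ followed by $\Sigma$-closure of $\Im$ over propositional fibers to get crystalinity of $\Ga(a,b)$, then \cref{lem:crystaline.subset.of.manifold.is.discrete} and \cref{thm:discrete.compact.subset.of.manifolds.is.properly.finite}. The only difference is that you spell out the $\Sigma$-closure step (which the paper asserts more tersely as "as a subset of the crystaline set... $\Ga(x,y)$ is crystaline"), but your explicit justification is exactly the right one, so this is purely a matter of exposition.
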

      \begin{proof}
        Since a crisp ordinary proper \'etale pregroupoid $\Ga$ is in particular a crisp \'etale pregroupoid, its Rezk completion $\type{r}\Ga$ is microlinear by \cref{thm:etale.groupoid.microlinear}. Therefore, it remains to show that the types of identifications in $\type{r}\Ga$ are properly finite. By the Yoneda lemma, $(\yo(x) = \yo(y))$ is equivalent to $\Ga(x, y) \subseteq \Ga_{1}$, so it will suffice to show that $\Ga(x, y)$ is properly finite. Since $(s, t) : \Ga_{1} \to \Ga_{0} \times \Ga_{0}$ is Dubuc-Penon proper and singletons are Dubuc-Penon compact, the inverse image $\Ga(x, y) \simeq \fib_{(s, t)}(x, y)$ is Dubuc-Penon compact.
As a subset of the crystaline set $\dsum{z : \Ga_{0}} \Ga(x, z)$ --- which is the fiber of $s : \Ga_{1}\to \Ga_{0}$ over $x$ --- the set $\Ga(x, y)$ is crystaline. Therefore, it is discrete by \cref{lem:crystaline.subset.of.manifold.is.discrete}.
Then, by \cref{thm:discrete.compact.subset.of.manifolds.is.properly.finite}, we may conclude that $\Ga(x, y)$ is properly finite.

      \end{proof}

    \subsection{Global quotient orbifolds, and pullback of orbifolds}\label{sec:global.quotient}

    We finally have the definitions of microlinear types and properly finite sets so that \cref{defn:orbifold} is a fully precise definition of orbifold. Let's quickly show that the good orbifolds --- the quotients of the actions of finite groups on microlinear types --- are orbifolds in this sense. Then, we will show that orbifolds are closed under pullback, and use this fact to conclude that the \emph{inertia orbifold} of an orbifold is, in fact, an orbifold.

    \begin{thm}\label{thm:good.orbifold}
The quotient $X \sslash \Gamma$ of a microlinear set $X$ by the action of a crisp, finite group $\Gamma$ is an orbifold in the sense of \cref{defn:orbifold}. These are the \emph{global quotient} orbifolds.
    \end{thm}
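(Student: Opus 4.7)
The plan is to verify the two clauses of \cref{defn:orbifold}: microlinearity of $X \sslash \Gamma$ and proper finiteness of all its identification types. Microlinearity is immediate. Since $\Gamma$ is crisp and finite it is in particular crisply discrete, so \cref{thm:discrete.homotopy.quotient.is.microlinear} applies and tells us that $X \sslash \Gamma$ is microlinear whenever $X$ is.

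For the finiteness condition, fix $p, q : X \sslash \Gamma$ and aim to show that $(p = q)$ is properly finite. Proper finiteness is a proposition, so by the $0$-connectedness of $\B \Gamma$ we may assume that $p \equiv (\pt_{\B\Gamma}, x)$ and $q \equiv (\pt_{\B\Gamma}, y)$ are both in the image of the quotient map $[-] : X \to X \sslash \Gamma$; here I use that an identification of any $(t, x') : X \sslash \Gamma$ with an element of the form $(\pt_{\B\Gamma}, x)$ induces an equivalence on identification types. \cref{lem:identifications.in.homotopy.quotient} then provides an equivalence
\[
(p = q) \;\simeq\; \dsum{g : \Gamma} (g \cdot x = y).
\]

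The right hand side is, by construction, a subset of the finite set $\Gamma$ cut out by the predicate $g \mapsto (g \cdot x = y) : \Gamma \to \Prop$ (which is a proposition because $X$ is assumed to be a set). Thus $(p = q)$ is a subset of a finite set, hence subfinite, and since $\Gamma$ is (crisply) discrete the subset is itself discrete, so $(p = q)$ is properly finite by \cref{defn:properly.finite} and the observation following it that subfinite sets are properly finite.

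The only step that requires any care is the reduction to the case $p, q \in \mathrm{im}[-]$, and this is routine once one observes that proper finiteness is a proposition and that $\B \Gamma$ is $0$-connected by definition of a delooping; I do not anticipate any real obstacle. Hence $X \sslash \Gamma$ satisfies both conditions of \cref{defn:orbifold}.
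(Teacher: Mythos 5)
Your proposal is correct and follows essentially the same route as the paper's proof: microlinearity via \cref{thm:discrete.homotopy.quotient.is.microlinear}, reduction to points in the image of the quotient map (you phrase this via $0$-connectedness of $\B\Gamma$, the paper via surjectivity of $q$ --- these are equivalent), then \cref{lem:identifications.in.homotopy.quotient} to identify $(p = q)$ with a subset of the finite group $\Gamma$, which is subfinite and hence properly finite.
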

    \begin{proof}
By \cref{thm:discrete.homotopy.quotient.is.microlinear}, $X \sslash \Gamma$ is microlinear; it remains to show that its types of identifications are properly finite. Since $q : X \to X \sslash \Gamma$ is surjective, we may consider the types of identifications $q(x) = q(y)$ for $x,\, y : X$. By \cref{lem:identifications.in.homotopy.quotient}, this type is equivalent to the type $\dsum{\gamma : \Gamma} (\gamma x = y)$. Since $X$ is a set, the type $(\gamma x = y)$ is a proposition, and so $\dsum{\gamma : \Gamma} (\gamma x = y)$ is a subset of $\Gamma$. Therefore, $q(x) = q(y)$ is subfinite, and so properly finite.
    \end{proof}

    \begin{prop}\label{prop:orbifold.pullback}
Orbifolds are closed under pullback.
    \end{prop}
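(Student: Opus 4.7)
The plan is to verify the two defining properties of an orbifold separately for a pullback. Given a cospan $X \xto{f} Z \xleftarrow{g} Y$ where $X$, $Y$, and $Z$ are orbifolds, I want to show that $X \times_{Z} Y$ is microlinear and that all of its identification types are properly finite.

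First, microlinearity of $X \times_{Z} Y$ is immediate from the first closure proposition of \cref{sec:notions.of.smooth.space}, which states that microlinear types are closed under all limits (this follows from \cref{lem:microlinear.lifting.property}, since microlinearity is characterized by a right lifting property).

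Second, I need to analyze the identification types. Given two points $(x, y, p), (x', y', p') : X \times_{Z} Y$ (where $p : f(x) = g(y)$ and $p' : f(x') = g(y')$), the standard characterization of identifications in $\Sigma$-types yields an equivalence
\[
\big((x, y, p) = (x', y', p')\big) \simeq \dsum{\alpha : x = x'} \dsum{\beta : y = y'} \big(f_{\ast}\alpha \bullet p' = p \bullet g_{\ast}\beta\big).
\]
The third component is an identification in the type $(f(x) = g(y'))$. Since $Z$ is an orbifold, this identification type is properly finite, hence in particular a discrete set. Identifications in a set are propositions, so the third component of the tuple is a proposition. This exhibits the identification type as a subset
\[
\big((x, y, p) = (x', y', p')\big) \subseteq (x = x') \times (y = y').
\]

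The hypothesis that $X$ and $Y$ are orbifolds gives that $(x = x')$ and $(y = y')$ are properly finite. By the closure of properly finite sets under binary products and under taking subsets (\cref{lem:properly.finite.closed.under.union}), the identification type $\big((x, y, p) = (x', y', p')\big)$ is properly finite, as required. No step here should be an obstacle; the argument is essentially a bookkeeping exercise combining the closure properties of microlinearity and proper finiteness already established.
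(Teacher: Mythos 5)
Your proof is correct and follows essentially the same route as the paper's: microlinearity from closure under limits, then characterize identifications in the pullback as a dependent sum whose third component is a proposition (an identification between identifications in the base orbifold, which is a set), exhibiting the identification type as a subset of a product of properly finite sets. The only difference is that you spell out why the third component is a proposition, which the paper leaves implicit.
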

    \begin{proof}
      Suppose that
      \[
\begin{tikzcd}
	A & B \\
	C & D
	\arrow["k"', from=1-1, to=2-1]
	\arrow["h", from=1-1, to=1-2]
	\arrow["f", from=1-2, to=2-2]
	\arrow["g"', from=2-1, to=2-2]
	\arrow["\lrcorner"{anchor=center, pos=0.125}, draw=none, from=1-1, to=2-2]
\end{tikzcd}
      \]
      is a pullback square commuting via $S : \dprod{a : A} fha = gka$, and that $B$, $C$ and $D$ are orbifolds. Since microlinear types are closed under pullback, $A$ is also microlinear; it remains to show that it has properly finite identification types. We have an equivalence
      \[
(a = a') \simeq \left(\dsum{p : ha = ha'} \dsum{q : ka = ka'} ( f_{\ast}p \bullet S_{a'} = S_{a} \bullet g_{\ast}q ) \right)
\]
given by the computation of identitication types of pair types. The first two factors are properly finite by assumption; the third is a proposition. Therefore, $(a = a')$ is a subset of a product of properly finite sets and is therefore properly finite by \cref{lem:properly.finite.closed.under.union}.
    \end{proof}

We can use \cref{prop:orbifold.pullback} to show that the \emph{inertia orbifolds} are orbifolds.
\begin{defn}
If $X$ is an orbifold, its \emph{inertia orbifold} is the type $\dsum{x : X} (x = x)$, or equivalently the type $X^{S^{1}}$, the free loop type of $X$.
\end{defn}

    \begin{cor}
The inertia orbifold $X^{S^{1}}$ of an orbifold is an orbifold in the sense of \cref{defn:orbifold}.
    \end{cor}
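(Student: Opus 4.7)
The plan is to realize the inertia orbifold $X^{S^{1}}$ as a pullback of orbifolds and then invoke \cref{prop:orbifold.pullback}.

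First, I would observe that $X \times X$ is itself an orbifold whenever $X$ is. Microlinearity is closed under limits, so $X \times X$ is microlinear. The identification type in a product decomposes as $((x, y) = (x', y')) \simeq (x = x') \times (y = y')$, so by \cref{lem:properly.finite.closed.under.union} it is properly finite as a product of properly finite sets. In particular, the diagonal $\Delta : X \to X \times X$ is a map between orbifolds.

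Next, I would identify $X^{S^{1}} \equiv \dsum{x : X}(x = x)$ with the pullback of the diagonal along itself:
\[
\begin{tikzcd}
	{X^{S^1}} & X \\
	X & {X \times X}
	\arrow[from=1-1, to=1-2]
	\arrow[from=1-1, to=2-1]
	\arrow["\Delta", from=1-2, to=2-2]
	\arrow["\Delta"', from=2-1, to=2-2]
	\arrow["\lrcorner"{anchor=center, pos=0.125}, draw=none, from=1-1, to=2-2]
\end{tikzcd}
\]
The pullback is $\dsum{x, y : X}((x, x) =_{X \times X} (y, y))$, which by the characterization of identifications in product types (HoTT Book, Theorem 2.7.2) is equivalent to $\dsum{x, y : X}(x = y) \times (x = y)$. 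Contracting the based path space $\dsum{y : X}(x = y)$ around $y :\equiv x$ via its second factor leaves exactly $\dsum{x : X}(x = x)$, as desired.

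With these two observations, \cref{prop:orbifold.pullback} immediately yields that $X^{S^{1}}$ is an orbifold. The argument is purely formal, so no real obstacle arises; the only step requiring care is the HoTT pullback computation identifying $X^{S^{1}}$ with $X \times_{X \times X} X$, which is entirely standard.
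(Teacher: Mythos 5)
Your proposal is correct and takes essentially the same approach as the paper: realize $X^{S^{1}}$ as the pullback $X \times_{X \times X} X$ along the diagonal and invoke \cref{prop:orbifold.pullback}. You are somewhat more careful than the paper in explicitly verifying that $X \times X$ is an orbifold (which \cref{prop:orbifold.pullback} requires of all three corners) and in spelling out the standard path-space contraction identifying the pullback with $\dsum{x : X}(x = x)$; these checks are correct and worth making.
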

    \begin{proof}
      We have a pullback
      \[
\begin{tikzcd}
	{X^{S^1}} & X \\
	X & {X\times X}
	\arrow["\Delta", from=1-2, to=2-2]
	\arrow["\Delta"', from=2-1, to=2-2]
	\arrow[from=1-1, to=2-1]
	\arrow[from=1-1, to=1-2]
	\arrow["\lrcorner"{anchor=center, pos=0.125}, draw=none, from=1-1, to=2-2]
\end{tikzcd}
      \]
      So, by \cref{prop:orbifold.pullback}, $X^{S^{1}}$ is an orbifold.
    \end{proof}

\section{Conclusion}

We began this paper by seeing that examples of orbifolds can be constructed explicitly and intuitively in homotopy type theory using the yoga of higher groups. Orbifolds constructed in this way have meaningful points, and we can study these orbifolds in terms of their points --- just as we might study a particular set by understanding its elements.

We then showed that the axioms and notions of synthetic differential geometry generalize smoothly to higher types when interpreted in homotopy type theory. In particular, we saw that a variety of locally discrete higher types --- types with discrete types of identifications --- are microlinear, using the exact definition which has become standard in SDG.

In particular, we saw a definition of orbifold which closely reflects the intuitive idea of an orbifold as a smooth space whose points have finite automorphism groups. Since finiteness has many constructive incarnations, we had to take a detour to understand finiteness and compactness in SDG. But, in the end, we saw that any proper \'etale groupoid in the ordinary, external sense (which internally is a crisp ordinary proper \'etale pregroupoid) presents an orbifold in the sense of the new definition.

\appendix

\section{Tiny Types}\label{sec:tiny}

A remarkable feature of synthetic differential geometry is the \emph{tinyness} of the infinitesimals and infinitesimal varieties. This is Postulate W of \cref{axiom:SDG}. The importance of tiny objects for SDG was first realized by Lawvere \cite{Lawvere:SDG}, and their elementary theory was worked out by Yetter \cite{Yetter:tiny}.

In this subsection, we will develop just enough of the theory of tiny types for our purposes in this paper --- in particular, enough to prove that localization at the type $\Dc$ of infinitesimals preserves crisp colimits (\cref{thm:Im.preserves.crisp.colimits}). A great deal more can be done with tiny types in synthetic differential geometry; for example, the construction of differential form classifiers, and from them the construction of classifying types for principal bundles with connection. We leave the further development of the theory of tiny types to future work.

A type $T$ is tiny if the functor $X \mapsto X^{T}$ has a \emph{right} adjoint, in addition to its usual left adjoint $X \mapsto T \times X$. This formulation is not quite correct: the adjunction can only exist \emph{externally} (see \cref{warn:internal.tiny}). To refer to the external world internally, we will use crisp types and the $\flat$ comodality.

Tiny types in a type theory with $\flat$ have been defined before by Licata, Orton, Pitts, and Spitters in Figure 1 of \cite{LOPS:Universes}. However, their definition is only coherent for set level objects, which suits their purposes because they interpret the type theory in a $1$-topos. Their axioms are also not propositional when applied to higher types. We will use a different definition which is coherent for higher types as well, and where being tiny is a propostion.

\begin{defn}\label{defn:tiny}
  A crisp type $T$ is \emph{tiny} when the following structure exists crisply:
  \begin{enumerate}
    \item For any crisp type $X$, a type $X^{\frac{1}{T}}$ and a map $\xi : (X^{\frac{1}{T}})^{T} \to X$.
    \item For any crisp types $X$ and $Y$, the map
          \[
          \Xi :\equiv \omega \mapsto [v \mapsto \xi(\omega \circ v) ] :(X \to Y^{\frac{1}{T}}) \to (X^{T} \to Y)
          \]
          is a $\flat$-equivalence: $\flat \Xi$ is an equivalence. That is,
          \[
\flat\Xi : \flat (X \to Y^{\frac{1}{T}}) \simeq \flat(X^{T} \to Y)
          \]
\end{enumerate}
\end{defn}

This definition is coherent because the assignment $X \mapsto X^{T}$ is $\infty$-functorial, and an $\infty$-functor $L : \Ca \to \Da$ has a right adjoint if and only if the slice category $L / d$ has a terminal object for each object $d \in \Da$ --- with no functoriality in $d$ assumed. \cref{defn:tiny} gives, roughly speaking, a terminal object $\xi : \left(Y^{\frac{1}{T}}\right)^{T} \to Y$ in $(-)^{T}/Y$; that $\flat \Xi$ is an equivalence says that for every object $X^{T} \to Y$ of $(-)^{T}/Y$, there is a unique map $X \to Y^{\frac{1}{T}}$ so that the triangle commutes. Since we want all of these statements to be external, we put them under a $\flat$.

\begin{warn}\label{warn:internal.tiny}
  By Theorem 1.4 of \cite{Yetter:Tiny} (or, a suitable adaptation to $\infty$-toposes), $T$ should be tiny in \emph{any} context, even one which is not crisp. However, as mentioned in the warning immediately following that theorem, it is not generally the case that the adjoint $X^{\frac{1}{T}}$ is stable under base-change. For this reason, we restrict $X$ to be crisp, since $X^{\frac{1}{T}}$ is stable under crisp base change.
\end{warn}

\begin{rmk}
In \cite{Lawvere:Data.Types}, Lawvere says that ``this possibility [of the existence of tiny types] does not seem to have been contemplated by combinatory logic; the formalism should be extended to enable treatment of so basic a situation.''. \cref{defn:tiny} does not constitute such an extension of the formalism of type theory. Rather, it is more of a hack, using the Shulman's modalities to internalize the external. Working with tiny objects as in \cref{defn:tiny} is not very different then working with them externally --- which is how they must be worked with in an internal logic without externalizing modalities, since the defining adjunctions only exist externally.
A real solution to Lawvere's challenge could be a novel type theory for tiny objects.
\end{rmk}

\begin{lem}\label{lem:naturality.of.tiny.adjunction.domain}
  Let $f :: X' \to X$ be a crisp map. Then the square
  \[
\begin{tikzcd}
	{(X \to Y^{\frac{1}{T}})} & {(X^T \to Y)} \\
	{(X' \to Y^{\frac{1}{T}})} & {(X'^T \to Y)}
	\arrow["{- \circ f}"', from=1-1, to=2-1]
	\arrow["\Xi", from=1-1, to=1-2]
	\arrow["{- \circ f^T}", from=1-2, to=2-2]
	\arrow["\Xi"', from=2-1, to=2-2]
\end{tikzcd}
  \]
  commutes.
\end{lem}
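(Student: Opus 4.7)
The plan is to unfold both composites around the square and observe that they agree by the associativity of function composition; there is essentially no content beyond chasing definitions.

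Concretely, take a crisp $\omega :: X \to Y^{\frac{1}{T}}$. Going down-then-right yields $\Xi(\omega \circ f)$, which by definition is the function $v' \mapsto \xi((\omega \circ f) \circ v') : X'^{T} \to Y$. Going right-then-down yields $(-\circ f^{T})(\Xi(\omega))$, which is the function $v' \mapsto \Xi(\omega)(f^{T}(v')) = \Xi(\omega)(f \circ v') = \xi(\omega \circ (f \circ v')) : X'^{T} \to Y$. By associativity of composition these two functions are definitionally equal at every $v' : T \to X'$, so function extensionality gives an identification of the two composites; by the same computation this identification is natural in $\omega$, hence the square of function types commutes.

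Strictly speaking, \cref{defn:tiny} asks $\Xi$ to be a $\flat$-equivalence and the diagram in the statement lives at the level of mapping types, not their $\flat$'s, so the only thing to be careful about is that the above argument may be carried out with $f :: X' \to X$ crisp (as required for $X'^{T} \to Y$ and $X' \to Y^{\frac{1}{T}}$ to even be in scope as types the statement can talk about). Since $f$ is crisp and $X, Y, T$ are crisp, all four corners of the square are well-defined crisp types and the precomposition maps $-\circ f$ and $-\circ f^{T}$ are legitimate. The main (and only) obstacle is purely bookkeeping: making sure the \cref{defn:tiny} data is being applied to crisp inputs so that $Y^{\frac{1}{T}}$ is defined and that the comparison $\Xi$ in both rows refers to the same structure map $\xi$. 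Once that is verified, the square commutes on the nose by associativity, with no use of the $\flat$-equivalence property of $\Xi$ required.
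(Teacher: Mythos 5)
Your proof is correct and matches the paper's: unfold $\Xi$ on both composites and observe that they agree by associativity of composition. One small note: restricting to crisp $\omega$ is unnecessary (and in fact slightly weakens what you want to show) --- $\Xi$ is a function on all of $(X \to Y^{\frac{1}{T}})$, and the square should commute as an identification of functions, which the pointwise computation gives for arbitrary $\omega$; crispness is only needed for the \emph{types} $X$, $X'$, $Y$, $T$ and the map $f$ to be in scope, exactly as you say.
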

\begin{proof}
 Let $\omega : X \to Y^{\frac{1}{T}}$ and $v : T \to X'$, and compute:
  \begin{align*}
    \Xi(\omega) \circ f^{T}(v) &\equiv \xi(\omega \circ f \circ v) \\
    &\equiv \Xi(\omega \circ f)(v).
\end{align*}

\end{proof}

Since mapping out of a tiny type has a right adjoint, it commutes with all colimits. This is the sense in which tiny types are ``tiny'': that $X \mapsto (T \to X)$ commutes with all colimits is a very strong compactness property. Of course, the adjoint only exists for crisp types and the adjunction only holds for crisp maps, so we can only hope to commute with crisp colimits.
\begin{prop}\label{prop:tiny.commutes.colimits}
  Let $T$ be at tiny type. Then the functor $X \mapsto X^{T}$ preserves all crisp colimits, but in particular the following:
  \begin{enumerate}
    \item If $f, g :: A \to B$, then
          \[
\type{coeq}(f, g)^{T} \simeq \type{coeq}(f^{T}, g^{T})
          \]
          where $f^{T},g^{T} :: A^{T} \to B^{T}$ are given by post-composition.
    \item If the square on the left is a crisp pushout, then so is the square on the right:
          \[
\begin{tikzcd}
	A & B && {A^T} & {B^T} \\
	C & D && {C^T} & {D^T}
	\arrow[from=1-1, to=2-1]
	\arrow[from=1-1, to=1-2]
	\arrow[from=1-2, to=2-2]
	\arrow[from=2-1, to=2-2]
	\arrow["\lrcorner"{anchor=center, pos=0.125, rotate=180}, draw=none, from=2-2, to=1-1]
	\arrow[from=1-4, to=2-4]
	\arrow[from=1-4, to=1-5]
	\arrow[from=1-5, to=2-5]
	\arrow[from=2-4, to=2-5]
	\arrow["\lrcorner"{anchor=center, pos=0.125, rotate=180}, draw=none, from=2-5, to=1-4]
\end{tikzcd}
          \]
    \item If $A_{0} \to A_{1} \to \cdots$ is a crisp sequence with colimit $A_{\infty}$, then $A_{\infty}^{T}$ is the colimit of the sequence $A_{0}^{T} \to A_{1}^{T} \to \cdots$.
\end{enumerate}
\end{prop}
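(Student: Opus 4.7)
The plan is to exploit the external adjunction $(-)^T \dashv (-)^{\frac{1}{T}}$ encoded by tinyness to obtain colimit preservation uniformly across the three displayed cases. A left adjoint preserves colimits, and crisp colimits agree with their external counterparts, so $(-)^T$ preserves crisp colimits. The three cases --- crisp coequalizers, crisp pushouts, and crisp sequential colimits --- are instances of this general fact.

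Here is how I would carry out the argument internally. Fix a crisp diagram $D$ together with its crisp colimit cocone into $L :\equiv \colim D$. The canonical comparison map $\kappa : \colim(D^T) \to L^T$, obtained by applying $(-)^T$ termwise to the cocone and then invoking the universal property of $\colim(D^T)$, is crisp. To show $\kappa$ is an equivalence I would construct a crisp inverse $\rho$ by the following recipe, read off from the adjunction: crisp maps $L^T \to \colim(D^T)$ correspond under $\flat\Xi$ of \cref{defn:tiny} (with $X$ taken to be $L$ and $Y$ taken to be $\colim(D^T)$) to crisp maps $L \to \colim(D^T)^{\frac{1}{T}}$; by the crisp universal property of $L$, these correspond in turn to crisp cocones $D \to \colim(D^T)^{\frac{1}{T}}$; and by $\flat\Xi$ once more, together with the naturality of $\Xi$ in its domain supplied by \cref{lem:naturality.of.tiny.adjunction.domain}, such cocones correspond to crisp cocones $D^T \to \colim(D^T)$. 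For $\rho$ I take the crisp map arising from the canonical colimit cocone $D^T \to \colim(D^T)$.

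Verifying $\rho\circ\kappa = \id_{\colim(D^T)}$ and $\kappa\circ\rho = \id_{L^T}$ reduces to the uniqueness halves of the two universal properties used in constructing $\rho$: both composites and the identity are crisp endomorphisms of crisp types that agree after composing with the colimit cocone legs of $\colim(D^T)$ and $L^T$ respectively, so they are forced to be equal. The three displayed cases --- coequalizers of parallel crisp maps, crisp pushouts, and crisp sequential colimits --- are then immediate specializations, since each of these diagram shapes is small, crisp, and enjoys the crisp universal property invoked above.

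The main obstacle is managing the fact that \cref{defn:tiny} only supplies a $\flat$-equivalence rather than a bona fide equivalence through $\Xi$. The way through is to keep every type and every map constructed in the argument crisp, so that $\flat\Xi$ applies at each step and all equalities between composites can be proved from the crisp universal properties of $L$ and $\colim(D^T)$, which do yield internal (not merely $\flat$) uniqueness. In particular, at no point do we need to assert that $\Xi$ itself is an internal equivalence; we only ever use it to pass crisp data from one side of the adjunction to the other.
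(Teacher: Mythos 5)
Your proposal is correct and follows essentially the same route as the paper. The paper packages the argument as a representability (crisp-Yoneda) claim --- it exhibits a chain of natural equivalences $\flat(\colim(D)^T \to Y) \simeq \flat(\colim(D^T) \to Y)$ for crisp $Y$ using $\flat\Xi$ and \cref{lem:naturality.of.tiny.adjunction.domain}, then substitutes the two colimits in for $Y$ to produce the mutually inverse maps --- which is precisely what your construction of $\kappa$ and $\rho$ and the verification via the uniqueness clauses of the crisp universal properties unpacks; in both cases the crucial point you correctly identify is that every type and map stays crisp so that $\flat\Xi$ can be applied throughout.
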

\begin{proof}
  In general, the arguments will go by showing that both $\colim D_{i}^{T}$ and $(\colim D_{i})^{T}$ have the same universal property in the $\infty$-category of crisp types with mapping types from $X$ to $Y$ being $\flat(X \to Y)$. This will give an equivalence $\flat(\colim D_{i}^{T} \simeq (\colim D_{i})^{T})$, from which we may conclude that $\colim D_{i}^{T} \simeq (\colim D_{i})^{T}$ as types (and therefore that they have the same universal property using the full mapping types $X \to Y$).

 We will prove the case of coequalizers; the other cases are similar. To that end, let $f, g :: A \to B$ be crisp maps. For any crisp type $Y$, we have equivalences:
  \begin{align*}
    \flat(\type{coeq}(f, g)^{T} \to Y) &\simeq \flat(\type{coeq}(f, g) \to Y^{\frac{1}{T}}) \\
                                       &\simeq \dsum{z : \flat(B \to Y^{\frac{1}{T}})} (\mbox{let $h^{\flat} :\equiv z$ in $\flat( h \circ f = h \circ g )$})\\
    &\simeq \dsum{z : \flat(B^{T} \to Y)} (\mbox{let $h^{\flat} :\equiv \Xi\inv(z)$ in $\flat( h \circ f = h \circ g )$})\\
    &\simeq \dsum{z : \flat(B^{T} \to Y)} (\mbox{let $h^{\flat} :\equiv \Xi\inv(z)$ in $( h^{\flat} \circ f^{\flat} = h^{\flat} \circ g^{\flat} )$})\\
    &\equiv \dsum{z : \flat(B^{T} \to Y)} (\Xi\inv(z) \circ f^{\flat} = \Xi\inv(z) \circ g^{\flat})\\
    &\simeq \dsum{z : \flat(B^{T} \to Y)} (\Xi\inv(z \circ ( f^{T} )^{\flat}) = \Xi\inv(z \circ (g^{T})^{\flat}))\\
    &\simeq \dsum{z : \flat(B^{T} \to Y)} (z \circ ( f^{T} )^{\flat} = z \circ (g^{T})^{\flat})\\
    &\simeq \flat(\type{coeq}(f^{T}, g^{T}) \to Y)
  \end{align*}
  By subsituting in $\type{coeq}(f, g)^{T}$ and $\type{coeq}(f^{T}, g^{T})$ in for $Y$, respectively, we can give an equivalence between them.
\end{proof}

\begin{lem}\label{lem:mapping.out.sum}
  Let $T$ be any type. If $A$ is $T$-null (the inclusion $A \to A^{T}$ of constants is and equivalence), then for any family $ B: A \to \Type$, we have
  \[
(\dsum{a : A} B(a))^{T} \simeq \dsum{a : A} B(a)^{T}.
  \]
\end{lem}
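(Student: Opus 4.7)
The plan is to factor the desired equivalence through the standard ``dependent choice'' equivalence for function types into a $\Sigma$-type, namely
\[
\left(T \to \dsum{a : A} B(a)\right) \simeq \dsum{f : T \to A} \dprod{t : T} B(f(t)),
\]
which is an instance of the distributivity of $\Pi$ over $\Sigma$ (Theorem 2.15.7 of \cite{HoTTBook}). An element $h : T \to \sum_{a : A} B(a)$ corresponds to the pair $(\fst \circ h,\, \snd \circ h)$, and this correspondence is a definitional equivalence up to a standard HoTT lemma; no assumption on $T$ or $A$ is needed here.

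Next I would use the hypothesis that $A$ is $T$-null. By definition, the constants map $\term{const} : A \to A^T \equiv (T \to A)$ is an equivalence. Applying the usual change-of-variable equivalence for $\Sigma$-types along the inverse of $\term{const}$ gives
\[
\dsum{f : T \to A} \dprod{t : T} B(f(t)) \;\simeq\; \dsum{a : A} \dprod{t : T} B(\term{const}_a(t)).
\]
Since $\term{const}_a(t) \equiv a$ for every $t : T$, the inner dependent product definitionally reduces to $\dprod{t : T} B(a) \equiv B(a)^T$, and composing the two equivalences yields the desired equivalence $\left(\dsum{a : A}B(a)\right)^T \simeq \dsum{a : A} B(a)^T$.

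There is essentially no obstacle here: the only content is the observation that the first component $\fst \circ h : T \to A$ of a map $h : T \to \sum_{a : A} B(a)$ must, by $T$-nullity of $A$, be (uniquely) constant at some $a : A$, after which the second component becomes a map $T \to B(a)$. The construction is manifestly natural in $B$ and in $T$, which is why \cref{lem:mapping.out.sum} will combine cleanly with the preservation results in \cref{prop:tiny.commutes.colimits} when deducing \cref{thm:tiny.null.preservers.crisp.colimits}.
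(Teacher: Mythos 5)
Your proposal is correct and takes essentially the same route as the paper: both start from the distributivity equivalence $\bigl(T \to \dsum{a : A} B(a)\bigr) \simeq \dsum{f : T \to A}\dprod{t : T} B(f(t))$ and then use $T$-nullity of $A$ to reindex the outer $\Sigma$ along $\term{const} : A \simeq A^T$. The paper merely spells out the reindexing step by inserting the contractible factor $\dsum{a : A}(f = \term{const}_a)$ and contracting it away, whereas you invoke the change-of-variable lemma for $\Sigma$-types directly; these are the same argument.
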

\begin{proof}
  If $A$ is $T$-null, then the fibers $\dsum{a : A} (f = \term{const}_{a})$ of the inclusion of constants $\term{const} : A \to A^{T}$ are contractible. Therefore, we have the following equivalences:
  \begin{align*}
    (T \to \dsum{a : A} B(a)) &\simeq \dsum{f : T \to A} (\dprod{t : T} B(ft)) \\
                              &\simeq \dsum{a : A} \dsum{f : T \to A} \dsum{f = \term{const}_{a}} (\dprod{t : T} B(ft)) \\
                              &\simeq \dsum{a : A} (\dprod{t : T} B(\term{const}_{a} t)) \\
    &\equiv \dsum{a : A} B(a)^{T}.
\end{align*}
\end{proof}

\begin{thm}\label{thm:tiny.commute.with.localization}
  Suppose that $T$ is a tiny type. Let $I$ be a crisply discrete, $T$-null type, and let $f_{i} :: P_{i} \to Q_{i}$ be a crisp family of $T$-null, sequentially compact types indexed by $i :: I$. (More formally, we have a crisp function $f :: I \to \dsum{X, Y : \Type}(X \to Y)$, and since $I$ is crisply discrete, we may assume any element of $I$ to be crisp) If $X$ is any crisp type, then
  \[
( L_{f}X )^{T} \simeq L_{f} (X^{T})
\]
where $L_{f}$ is the localization at the family $f$. As a corollary, we have
\[
\trunc{X}_{n}^{T} \simeq \trunc{X^{T}}_{n}
\]
for any $n : \Nb$.
\end{thm}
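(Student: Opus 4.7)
The strategy is to unwind the construction of $L_f X$ from Section 2.2 of \cite{RSS:Modalities} as a crisp sequential colimit of crisp pushouts, and to observe that, thanks to tinyness, every operation appearing in this construction commutes with $(-)^T$. Concretely, one sets $X_0 :\equiv X$ and builds each $X_{n+1}$ as a crisp pushout whose ``attaching span'' is assembled from the sums $\dsum{i : I} X_n^{P_i} \times P_i$ and $\dsum{i : I} X_n^{P_i} \times Q_i$ (plus an analogous span of path-cells when localizing at maps rather than purely nullifying); then $L_f X$ is the crisp sequential colimit of these stages. The sequential compactness hypothesis on the $P_i$ and $Q_i$ is precisely what guarantees that this colimit really is $f$-local, since any map from $P_i$ or $Q_i$ into $X_\infty$ must factor through some finite $X_n$ on which the extension problem has already been solved.

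I would then apply $(-)^T$ stage by stage, identifying $X_n^T$ with the $n$-th stage of the same localization construction applied to $X^T$ via four commutations: (i) $(-)^T$ preserves crisp pushouts and crisp sequential colimits by \cref{prop:tiny.commutes.colimits}; (ii) $\left(\dsum{i : I} A_i\right)^T \simeq \dsum{i : I} A_i^T$ by \cref{lem:mapping.out.sum}, since $I$ is $T$-null; (iii) $P_i \simeq P_i^T$ and $Q_i \simeq Q_i^T$ because both are $T$-null; and (iv) $(X_n^{P_i})^T \simeq X_n^{T \times P_i} \simeq (X_n^T)^{P_i}$ by currying, together with $(A \times B)^T \simeq A^T \times B^T$ since $(-)^T$ is a right adjoint. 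Combining these, applying $(-)^T$ to the pushout square defining $X_{n+1}$ yields precisely the pushout square defining the $(n+1)$-st stage of the localization of $X^T$; passing to the sequential colimit one final time, once again preserved by $(-)^T$, gives the desired equivalence $(L_f X)^T \simeq L_f(X^T)$.

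The main obstacle will be bookkeeping: pinning down a presentation of the RSS construction that respects crispness at every stage and that transparently admits the commutations above --- especially in the general localization (as opposed to nullification) case, where extra path-cells appear and must themselves be crisp pushouts built from $P_i$, $Q_i$, and exponentials $X_n^{P_i}$. The sequential compactness hypothesis is the key device that promotes the naive pushout iteration into an honest $f$-localization in the colimit, and this is where I expect to spend the most effort. For the corollary, $n$-truncation is the nullification at a single map from an appropriate homotopy sphere $S^{n+2}$ to the terminal type, with singleton index type. The index type and the terminal codomain are trivially $T$-null and sequentially compact, and the sphere is $T$-null because it is $\shape$-modal while the tiny $T$ arising in our SDG setting is $\shape$-connected; higher spheres are also sequentially compact. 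The main theorem then applies directly, giving $\trunc{X}_n^T \simeq \trunc{X^T}_n$.
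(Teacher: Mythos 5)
Your proposal follows essentially the same route as the paper: you apply $(-)^T$ stagewise to the sequential-colimit-of-pushouts construction of $L_f$, using that $(-)^T$ preserves crisp pushouts and sequential colimits, commutes with sums over the $T$-null index $I$, fixes the $T$-null types $P_i$ and $Q_i$, and interacts with exponentials via currying; the paper works with $\type{QL}_f$ from Section 7.2 of Rijke's thesis while you cite RSS, but the construction and the commutations exploited are the same. One small correction for the corollary: $n$-truncation is nullification at $S^{n+1}$, not $S^{n+2}$ (nullification at $S^0$ gives propositions, at $S^1$ gives sets, and so on).
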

\begin{proof}
  We will use the construction of $L_{f} X$ as in Section 7.2 of Rijke's thesis \cite{Rijke:Thesis}. This construction proceeds as follows: for any type $A$, define $\type{QL}_{f} A$ to be the pushout
  \[
\begin{tikzcd}
	{\sum_{i : I} \left((P_i \times A^{P_i}) +_{(P_i \times A^{Q_i})} (Q_i \times A^{Q_i} \right)} & {\sum_{i : I} Q_i \times A^{P_i}} \\
	A & {\type{QL}_f A}
	\arrow[from=1-1, to=2-1]
	\arrow[from=1-1, to=1-2]
	\arrow[from=1-2, to=2-2]
	\arrow[from=2-1, to=2-2]
\end{tikzcd}
  \]
  where the top horizontal map is given by pushout-product and the left vertical map is given by evaluation (see Definition 7.2.6 of \cite{Rijke:Thesis} for details). We then define $L_{f} X$ to be the colimit of the sequence $X \to \type{QL}_{f} X \to \type{QL}_{f}\type{QL}_{f} X \to \cdots$. By \cref{prop:tiny.commutes.colimits}, $(L_{f} X)^{T}$ is equivalently the colimit of the sequence $X^{T} \to (\type{QL}_{f} X)^{T} \to (\type{QL}_{f} \type{QL}_{f} X)^{T} \to \cdots$. Therefore, it will suffice to show that for any crisp type $A$, we have $(\type{QL}_{f} A)^{T} \simeq \type{QL}_{f}(A^{T})$, natural for crisp maps in $A$.

  For this, we will appeal to \cref{lem:mapping.out.sum} and \cref{prop:tiny.commutes.colimits} again to see that we have equivalences:
  \[
\begin{tikzcd}
	{\left(\sum_{i : I} \left((P_i \times A^{P_i}) +_{(P_i \times A^{Q_i})} (Q_i \times A^{Q_i} \right)\right)^T} & {\left(\sum_{i : I} Q_i \times A^{P_i}\right)^T} \\
	{\sum_{i : I} \left((P_i \times (A^T)^{P_i}) +_{(P_i \times (A^T)^{Q_i})} (Q_i \times (A^T)^{Q_i} \right)} & {\sum_{i : I} Q_i \times (A^T)^{P_i}}
	\arrow[from=1-1, to=1-2]
	\arrow[from=2-1, to=2-2]
	\arrow[from=1-1, to=2-1, equals]
	\arrow[from=1-2, to=2-2, equals]
\end{tikzcd}
  \]
  We are making use of the fact that $I$, $P_{i}$ and $Q_{i}$ are all $T$-null, and that $(-)^{T}$ commutes with crisp pushouts.

\end{proof}

\begin{cor}\label{cor:tiny.connectedness}
  Let $T$ be a tiny type. If $X$ is crisp and $k$-connected, then $X^{T}$ is $k$-connected. In particular, for $k$-commutative higher groups $G$,
  \[
    \B^{k+1}(G^T) \simeq (\B^{k+1} G)^{T}.
  \]
\end{cor}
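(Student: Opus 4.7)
The plan is to deduce both claims from the truncation case of \cref{thm:tiny.commute.with.localization}, namely $\lVert X \rVert_n^T \simeq \lVert X^T \rVert_n$ for crisp $X$. For the first assertion, if $X$ is crisp and $k$-connected then $\lVert X \rVert_k$ is contractible, so $\lVert X \rVert_k^T$ is contractible (a function type into a contractible type is contractible). Applying the truncation corollary gives $\lVert X^T \rVert_k \simeq \lVert X \rVert_k^T \simeq \ast$, so $X^T$ is $k$-connected.

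For the identification $(\B^{k+1}G)^T \simeq \B^{k+1}(G^T)$, the key point is that the functor $Y \mapsto Y^T$ has a left adjoint (namely $-\times T$) and therefore preserves all limits. In particular it preserves iterated loop spaces of pointed types: for any pointed $Y$, pointing $Y^T$ at the constant map yields a natural equivalence $\Omega^j(Y^T) \simeq (\Omega^j Y)^T$ for every $j$, since $\Omega$ is the pullback of two copies of the base point along itself. Taking $Y \equiv \B^{k+1}G$, we obtain
\[
\Omega^{k+1}\bigl((\B^{k+1}G)^T\bigr) \;\simeq\; (\Omega^{k+1}\B^{k+1}G)^T \;\simeq\; G^T.
\]
Combined with the first part, $(\B^{k+1}G)^T$ is a pointed, $k$-connected type whose $(k+1)$-fold loop space is $G^T$.

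To conclude, I would invoke uniqueness of deloopings for $k$-commutative higher groups: a $k$-connected pointed type is determined up to equivalence by the $k$-commutative higher group structure on its $(k+1)$-fold loop space. Since $\B^{k+1}(G^T)$ is by definition \emph{the} $k$-connected delooping of $G^T$ to $(k+1)$ levels, the two pointed $k$-connected types $(\B^{k+1}G)^T$ and $\B^{k+1}(G^T)$ are canonically identified by any pointed map between them inducing the identity on $\Omega^{k+1}$, and such a map exists because $(-)^T$ is functorial on pointed types and so takes the iterated delooping structure of $\B^{k+1}G$ to a delooping structure for $G^T$.

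The main obstacle is really only bookkeeping: verifying that the natural equivalence $\Omega^{j}(Y^T) \simeq (\Omega^j Y)^T$ assembles coherently into an $E_{k+1}$-map so that the resulting identification respects the delooping structure. This is routine once one observes that $(-)^T$, as a right adjoint, is a strict morphism of $\infty$-categories preserving all finite limits and hence all the coherence data defining $\Omega^{k+1}$; no novel ingredient beyond \cref{thm:tiny.commute.with.localization} is required.
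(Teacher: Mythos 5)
Your proof is correct and follows the same route as the paper for the first claim: use $\lVert X^T\rVert_k \simeq \lVert X\rVert_k^T$ from the truncation case of the localization theorem, then note $\lVert X\rVert_k^T$ is contractible because $\lVert X\rVert_k$ is. That is exactly what the paper does, and the paper's proof only addresses this first claim explicitly, treating the delooping identification as an obvious consequence.

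For the second claim you supply more detail than the paper, and the core of your argument is right: $(-)^T$ preserves finite limits (being a right adjoint), hence $\Omega^{k+1}\bigl((\B^{k+1}G)^T\bigr) \simeq (\Omega^{k+1}\B^{k+1}G)^T \simeq G^T$, and $(\B^{k+1}G)^T$ is pointed and $k$-connected by the first part. However, your closing worry about ``assembling coherently into an $E_{k+1}$-map'' is not a real obstruction in this setting, and slightly misconstrues how higher groups work in HoTT. In the Buchholtz--van Doorn--Rijke framework used by the paper, a $k$-commutative higher group \emph{is} a pointed $k$-connected type; the $E_{k+1}$-structure on $G^T$ is nothing over and above a choice of such a delooping, so once you have exhibited $(\B^{k+1}G)^T$ as pointed, $k$-connected, and with $\Omega^{k+1}$ equal to $G^T$, you are done --- there is no separate coherence datum to check. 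Alternatively, comparing against any other delooping of $G^T$ requires only a pointed map inducing an equivalence on $\Omega^{k+1}$, by the fundamental theorem of higher groups (pointed $k$-connected types are pointed $0$-connected, so the theorem applies directly); again no $E_{k+1}$-coherence is needed. So your proof is correct, but the final paragraph raises a difficulty that does not actually exist.
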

\begin{proof}
By \cref{thm:tiny.commute.with.localization}, we have $\trunc{X^{T}}_{k} \simeq \trunc{X}_{k}^{T} \simeq \ast^{T} \simeq \ast$.
\end{proof}

\begin{thm}\label{thm:tiny.null.preservers.crisp.colimits}
  Let $I$ be a crisply discrete type and let $T :: I \to \Type$ be a family of tiny types. Let $L_{T}$ be the modality given by nullifiying the $T_{i}$. Then $L_{T}$ commutes with crisp colimits: in particular, with pushouts and colimits of sequences.
  \[
L_{T}(\colim D_{j}) \simeq \colim L_{T} D_{j}.
  \]
\end{thm}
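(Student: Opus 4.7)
The approach is to reduce the claim to the fact that $L_T$-modal types are closed under crisp pushouts and crisp sequential colimits, which in turn follows from \cref{prop:tiny.commutes.colimits} applied to each $T_i$ individually.

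First I would characterize $L_T$-modal types as those $X$ for which the inclusion of constants $X \to X^{T_i}$ is an equivalence for every $i :: I$; this is the defining property of nullification. To establish closure, let $D : J \to \Type$ be a crisp diagram whose shape $J$ is either a pushout cone or a sequence, and suppose each $D_j$ is $L_T$-modal. For each $i :: I$, the inclusion of constants from $\colim_j D_j$ into $(\colim_j D_j)^{T_i}$ fits into
\[
\colim_j D_j \longrightarrow (\colim_j D_j)^{T_i} \xrightarrow{\sim} \colim_j D_j^{T_i},
\]
where the second equivalence is \cref{prop:tiny.commutes.colimits} and the composite is the map induced by the pointwise inclusions $D_j \to D_j^{T_i}$. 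Since each $D_j$ is $T_i$-null these pointwise inclusions are equivalences, so by functoriality of the colimit the composite is an equivalence, and hence so is the first map. This shows $\colim_j D_j$ is $T_i$-null for every $i :: I$, i.e.\ $L_T$-modal.

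From this closure, the desired equivalence $L_T(\colim_j D_j) \simeq \colim_j L_T D_j$ follows by the usual reflection argument. The units $D_j \to L_T D_j$ induce a map $\colim_j D_j \to \colim_j L_T D_j$, and since the target is $L_T$-modal by the previous step this factors uniquely through the unit as $\varphi : L_T(\colim_j D_j) \to \colim_j L_T D_j$. Conversely, for each $j$ the composite $D_j \to \colim_j D_j \to L_T(\colim_j D_j)$ extends uniquely along the unit to $L_T D_j \to L_T(\colim_j D_j)$, and these assemble via the colimit's universal property into $\psi : \colim_j L_T D_j \to L_T(\colim_j D_j)$. Two applications of uniqueness of factorizations through $L_T$-units --- precomposing $\psi \circ \varphi$ with the unit of $\colim_j D_j$, and precomposing $\varphi \circ \psi$ with each cocone leg $L_T D_j \to \colim_j L_T D_j$ --- show that $\varphi$ and $\psi$ are mutually inverse.

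The main obstacle is essentially clerical rather than mathematical: ensuring that the colimits computed by $(-)^{T_i}$ in \cref{prop:tiny.commutes.colimits} are formed over the same crisp diagrams we are localizing, and checking that all crispness hypotheses are met throughout (which they are, since $I$ is crisply discrete and the diagrams in view are crisp by assumption). No new use of tinyness is required beyond \cref{prop:tiny.commutes.colimits}; the theorem is a direct consequence of the fact that each $(-)^{T_i}$ preserves the relevant crisp colimits.
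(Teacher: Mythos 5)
Your argument is correct and takes essentially the same approach as the paper: both reduce the problem to showing that $\colim L_T D_j$ is $L_T$-modal by invoking \cref{prop:tiny.commutes.colimits} for each $T_i$, and then discharge the universal property of nullification to produce mutually inverse maps. You spell out the naturality of the constant-inclusion against the colimit-commutation equivalence a bit more carefully than the paper, but the substance is identical.
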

\begin{proof}
The argument is the same for any expressible colimit (coequalizers, pushouts, colimits of sequences, etc.). Both $L_{T}(\colim D_{j})$ and $\colim L_{T} D_{j}$ are universal for cones under $D$ mapping into $L_{T}$-modal types. Since $L_{T}(\colim D_{j})$ is an $L_{T}$-modal type admitting a cone under $D$,  we have a map $\colim L_{T} D_{j} \to L_{T}(\colim D_{j})$. To show that this map is an equivalence, it will suffice to show that $\colim L_{T}D_{j}$ is $L_{T}$-modal; then we can discharge the universal property of $L_{T}(\colim D_{j})$ to construct an inverse.

Being $L_{T}$-modal means being $T_{i}$-null for all $i : I$. That is, we need to show that $\type{const} : \colim L_{T}D_{j} \to (\colim L_{T} D_{j})^{T}$ is an equivalence. But by \cref{prop:tiny.commutes.colimits}, we have $(\colim L_{T}D_{j})^{T} \simeq \colim ((L_{T} D_{j})^{T})$, and $L_{T}D_{j}$ is $T$-null by construction.
\end{proof}

\printbibliography

\end{document}